\DeclareMathAlphabet{\mathcal}{OMS}{cmsy}{m}{n}
\DeclareMathOperator{\Div}{div}
\newcommand{\Th}{\mathcal{T}_{h}}
\newcommand{\R}{\mathbb{R}}
\renewcommand{\P}{\mathbb{P}}
\newcommand{\FF}{\mathcal{F}}
\newcommand{\triple}{|\!|\!|}
\begin{document}

\title*{Stability and optimal convergence of unfitted  extended finite element methods with Lagrange multipliers for the Stokes equations}
\titlerunning{Unfitted XFEM for Stokes equations}
\author{ Michel Fourni\'e and Alexei Lozinski }
% Use \authorrunning{Short Title} for an abbreviated version of
% your contribution title if the original one is too long
\institute{
Michel Fourni\'e \at Name, Address of Institute \email{name@email.address}
\and 
Alexei Lozinski \at Laboratoire de Math\'ematiques de Besan\c{c}on, UMR CNRS 6623, Univ. Bourgogne Franche-Comt\'e, \email{alexei.lozinski@univ-fcomte.fr}
}
%
% Use the package "url.sty" to avoid
% problems with special characters
% used in your e-mail or web address
%
\maketitle

\abstract{We study a fictitious domain approach with Lagrange multipliers to discretize Stokes equations
on a mesh that does not fit the boundaries. A mixed finite element method is used for fluid flow.
Several stabilization terms are added to improve the approximation of the normal trace of the stress tensor and to avoid the inf-sup conditions between the spaces of the velocity and the Lagrange multipliers.
We generalize first an approach based on eXtended Finite Element Method due to Haslinger-Renard \cite{Haslinger09} involving a Barbosa-Hughes stabilization and a robust reconstruction on the badly cut elements.
Secondly, we adapt the approach due to Burman-Hansbo \cite{BurmanHansbo10} involving a stabilization  only on the Lagrange multiplier. 
Multiple choices for the finite elements for velocity, pressure and multiplier are considered.
Additional stabilization on pressure (Brezzi-Pitkäranta, Interior Penalty) is added, if needed. 
We prove the stability and the optimal convergence of several variants of these methods under appropriate assumptions.
Finally, we perform numerical tests  to illustrate the capabilities of the methods.
}

\section{Introduction}
\label{sec:intro}

Let $\mathcal{D}\subset\R^d$, $d=2$ or $3$, be a bounded polygonal (polyhedral) domain. We are interested in the Stokes equations in a setting motivated by the  fluid-structure interaction, especially by simulations of particulate flows. We thus assume that $\mathcal{D}$ is decomposed into the fluid domain $\FF$ and the solid one $\mathcal{S}$. The domains $\FF$ and $\mathcal{S}$ are separated by the interface $\Gamma$, cf. Fig. \ref{fig:domain}. We also denote  $\Gamma_{wall}=\partial\mathcal{D}$ and assume, for simplicity, that $\Gamma$ and $\Gamma_{wall}$ are disjoint. Consider the problem
\begin{align}
- 2\Div D(u)  + \nabla p & =  f \quad &&\text{in } \mathcal{F}, \label{system1} \\
\Div  u  & =  0 \quad &&\text{in }\mathcal{F}, \label{system2} \\
 u  & =  g \quad &&\text{on } \Gamma, \label{system5} \\
 u  & =  g_{wall} \quad &&\text{on } \Gamma_{wall}, \label{system3} 
\end{align}
for the velocity $u$ and the pressure $p$ of the fluid filling $\mathcal{F}$.
Here $D(u) = \frac{1}{2} \left(\nabla u + \nabla u^T \right)$  and the viscosity has been set to 1 for simplicity.
In applications we have in mind, i.e. simulations of the motion of rigid or elastic particles flowing in the fluid, the interface $\Gamma$  is moving in time while the outer boundary $\Gamma_{wall}$ is immobile. In this chapter, we shall study  Finite Element (FE) discretizations of the problem above on a mesh fixed on $\mathcal{D}$ which is thus fitted to $\Gamma_{wall}$ but is cut in an arbitrary manner by interface $\Gamma$. The interest of these methods in the context of fluid-structure interaction is that it allows one to avoid remeshing when the interface advances with time.  

\begin{figure}
\centering
\includegraphics[trim=0cm 0cm 0cm 0cm,clip,scale=0.3]{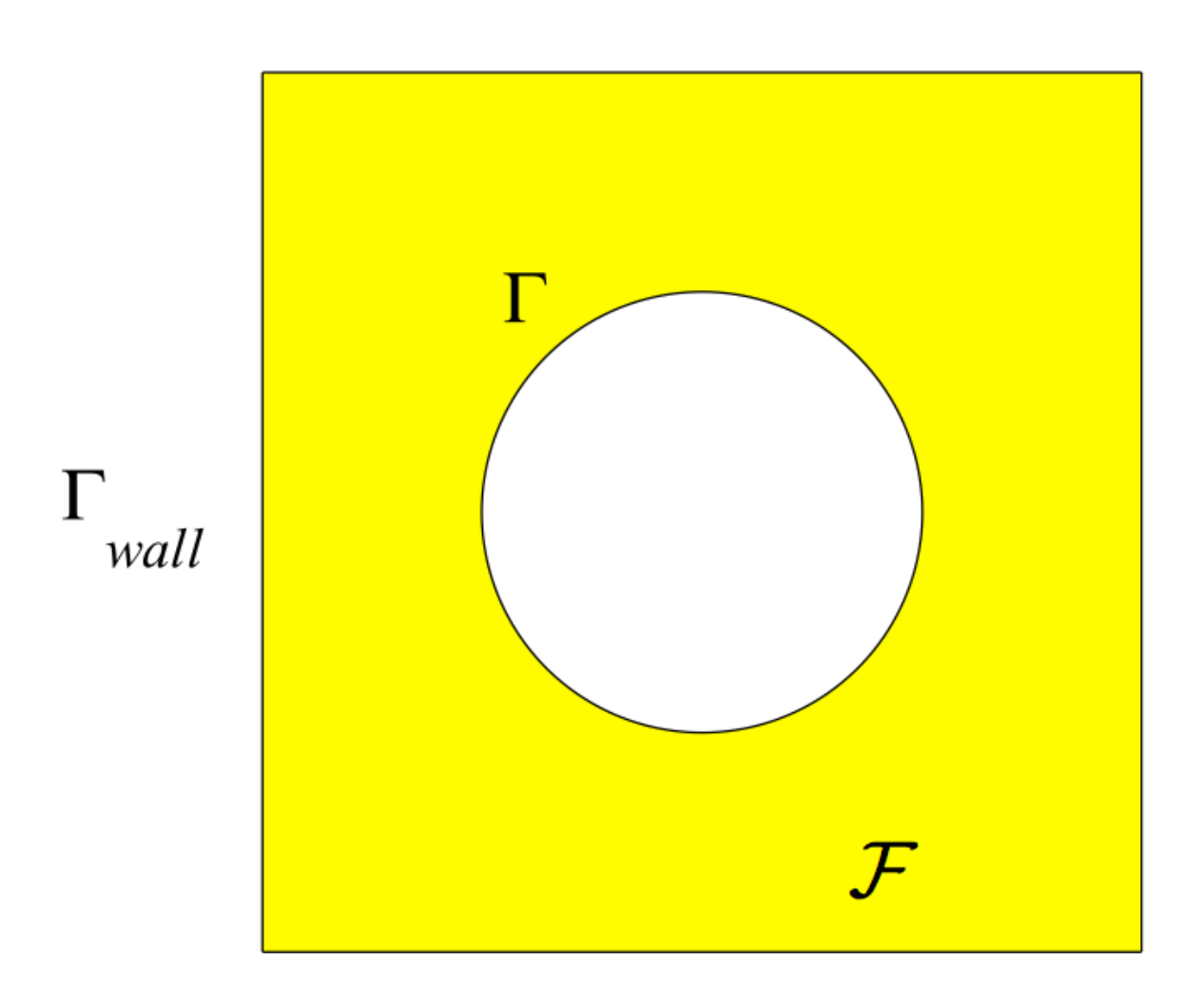}
\caption{The fluid domain $\FF$, the interface $\Gamma$ and the outer boundary $\Gamma_{wall}$.}
\label{fig:domain}
\end{figure}

Introducing the force exerted by the fluid on the solid at each point of $\Gamma$
\begin{eqnarray}\label{ForcLam}
\lambda =  -2 D(u)n +pn, \text{ on }\Gamma 
\end{eqnarray}
with $n$ the unit normal looking outside from $\mathcal{F}$, 
and interpreting $\lambda$ as the Lagrange multiplier associated with the Dirichlet conditions \eqref{system5}, we can write the weak formulation of \eqref{system1}--\eqref{system3} with $g_{wall}=0$ as
\begin{equation}\label{VarOrig}
\begin{array}{l}
 \text{Find }(u,p,\lambda)\in{H}^{1}_{wall}(\mathcal{F})^{d}\times L^2_0(\mathcal{F})\times{H}^{-\frac{1}{2}}(\Gamma)^d\text{ such that}\\
 \mathcal{A}(u,p,\lambda;v,q,\mu)=\mathcal{L}(v,\mu), \quad\forall(v,q,\mu)\in{H}^{1}_{wall}(\mathcal{F})^{d}\times L^2_0(\mathcal{F})\times{H}^{-\frac{1}{2}}(\Gamma)^d
\end{array}
\end{equation}
where
\begin{eqnarray*}
 \mathcal{A}(u,p,\lambda;v,q,\mu) & = & 2\int_{\mathcal{F}}D(u):D(v)-\int_{\mathcal{F}}(p\Div v+q\Div u)+\int_{\Gamma}(\lambda\cdot v+\mu\cdot u)
 \\
 \mathcal{L}(v,\mu) &=& \int_{\mathcal{F}}f\cdot v + \int_{\Gamma}g\cdot\mu
\end{eqnarray*}
and ${H}^{1}_{wall}(\mathcal{F})$ is the space of $H^1$ functions on $\mathcal{F}$ vanishing on $\Gamma_{wall}$ (we assume $g_{wall}=0$ in the theoretical analysis part of this paper to simplify the notations, the extension to $g_{wall}\not=0$ being trivial). The FE methods studied in this chapter will be based on the variational formulation (\ref{VarOrig}). They shall thus discretize the Lagrange multiplier $\lambda$, alongside $u$ and $p$, thus giving a natural approximation of the force exerted by the fluid on the solid.

As mentioned above, our FE methods will rely on a "background" fixed mesh $\Th$ that lives on the fluid-structure domain $\mathcal{D}\supset\mathcal{F}$ (the boundary of $D$ is $\Gamma_{wall}$ and is well fitted by $\Th$). In the actual computations, the elements of $\Th$ having no intersection with $\mathcal{F}$ will be discarded and the FE spaces for velocity and pressure will be defined on the mesh $\Th^e:=\Th^i\cup\Th^\Gamma$ where $\Th^\Gamma$ is the union of elements of $\mathcal{T}_h$ that are cut by $\Gamma$ and $\Th^i$ is the union of elements of $\mathcal{T}_h$ inside $\mathcal{F}$. The FE space for the Lagrange multiplier will live only on the cut elements $\Th^\Gamma$, cf. Fig. \ref{fig:mesh}.

\begin{figure}
\centering
\includegraphics[trim=0cm 0cm 0cm 0cm,clip,scale=0.3]{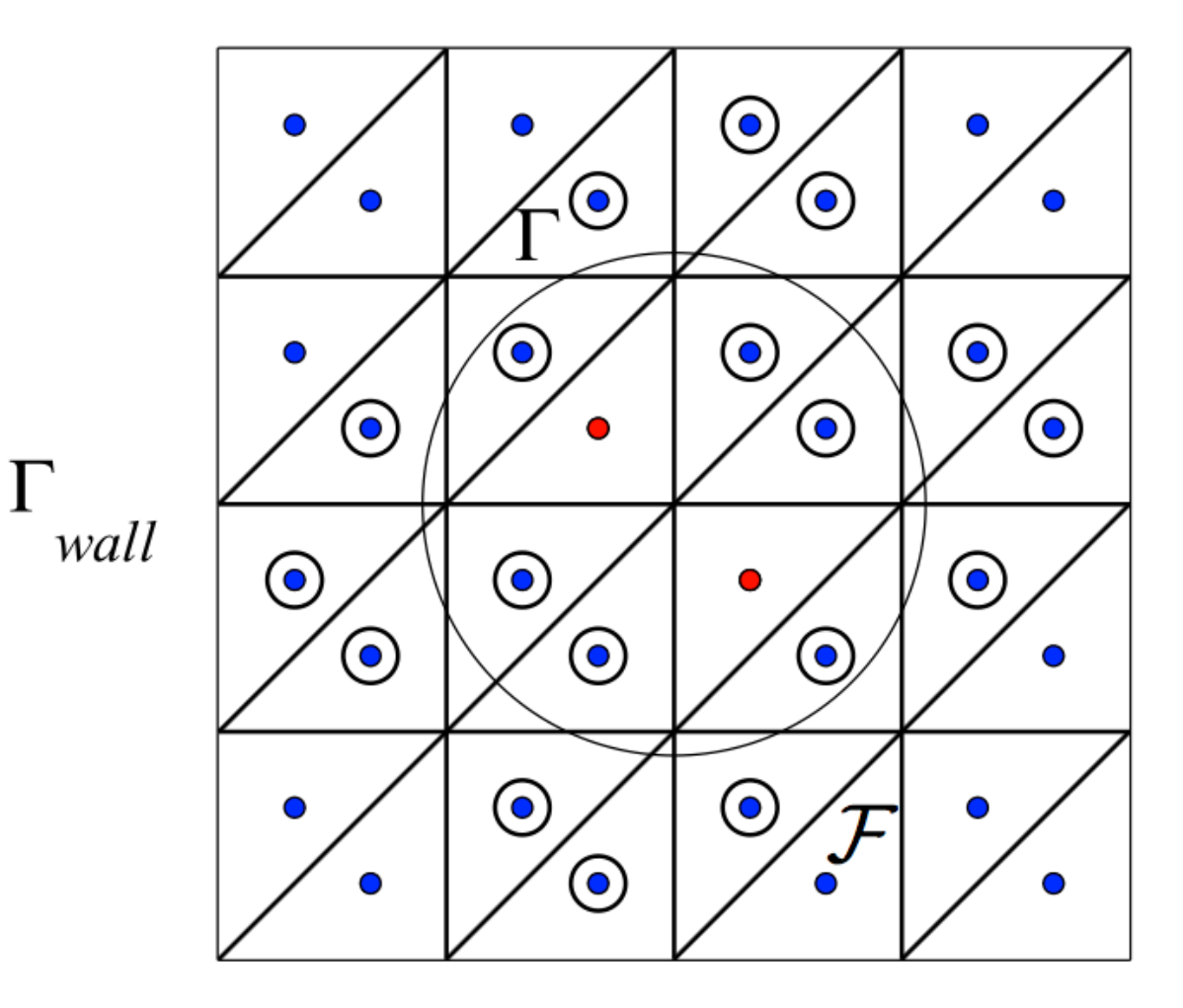}
\caption{The meshes $\Th^e=\Th^i \cup \Th^\Gamma$: the triangles of  $\Th^i$ are marked by \textcolor{blue}{$\bullet$} and those of $\Th^\Gamma$  are marked by  $\bigcirc$ \hspace*{-0.31cm}\textcolor{blue}{$\bullet$} ;  triangles marked by \textcolor{red}{$\bullet$} are not used.}
\label{fig:mesh}
\end{figure}

Denoting by $\mathcal{F}_h^e$ (resp. $\mathcal{F}_h^i$, $\mathcal{F}_h^\Gamma$) the domain covered by mesh $\Th^e$ (resp. $\Th^i$, $\Th^\Gamma$) we introduce three FE spaces 
\begin{equation}\label{fespaces}
V_{h}\subset{H}^{1}_{wall}(\mathcal{F}_h^e)^{d},\ 
{Q}_{h}\subset L^{2}(\mathcal{F}_h^e) \cap L_{0}^{2}(\mathcal{F}),\ {{W}}_{h}\subset{L}^{2}(\mathcal{F}_h^\Gamma)^{d}
\end{equation}
to approximate velocity, pressure and Lagrange multiplier respectively. Several choices of FE spaces $V_{h}$, $Q_h$, and $W_h$ will be considered, but we restrict ourselves in this chapter to triangular (tetrahedral) quasi-uniform meshes $\Th$ and to the standard continuous piecewise polynomial FE-spaces $\P_k$ ($k\ge 1$) or the piecewise constant space $\P_0$ on such a mesh.\footnote{The case of regular non-quasi-uniform meshes can also be easily treated at the expense of some technicalities. However, in applications, one will typically use a simplest possible mesh on $\mathbf{D}$ (for example, structured Cartesian) so that the quasi-uniformity restriction seems quite acceptable.}  Our FE spaces will be always based on meshes inherited from $\Th$: $\Th^e$ for $V_{h}$, $Q_h$, and $\Th^\Gamma$ for $W_h$. Note that velocity and pressure are approximated on a domain $\FF_h^e$ slightly larger than $\FF$ but all the integrals in the discretized problem will be calculated on $\FF$ or $\Gamma$. Note also that we choose the FE space for $\lambda$ on a domain $\mathcal{F}_h^\Gamma$ rather than on the surface $\Gamma$ to avoid the complicated issue of meshing a surface. 

A straightforward Galerkin approximation of (\ref{VarOrig}) is not stable in general (although it often works in practice, as will be seen in the numerical experiments at the end of this chapter). Several stabilization techniques were therefore proposed in the literature, using either Lagrange multipliers \cite{Haslinger09, BurmanHansbo10} or a Nitsche-like method \cite{BurmanHansbo14} to take into account the boundary conditions on $\Gamma$. We shall be concerned in this chapter only with the methods based on Lagrange multipliers. Firstly, we adapt the method of Haslinger-Renard (cf. \cite{Haslinger09} for the Poisson problem) to Stokes equations. The method is based on a Barbosa-Hughes stabilization \cite{Barbosa} on $\Gamma$ with additional local treatment on badly cut mesh elements. An extension to Stokes equations was already presented in \cite{Court14} but the analysis there relied on a number of hypotheses, difficult to verify. In this paper, we present a complete theoretical analysis in two cases :
\begin{enumerate}
\item LBB-unstable velocity-pressure FE pairs, namely, $\P_1-\P_1$ or $\P_1-\P_0$ elements. A stabilization is needed in this case even on a fitted mesh. We shall show, that adding the well known stabilization terms such as Brezzi-Pitk{\"a}ranta \cite{Brezzi84} for $\P_1-\P_1$ elements (or interior penalty for $\P_1-\P_0$ elements) to a Haslinger-Renard  fictitious domain method, as in \cite{Court14}, makes it stable and optimally convergent. 
\item LBB-stable velocity-pressure FE pairs, namely, $\P_k-\P_{k-1}$ Taylor-Hood elements. We show that a version of the method above (with and additional pressure stabilization on $\Gamma$ but avoiding stabilization over the whole domain $\FF$) is also stable and optimally convergent. Our proofs are presented here only in the 2D case and under some additional assumptions on the mesh. 
\end{enumerate}

We generalize moreover a method by  Burman-Hansbo \cite{BurmanHansbo10} to Stokes equations. This is also a fictitious domain method with Lagrange multipliers. Unlike the method by Haslinger-Renard (where the stabilization comes by enforcing (\ref{ForcLam}) on $\Gamma$ and thus involves all the variables $u$, $p$, $\lambda$), one stabilizes  here only the multiplier $\lambda$ by enforcing its continuity in some sense, so that the structure of resulting matrices is simpler. Fortunately, much of the theory outlined above can be reused for the analysis of this method. We are thus able to prove the stability and optimal convergence for the same choices of the FE spaces as above. 

The chapter is concluded by numerical experiments aiming at comparing different stabilizations and choices of of FE spaces.

\medskip

\noindent\textbf{Nomenclature.} 
\begin{description}
\item[Domains:] $\FF$ is the fluid domain where the problem (\ref{system1})--(\ref{system3}) is posed while $\FF_h^i$, $\FF_h^e$, $\FF_h^\Gamma$ are the domains occupied by the meshes  $\Th^i$, $\Th^e$, $\Th^\Gamma$ respectively. We have thus $\FF_h^i \subset \FF \subset \FF_h^e$ and  $\FF_h^\Gamma=\FF_h^e\setminus\FF_h^i$.\\
\item[Meshes:] $\Th^i$, $\Th^e$, $\Th^\Gamma$ are submeshes of a background mesh $\Th$ so that $\Th^i=\{T\in\Th : T\subset\FF\}$, $\Th^\Gamma=\{T\in\Th : T\cap\Gamma\not=\varnothing\}$ and $\Th^e:=\Th^i\cup\Th^\Gamma$. \\
$\mathcal{E}_h^e$ and $\mathcal{E}_h^\Gamma$ stand for the sets of interior edges of $\mathcal{T}_h^e$ and $\mathcal{T}_h^\Gamma$ respectively.\\
$\mathcal{F}_T$ (resp. $\Gamma_T$) denotes $T\cap\mathcal{F}$ (resp. $T\cap\Gamma$) for any cut element $T\in\Th^\Gamma$.\\
\item[Norms:] $\|\cdot\|_{k,\omega}$ stands for the norm in $H^k(\omega)$ where $\omega$ can be a domain in $\R^d$ or a $(d-1)$-dimensional manifold. We identify $H^0(\omega)$ with $L^2(\omega)$.\\ $|\cdot|_{k,\omega}$ stands for the semi-norm in $H^k(\omega)$, $k>0$.\\ $\|\cdot\|_{\infty,\omega}$ stands for the norm in $L^\infty(\omega)$. 
\end{description}

\section{Methods \`a la Haslinger-Renard}
The starting point for the construction of the Haslinger-Renard method (proposed in \cite{Haslinger09} for the Poisson equation) is to add to the variational formulation (\ref{VarOrig}) the Barbosa-Hughes stabilization \cite{Barbosa}, which enforces the relation $\lambda +2 D(u)n -pn =0$ on $\Gamma$. These terms take the form 
\begin{equation}\label{BarbosaStab}
-\gamma_{0}h\int_{\Gamma}(\lambda+2D({u}){n}-pn)\cdot\left(\mu+2D({v}){n}-qn\right)
\end{equation}
with a mesh-independent $\gamma_{0}>0$. This idea, at least in the context of the Poisson equation as in \cite{Haslinger09}, produces a stable and optimally convergent approximation provided the mesh elements are  cut by $\Gamma$ in a certain way so that $\mathcal{F}\cap T$ is a big enough portion of $T$ for any $T\in\mathcal{T}_h^\Gamma$. 
If, for some elements, this is not the case the method can be still cured by replacing the approximating polynomial in such ``bad elements'' by the polynomial extended from from adjacent ``good elements''. The relation between bad and good elements is made precise in the following \\[1mm]
\textbf{Assumption A.} We fix a threshold $\theta_{\min}\in(0,1]$ and declare any $T\in\mathcal{T}_h^\Gamma$ a good element (resp. bad element) if $\frac{|\mathcal{F}_T|}{|T|}\ge\theta_{\min}$ (resp. $\frac{|\mathcal{F}_T|}{|T|}<\theta_{\min}$). We assume that one can choose for any bad element $T$ a ``good neighbor'' $T'\in\mathcal{T}_h^e$, $\frac{|T'\cap\mathcal{F}|}{|T'|}\ge\theta_{\min}$, such that $T$ and $T'$ share at least one node, cf. Fig. \ref{fig:good_bad_T}.

\begin{remark} Typically, Assumption A will hold true even for $\theta_{\min}=1$ if the mesh is sufficiently refined. One could also relax the notion of a neighbor (at the expense of some complication of the forthcoming proofs) to the requirement $\operatorname{dist}(T,T')\le Ch$ with a mesh-independent $C>0$.
\end{remark}

\begin{figure}
\centering
\includegraphics[trim=0cm 0cm 0cm 0cm,clip,scale=0.9]{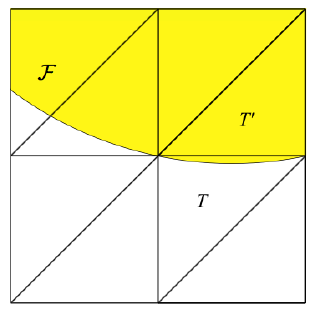}
\caption{Good element $T'$ and bad element $T$}
\label{fig:good_bad_T}
\end{figure}

We now define a ``robust reconstruction'' on $\mathcal{F}_h^\Gamma$ for the FE functions on $\mathcal{F}_h^e$
\begin{definition}\label{RobRec} For any $v_h\in V_h$ set $\widehat{v_h}$ on any $T\in\mathcal{T}_h^\Gamma$ as
\begin{itemize}
 \item $\widehat{v_h}=v_h$ on $T$ if $T$ is a good element,
 \item $(\widehat{v_h})_{|T}=(v_h)_{|T'}$ if $T$ is a bad element. Here $T'$ is the good neighbor of $T$ from Assumption A and the relation should be understood in the sense that $\widehat{v_h}$ on $T$ is taken as the same polynomial as the polynomial giving $v_h$ on $T'$.
\end{itemize}
For any $q_h\in Q_h$, one constructs $\widehat{q_h}$ in the same way.
\end{definition}

We shall show in the subsequent paragraphs that adding stabilization (\ref{BarbosaStab}) to (\ref{VarOrig}) and replacing $u,v$ in these terms (sometimes also $p,q$) by their robust reconstructions produces indeed a stable approximation to the Stokes equations. We end this general introduction  to the Haslinger-Renard method by a Proposition illustrating the usefulness of the selection criterion for good elements, showing that the $L^2$ norm on the cut portion of an element $T$ controls $L^\infty$ (and hence any other) norm on the whole element with an equivalence constant depending on the relative measure of the cut portion, followed by a list of interpolation error estimates that shall be needed in the forthcoming analysis.

\begin{proposition}\label{A0}
  Let $p$ be a polynomial of degree $\leq k$ and $\theta \in (0,
  1]$. Then for any $T \in \mathcal{T}_h$ and any measurable set $S \subset T$
  with $| S | \geq \theta | T |$ one has
  \begin{equation}\label{A0neq} 
    \|p\|_{\infty, T} \leq \frac{C}{h^{d / 2}} \| p \|_{0, S}
  \end{equation}
  with a constant $C > 0$ depending only on $\theta$, $k$ and mesh regularity.
\end{proposition}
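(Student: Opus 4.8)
The plan is to reduce \eqref{A0neq} to an estimate on the reference element by an affine change of variables, and then to prove that estimate by a compactness argument. Fix a reference simplex $\hat T$ and, for a given $T\in\Th$, let $F_T:\hat T\to T$ be the affine bijection with matrix $DF_T$; set $\hat p=p\circ F_T$, which is again a polynomial of degree $\le k$ on $\hat T$, and $\hat S=F_T^{-1}(S)\subset\hat T$. Then $\|p\|_{\infty,T}=\|\hat p\|_{\infty,\hat T}$, $\|p\|_{0,S}^2=|\det DF_T|\,\|\hat p\|_{0,\hat S}^2$, and $|\hat S|/|\hat T|=|S|/|T|\ge\theta$, while the quasi-uniformity of $\Th$ gives $|\det DF_T|=|T|/|\hat T|\ge c\,h^d$ with $c>0$ depending only on the mesh regularity. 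Hence it suffices to prove
\begin{equation*}
  \|\hat p\|_{\infty,\hat T}\ \le\ C(\theta,k)\,\|\hat p\|_{0,\hat S}
\end{equation*}
for every polynomial $\hat p$ of degree $\le k$ on $\hat T$ and every measurable set $\hat S\subset\hat T$ with $|\hat S|\ge\theta|\hat T|$; substituting the scaling relations above and $|\det DF_T|\ge c\,h^d$ then yields \eqref{A0neq}.

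I would prove the reference estimate by contradiction. If it fails, there exist polynomials $\hat p_n$ of degree $\le k$ with $\|\hat p_n\|_{\infty,\hat T}=1$ and measurable sets $\hat S_n\subset\hat T$, $|\hat S_n|\ge\theta|\hat T|$, such that $\|\hat p_n\|_{0,\hat S_n}\to 0$. Since $\P_k(\hat T)$ is finite-dimensional and $(\hat p_n)$ is bounded, a subsequence converges, in every norm and in particular uniformly on $\hat T$, to some $\hat p$ with $\|\hat p\|_{\infty,\hat T}=1$. The indicator functions $\chi_{\hat S_n}$ are bounded in $L^\infty(\hat T)$, so along a further subsequence $\chi_{\hat S_n}\rightharpoonup^{*}\chi$ in $L^\infty(\hat T)$ with $0\le\chi\le1$ and $\int_{\hat T}\chi=\lim_n|\hat S_n|\ge\theta|\hat T|>0$. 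Writing $\|\hat p_n\|_{0,\hat S_n}^2=\int_{\hat T}\chi_{\hat S_n}\hat p_n^2$ and using that $\hat p_n^2\to\hat p^2$ uniformly while $\chi_{\hat S_n}\rightharpoonup^{*}\chi$, one obtains $\int_{\hat T}\chi\,\hat p^2=\lim_n\int_{\hat T}\chi_{\hat S_n}\hat p_n^2=0$. Therefore $\hat p$ vanishes on $\{\chi>0\}$, a set of positive Lebesgue measure; since a polynomial vanishing on a set of positive measure is identically zero, $\hat p\equiv0$, contradicting $\|\hat p\|_{\infty,\hat T}=1$. This establishes the reference estimate.

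The only genuinely delicate point is that $S$ is an \emph{arbitrary} measurable subset of $T$ of relative measure at least $\theta$, not a fixed set, so one cannot simply invoke equivalence of norms on the finite-dimensional space $\P_k$; this is exactly why one passes to the weak-$*$ limit $\chi$ of the indicators $\chi_{\hat S_n}$. The two supporting facts — weak-$*$ sequential compactness of the unit ball of $L^\infty(\hat T)$, and the fact that a polynomial vanishing on a positive-measure set vanishes identically (by induction on $d$ using Fubini) — are classical. As for the constant: the reference bound $C(\theta,k)$ depends only on $\theta$ and $k$, and the factor $h^{-d/2}$ together with the dependence on mesh regularity enters solely through the Jacobian bound $|\det DF_T|\ge c\,h^d$.
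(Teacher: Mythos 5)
Your proof is correct, and it reaches the reference-element estimate by a route that differs in its key mechanism from the paper's. You argue by contradiction and handle the arbitrariness of the set $S$ through weak-$*$ compactness: you extract a uniform limit $\hat p$ of the polynomials and a weak-$*$ limit $\chi$ of the indicators $\chi_{\hat S_n}$, pass to the limit in $\int\chi_{\hat S_n}\hat p_n^2$, and conclude via the classical fact that a polynomial vanishing on a set of positive measure is identically zero. The paper instead works directly with the functional $N_\theta(p)=\inf_{S\subset T,\,|S|\ge\theta|T|}\|p\|_{0,S}$: it asserts its continuity on the finite-dimensional space $\mathbb{P}_k$, attains a minimizer $p_\alpha$ on the sup-norm unit sphere, and proves $\alpha>0$ quantitatively by choosing $\varepsilon$ so that the sublevel set $T_\varepsilon=\{|p_\alpha|\le\varepsilon\}$ has measure at most $\tfrac{\theta}{2}|T|$, which gives $\|p_\alpha\|_{0,S}^2\ge\varepsilon^2\tfrac{\theta}{2}|T|$ uniformly over all admissible $S$ without ever extracting limits of the sets. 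The paper's argument is thus more elementary (no Banach--Alaoglu, no limit interchange), at the price of leaving the continuity of $N_\theta$ to the reader; your version avoids that continuity claim but imports weak-$*$ sequential compactness of the unit ball of $L^\infty$. Both ultimately rest on the same fact that a nonzero polynomial cannot be small on a set of substantial measure, and your explicit treatment of the scaling step (the affine map $F_T$ and the Jacobian bound $|\det DF_T|\ge c\,h^d$ from quasi-uniformity), which the paper compresses into ``by scaling,'' is a welcome addition.
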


\begin{proof}
  By scaling, it is sufficient to prove (\ref{A0neq}) on a reference
  element. We thus fix a simplex $T\in\R^d$ of diameter $h=1$ and consider for any $p
  \in \mathbb{P}_k$
  \[ N_{\theta} (p) = \inf_{S \subset T, | S | \geq \theta | T |} \| p \|_{0, S} 
  \]
  It is easy to see that $N_{\theta}$ is a continuous function on the
  finite-dimensional space $\mathbb{P}_k$. Consequently, it attains a minimum
  on the set $\Sigma_1 : = \{ p \in \mathbb{P}_k, \| p \|_{\infty, T} = 1
  \}$, i.e. $\exists \alpha \geq 0$ and $p_{\alpha} \in \Sigma_1$ such that
  $N_{\theta} (p) \geq N_{\theta} (p_{\alpha}) = \alpha$ for all $p\in\Sigma_1$. It remains to prove
  $\alpha > 0$. To this end, let $T_\delta = \{ x \in T : |  p_{\alpha} (x) | \leq \delta \}$,
  $m (\delta) = \operatorname{meas} \{ T_\delta \}$ for any $\delta \geq 0$. Since $m (\delta)$
  is decreasing down to 0 as $\delta \rightarrow 0$, one can find $\varepsilon
  > 0$ s.t. $m (\varepsilon) \leq \frac{\theta}{2} | T | $. We
  observe now $\| p_{\alpha} \|^2_{0, S} \geq \int_{S\setminus T_\varepsilon}p_{\alpha}^2 \geq \varepsilon^2 \left( | S | -
  \frac{\theta}{2} | T | \right)$ for any $S \subset T$, hence $\alpha^2 =
  N^2_{\theta} (p_{\alpha}) \geq \varepsilon^2 \frac{\theta}{2} | T |>0$. 
  By homogeneity, this also proves $N_{\theta} (p) \geq \alpha \|p\|_{\infty,T}$ for all $p\in\P_k$
  entailing (\ref{A0neq}) with $C = \frac{1}{\alpha}$ (we recall that the proof is done on the reference element with $h=1$).
\qed  
\end{proof}

We are going to establish interpolation estimates on the cut domain. To this end, we introduce\\[1mm]
\textbf{Assumption B.} $\Omega$ is a Lipschitz domain and there exist constants $c_\Gamma,C_\Gamma>0$ such that for any $T\in\mathcal{T}_h^\Gamma$
\begin{enumerate}
 \item $|\Gamma_T|\le C_\Gamma h^{d-1}$ with $\Gamma_T:=T\cap\Gamma$;
 \item there exists a unit vector $\chi_T\in\R^d$ such that $\chi_T\cdot n \ge c_\Gamma$ a.e. on $\Gamma_T$ where $n$ is the unit normal looking outward from $\mathcal{F}$.
\end{enumerate}

\begin{remark}\label{remark2}
  The bound on $|\Gamma_T|$ in the first part of the Assumption B is automatically satisfied on Lipschitz domain. We prefer however to write this bound explicitly in order to emphasize that some of the estimates below will depend on the constant $C_\Gamma$, so that $\Gamma$ should be supposed not too oscillating. The second part of the Assumption B is not too restrictive either. Typically, one can take $\chi_T$ as the normal $n$ at the middle point of $\Gamma_T$ if $\Gamma_T$ is smooth or as the average between the two normals if $\Gamma_T$ is the union of two segments (in the case when $\Omega$ is a 2D polygon). Such choices will suffice on a sufficiently refined mesh. 
\end{remark}

\begin{proposition}\label{InterpLem}
Let $V_h,Q_h,W_h$ be (respectively) $\P_{k_u},\P_{k_p},\P_{k_\lambda}$ FE spaces on meshes $\Th^e,\Th^e,\Th^\Gamma$ as in (\ref{fespaces}). Under Assumptions A and B, there exist interpolation operators $I_h^u: H^{1}_{wall}(\mathcal{F})^d \to V_h$, $I_h^p\in L^{2}_0(\mathcal{F}) \to Q_h$, $I_h^\lambda : H^{\frac{1}{2}}(\Gamma)^d \to W_h$ s.t. for any sufficiently smooth $u,p,\lambda$ 
\begin{align}
\frac 1h \|u-I_h^uu\|_{0,\mathcal{F}} + |u-I_h^uu|_{1,\mathcal{F}} + \frac{1}{\sqrt{h}} \|u-I_h^uu\|_{0,\Gamma} &\le C h^{s_u}|u|_{s_u+1,\mathcal{F}}
\label{IntEst}\\
& \quad\text{(for all integer }s_u : 0\le s_u \le k_u)
\notag\\
\left(\|\nabla u-\nabla{I_h^uu}\|_{0,\Gamma} + \|\nabla u-\nabla\widehat{I_h^uu}\|_{0,\Gamma} \right) &\le C h^{s_u -\frac 12}|u|_{s_u+1,\mathcal{F}}
\label{IntEstU}\\
& \quad\text{(for all integer }s_u : 1\le s_u \le k_u)
\notag\\
\frac 1h \|p-I_h^pp\|_{0,\mathcal{F}} + |p-I_h^pp|_{1,\mathcal{F}} \hspace{2cm}&\notag\\
 + \frac{1}{\sqrt{h}} \left( \|p-I_h^pp\|_{0,\Gamma} + \|p-\widehat{I_h^pp}\|_{0,\Gamma} \right) &\le C h^{s_p}|p|_{s_p+1,\mathcal{F}}
\label{IntEstP}\\
& \quad\text{(for all integer }s_p : 0\le s_p \le k_p)
\notag\\
\frac {1}{\sqrt h} \|\lambda-I_h^\lambda\lambda\|_{0,\Gamma} &\le C h^{s_\lambda}|\lambda|_{s_\lambda+\frac{1}{2},\Gamma}
\label{IntEstLam}\\
& \quad\text{(for all integer }s_\lambda : 0\le s_\lambda \le k_\lambda)
\notag
\end{align}
with $C>0$ depending only on the constants in Assumptions A, B, and on the mesh regularity, and $k_u\ge 1$ in the case of estimate (\ref{IntEstU}). 
Moreover, operator $I_h^\lambda$ can be extended to  $I_h^\lambda : H^{1}_{wall}(\mathcal{F})^d \to W_h$ s.t. for any $\tilde\lambda\in (H^{s_\lambda+1}(\mathcal{F})\cap H^1_{wall})^d $ and any integer $s_\lambda$, $0\le s_\lambda \le k_\lambda$
\begin{equation}\label{IntEstLamOm}
\frac 1h \|\tilde\lambda-I_h^\lambda\tilde\lambda\|_{0,\mathcal{F}_h^\Gamma} + |\tilde\lambda-I_h^\lambda\tilde\lambda|_{1,\mathcal{F}_h^\Gamma} +
\frac {1}{\sqrt h} \|\tilde\lambda-I_h^\lambda\tilde\lambda\|_{0,\Gamma} \le C h^{s_\lambda}|\tilde\lambda|_{s_\lambda+1,\mathcal{F}}
\end{equation}
\end{proposition}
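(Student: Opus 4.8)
The plan is to obtain all three interpolants by composing a classical Sobolev extension with a Scott--Zhang quasi-interpolant on the appropriate inherited mesh, so that the bulk estimates on $\FF$ reduce to standard interpolation theory while the estimates on $\Gamma$ are reached through a trace inequality on cut elements furnished by Assumption~B. Since $\FF$ is Lipschitz I fix a Stein-type extension $E$, bounded $H^m(\FF)^d\to H^m(\R^d)^d$ for every relevant $m$ and reproducing polynomials, and let $J_h$ be a Scott--Zhang operator onto the relevant $\P_k$ space on $\Th^e$ that preserves homogeneous Dirichlet data on $\Gamma_{wall}$. I set $I_h^uu=J_h(Eu)$ and $I_h^pp=J_h(Ep)-|\FF|^{-1}\int_\FF J_h(Ep)$. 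The correction in $I_h^pp$ enforces $I_h^pp\in L^2_0(\FF)$; being equal to $|\FF|^{-1}\int_\FF(J_h(Ep)-p)$ it is $O(h^{s_p+1}|p|_{s_p+1,\FF})$ and invisible to gradients, hence harmless; the Dirichlet-preserving choice of $J_h$ is legitimate because near $\Gamma_{wall}$ all elements are interior, so there $Eu=u$ vanishes. Then $I_h^u$ reproduces $\P_{k_u}$ and $I_h^p$ reproduces the polynomials of degree $\le k_p$ with zero mean over $\FF$ (since $E$ and $J_h$ do). For the bulk terms in (\ref{IntEst}), (\ref{IntEstP}) one sums the local Scott--Zhang estimates over the elements of $\Th^e$ meeting $\FF$ and bounds the patch seminorms of $Eu,Ep$ by $\|\cdot\|_{s+1,\R^d}\le C\|\cdot\|_{s+1,\FF}$; to obtain the seminorm rather than the full norm on the right one subtracts the averaged-Taylor polynomial of $u$ (resp.\ its zero-mean version for $p$) over $\FF$ --- reproduced by the operator --- and applies Bramble--Hilbert on $\FF$. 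The operator $I_h^\lambda$ on $H^1_{wall}(\FF)^d$ is defined analogously, $I_h^\lambda\tilde\lambda=J_h^\Gamma(E\tilde\lambda)$ with $J_h^\Gamma$ the Scott--Zhang operator on the band mesh $\Th^\Gamma$, which directly yields (\ref{IntEstLamOm}) (there the bulk norms are taken over all of $\FF_h^\Gamma$, so no restriction to $\FF$ intervenes). For surface data $\lambda\in H^{s_\lambda+1/2}(\Gamma)^d$ I first lift $\lambda$ to $H^1_{wall}(\FF)^d$ by a fixed bounded right inverse of the trace, simultaneously bounded $H^{s+1/2}(\Gamma)\to H^{s+1}(\FF)$ (e.g.\ the harmonic lifting), and then apply the construction above; (\ref{IntEstLam}) follows from the $\Gamma$-part of (\ref{IntEstLamOm}).

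The single analytic tool behind every estimate on $\Gamma$ is a trace inequality on a cut element: for $T\in\Th^\Gamma$ and $w\in H^1(T)$,
\[
\|w\|_{0,\Gamma_T}^2\le C\bigl(h^{-1}\|w\|_{0,\FF_T}^2+h\,|w|_{1,\FF_T}^2\bigr),
\]
with $C$ depending only on $c_\Gamma$, $C_\Gamma$ and mesh regularity. I would prove it by applying the divergence theorem to $w^2\chi_T$ on the set $\FF_T=T\cap\FF$ (Lipschitz near $\Gamma_T$ by part~2 of Assumption~B): the $\Gamma_T$-boundary term dominates $c_\Gamma\int_{\Gamma_T}w^2$ because $\chi_T\cdot n\ge c_\Gamma$ there, the remaining boundary terms live on faces of $T$ and are controlled by the standard trace inequality on the whole simplex, and the volume term $2\int_{\FF_T}w\,\nabla w\cdot\chi_T$ is absorbed by Young's inequality. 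Inserting $w=Eu-I_h^uu$ (resp.\ $Ep-I_h^pp$, $E\tilde\lambda-I_h^\lambda\tilde\lambda$) together with the Scott--Zhang bounds on $T$ and summing over $\Th^\Gamma$ gives the $\Gamma$-terms in (\ref{IntEst}), (\ref{IntEstP}), (\ref{IntEstLam}), (\ref{IntEstLamOm}). For the gradient bound (\ref{IntEstU}) one applies the same inequality componentwise to $\nabla(Eu-I_h^uu)$; this requires $Eu\in H^2$ locally, hence $s_u\ge1$, and the term $h\,|Eu-I_h^uu|_{2,T}^2$ is kept at the right order after subtracting the local averaged-Taylor polynomial and estimating $|I_h^uu|_{2,T}$ via an inverse inequality, which yields $\|\nabla(Eu-I_h^uu)\|_{0,\Gamma_T}^2\le Ch^{2s_u-1}|Eu|_{s_u+1,\omega_T}^2$ and hence the $h^{s_u-1/2}$ rate.

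The genuinely new ingredient, and the main obstacle, is the control of the robust reconstruction in the second terms of (\ref{IntEstU}) and (\ref{IntEstP}). On good elements $\widehat{\,\cdot\,}=\,\cdot\,$ and nothing changes; on a bad element $T$ with good neighbour $T'$ from Assumption~A, $\widehat{I_h^uu}$ is, by Definition~\ref{RobRec}, the polynomial $\phi:=(I_h^uu)|_{T'}$ restricted to $T$, and one must bound $\|\nabla(Eu-\phi)\|_{0,\Gamma_T}$. I split $Eu-\phi=(Eu-\pi)-(\phi-\pi)$ with $\pi$ the averaged-Taylor polynomial of $Eu$ of degree $\le s_u$ over a fixed-size patch $\omega\supset T\cup T'$ (possible since $T,T'$ share a node and have diameters $\simeq h$). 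The term $\|\nabla(Eu-\pi)\|_{0,\Gamma_T}$ is handled by the cut-element trace inequality and Bramble--Hilbert exactly as above. For the polynomial term, $\nabla(\phi-\pi)$ has bounded degree, so by part~1 of Assumption~B
\[
\|\nabla(\phi-\pi)\|_{0,\Gamma_T}^2\le|\Gamma_T|\,\|\nabla(\phi-\pi)\|_{\infty,T}^2\le C_\Gamma h^{d-1}\|\nabla(\phi-\pi)\|_{\infty,T}^2;
\]
a polynomial norm-equivalence between the two comparable fat simplices $T$ and $T'$ --- proved by the same compactness-and-scaling argument as Proposition~\ref{A0} --- gives $\|\nabla(\phi-\pi)\|_{\infty,T}\le C\|\nabla(\phi-\pi)\|_{\infty,T'}$, and Proposition~\ref{A0} applied on the good element $T'$ (where $|\FF_{T'}|\ge\theta_{\min}|T'|$) yields $\|\nabla(\phi-\pi)\|_{\infty,T'}\le Ch^{-d/2}\|\nabla(\phi-\pi)\|_{0,\FF_{T'}}$. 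Since $\phi=I_h^uu$ on $T'\supset\FF_{T'}$, $\|\nabla(\phi-\pi)\|_{0,\FF_{T'}}\le\|\nabla(Eu-I_h^uu)\|_{0,T'}+\|\nabla(Eu-\pi)\|_{0,T'}\le Ch^{s_u}|Eu|_{s_u+1,\omega}$. Chaining these bounds gives $\|\nabla(Eu-\widehat{I_h^uu})\|_{0,\Gamma_T}\le Ch^{s_u-1/2}|Eu|_{s_u+1,\omega}$; summing over the bad elements (each patch $\omega$ re-used a bounded number of times by mesh regularity) and passing to the seminorm over $\FF$ as in the first paragraph proves (\ref{IntEstU}). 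The estimate for $\|Ep-\widehat{I_h^pp}\|_{0,\Gamma}$ in (\ref{IntEstP}) is identical with one derivative fewer.

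In summary, the only delicate points are the Lipschitz regularity of the pieces $\FF_T$ and the uniformity of every constant with respect to the way $\Gamma$ cuts the mesh --- precisely what Assumptions~A and~B are designed to provide --- together with the polynomial norm-equivalence across neighbouring elements, whose proof mirrors that of Proposition~\ref{A0}; everything else is an assembly of classical quasi-interpolation estimates.
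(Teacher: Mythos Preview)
Your approach is essentially the same as the paper's: Sobolev extension followed by a Cl\'ement/Scott--Zhang quasi-interpolant, with the $\Gamma$-estimates obtained through the divergence-theorem trace argument on $\FF_T$ using $\chi_T$ from Assumption~B, the zero-mean correction for the pressure interpolant, and the trace-lifting for $\lambda$. One small slip: your displayed cut-element trace inequality places $\|w\|_{0,\FF_T}$ and $|w|_{1,\FF_T}$ on the right, but your own derivation (via the standard trace inequality on the whole simplex for the $\partial T$ boundary terms) actually produces norms over $T$, not $\FF_T$; this is harmless in the application since $w=Eu-I_h^uu$ is defined and estimated on all of $T$, but the formula as written is not the inequality you prove.

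The one place where you genuinely diverge from the paper is the treatment of the robust reconstruction $\widehat{I_h^uu}$. The paper exploits a specific feature of the Cl\'ement construction: since $T$ and $T'$ share a node, $T\subset\omega_{T'}$, and the Cl\'ement polynomial $(\tilde I_h\tilde u)_{|T'}$ is built from averages over $\omega_{T'}$ and hence approximates $\tilde u$ optimally on all of $\omega_{T'}$, in particular on $T$, in one stroke. Your route---insert an averaged-Taylor polynomial $\pi$ on a common patch, transfer the polynomial part from $T$ to $T'$ by norm equivalence on neighbouring simplices, then invoke Proposition~\ref{A0} on the good element $T'$---is correct and more portable (it works for any quasi-interpolant), at the price of an extra splitting and the auxiliary norm-equivalence lemma. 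Either argument yields the same estimates.
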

\begin{proof} We start with the construction of $I_h^u$. Extension theorems for Sobolev spaces guarantee for any $u\in{H}^{s_u+1}(\mathcal{F})^{d}$ existence of  $\tilde u\in{H}^{s_u+1}(\mathcal{F}_h^e)^{d}$ with 
$\|\tilde{u} \|_{s_u+1, \mathcal{F}_h^e} \leq C \| u \|_{s_u+1, \mathcal{F}}$ and $\tilde{u} =
u$ on $\mathcal{F}$. Let $\tilde I_h:H^1_{wall}(\FF_h^e)^d\to V_h$ be a Cl{\'e}ment-type interpolation operator \cite{Ern} satisfying
\[ \frac{1}{h} \|  \tilde{u} -\tilde I_h \tilde u \|_{0, T} + |\tilde{u} -\tilde I_h \tilde u|_{1, T} 
   + \frac{1}{\sqrt{h}} \|  \tilde{u} -\tilde I_h \tilde u \|_{0, \partial T} 
   + {\sqrt{h}} \|  \nabla(\tilde{u} - \tilde I_h \tilde u) \|_{0, \partial T} 
   \le C h^{s_u} |\tilde{u} |_{s_u+1, \omega_T} \]
on any  $T\in\mathcal{T}_h^e$ with $\omega_T$ begin the patch of elements of $\Th^e$ touching $T$. 
Let $I_h^u u = \tilde I_h \tilde u|_\FF$. Summing the estimates above  over all
the mesh elements yields immediately the estimates in $L^2 (\mathcal{F})$ and $H^1 (\mathcal{F})$ in
(\ref{IntEst}). Now, on any element $T\in\mathcal{T}_h^{\Gamma}$
\[ c_\Gamma \| u - I_h^u u \|_{0, \Gamma_T}^2 \leq \int_{\Gamma_T} (\tilde u - \tilde I_h \tilde u )^2
   \chi_T \cdot n = \int_{\mathcal{F}_T} \Div ((\tilde u - \tilde I_h \tilde u )^2 \chi_T) -
   \int_{\mathcal{F} \cap \partial T} (\tilde u - \tilde I_h \tilde u )^2 \chi_T \cdot n \]
since $\partial \mathcal{F}_T =\Gamma_T \cup (\mathcal{F} \cap \partial T)$. Developing and
applying the interpolation estimates above gives
\begin{align*}
 c_\Gamma \| u - I_h^u u \|_{0, \Gamma_T}^2 
 &\leq \int_{\mathcal{F}_T} 2 (\tilde u - \tilde I_h \tilde u )
   \nabla (\tilde u - \tilde I_h \tilde u ) \cdot \chi_T 
   + \| \tilde u - \tilde I_h \tilde u  \|^2_{0, \mathcal{F} \cap \partial T} \\
 &\leq 2\| \tilde u - \tilde I_h \tilde u  \|_{0, T} | \tilde u - \tilde I_h \tilde u  |_{1, T} + \|
   \tilde u - \tilde I_h \tilde u  \|^2_{0, \partial T} \leq Ch^{2s_u+1} (| \tilde{u} |^2_{s_u+1,
   \omega_T}) 
\end{align*}
Summing this over all the elements in $\mathcal{F}_h^{\Gamma}$ yields the $L^2 (\Gamma)$-estimate
in (\ref{IntEst}).  

If $s_u \geq 1$, we have moreover on any $T \in \mathcal{T}_h^e$
\[ h | \tilde u - \tilde I_h \tilde u |_{2, T} + \sqrt{h}  \| \nabla (\tilde{u} - I_h \tilde u)\|_{0, \partial T} \le Ch^{s_u} | \tilde{u} |_{s_u + 1, \omega_T} \]
This, by the same argument as above, gives the $L^2 (\Gamma)$ estimate on
$\nabla (u - I_h^u u)$ in (\ref{IntEstU}). In order to extend this to $\nabla
(u - \widehat{I_h^u u})$ consider a bad element $T$ and its good neighbor
$T'$. Both $T$ and $T'$ belong to the patch $\omega_{T'}$ and examining the
derivation of interpolation estimates for the Cl{\'e}ment interpolator $\tilde I_h$
reveals that the polynomial $(\tilde I_h \tilde u) _{|T'}$ gives actually an optimal approximation of
$\tilde{u}$ on the whole $\omega_{T'}$, i.e.
\[ | u - \widehat{I^u_h u} |_{1, T} = | \tilde{u} - (\tilde I_h \tilde u )_{|T'}|_{1, T} 
\le| \tilde{u} - (\tilde I_h \tilde u )_{|T'}|_{1, \omega_{T'}} \leq {Ch}^{^{s_u}} | u |_{s_u + 1, \omega_{T'}} \]
Similarly, $\frac {1}{h} \| u - \widehat{I^u_h u} \|_{0, T} + \frac {1}{\sqrt{h}} \| u - \widehat{I^u_h u} \|_{0, \partial T}
\leq {Ch}^{^{s_u}} | u |_{s_u + 1, \omega_{T'}} $.  Thus, the same argument as above gives the $L^2 (\Gamma)$ estimate on
$\nabla (u - \widehat{I^u_h u})$ in (\ref{IntEstU}). 

The remaining estimates (\ref{IntEstP}), (\ref{IntEstLam}) and (\ref{IntEstLamOm}) are proved in a similar manner. We skip the details and make only the following remarks:
\begin{itemize}
 \item The operator $I_h^p$ should preserve the restriction that pressure is of zero mean on $\FF$. We thus define it as $I_h^p p=\tilde I_h \tilde p -i_h(p)$ where $\tilde I_h$ is the Cl{\'e}ment interpolation operator on $\Th^e$,  $\tilde p$ is an extension of $p$ to $\FF_h^e$, and $i_h(p) = \int_\FF\tilde I_h \tilde p$. The correction $i_h(p)$ can be bounded as 
 $$
  |i_h(p)| = \left|\int_\FF (\tilde I_h\tilde p -p)\right| 
  \le |\FF|^{\frac{1}{2}} \|\tilde p-\tilde I_h\tilde p\|_{0,\FF_h^e} \le C h^{s_p+1}|\tilde p|_{s_p+1,\FF_h^e}
 $$
 and thus it does not perturb the estimates (\ref{IntEstP}).
 \item Concerning the interpolation of $\lambda$, we note that (\ref{IntEstLam}) is in fact an easy corollary to (\ref{IntEstLamOm}). Indeed, for any $\lambda \in H^{k_\lambda+\frac{1}{2}}(\Gamma)^d$ there  exists (by the trace theorem) $\tilde\lambda\in H^{k_\lambda+1}(\mathcal{F}_h^\Gamma)^d$ satisfying $\tilde\lambda|_\Gamma=\lambda$ and $|\tilde\lambda|_{k_\lambda+1,\mathcal{F}_h^\Gamma} \le C|\lambda|_{k_\lambda+\frac{1}{2},\Gamma}$. We can thus define $I_h^\lambda\lambda:=I_h^\lambda\tilde\lambda$ and observe that (\ref{IntEstLamOm}) entails (\ref{IntEstLam}). 
\end{itemize}
\qed
\end{proof}

\subsection{$\P_1-\P_1$ velocity-pressure spaces with Brezzi-Pitk{\"a}ranta stabilization.}
\label{P1P1}

Let us choose $\P_1$ FE spaces for both ${{V}}_{h}$ and ${{Q}}_{h}$, add Brezzi-Pitk{\"a}ranta-like stabilization for the pressure and the Barbosa-Hughes-like stabilization on the interface as described above. We choose to introduce the robust reconstruction from Definition \ref{RobRec} in the last terms only for the velocity in this case (on both trial function $u_h$ and test function $v_h$). The method thus reads 
\begin{equation}\label{methBP}
\begin{array}{l}
 \text{Find }(u_{h},p_{h},\lambda_{h})\in{V}_{h}\times Q_{h}\times{W}_{h}\text{ such that}\\
 \mathcal{A}^{HR-BP}(u_{h},p_{h},\lambda_{h}; v_{h},q_{h},\mu_{h})=\mathcal{L}(v_{h},\mu_{h}),\hspace{1em}\forall(v_{h},q_{h},\mu_{h})\in{V}_{h}\times Q_{h}\times{W}_{h}
\end{array}
\end{equation}
where 
\begin{multline*}
\mathcal{A}^{HR-BP}(u,p,\lambda; v,q,\mu) = \mathcal{A}(u,p,\lambda; v,q,\mu)\\
  -\gamma_{0}h\int_{\Gamma}(\lambda+2D(\widehat{u}){n}-pn)\cdot\left(\mu+2D(\widehat{v}){n}-qn\right)-\theta h^{2}\int_{\mathcal{F}_h^e}\nabla p\cdot\nabla q
\end{multline*}
$V_h$, $Q_h$ are continuous $\P_1$ FE spaces on mesh $\Th^e$ and $W_h$ is $\P_1$ or $\P_0$ FE space on mesh $\Th^\Gamma$, cf. (\ref{fespaces}).

 We recall that the Brezzi-Pitk{\"a}ranta stabilization (the last term above) should be present on $\mathcal{F}$ to compensate the lack of the discrete inf-sup in P1-P1 velocity-pressure FE spaces. In addition, in our fictitious domain situation, it is extended to the larger domain $\mathcal{F}_h^e$ thus helping to ensure stability near $\Gamma$. 

In the following propositions, Assumptions A and B are implicitly implied and the constants $C$ may vary from line to line and depend on $c_\Gamma,C_\Gamma>0$ from Assumption B, $\theta_{\min}$ from Assumption A, and on the mesh regularity.
\begin{proposition}\label{A1}
 For all $v_{h}\in{V}_{h}$ one has 
\begin{equation}\label{A1neq}
h\|\nabla{\widehat{v_{h}}}\|_{0,\Gamma}^{2}  \leq  C|v_{h}|_{1,\mathcal{F}}^{2}
\end{equation}
\end{proposition}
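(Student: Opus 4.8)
The plan is to prove the estimate element by element on $\Th^\Gamma$ and then sum. Fix $T\in\Th^\Gamma$. On $T$, the function $\widehat{v_h}$ coincides with a single polynomial $P\in\P_1^d$, namely either $(v_h)_{|T}$ (if $T$ is good) or $(v_h)_{|T'}$ for the good neighbor $T'$ (if $T$ is bad). The key observation is that $\nabla P$ is a \emph{constant} vector (since $P$ is affine), so $h\|\nabla\widehat{v_h}\|_{0,\Gamma_T}^2 = h\,|\Gamma_T|\,|\nabla P|^2$, and by the first part of Assumption B this is bounded by $C_\Gamma h^d |\nabla P|^2$. Thus it suffices to control $h^d|\nabla P|^2$ by $|v_h|_{1,S}^2$ for a suitable good element $S$ (namely $S=T$ if $T$ is good, $S=T'$ if $T$ is bad). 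Here I would use the fact that $\nabla P$ is constant again: $h^d|\nabla P|^2 \le C\|\nabla P\|_{0,S}^2 / (|S|/h^d)$ up to mesh-regularity constants, and since $S$ is a good element, $|S\cap\mathcal{F}| = |\mathcal{F}_S|\ge\theta_{\min}|S|\ge c h^d$, so in fact $\|\nabla P\|_{0,S}^2 \ge \|\nabla P\|_{0,S\cap\mathcal{F}}^2$ and (by an inverse-type / equivalence-of-norms argument on a fixed-dimensional polynomial space, as in Proposition~\ref{A0} applied to the components of $\nabla P$, which are constants) $\|\nabla P\|_{0,S\cap\mathcal{F}}^2 \ge c\,|S\cap\mathcal{F}|\,|\nabla P|^2 \ge c\,h^d|\nabla P|^2$. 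On $S\cap\mathcal{F}$ we have $\nabla P = \nabla v_h$ (both for $S=T$ good, and for $S=T'$: note $T'\in\Th^e$, so $v_h$ restricted to $T'$ really is the polynomial $P$). Hence $h^d|\nabla P|^2 \le C\|\nabla v_h\|_{0,S\cap\mathcal{F}}^2 \le C|v_h|_{1,\mathcal{F}}^2$ restricted to that piece.

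Carrying this out, for each $T\in\Th^\Gamma$ I obtain $h\|\nabla\widehat{v_h}\|_{0,\Gamma_T}^2 \le C\,|v_h|_{1,S_T\cap\mathcal{F}}^2$ where $S_T$ is $T$ itself (good case) or its good neighbor $T'$ (bad case). Summing over $T\in\Th^\Gamma$ gives the result, provided the right-hand side, $\sum_{T\in\Th^\Gamma} |v_h|_{1,S_T\cap\mathcal{F}}^2$, is bounded by $C|v_h|_{1,\mathcal{F}}^2$. This is where the finite-overlap property enters: each good element $S$ can be the designated good neighbor of only boundedly many bad elements $T$ (since $T$ and $S$ must share a node and the mesh is quasi-uniform and shape-regular, so only $O(1)$ elements meet any given node), and each good element appears as its own $S_T$ at most once. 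Hence the sum over $\Th^\Gamma$ counts each $S\cap\mathcal{F}$ at most a mesh-regularity-dependent constant number of times, which yields $\sum_{T\in\Th^\Gamma}|v_h|_{1,S_T\cap\mathcal{F}}^2 \le C\sum_{S}|v_h|_{1,S\cap\mathcal{F}}^2 \le C|v_h|_{1,\mathcal{F}}^2$.

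The main obstacle — really the only delicate point — is the localized norm equivalence $h^d|\nabla P|^2 \le C\|\nabla P\|_{0,S\cap\mathcal{F}}^2$ on a good element, i.e.\ that integrating a constant over a set of measure at least $\theta_{\min}|S|$ recovers its square up to constants. This is elementary here because $\nabla P$ is constant: $\|\nabla P\|_{0,S\cap\mathcal{F}}^2 = |S\cap\mathcal{F}|\,|\nabla P|^2 \ge \theta_{\min}|S|\,|\nabla P|^2 \ge c\,\theta_{\min}h^d|\nabla P|^2$ by quasi-uniformity. (For $\P_k$ velocity with $k\ge 2$ one would instead invoke Proposition~\ref{A0} directly on $\nabla\widehat{v_h}$, combined with an inverse inequality $\|\nabla\widehat{v_h}\|_{\infty,T}\le Ch^{-1}\|\widehat{v_h}\|_{\infty,T}$; but for the present $\P_1$ case the constancy of the gradient makes everything transparent.) The bookkeeping of the overlap in the summation step is routine given the quasi-uniformity and shape-regularity assumptions on $\Th$.
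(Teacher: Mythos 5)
Your proof is correct and follows essentially the same route as the paper's: bound the contribution of each $T\in\Th^\Gamma$ by $|\Gamma_T|\le C_\Gamma h^{d-1}$ times the squared (here constant) sup of the gradient, transfer this to $\|\nabla v_h\|_{0,T'\cap\FF}^2$ on the good element using $|T'\cap\FF|\ge\theta_{\min}|T'|\ge c h^d$, and sum — your explicit finite-overlap count of how many bad elements can share a given good neighbor is exactly the (implicit) justification of the paper's final summation. The only real difference is that the constancy of $\P_1$ gradients lets you bypass Proposition \ref{A0} and the neighbor-scaling comparison, which is fine for the statement as posed; just note that when Proposition \ref{A1} is reused for the Taylor--Hood case ($\P_k$ velocity), the correct generalization is the paper's one (apply Proposition \ref{A0} to $\nabla v_h$ on the good neighbor $T'$), not your parenthetical variant of applying it together with an inverse inequality on the possibly badly cut element $T$ itself.
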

\begin{proof}
Taking any $T\in\mathcal{T}_h^\Gamma$ and denoting its good neighbor by $T'$ we observe  
\[ \| \nabla{\widehat{v_{h}}} \|_{0, \Gamma_T} \le \sqrt{| \Gamma_T |} \| \nabla\widehat{v_{h}} \|_{L^{\infty} (T)} 
 \le C\sqrt{| \Gamma_T |} \| \nabla{v_{h}} \|_{L^{\infty} (T')} 
 \le C\frac{\sqrt{| \Gamma_T |}}{h^{d/2}}  \| \nabla{v_{h}} \|_{0,\mathcal{F}_{T'}} \]
  The last inequality above holds by Proposition \ref{A0} with a constant dependent on $\theta_{\min}$. The last but one inequality is easily proven by scaling given that $T$ and $T'$ are neighbors. 
  Using the bound $| \Gamma_T | \leq  C_{\Gamma}h^{d-1}$ and summing over all $T\in\mathcal{T}_h^\Gamma$ 
  yields  (\ref{A1neq}).
\qed  
\end{proof}

\begin{proposition}\label{A2}
 For all $q_{h}\in Q_{h}$ one has 
\begin{eqnarray*}
h\|q_{h}\|_{0,\Gamma}^{2} & \leq & C\left(\|q_{h}\|_{0,\mathcal{F}}^{2}+h^{2}|q_{h}|_{1,\mathcal{F}_h^e}^{2}\right)
\end{eqnarray*}
\end{proposition}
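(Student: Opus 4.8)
The plan is to mimic the proof of Proposition~\ref{A1}: estimate $\|q_h\|_{0,\Gamma_T}^2$ on each cut element $T$ and sum over $\mathcal{T}_h^\Gamma$. The new difficulty is that $q_h$ is \emph{not} replaced by a robust reconstruction here, so on a badly cut $T$ (where $\mathcal{F}_T$ is tiny and $\|q_h\|_{0,\mathcal{F}_T}$ controls nothing) one must transfer the estimate to a good neighbor; this is precisely what will generate the gradient term on the right-hand side.

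Fix $T\in\mathcal{T}_h^\Gamma$ and let $T'$ be its good neighbor from Assumption~A, with the convention $T'=T$ if $T$ is itself a good element. By part~1 of Assumption~B and a crude $L^\infty$ bound,
\[
\|q_h\|_{0,\Gamma_T}^2 \le |\Gamma_T|\,\|q_h\|_{\infty,T}^2 \le C_\Gamma h^{d-1}\|q_h\|_{\infty,T}^2 .
\]
Since $q_h$ is globally continuous and $T$, $T'$ share at least one node $a$, and the polynomial representing $q_h$ on $T$ is $x\mapsto q_h(a)+\nabla q_h\cdot(x-a)$ (with $\nabla q_h$ its constant gradient on $T$), using $|x-a|\le Ch$ on $T$ and $|\nabla q_h|^2=|T|^{-1}|q_h|_{1,T}^2\le Ch^{-d}|q_h|_{1,T}^2$ gives
\[
\|q_h\|_{\infty,T}\ \le\ |q_h(a)| + Ch\,|\nabla q_h|\ \le\ \|q_h\|_{\infty,T'} + Ch^{1-d/2}\,|q_h|_{1,T}\,.
\]
As $T'$ is good, $|\mathcal{F}_{T'}|\ge\theta_{\min}|T'|$, so Proposition~\ref{A0} applied with $S=\mathcal{F}_{T'}$ and $\theta=\theta_{\min}$ yields $\|q_h\|_{\infty,T'}\le Ch^{-d/2}\|q_h\|_{0,\mathcal{F}_{T'}}$. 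Combining the three displays,
\[
\|q_h\|_{0,\Gamma_T}^2\ \le\ Ch^{d-1}\bigl(h^{-d}\|q_h\|_{0,\mathcal{F}_{T'}}^2 + h^{2-d}|q_h|_{1,T}^2\bigr)\ =\ C\bigl(h^{-1}\|q_h\|_{0,\mathcal{F}_{T'}}^2 + h\,|q_h|_{1,T}^2\bigr).
\]

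Multiplying by $h$ and summing over $T\in\mathcal{T}_h^\Gamma$, I would finish with two bounded‑overlap observations. First, the elements $T$ have disjoint interiors and lie in $\mathcal{T}_h^e$, hence $\sum_T|q_h|_{1,T}^2\le|q_h|_{1,\mathcal{F}_h^e}^2$. Second, by quasi‑uniformity and mesh regularity a fixed good element can serve as the chosen neighbor $T'$ of only a bounded number of elements $T$ (all sharing a node with it), so $\sum_T\|q_h\|_{0,\mathcal{F}_{T'}}^2\le C\sum_{S\in\mathcal{T}_h^e}\|q_h\|_{0,\mathcal{F}_S}^2 = C\|q_h\|_{0,\mathcal{F}}^2$, since the sets $\mathcal{F}_S=S\cap\mathcal{F}$ are essentially disjoint and their union is $\mathcal{F}\subset\mathcal{F}_h^e$. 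This gives $h\|q_h\|_{0,\Gamma}^2\le C(\|q_h\|_{0,\mathcal{F}}^2+h^2|q_h|_{1,\mathcal{F}_h^e}^2)$.

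The only genuinely delicate point is the handling of badly cut elements: there $\|q_h\|_{0,\mathcal{F}_T}$ is useless, one is forced through a good neighbor via Assumption~A, and it is exactly the $\mathbb{P}_1$ continuity of $q_h$ together with the inverse bound on $\nabla q_h$ that makes this transfer possible — which is also the reason the term $h^2|q_h|_{1,\mathcal{F}_h^e}^2$, absent from the analogous Proposition~\ref{A1}, appears in the statement.
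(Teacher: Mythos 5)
Your argument is correct and follows essentially the same route as the paper's proof: bound $\|q_h\|_{0,\Gamma_T}$ by $\sqrt{|\Gamma_T|}\,\|q_h\|_{\infty,T}$, expand the $\P_1$ function around a node shared with the good neighbor $T'$ so that the value is controlled by $\|q_h\|_{\infty,T'}$ (hence, via Proposition \ref{A0}, by $h^{-d/2}\|q_h\|_{0,\mathcal{F}_{T'}}$) plus $h$ times the constant gradient on $T$, and then use $|\Gamma_T|\le C_\Gamma h^{d-1}$ and summation. Your explicit remarks on the bounded overlap of good neighbors and the convention $T'=T$ for good elements just make precise what the paper leaves implicit.
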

\begin{proof}
Using the notations $T,T'$ as in the preceding proof and assuming that these two elements share a node $x$, we observe  
\begin{align*}
 \| q_h \|_{0, \Gamma_T} &\le \sqrt{| \Gamma_T |} \| q_h \|_{L^{\infty} (T)} 
 \le \sqrt{| \Gamma_T |} (|q_h(x)| +  h\| \nabla{q_{h}} \|_{L^{\infty} (T)} )
 \\
 &\le \sqrt{| \Gamma_T |} (\|q_h\|_{L^{\infty} (T')} +  h\| \nabla{q_{h}} \|_{L^{\infty} (T)} )
 \le C\frac{\sqrt{| \Gamma_T |}}{h^{d/2}}   (\|q_h\|_{0,\mathcal{F}_{T'}} +  h\| \nabla{q_{h}} \|_{0,T} )
\end{align*}
We have used again Proposition \ref{A0} on the good element $T'$. 
  We conclude thanks to $| \Gamma_T | \leq  C_{\Gamma}h^{d-1}$ from Assumption B and the summation over all $T\in\mathcal{T}_h^\Gamma$. 
\qed  
\end{proof}

We shall also need a special interpolation operator adapted to functions
vanishing on $\Gamma$, the idea of which goes to \cite{Reusken16}.

\begin{proposition}\label{A3int}
  There exists an interpolation operator $I_h^0 : H^1_0(\mathcal{F})^d  \rightarrow V_h$ %with $V_h^i:=\{v_h\in V_h:v_h=0\text{ on }\mathcal{F}_h^\Gamma\}$ 
  such that 
  \[ \| v - I^0_h v \|_{0, \mathcal{F}} \le Ch |v|_{1, \mathcal{F}},  \quad |I^0_h v|_{1, \mathcal{F}} \le C |v|_{1, \mathcal{F}} \]
  and $I_h^0v=0$ on $\FF_h^\Gamma$ (and consequently $I_h^0v=0$ on $\Gamma$) for any $v \in H^1_0 (\mathcal{F})^d$ with a mesh-independent constant $C > 0$.
\end{proposition}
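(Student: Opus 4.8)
The plan is to build $I_h^0$ as a two-stage construction: first a standard quasi-interpolant with the right stability and approximation properties, then a correction that kills the degrees of freedom living near $\Gamma$ without destroying those properties. Concretely, let $\mathcal{N}_h^\Gamma$ denote the set of Lagrange nodes of $V_h$ that belong to some element of $\Th^\Gamma$, and let $\mathcal{N}_h^i$ be the remaining nodes (those whose supporting elements are all in $\Th^i$). Start from a Cl\'ement/Scott--Zhang-type operator $\tilde I_h : H^1_0(\mathcal{F})^d \to V_h$ on $\Th^e$ (extending $v$ by the Stein extension to $\FF_h^e$ first, as in the proof of Proposition \ref{InterpLem}), which satisfies $\|v - \tilde I_h v\|_{0,T} \le Ch|v|_{1,\omega_T}$ and $|\tilde I_h v|_{1,T}\le C|v|_{1,\omega_T}$ on every $T\in\Th^e$. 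Then define $I_h^0 v$ by keeping the nodal values of $\tilde I_h v$ at nodes in $\mathcal{N}_h^i$ and setting them to zero at all nodes in $\mathcal{N}_h^\Gamma$; equivalently $I_h^0 v = \sum_{x\in\mathcal{N}_h^i} (\tilde I_h v)(x)\,\varphi_x$ where $\varphi_x$ are the nodal basis functions. By construction $I_h^0 v$ vanishes on every element of $\Th^\Gamma$, hence on $\FF_h^\Gamma$ and in particular on $\Gamma$.

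It remains to prove the two estimates. The key point is that zeroing out the $\mathcal{N}_h^\Gamma$-nodes is a harmless perturbation because $v\in H^1_0(\mathcal{F})$ vanishes on $\Gamma$, so $v$ itself is small on the strip of elements $\Th^\Gamma$. The first step is a strip estimate: for any $T\in\Th^\Gamma$, since $T\cap\Gamma\ne\varnothing$ (or $T$ is within $Ch$ of $\Gamma$ in the relaxed neighbor sense) and $v=0$ on $\Gamma$, a Poincar\'e--Friedrichs inequality on the patch gives $\|v\|_{0,\omega_T}\le Ch|v|_{1,\omega_T}$, where $\omega_T$ is a suitable fixed-size patch meeting $\Gamma$. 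Combined with the approximation property of $\tilde I_h$, this yields $\|\tilde I_h v\|_{0,T}\le \|v\|_{0,T}+\|v-\tilde I_h v\|_{0,T}\le Ch|v|_{1,\omega_T}$ for $T\in\Th^\Gamma$, and by an inverse inequality on the finite-dimensional space $\P_{k}(T)$ we control the nodal values: $h^{d}\sum_{x\in\mathcal{N}_h^\Gamma\cap T}|(\tilde I_h v)(x)|^2 \le C\|\tilde I_h v\|_{0,T}^2\le Ch^2\|v\|_{1,\omega_T}^2$. The difference $\tilde I_h v - I_h^0 v = \sum_{x\in\mathcal{N}_h^\Gamma}(\tilde I_h v)(x)\varphi_x$ is then estimated element by element: on each $T$, $\|\tilde I_h v - I_h^0 v\|_{0,T}^2 \le C h^{d}\sum_{x\in\mathcal{N}_h^\Gamma\cap T}|(\tilde I_h v)(x)|^2$ and $|\tilde I_h v - I_h^0 v|_{1,T}^2 \le C h^{d-2}\sum_{x\in\mathcal{N}_h^\Gamma\cap T}|(\tilde I_h v)(x)|^2$ by the standard local bounds on nodal basis functions. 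Summing over $T\in\Th^e$ (only $T\in\Th^\Gamma$ contribute) and using the strip bound above gives $\|\tilde I_h v - I_h^0 v\|_{0,\FF} \le Ch|v|_{1,\FF}$ and $|\tilde I_h v - I_h^0 v|_{1,\FF}\le C|v|_{1,\FF}$, where we must be slightly careful that the patches $\omega_T$ for $T\in\Th^\Gamma$ have bounded overlap (true by mesh regularity) and that all of them lie within a fixed-width neighborhood of $\Gamma$ inside $\FF$ so the Poincar\'e constant is uniform. The triangle inequality with the properties of $\tilde I_h$ then delivers both claimed estimates.

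The main obstacle is the strip Poincar\'e inequality and making it uniform: one needs that for each $T\in\Th^\Gamma$ there is a connected patch $\omega_T$ of $O(1)$ many elements, contained in $\overline{\FF}$, that touches $\Gamma$, so that "$v=0$ on $\Gamma$" can be converted into an $L^2$-over-$H^1$ bound with an $h$-independent constant after scaling. This is where Assumptions A and B enter (a good neighbor in $\Th^e$, hence meeting $\FF$ substantially, exists for every cut element, and $\Gamma$ is not too oscillatory so that $|T\cap\Gamma|$ is comparable to $h^{d-1}$ and the trace of $v$ on $\Gamma_T$ genuinely controls $v$ on $\omega_T$); a clean way to package it is the scaled trace-Poincar\'e estimate $\|w\|_{0,\hat\omega}\le C(\|w\|_{0,\hat\gamma}+|w|_{1,\hat\omega})$ on a reference configuration, applied with $w$ the pullback of $v$ and $\hat\gamma$ the pullback of $\Gamma_T$, whose validity for the $O(1)$-many admissible reference shapes follows by a compactness argument exactly as in Proposition \ref{A0}. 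Everything else is routine finite-element bookkeeping.
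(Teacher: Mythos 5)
Your construction and argument are essentially the paper's own: a Cl\'ement-type interpolant of an extension of $v$, with the nodal values on the elements of $\Th^\Gamma$ set to zero, the correction being harmless because $v$ (hence the interpolant) is of size $O(h|v|_{1,\FF})$ in the $O(h)$-band around $\Gamma$ where it vanishes; the only difference is cosmetic bookkeeping (nodal-basis decomposition and local norm equivalence versus the paper's edge-trace scaling bound on $\partial\FF_h^i$). Just note the small slip that the correction $\tilde I_h v - I_h^0 v$ is also supported on elements of $\Th^i$ touching $\partial\FF_h^i$ (their boundary nodes lie in $\mathcal{N}_h^\Gamma$), but the same nodal bounds, taken from an adjacent cut element, cover those contributions with bounded overlap, so the estimates are unaffected.
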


\begin{proof}
  The construction of $I_h^0$ will be based on the interpolator $I_h^u$ from Proposition \ref{InterpLem} with $k_u=1$. 
  For any $v \in H^1_0 (\mathcal{F})^d $,  let us put $I_h^0 v (x) = I_h^u (x)$ at all the interior nodes $x$ of $\Th^i$ (i.e. excepting the nodes lying on $\partial\Th^i$)
  and $I_h^0 v (x)= 0$ on all the nodes of  $\Th^{\Gamma}$. Since $I_h^0 v$ is the piecewise linear function on $\Th^e$, this uniquely defines it everywhere on $\FF_h^e$. Moreover, $I_h^0v=0$ on $\FF_h^\Gamma$.
  
  Let us denote, for a mesh edge $E$ lying on  $\partial \mathcal{F}_h^i$, the adjacent element from $\mathcal{T}_h^{\Gamma}$ by $T^{\Gamma}$ and the union of all the elements from $\mathcal{T}_h^i$ sharing at least a node with $E$ by $\omega_E^i$. By scaling
  \[ \| I_h^u v - I_h^0 v \|_{0, \omega_E^i} \leq C \sqrt{h} \| I_h^u v \|_{0, E}
     \leq C (\| I_h^u v \|_{0, T^\Gamma} + h | I_h^u v |_{1, T^\Gamma}) \]
  Summing over all such edges and introducing the extension $\tilde{v}$ to $\mathcal{F}_h^e$ as in the proof of Proposition \ref{InterpLem} yields
  \[ \| I_h^u v - I_h^0 v \|_{0, \mathcal{F}_h^i} 
  \leq C (\| I_h^u v \|_{0, \mathcal{F}_h^{\Gamma}} +  h | I_h^u v |_{1, \mathcal{F}_h^{\Gamma}})
  \leq C (\| \tilde{v} - I_h^u v \|_{0,
     \mathcal{F}_h^{\Gamma}} + \| \tilde{v} \|_{0, \mathcal{F}_h^{\Gamma}} + h | I_h^u v
     |_{1, \mathcal{F}_h^{\Gamma}}) \]
  Since $I_h^0 v = 0$ on $\mathcal{F}^{\Gamma}_h$ this entails
  \begin{align*}
    \| v - I_h^0 v \|_{0, \mathcal{F}} &\leq \| {v} \|_{0, \mathcal{F}} + \| v - I_h^u v \|_{0, \mathcal{F}_h^i} + \| I_h^u v
     - I_h^0 v \|_{0, \mathcal{F}_h^i} \\
     &\leq C (\| \tilde{v} \|_{0,\mathcal{F}_h^{\Gamma}} + \| \tilde{v} - I_h^u v \|_{0, \mathcal{F}_h^e} + h | I_h^u v
     |_{1, \mathcal{F}_h^{\Gamma}}) 
  \end{align*}
  We now employ the bound $\| \tilde{v} \|_{0,
  \mathcal{F}_h^{\Gamma}} \leq Ch | \tilde{v} |_{1, \mathcal{F}_h^{\Gamma}}$, which is
  valid since $\mathcal{F}_h^{\Gamma}$ is a band of thickness $h$ around $\Gamma$ and
  $\tilde{v} = 0$ on $\Gamma$. Moreover,
  $$      
     \frac 1h \| \tilde{v} - I_h^u v \|_{0, \mathcal{F}_h^e} + | I_h^u v |_{1, \mathcal{F}_h^e}
     \le C | \tilde{v} |_{1, \mathcal{F}_h^e}
  $$
  as follows from the proof of Proposition \ref{InterpLem}, cf. (\ref{IntEst}) with $s_u=0$. 
  Since $| \tilde{v} |_{1, \mathcal{F}_h^e} \le C | {v} |_{1, \mathcal{F}}$ by the extension theorem, this proves the announced estimate of  $\| v - I_h^0 v \|_{0, \mathcal{F}}$.
  
  The estimate for the $H^1$ norm of
  $I_h^0 v$ follows using the inverse inequality and the $L^2$ error estimates proved above:
  \[ | I_h^0 v |_{1, \mathcal{F}} = | I_h^0 v |_{1, \mathcal{F}_h^i} 
     \leq | I_h^0 v - I_h^u v |_{1, \mathcal{F}_h^i} + |
     I_h^u v |_{1, \mathcal{F}_h^i} \leq \frac{C}{h} \| I_h^0 v - I_h^u v \|_{0, \mathcal{F}_h^i} + | I_h^u v
     |_{1, \mathcal{F}_h^i} \leq C | v |_{1, \mathcal{F}} \]
\qed  
\end{proof}

\begin{lemma}\label{lemmainfsup1}
Under Assumption A and B, taking $\gamma_{0}>0$ small enough and any $\theta>0$, there
exists a mesh-independent constant $c>0$ such that 
\begin{eqnarray*}
\inf_{(u_{h},p_{h},\lambda_{h})\in{V}_{h}\times Q_{h}\times{W}_{h}}\sup_{(v_{h},q_{h},\mu_{h})\in{V}_{h}\times Q_{h}\times{W}_{h}}\frac{\mathcal{A}^{HR-BP}(u_{h},p_{h},\lambda_{h}; v_{h},q_{h},\mu_{h})}{\triple u_{h},p_{h},\lambda_{h}\triple \hspace{0.25em}\triple v_{h},q_{h},\mu_{h}\triple }\geq c
\end{eqnarray*}
where the triple norm is defined by 
\begin{eqnarray*}
\triple u,p,\lambda\triple =\left(|u|_{1,\mathcal{F}}^{2}+\|p\|_{0,\mathcal{F}}^{2}+h^2|p|_{1,\mathcal{F}_h^e}^{2}+h\|\lambda\|_{0,\Gamma}^{2}+\frac{1}{h}\|u\|_{0,\Gamma}^{2}\right)^{1/2}
\end{eqnarray*}
\end{lemma}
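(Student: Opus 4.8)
The plan is to establish the inf-sup (Babuška–Brezzi) condition by the classical ``test function construction'' strategy: given an arbitrary $(u_h,p_h,\lambda_h)$, exhibit a test triple $(v_h,q_h,\mu_h)$ whose triple norm is bounded by $\triple u_h,p_h,\lambda_h\triple$ and for which $\mathcal{A}^{HR-BP}(u_h,p_h,\lambda_h;v_h,q_h,\mu_h)\gtrsim \triple u_h,p_h,\lambda_h\triple^2$. First I would compute the ``diagonal'' value $\mathcal{A}^{HR-BP}(u_h,p_h,\lambda_h;u_h,-p_h,\lambda_h)$. The bilinear terms $-\int_\FF(p_h\Div u_h+q_h\Div u_h)$ cancel with the sign flip on $q$, the $\int_\Gamma(\lambda_h\cdot v_h+\mu_h\cdot u_h)$ terms combine, and one is left with $2|u_h|_{1,\FF}^2 + \theta h^2|p_h|_{1,\FF_h^e}^2$ plus the Barbosa–Hughes quadratic form $-\gamma_0 h\int_\Gamma|\lambda_h+2D(\widehat{u_h})n-p_hn|^2$. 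Expanding this last term and using Young's inequality together with Propositions \ref{A1} and \ref{A2} (to absorb $h\|\nabla\widehat{u_h}\|_{0,\Gamma}^2$ into $|u_h|_{1,\FF}^2$ and $h\|p_h\|_{0,\Gamma}^2$ into $\|p_h\|_{0,\FF}^2+h^2|p_h|_{1,\FF_h^e}^2$), one gets, for $\gamma_0$ small enough, a lower bound of the form $c\,(|u_h|_{1,\FF}^2 + h\|\lambda_h\|_{0,\Gamma}^2) - C(\|p_h\|_{0,\FF}^2 + h^2|p_h|_{1,\FF_h^e}^2)$ after also controlling $\tfrac1h\|u_h\|_{0,\Gamma}^2$ by $|u_h|_{1,\FF}^2$ via the trace estimate implicit in Assumption B. So the diagonal test function controls everything except the pressure $L^2$ norm, for which it gives the wrong sign.

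Next I would recover control of $\|p_h\|_{0,\FF}$ by the standard Stokes trick: by surjectivity of the divergence from $H^1_0(\FF)^d$ onto $L^2_0(\FF)$ there exists $w\in H^1_0(\FF)^d$ with $\Div w = p_h$ on $\FF$ and $|w|_{1,\FF}\le C\|p_h\|_{0,\FF}$; set $w_h = I_h^0 w$, the special interpolant from Proposition \ref{A3int}, which vanishes on $\FF_h^\Gamma$ and hence on $\Gamma$, so $|w_h|_{1,\FF}\le C\|p_h\|_{0,\FF}$, $w_h|_\Gamma=0$, and $\widehat{w_h}=0$ on all cut elements. Then evaluate $\mathcal{A}^{HR-BP}(u_h,p_h,\lambda_h;w_h,0,0)$: the dominant term is $-\int_\FF p_h\Div w_h = -\|p_h\|_{0,\FF}^2 + \int_\FF p_h(p_h-\Div w_h)$, and since $\Div(w-w_h)$ integrates suitably one extracts $-\|p_h\|_{0,\FF}^2$ up to $\varepsilon\|p_h\|_{0,\FF}^2 + C_\varepsilon h^2|p_h|_{1,\FF_h^e}^2$ using $\|w-w_h\|_{0,\FF}\le Ch|w|_{1,\FF}$ (note $\widehat{w_h}=0$ kills the Barbosa–Hughes cross terms, and $\int_\Gamma \lambda_h\cdot w_h=0$). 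The remaining terms $2\int_\FF D(u_h):D(w_h)$ and $\int_\Gamma\mu_h\cdot\dots$ (there is none since $q_h=\mu_h=0$) are bounded by $|u_h|_{1,\FF}|w_h|_{1,\FF}\le C|u_h|_{1,\FF}\|p_h\|_{0,\FF}$, controllable by Young. Thus $\mathcal{A}^{HR-BP}(u_h,p_h,\lambda_h;w_h,0,0)\ge \tfrac12\|p_h\|_{0,\FF}^2 - C(|u_h|_{1,\FF}^2 + h^2|p_h|_{1,\FF_h^e}^2)$.

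Finally I would take $v_h = u_h + \delta w_h$, $q_h = -p_h$, $\mu_h = \lambda_h$ for a small fixed $\delta>0$, so that $\mathcal{A}^{HR-BP}(u_h,p_h,\lambda_h;v_h,q_h,\mu_h)$ is the diagonal lower bound plus $\delta$ times the $w_h$-bound; choosing $\delta$ small enough that the $C\delta|u_h|_{1,\FF}^2$ and $C\delta h^2|p_h|_{1,\FF_h^e}^2$ terms are absorbed, and recalling that the diagonal bound already controls $h^2|p_h|_{1,\FF_h^e}^2$ with a good sign, one obtains $\gtrsim \triple u_h,p_h,\lambda_h\triple^2$. Since $\triple v_h,q_h,\mu_h\triple \le (1+C\delta)\triple u_h,p_h,\lambda_h\triple$ (using $|w_h|_{1,\FF}\le C\|p_h\|_{0,\FF}$, $w_h|_\Gamma=0$, and that $w_h$ contributes nothing to the $h^2|\cdot|_{1,\FF_h^e}^2$ term beyond an inverse-inequality bound by $\|p_h\|_{0,\FF}$), dividing gives the claimed constant $c$. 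The main obstacle I anticipate is the bookkeeping in the first step: making sure the Barbosa–Hughes term, after expansion, really can be dominated so that $\gamma_0$ small suffices — in particular that the cross term $-2\gamma_0 h\int_\Gamma(\lambda_h)\cdot(2D(\widehat{u_h})n - p_h n)$ does not destroy the positivity of $h\|\lambda_h\|_{0,\Gamma}^2$, which forces $\gamma_0$ to be small relative to the constants in Propositions \ref{A1}–\ref{A2}; and, secondarily, checking that $h^2|p_h|_{1,\FF_h^e}^2$ is genuinely controlled (it appears with the right sign only through the $\theta h^2\int\nabla p_h\cdot\nabla q_h$ term in the diagonal, so one needs $\theta>0$ fixed and all $\delta$-perturbations of it controlled by inverse inequalities on $w_h$).
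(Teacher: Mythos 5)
Your overall skeleton (diagonal test function, then a pressure test function built from the continuous inf-sup via $I_h^0 v_p$, then a small linear combination) matches the first two steps of the paper's proof, but two of the five terms of the triple norm are never actually produced, and the first step contains a sign error that is not cosmetic. You test with $\mu_h=+\lambda_h$; the correct choice is $\mu_h=-\lambda_h$. With your sign, the boundary coupling $\int_\Gamma(\lambda_h\cdot v_h+\mu_h\cdot u_h)$ does \emph{not} cancel but gives $2\int_\Gamma\lambda_h\cdot u_h$, a term with no sign that you cannot absorb at this stage, and the Barbosa--Hughes quadratic form expands to
\begin{equation*}
-\gamma_0 h\int_\Gamma(\lambda_h+2D(\widehat{u_h})n-p_hn)\cdot(\lambda_h+2D(\widehat{u_h})n+p_hn)
=-\gamma_0 h\|\lambda_h+2D(\widehat{u_h})n\|_{0,\Gamma}^2+\gamma_0 h\|p_h\|_{0,\Gamma}^2,
\end{equation*}
i.e.\ the $\lambda_h$ contribution comes out \emph{negative}, so the claimed lower bound $c\,(|u_h|_{1,\FF}^2+h\|\lambda_h\|_{0,\Gamma}^2)-C(\dots)$ does not follow. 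With the correct sign the boundary terms cancel and one gets the positive term $+\gamma_0 h\|\lambda_h-p_hn\|_{0,\Gamma}^2$; but even then, extracting $h\|\lambda_h\|_{0,\Gamma}^2$ from it is a separate step: one must split $\|\lambda_h-p_hn\|_{0,\Gamma}^2\ge \frac{t}{t+1}\|\lambda_h\|_{0,\Gamma}^2-t\|p_h\|_{0,\Gamma}^2$ with a small parameter $t$, use Proposition~\ref{A2} to convert $h\|p_h\|_{0,\Gamma}^2$ into $\|p_h\|_{0,\FF}^2+h^2|p_h|_{1,\FF_h^e}^2$, and choose $t$ small relative to the coefficient $\kappa$ of the pressure control so that the loss is absorbed by $\frac{\kappa}{2}\|p_h\|_{0,\FF}^2$. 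Your proposal invokes Proposition~\ref{A2} only to "absorb" a term that, with your sign, actually appears with a favorable sign, and never performs this splitting.

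The second, independent gap is the term $\frac1h\|u_h\|_{0,\Gamma}^2$. You claim it is bounded by $|u_h|_{1,\FF}^2$ "via the trace estimate implicit in Assumption B"; no such mesh-independent bound exists (a scaled trace/inverse inequality only gives $\frac1h\|u_h\|_{0,\Gamma}^2\lesssim \frac1{h^2}\|u_h\|_{0,\FF}^2+|u_h|_{1,\FF}^2$, and Poincar\'e offers no $h$-gain), which is precisely why this quantity must be generated by the test function rather than absorbed. The paper does this by adding the component $\frac{\eta}{h}I_h^\lambda u_h$ to $\mu_h$ (with $I_h^\lambda$ from Proposition~\ref{InterpLem}), which yields $\frac{1}{h}\int_\Gamma u_h\cdot I_h^\lambda u_h\ge \frac{1}{2h}\|u_h\|_{0,\Gamma}^2-\frac{1}{2h}\|u_h-I_h^\lambda u_h\|_{0,\Gamma}^2$ and then uses the estimate $\|u_h-I_h^\lambda u_h\|_{0,\Gamma}\le C\sqrt h\,|u_h|_{1,\FF}$, at the price of extra Barbosa--Hughes cross terms controlled by Proposition~\ref{A1} and a small parameter $\eta$. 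Without this third component in your test triple (you take $\mu_h=\pm\lambda_h$ only), the inf-sup constant in the stated triple norm cannot be obtained. Your pressure step ($w_h=I_h^0 v_p$, Proposition~\ref{A3int}) is essentially the paper's; just note that the assertion $\widehat{w_h}=0$ on all cut elements is not automatic (a bad element's good neighbor may be an interior element where $I_h^0v_p\neq0$), but the resulting Barbosa--Hughes cross term can anyway be bounded by Proposition~\ref{A1} and $|I_h^0v_p|_{1,\FF}\le C\|p_h\|_{0,\FF}$, so that point is minor.
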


\begin{proof}
We observe, using Proposition \ref{A1},
\begin{multline*}
\mathcal{A}^{HR-BP}(u_{h},p_{h},\lambda_{h}; u_{h},-p_{h},-\lambda_{h})  \\
 =  2\|D(u_{h})\|_{0,\mathcal{F}}^{2}
 -4\gamma_{0}h\|D(\widehat{u_h})\|^{2}_{0,\Gamma}
 +\gamma_{0}h\|\lambda_{h}-p_{h}n\|^{2}_{0,\Gamma} 
 +\theta h^{2}|p_{h}|_{1,\mathcal{F}_h^e} \\
  \geq  2\|D(u_{h})\|_{0,\mathcal{F}}^{2} -C\gamma_0|u_{h}|_{1,\mathcal{F}}^{2}
   +\gamma_{0}h\|\lambda_{h}-p_{h}n\|_{0,\Gamma}^{2}+\theta h^{2}|p_{h}|_{1,\mathcal{F}_h^e} \\
  \geq  \frac 1K |u_{h}|_{1,\mathcal{F}}^{2}
   +\gamma_{0}h\|\lambda_{h}-p_{h}n\|_{0,\Gamma}^{2}+\theta h^{2}|p_{h}|_{1,\mathcal{F}_h^e}
\end{multline*}
We have used in the last line the assumption that $\gamma_{0}$ is sufficiently small and 
Korn inequality 
\begin{equation}\label{korn}
|v|_{1, \mathcal{F}}^2 \leq K \|D (v) \|_{0, \mathcal{F}}^2,
\quad\forall v\in H^1_{wall}(\FF)
\end{equation}
Note that the inequality is valid in this form because the functions from $H^1_{wall}(\FF)$ vanish on $\Gamma_{wall}$, i.e. on a part of the boundary $\partial\FF$ with non zero measure. 

The continuous inf-sup condition \cite{Girault} implies for all $p_{h}\in Q_{h}$ there exists $v_{p}\in H^1_{0}(\mathcal{F})^d$
such that 
\begin{equation}
-\int_{\mathcal{F}}p_{h}\Div v_{p}=\|p_{h}\|_{0,\mathcal{F}}^{2} \text{ and }  |v_{p}|_{1,\mathcal{F}}\leq C\|p_{h}\|_{0,\mathcal{F}}.\label{vp}
\end{equation}
Recalling that $v_{p}=I_{h}^0 v_{p}=0$ on $\Gamma$ we can write
\begin{eqnarray}
-\int_{\mathcal{F}}p_{h}\Div(I_h^0 v_{p}) & = & \|p_{h}\|_{0,\mathcal{F}}^{2}-\int_{\mathcal{F}}p_{h}\Div(I_{h}^0 v_{p}-v_{p})
 \notag\\
 &=& \|p_{h}\|_{0,\mathcal{F}}^{2}-\int_{\mathcal{F}}\nabla p_{h}\cdot(v_p-I_{h}^0v_{p})
  \geq  \|p_{h}\|_{0,\mathcal{F}}^{2} -Ch|p_{h}|_{1,\mathcal{F}_h^e}|v_{p}|_{1,\mathcal{F}}
 \notag\\
   &\geq& \|p_{h}\|_{0,\mathcal{F}}^{2} -Ch|p_{h}|_{1,\mathcal{F}_h^e}\|{p}_h\|_{0,\mathcal{F}}
 \label{pdivvDisc}  
\end{eqnarray}
where we have used the bounds from Proposition \ref{A3int} and (\ref{vp}). 
Combining this with Young inequality we obtain
\begin{eqnarray*}
\mathcal{A}^{HR-BP}(u_{h},p_{h},\lambda_{h}; I_{h}^0v_{p},0,0) 
 & \geq & -\|D(u_{h})\|_{0,\mathcal{F}} \|D(I_{h}^0 v_{p})\|_{0,\mathcal{F}} + \|p_{h}\|_{0,\mathcal{F}}^{2} -Ch|p_{h}|_{1,\mathcal{F}_h^e}\|p_{h}\|_{0,\mathcal{F}} \\
 & \ge & \frac{1}{2}\|p_{h}\|_{0,\mathcal{F}}^{2}-C|u_{h}|_{1,\mathcal{F}}^{2}-Ch^{2}|p_{h}|_{1,\mathcal{F}_h^e}^{2}
\end{eqnarray*}

Recall interpolation operator  $I_{h}^\lambda$ from Proposition \ref{InterpLem} and
observe, using Proposition \ref{A1} with Young inequality,
\begin{multline*}
\mathcal{A}^{HR-BP}(u_{h},p_{h},\lambda_{h}; 0,0,\frac{1}{h}I_{h}^\lambda u_{h})   =   \frac{1}{h}\int_\Gamma u_h \cdot I_{h}^\lambda u_{h}-\gamma_{0}\int_{\Gamma}(2D(\widehat{u}_h)n-p_{h}n+\lambda_h)\cdot I_{h}^\lambda u_{h}\\
   \geq   \frac{1}{2h}\|u_{h}\|_{0,\Gamma}^{2} - \frac{1}{2h}\|u_h-I_{h}^\lambda u_{h}\|_{0,\Gamma}^{2} 
    -\gamma_{0}\left(\frac{C}{\sqrt{h}}|{u}_{h}|_{1,\mathcal{F}}+\|\lambda_{h}-p_{h}n\|_{0,\Gamma}\right) \left( \|u_{h}\|_{0,\Gamma} + \|u_h-I_{h}^\lambda u_{h}\|_{0,\Gamma} \right) \\
 \geq  \frac{1}{4h}\|u_{h}\|_{0,\Gamma}^{2}-\frac Ch \|I_{h}^\lambda u_{h} - u_{h}\|_{0,\Gamma}^{2} -C|u_{h}|_{1,\mathcal{F}}^{2} -Ch\|\lambda_{h}-p_{h}n\|_{0,\Gamma}^{2}\\
 \geq  \frac{1}{4h}\|u_{h}\|_{0,\Gamma}^{2}-C|u_{h}|_{1,\mathcal{F}}^{2}-Ch\|\lambda_{h}-p_{h}n\|_{0,\Gamma}^{2}
\end{multline*}
In the last line, we have used the bound $ \|u_h-I_h^{\lambda}u_h\|_{0,\Gamma} \le C{\sqrt{h}}|u_h|_{1,\mathcal{F}}$, i.e. (\ref{IntEstLamOm}) with $s_\lambda=0$. 

Combining the above inequalities, we can obtain for any $\kappa,\eta>0$,  
\begin{multline}\label{ine10}
  \mathcal{A}^{HR-BP}(u_{h},p_{h},\lambda_{h}; u_{h}+\kappa I_{h}^0v_{p},-p_{h},-\lambda_{h}+\frac{\eta}{h}I_{h}^\lambda u_{h})
 \geq \frac 1K |u_{h}|_{1,\mathcal{F}}^{2}+\frac{\kappa}{2}\|p_{h}\|_{0,\mathcal{F}}^{2}+\frac{\eta}{4h}\|u_{h}\|_{0,\Gamma}^{2}\\
  +(\theta-C\kappa)h^{2}|p_{h}|_{1,\mathcal{F}_h^e}+(\gamma_{0}-C\eta)h\|\lambda_{h}-p_{h}n\|_{0,\Gamma}^{2}-C(\kappa+\eta)|u_{h}|_{1,\mathcal{F}}^{2}
\end{multline}
In order to split $p_h$ and $\lambda_h$ inside $\| \lambda_h -
p_h n\|_{0, \Gamma}$ we establish the following bounds with any $t > 0$ and use finally Proposition \ref{A2}
\begin{multline}\label{plamsplit}
  \|p_h n - \lambda_h \|_{0, \Gamma}^2  \geq \|p_h \|_{0, \Gamma}^2 + \|
  \lambda_h \|_{0, \Gamma}^2 - (t + 1) \|p_h \|_{0, \Gamma}^2 - \frac{1}{t +
  1} \| \lambda_h \|_{0, \Gamma}^2\\
  = \frac{t}{t + 1} \| \lambda_h \|_{0, \Gamma}^2 - t \|p_h \|_{0, \Gamma}^2 
  \ge \frac{t}{t + 1} \| \lambda_h \|_{0, \Gamma}^2 
   -\frac{Ct}{h} \left(\|p_{h}\|_{0,\mathcal{F}}^{2}+h^{2}|p_{h}|_{1,\mathcal{F}_h^e}^{2}\right)
\end{multline}
Substituting this into inequality (\ref{ine10}) and assuming $\gamma_0$, $\kappa$, $\eta$, $t$ sufficiently small, we obtain finally
\begin{align}  \label{al1}
  & \mathcal{A}^{HR-BP} (u_h, p_h, \lambda_h;u_h + \kappa I_h^0 v_p, - p_h, -
  \lambda_h + \frac{\eta}{h}I_{h}^\lambda u_{h})\\
  & \hspace{1em} \geq c \left( |u_h |_{1, \mathcal{F}}^2 +\|p_h \|_{0, \mathcal{F}}^2 +
  h^2 |p_h |_{1, \mathcal{F}_h^e}^2 + h\| \lambda_h \|_{0, \Gamma}^2 + \frac{1}{h}\|u_h \|_{0, \Gamma}^2 \right) 
    =  c \triple u_h, p_h, \lambda_h \triple ^2 . 
  \notag
\end{align}

On the other hand, the estimates of Propositions \ref{InterpLem} and \ref{A3int} give immediately
\begin{eqnarray}
  \triple u_h + \kappa I_h^0 v_p, - p_h, - \lambda_h + \frac{\eta}{h}I_{h}^\lambda u_{h} \triple  & \leq
  & C \triple u_h, p_h, \lambda_h \triple   \label{al2}
\end{eqnarray}
Dividing (\ref{al1}) by (\ref{al2}) yields the result of the Lemma.
\qed 
\end{proof}

\begin{theorem}\label{ThP1P1}
Under Assumptions A, B, $\gamma_{0}>0$ small enough, any $\theta>0$, and $(u,p,\lambda)\in H^2(\mathcal{F})^d \times L^2_0(\mathcal{F}) \times H^\frac{1}{2}(\Gamma)$,  
the following a priori error estimates hold  for method (\ref{methBP}):
\begin{equation}\label{ErrupP1P1}
 |u-u_{h}|_{1,\mathcal{F}}+\|p-p_{h}|_{0,\mathcal{F}}+\sqrt{h}\|\lambda-\lambda_{h}\|_{0,\Gamma}\le Ch(|u|_{2,\mathcal{F}}+|p|_{1,\mathcal{F}}+|\lambda|_{1/2,\Gamma})
\end{equation}
Moreover, assuming the usual elliptic regularity for the Stokes problem in $\mathcal{F}$, i.e. the bound (\ref{StokesElip}) for the solution to (\ref{Stokesvq}), one has $\forall\varphi\in H^{3/2}(\Gamma)$
\begin{equation}\label{ErrlamP1P1}
\left|\int_{\Gamma}(\lambda-\lambda_{h})\varphi\right|\le Ch^{2}(|u|_{2,\mathcal{F}}+|p|_{1,\mathcal{F}}+|\lambda|_{1/2,\Gamma})|\varphi|_{3/2,\Gamma}
\end{equation}
\end{theorem}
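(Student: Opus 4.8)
The proof splits into the energy estimate \eqref{ErrupP1P1}, which follows from the inf-sup stability of Lemma \ref{lemmainfsup1} plus consistency and interpolation, and the superconvergence estimate \eqref{ErrlamP1P1} for the multiplier, which requires a duality (Aubin--Nitsche) argument. I would first record Galerkin orthogonality. Since the exact solution $(u,p,\lambda)$ satisfies \eqref{ForcLam} on $\Gamma$, the Barbosa--Hughes term $-\gamma_0h\int_\Gamma(\lambda+2D(\widehat u)n-pn)\cdot(\cdots)$ vanishes when evaluated at the exact solution \emph{provided} $\widehat u = u$ on $\Gamma$; here one must be mildly careful because $\widehat{(\cdot)}$ acts on FE functions, so strictly one writes the consistency error in terms of $\widehat{I_h^uu}-u$ and uses \eqref{IntEstU}. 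Likewise the Brezzi--Pitkäranta term $-\theta h^2\int_{\FF_h^e}\nabla p\cdot\nabla q$ contributes a consistency error bounded by $\theta h^2|p|_{1,\FF_h^e}\,|q_h|_{1,\FF_h^e}\le Ch\,\triple v_h,q_h,\mu_h\triple\cdot h|p|_{1,\FF}$ after extending $p$.

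**Energy estimate.** Writing $e_h=(u_h-I_h^uu,\,p_h-I_h^pp,\,\lambda_h-I_h^\lambda\lambda)$, I would apply Lemma \ref{lemmainfsup1}: there is $(v_h,q_h,\mu_h)$ with $\triple v_h,q_h,\mu_h\triple\le1$ and $c\,\triple e_h\triple\le\mathcal{A}^{HR-BP}(e_h;v_h,q_h,\mu_h)$. Using Galerkin orthogonality this equals $\mathcal{A}^{HR-BP}(u-I_h^uu,\,p-I_h^pp,\,\lambda-I_h^\lambda\lambda;\,v_h,q_h,\mu_h)$ plus the consistency terms above. Each bilinear contribution is bounded termwise by Cauchy--Schwarz against the pieces of $\triple v_h,q_h,\mu_h\triple$, using the interpolation estimates \eqref{IntEst}--\eqref{IntEstLamOm} with $s_u=1$, $s_p=1$, $s_\lambda=0$; the only non-obvious terms are those involving $\|\nabla(u-\widehat{I_h^uu})\|_{0,\Gamma}$, handled by \eqref{IntEstU}, and $h\|\lambda-I_h^\lambda\lambda\|_{0,\Gamma}^2$, handled by \eqref{IntEstLam}. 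This gives $\triple e_h\triple\le Ch(|u|_{2,\FF}+|p|_{1,\FF}+|\lambda|_{1/2,\Gamma})$, and the triangle inequality with the same interpolation estimates yields \eqref{ErrupP1P1} (noting $\triple\cdot\triple$ dominates $|u|_{1,\FF}+\|p\|_{0,\FF}+\sqrt h\|\lambda\|_{0,\Gamma}$).

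**Duality argument for the multiplier.** Fix $\varphi\in H^{3/2}(\Gamma)$ and let $(\psi,\pi)$ solve the adjoint Stokes problem in $\FF$ with $\psi=\varphi$ on $\Gamma$, $\psi=0$ on $\Gamma_{wall}$, so that its normal stress on $\Gamma$ is the adjoint multiplier, call it $\xi$; elliptic regularity \eqref{StokesElip} gives $\|\psi\|_{2,\FF}+\|\pi\|_{1,\FF}+\|\xi\|_{1/2,\Gamma}\le C|\varphi|_{3/2,\Gamma}$ (modulo the zero-mean normalization of $\pi$). The key identity is that, by construction of $\mathcal{A}$, $\int_\Gamma(\lambda-\lambda_h)\cdot\varphi$ can be written as $\mathcal{A}(u-u_h,p-p_h,\lambda-\lambda_h;\psi,-\pi,\xi)$ up to the stabilization terms, which I would then rewrite using Galerkin orthogonality as $\mathcal{A}^{HR-BP}(u-u_h,\dots;\psi-I_h^u\psi,\dots)$ plus cross terms coming from the stabilization acting on $(\psi,\pi,\xi)$ versus its interpolant. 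Bounding the first part by $\triple u-u_h,p-p_h,\lambda-\lambda_h\triple$ (already $O(h)$) times $\triple\psi-I_h^u\psi,\pi-I_h^p\pi,\xi-I_h^\lambda\xi\triple$ (also $O(h)$ by \eqref{IntEst}--\eqref{IntEstLamOm} with the $H^2$/$H^1$/$H^{1/2}$ regularity of the dual solution) gives the desired $O(h^2)$. The consistency/cross terms from the Barbosa--Hughes stabilization — terms like $\gamma_0h\int_\Gamma(2D(\widehat u-\widehat u_h)n-(p-p_h)n+(\lambda-\lambda_h))\cdot(2D(\widehat\psi)n-\pi n+\xi)$ — vanish against the exact adjoint because $\xi=-2D(\psi)n+\pi n$, so only the interpolation defect of the adjoint survives, again $O(h)\times O(h)$; similarly for the pressure term $\theta h^2\int_{\FF_h^e}\nabla(p-p_h)\cdot\nabla\pi$, bounded using $|p-p_h|_{1,\FF_h^e}$ — and \emph{this} is the subtle point, since the triple norm controls $h|p_h|_{1,\FF_h^e}$ but we need $h|p-p_h|_{1,\FF_h^e}=O(h)$ which follows from $\triple e_h\triple\le Ch$ plus $h|p-I_h^pp|_{1,\FF_h^e}\le Ch^2|p|_{2,\FF}$... so in fact one only gets $O(h^2)$ here if $p\in H^2$; otherwise one keeps this term as $O(h)\cdot h|\pi|_{1,\FF}=O(h)$ — hence I would absorb it by noting the adjoint pressure interpolation gives an extra $h$, i.e. $\theta h^2\int\nabla(p-p_h)\cdot\nabla(\pi-I_h^p\pi)$ after orthogonality, which is $Ch\cdot\triple e_h\triple\cdot h|\pi|_{1}$...

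The main obstacle is precisely the bookkeeping of the stabilization terms in the duality step: one must carefully split each stabilization bilinear form into a part annihilated by Galerkin orthogonality and a part annihilated by the exact adjoint solution satisfying \eqref{ForcLam}, so that what remains is a genuine product of two interpolation errors. A second, smaller obstacle is the zero-mean constraint on the adjoint pressure $\pi$, which introduces a constant shift handled exactly as the term $i_h(p)$ in the proof of Proposition \ref{InterpLem}. Once these are in place, assembling the bounds is routine.
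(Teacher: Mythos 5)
Your proposal is correct and follows essentially the same route as the paper's proof: the energy bound comes from Galerkin orthogonality (whose only consistency defect is the Brezzi--Pitk{\"a}ranta term), Proposition \ref{InterpLem} and the inf-sup Lemma \ref{lemmainfsup1}, while the multiplier bound comes from the dual Stokes problem \eqref{Stokesvq} with $\mu=-(2D(v)n-qn)|_\Gamma$, whose exactness annihilates the Barbosa--Hughes contribution so that only products of two $O(h)$ defects remain. Two bookkeeping slips should be fixed when writing it out: the pressure interpolation index is $s_p=0$, not $1$ (so $\|p-I_h^pp\|_{0,\mathcal{F}}\le Ch|p|_{1,\mathcal{F}}$ and $h\,|p-I_h^pp|_{1,\mathcal{F}_h^e}\le Ch|p|_{1,\mathcal{F}}$, which already renders the $\theta h^2$ term $O(h^2)$ with no need for $p\in H^2$), and the duality identity holds with test pressure $+\pi$ rather than $-\pi$, since with $-\pi$ a spurious term $2\int_{\mathcal{F}}\pi\,\Div(u-u_h)$ of size only $O(h)$ survives.
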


\begin{proof}
Use Galerkin orthogonality (taking $\widehat{u}=u$ for the exact solution $u$ and extending $p$ from $\FF$ to $\FF_h^e$) 
\begin{equation}\label{GalOrt}
     \mathcal{A}^{HR-BP} (u_h - u, p_h - p, \lambda_h - \lambda;v_h, q_h, \mu_h)
     =  \theta h^2 \int_{\mathcal{F}_h^e} \nabla p \cdot \nabla q_h
\end{equation}
to conclude
\begin{multline*}
     \mathcal{A}^{HR-BP} (u_h - I_h^uu, p_h - I_h^pp, \lambda_h - I_h^\lambda\lambda;v_h,
     q_h, \mu_h) = 2 \int_{\mathcal{F}} D (u - I_h^uu) : D (v_h) \\
     - \int_{\mathcal{F}} ((p - I_h^pp) \Div{v}_h + q_h \Div (u - I_h^uu)) 
     + \int_{\Gamma} ((\lambda - I_h^\lambda\lambda) \cdot v_h +\mu_h \cdot (u -
     I_h^uu))\\
      - \gamma_0 h \int_{\Gamma} (\lambda - I_h^\lambda\lambda + 2 D (u -
     \widehat{I_h^uu}) n - (p - I_h^pp) n) \cdot (\mu_h + 2 D
     (\widehat{v_h}) n - q_h n) \\
     + \theta h^2  \int_{\mathcal{F}_h^e} \nabla I_h^pp \cdot \nabla q_h
\end{multline*} 
All the terms in the right-hand side can be bounded thanks to Proposition \ref{InterpLem} with $s_u=1$, $s_p=s_\lambda=0$ so that
$$
\mathcal{A}^{HR-BP} (u_h - I_h^uu, p_h - I_h^pp, \lambda_h - I_h^\lambda\lambda;v_h,q_h, \mu_h)
\le Ch(|u|_{2,\mathcal{F}}+|p|_{1,\mathcal{F}}+|\lambda|_{1/2,\Gamma}) \triple v_h,q_h, \mu_h\triple 
$$
The inf-sup lemma \ref{lemmainfsup1} now gives (\ref{ErrupP1P1}).

To prove (\ref{ErrlamP1P1}), choose any $\varphi \in H^{3 / 2} (\Gamma)$ and take $v, q$ solution to
\begin{equation}\label{Stokesvq}
- 2 \Div{D} (v) + \nabla q = 0, \hspace{1em} \Div{v} = 0 \text{ on }
   \mathcal{F}, \hspace{1em} v = \varphi \text{ on } \Gamma 
\end{equation}
as well as $\mu= - (2 D (v) n - qn) |_{\Gamma}$. Integration by parts gives
\[ 2 \int_{\mathcal{F}} D (u - u_h) : D (v) - \int_{\mathcal{F}} q\Div (u - u_h) +
   \int_{\Gamma} (u - u_h) \mu= 0 \]
Subtracting this from Galerkin orthogonality relation (\ref{GalOrt}) gives
\begin{multline*}
 \int_{\Gamma} (\lambda - \lambda_h) \cdot \varphi = 2
   \int_{\mathcal{F}} D (u - u_h) : D (v - v_h) - \int_{\mathcal{F}} ((p - p_h)
   \Div (v - v_h) + (q - q_h) \Div (u - u_h)) \\
   + \int_{\Gamma}
   ((\lambda - \lambda_h) \cdot (v - v_h) + (\mu-\mu_h) \cdot (u -
   u_h)) \\
- \gamma_0 h \int_{\Gamma} (\lambda - \lambda_h + 2 D (u - \widehat{u_h}) n - (p
- p_h) n) \cdot (\mu-\mu_h + 2 D (v - \widehat{v_h}) n - (q - q_h) n) \\
- \theta h^2  \int_{\mathcal{F}_h^e} \nabla p_h \cdot \nabla q_h
\end{multline*}
Taking $v_h = I_h^u v$, $q_h = I_h^p q$, $\mu_h = I_h^\lambda \mu$, applying Proposition \ref{InterpLem} with $s_u=1$, $s_p=s_\lambda=0$ and recalling  that
\begin{equation}\label{StokesElip}
(|v|_{2,\mathcal{F}}+|q|_{1,\mathcal{F}}+|\mu|_{1/2,\Gamma})\le C|\varphi|_{3/2,\Gamma}
\end{equation}
thanks to the elliptic regularity of the Stokes problem, yields (\ref{ErrlamP1P1}).
\qed	
\end{proof}

\begin{remark}
The mesh elements with very small cuts may be present in method (\ref{methBP}) as well as in all its forthcoming variants. They can thus produce very ill conditioned matrices despite the stability guaranteed by Lemma \ref{lemmainfsup1} in the mesh dependent norms. The influence of this phenomenon on the accuracy of linear algebra solvers is yet to be investigated and remains  out of the scope of the present work. However, some partial results are available in \cite{Court14}. Note also that alternative methods based on the Ghost Penalty \cite{burmanghost} are free from this drawback, cf. \cite{BurmanHansbo14}. Indeed, the Ghost Penalty allows one to control velocity and pressure in the natural norms on the extended domain $\FF_h^e$ rather than on the fluid domain only, as in Lemma \ref{lemmainfsup1}.
\end{remark}

\subsection{$\P_1-\P_0$ velocity-pressure spaces with interior penalty stabilization.}
\label{P1P0}

Let us now choose $\P_1$ FE for ${{V}}_h$ and $\P_0$ for
${Q}_h$ and add interior penalty (IP) stabilization to the Haslinger-Renard method. The method becomes:
\begin{equation}\label{methIP}
\begin{array}{l}
 \text{Find }(u_{h},p_{h},\lambda_{h})\in{V}_{h}\times Q_{h}\times{W}_{h}\text{ such that}\\
 \mathcal{A}^{HR-IP}(u_{h},p_{h},\lambda_{h}; v_{h},q_{h},\mu_{h})=\mathcal{L}(v_{h},\mu_{h}),\hspace{1em}\forall(v_{h},q_{h},\mu_{h})\in{V}_{h}\times Q_{h}\times{W}_{h},
\end{array}
\end{equation}
where
\begin{multline*}
  \mathcal{A}^{HR-IP} (u, p, \lambda;v, q, \mu) = \mathcal{A} (u, p,
  \lambda;v, q, \mu)\\
  - \gamma_0 h \int_{\Gamma} (\lambda + 2 D (\widehat{u}) n - pn) \cdot
  (\mu+ 2 D (\widehat{v}) n - qn) - \theta h  \sum_{E \in \mathcal{E}_h^e}
  \int_{E} [p] [q]
\end{multline*}
$V_h$ is continuous $\P_1$ FE space on mesh $\Th^e$, $Q_h$ is $\P_0$ FE space on mesh $\Th^e$, and $W_h$ is $\P_0$ FE space on mesh $\Th^\Gamma$, cf. (\ref{fespaces}).

Note that the IP stabilization is applied to the pressure in the interior on $\mathcal{F}$ as well as on the cut elements. The analysis of this method is similar to that of the previous one (\ref{methBP}) and we give immediately the final result: 
\begin{theorem}\label{ThP1P0}
Under Assumptions A and B, $\gamma_{0}>0$ small enough, any $\theta>0$, and $(u,p,\lambda)\in H^2(\mathcal{F})^d \times L^2_0(\mathcal{F}) \times H^\frac{1}{2}(\Gamma)$,  
the a priori error estimates (\ref{ErrupP1P1}) and (\ref{ErrlamP1P1}) hold for method (\ref{methIP}).
\end{theorem}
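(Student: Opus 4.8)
The plan is to mirror the proof of Theorem~\ref{ThP1P1} step by step, since the only change is that the Brezzi-Pitk{\"a}ranta term $\theta h^2\int_{\FF_h^e}\nabla p\cdot\nabla q$ is replaced by the interior penalty term $\theta h\sum_{E\in\mathcal{E}_h^e}\int_E[p][q]$. The skeleton is: (i) prove an inf-sup lemma for $\mathcal{A}^{HR-IP}$ in the triple norm $\triple u,p,\lambda\triple^2 = |u|_{1,\FF}^2+\|p\|_{0,\FF}^2+h\sum_{E\in\mathcal{E}_h^e}\int_E[p]^2+h\|\lambda\|_{0,\Gamma}^2+\frac1h\|u\|_{0,\Gamma}^2$ (the middle term replacing $h^2|p|_{1,\FF_h^e}^2$); (ii) establish Galerkin orthogonality with a consistency defect $\theta h\sum_E\int_E[I_h^pp][q_h]$ coming from the stabilization applied to the exact pressure; (iii) combine with the interpolation estimates of Proposition~\ref{InterpLem} to get the energy estimate \eqref{ErrupP1P1}, and (iv) run the Aubin-Nitsche duality argument with the dual Stokes problem \eqref{Stokesvq} to get \eqref{ErrlamP1P1}.

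First I would state and prove the analogue of Lemma~\ref{lemmainfsup1}. Testing with $(u_h,-p_h,-\lambda_h)$ gives $\mathcal{A}^{HR-IP}(u_h,p_h,\lambda_h;u_h,-p_h,-\lambda_h)\ge \frac1K|u_h|_{1,\FF}^2+\gamma_0h\|\lambda_h-p_hn\|_{0,\Gamma}^2+\theta h\sum_E\int_E[p_h]^2$ exactly as before, using Proposition~\ref{A1} and Korn's inequality. The pressure-control step needs a small modification: in \eqref{pdivvDisc} one writes $-\int_\FF p_h\Div(I_h^0v_p-v_p)=\int_\FF\nabla p_h\cdot(v_p-I_h^0v_p)$ piecewise and, since $p_h$ is now $\P_0$, $\nabla p_h=0$ on each element, so one integrates by parts on each element and collects the jump terms on edges: $-\int_\FF p_h\Div(I_h^0v_p-v_p)=\sum_E\int_E[p_h]\,(v_p-I_h^0v_p)\cdot n_E$, which is bounded by $(\sum_E h^{-1}\|v_p-I_h^0v_p\|_{0,E}^2)^{1/2}(\sum_E h\int_E[p_h]^2)^{1/2}\le C|v_p|_{1,\FF}(\sum_E h\int_E[p_h]^2)^{1/2}$ using a trace inequality and Proposition~\ref{A3int}. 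The $\lambda_h$-testing step with $\frac\eta h I_h^\lambda u_h$ is verbatim. For the $p$--$\lambda$ splitting \eqref{plamsplit} one needs the analogue of Proposition~\ref{A2}, namely $h\|q_h\|_{0,\Gamma}^2\le C(\|q_h\|_{0,\FF}^2+h\sum_{E}\int_E[q_h]^2)$ for $\P_0$ functions $q_h$; this holds by essentially the same hopping argument — from a cut element $T$ one reaches a good neighbor $T'$ by crossing at most a bounded number of edges, and each crossing costs a jump term $h\int_E[q_h]^2$ (scaled), while $\|q_h\|_{0,T'}^2\ge ch^d\|q_h\|_{\infty,T'}^2$ on the good element. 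Assembling everything with $\gamma_0,\kappa,\eta,t$ small yields the inf-sup constant $c>0$.

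The remaining steps are routine transcriptions. Galerkin orthogonality reads $\mathcal{A}^{HR-IP}(u_h-u,p_h-p,\lambda_h-\lambda;v_h,q_h,\mu_h)=\theta h\sum_{E\in\mathcal{E}_h^e}\int_E[I_h^pp][q_h]$ (taking $\widehat u=u$ and extending $p$); note the exact pressure is continuous so $[p]=0$, but the interpolant $I_h^pp$ need not be, hence the defect, which is controlled by $\theta h\sum_E\int_E[I_h^pp-p]^2\le Ch^2|p|_{1,\FF}^2$ via a trace inequality plus \eqref{IntEstP} with $s_p=0$ (alternatively one absorbs it directly as $\le Ch|p|_{1,\FF}\triple v_h,q_h,\mu_h\triple$). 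Then the same manipulation as in Theorem~\ref{ThP1P1}, writing the error through interpolants and bounding each term by Proposition~\ref{InterpLem} with $s_u=1$, $s_p=s_\lambda=0$, gives \eqref{ErrupP1P1}; the duality argument for \eqref{ErrlamP1P1} is identical, the only new term being $-\theta h\sum_E\int_E[p_h][q_h]$, bounded via Cauchy-Schwarz by the stabilization seminorms of $p_h$ (controlled by the energy estimate, hence $O(h)$) and of $q_h=I_h^pq$ (controlled by $Ch^{1/2}|q|_{1,\FF}\le Ch^{1/2}|\varphi|_{3/2,\Gamma}$), giving the required $O(h^2)$.

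The main obstacle is the analogue of Proposition~\ref{A2} for the interior-penalty seminorm on $\P_0$ functions: one must control $\|q_h\|_{\infty,T}$ on a badly cut element $T$ by $\|q_h\|_{0,T'}$ on its good neighbor plus a chain of edge-jump contributions, which requires (a) that a bounded-length edge path connects $T$ to $T'$ within $\Th^e$ — true because they share a node and the mesh is quasi-uniform and shape-regular, so the node patch has boundedly many elements — and (b) careful scaling of the telescoping $q_h|_T - q_h|_{T'} = \sum (q_h|_{T_{j+1}}-q_h|_{T_j})$ with each difference equal to the jump $[q_h]$ across the shared edge $E_j$, giving $|q_h|_T-q_h|_{T'}| \le C h^{(1-d)/2}\sum_j \|[q_h]\|_{0,E_j}$. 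Everything else is a mechanical adaptation of Section~\ref{P1P1}.
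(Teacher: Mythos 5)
Your proposal follows essentially the same route as the paper: it reuses the Section~\ref{P1P1} machinery with exactly the three modifications the paper makes — the jump-based analogue of Proposition~\ref{A2} proved by hopping across edges to a good neighbor, the replacement of $h^2|p|^2_{1,\mathcal{F}_h^e}$ by $h\sum_{E}\int_E[p]^2$ in the triple norm, and the elementwise integration by parts turning the velocity--pressure coupling term into edge jumps bounded via the trace estimate for $v_p-I_h^0v_p$ — and then transcribes the convergence and duality arguments of Theorem~\ref{ThP1P1}. The only cosmetic slip is attributing the defect $\theta h\sum_E\int_E[I_h^pp][q_h]$ to Galerkin orthogonality (the exact extended pressure has zero jumps, so that term really appears when passing to interpolants), but this does not affect the argument or the final bounds.
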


\begin{proof} We shall not repeat all the technical details but only point out some important changes that should be made in Propositions \ref{A1}--\ref{A3int} and the inf-sup lemma from the preceding section in order to adapt them to the the analysis of method (\ref{methIP}):
\begin{itemize}
  \item The estimate of Proposition \ref{A2} should be changed to
  \[ \begin{array}{lll}
       h \|q_h \|_{0, \Gamma}^2 & \leq & C \left( \|q_h \|_{0, \mathcal{F}}^2 +
       h  \sum_{E \in \mathcal{E}_h^e} \int_{E} [q_h]^2 \right)
     \end{array} \]
  This can be proved observing on any bad element $T \in \mathcal{T}^{\Gamma}_h$ sharing an edge $E$ with its good neighbor $T'$
\begin{multline*}
       \|q_h \|_{0, \Gamma_T} = \sqrt{| \Gamma_T |} \, |(q_h) _{|T} | \le
     \sqrt{| \Gamma_T |}  (| [q_h]_E | + |(q_h)_{|T'} |)\\
     = \sqrt{| \Gamma_T |}  \left( \frac{1}{\sqrt{| E |}} \| [q_h] \|_{0,
     E} + \frac{1}{\sqrt{| T' |}} \| q_h \|_{0, T'} \right) \le C \left( \|
     [q_h] \|_{0, E} + \frac{1}{\sqrt{h}} \| q_h \|_{0, T'}) \right)
\end{multline*}
The case of a bad element that does not share an edge with its good neighbor can be treated similarly by introducing a chain of elements connecting $T$ to $T'$. The case when $T \in \mathcal{T}^{\Gamma}_h$ is ``good'' itself is trivial.

  \item The term $h^2 |p|_{1, \mathcal{F}_h^e}$ in the triple norm in Lemma \ref{lemmainfsup1} should be replaced by $h\sum_{E \in \mathcal{E}_h^e}
  \int_{E} [p]^2$
  
  \item The treatment (\ref{pdivvDisc}) of the velocity-pressure term inside the proof of Lemma
  \ref{lemmainfsup1} is now replaced by
  \begin{align*}
    - \int_{\mathcal{F}} p_h \Div I_h^0 v_p &= \|p_h \|^2_{0, \mathcal{F}} +
     \int_{\mathcal{F}} p_h \Div  (v_p - I_h^0 v_p) \\
     & = \|p_h \|^2_{0, \mathcal{F}} + \sum_{E \in \mathcal{E}_h^e} \int_{E
     \cap \mathcal{F}} [p_h] n \cdot (v_p - I_h^0 v_p) 
  \end{align*}
  and the bound $\sum_{E \in \mathcal{E}_h^e} \|v_p - I_h^0 v_p\|^2_{0,E} \leq {Ch} | v_p |_{1, \mathcal{F}}$ which is proved as in Proposition \ref{A3int}.
\end{itemize}
\qed 
\end{proof}

\subsection{Taylor-Hood spaces.}
\label{PkPk1}

We now choose $\P_k$ (resp. $\P_{k-1}$) FE space with $k\ge 2$ for ${V}_{h}$ (resp. $Q_{h}$). These are well known Taylor-Hood spaces which satisfy the discrete inf-sup conditions in the usual setting and thus no stabilization for pressure ``in the bulk'' is needed. Intuitively, some extra stabilization should be now added for the pressure on the cut triangles.  We thus propose the following modification of the Haslinger-Renard method for Taylor-Hood spaces:
\begin{equation}\label{methTH}
\begin{array}{l}
 \text{Find }(u_{h},p_{h},\lambda_{h})\in{V}_{h}\times Q_{h}\times{W}_{h}\text{ such that}\\
 \mathcal{A}^{HR-TH}(u_{h},p_{h},\lambda_{h}; v_{h},q_{h},\mu_{h})=\mathcal{L}(v_{h},\mu_{h}),\hspace{1em}\forall(v_{h},q_{h},\mu_{h})\in{V}_{h}\times Q_{h}\times{W}_{h},
\end{array}
\end{equation}
where 
\begin{eqnarray*}
\mathcal{A}^{HR-TH}(u,p,\lambda; v,q,\mu) & = & \mathcal{A}(u,p,\lambda; v,q,\mu)\\
 &  & -\gamma_{0}h\int_{\Gamma}(\lambda+D(\widehat{u}){n}-\widehat pn)\cdot\left(\mu+D(\widehat{v}){n}-\widehat qn\right)
\end{eqnarray*}
$V_h$ is continuous $\P_{k}$ FE space on mesh $\Th^e$, $Q_h$ (resp. $W_h$) is continuous $\P_{k-1}$ FE space on mesh $\Th^e$ (resp. $\Th^\Gamma$) for $k\ge 2$, cf. (\ref{fespaces}).
The notation  $\widehat{\cdot}$ stands here again for the ``robust reconstruction'' from Definition \ref{RobRec}. We emphasize that it is applied here not only to the velocity, but also to pressure, unlike versions of the method (\ref{methBP}) and (\ref{methIP}) studied above.

The analysis of this method will be done under more restrictive assumptions than that of the previous ones:\\[1mm]
\textbf{Assumption C.} The dimension is $d = 2$, $\mathcal{F}_h^i$ contains
at least 3 triangles, $\Gamma$ is a curve of class $C^2$, the mesh $\Th$ is composed of non-obtuse triangles and is sufficiently fine (with respect to the curvature of $\Gamma$).

\begin{remark}
  Assumption C covers Assumption B, cf Remark \ref{remark2}.
\end{remark}

We shall tacitly assume Assumption C in all the Propositions until the end of this Section. Proposition \ref{A1} will be reused in the analysis of the present case but Proposition \ref{A2} should be replaced with the following
\begin{proposition}\label{A2bis}
 For all $q_{h}\in Q_{h}$ one has 
\begin{eqnarray*}
h\|\widehat{q_h}\|_{0,\Gamma}^{2} & \leq & C \|q_{h}\|_{0,\mathcal{F}}^{2} .
\end{eqnarray*}
\end{proposition}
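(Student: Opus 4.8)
The goal is to control $h\|\widehat{q_h}\|_{0,\Gamma}^2$ by $\|q_h\|_{0,\mathcal{F}}^2$ alone, i.e. without the gradient/jump term that appeared in Proposition \ref{A2}. The reason this is possible here (unlike for the $\P_1$-$\P_1$ case) is that the ``hat'' reconstruction already replaces the polynomial on a bad element $T$ by the one coming from a \emph{good} neighbor $T'$, and on the good element $T'$ the full mesh-cell measure is comparable to $|T'\cap\mathcal{F}|$. So the plan is: fix $T\in\mathcal{T}_h^\Gamma$ with good neighbor $T'$ (if $T$ is itself good, take $T'=T$). Then
\[
\|\widehat{q_h}\|_{0,\Gamma_T} \le \sqrt{|\Gamma_T|}\,\|\widehat{q_h}\|_{\infty,T}
= \sqrt{|\Gamma_T|}\,\|q_h\|_{\infty,T'}
\le C\,\frac{\sqrt{|\Gamma_T|}}{h^{d/2}}\,\|q_h\|_{0,\mathcal{F}_{T'}},
\]
where the last step is Proposition \ref{A0} applied on $T'$ with $S=\mathcal{F}_{T'}=T'\cap\mathcal{F}$, using $|\mathcal{F}_{T'}|\ge\theta_{\min}|T'|$ (Assumption A, good element). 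Then use $|\Gamma_T|\le C_\Gamma h^{d-1}$ from Assumption B to get $\|\widehat{q_h}\|_{0,\Gamma_T}^2\le \frac{C}{h}\|q_h\|_{0,\mathcal{F}_{T'}}^2$, hence $h\|\widehat{q_h}\|_{0,\Gamma_T}^2\le C\|q_h\|_{0,\mathcal{F}_{T'}}^2$.

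The remaining point is the summation over $T\in\mathcal{T}_h^\Gamma$: several bad elements may share the same good neighbor $T'$, so $\|q_h\|_{0,\mathcal{F}_{T'}}^2$ can be counted multiple times, but by mesh quasi-uniformity and shape regularity the number of bad elements assigned to a single good neighbor is bounded by a mesh-independent constant (each such $T$ shares a node with $T'$, and the valence of a node is bounded). Summing therefore yields
\[
h\sum_{T\in\mathcal{T}_h^\Gamma}\|\widehat{q_h}\|_{0,\Gamma_T}^2
\le C\sum_{T'}\|q_h\|_{0,\mathcal{F}_{T'}}^2
\le C\|q_h\|_{0,\mathcal{F}}^2,
\]
which is the claimed bound. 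Note that, in contrast to Proposition \ref{A2}, no term involving $|q_h|_{1,\mathcal{F}_h^e}$ or edge jumps is needed, precisely because the good-neighbor reconstruction lets us invoke Proposition \ref{A0} directly on an element where the cut fraction is bounded below.

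I do not expect a genuine obstacle here; the only mild subtlety is the finite-overlap bookkeeping in the summation (ensuring each good element is reused only boundedly many times), which follows from shape regularity and the node-sharing requirement in Assumption A, exactly as in the proofs of Propositions \ref{A1} and \ref{A2}. One should also keep in mind that this bound is stated only under Assumption C (2D, non-obtuse fine mesh, $C^2$ interface), but the proof itself only uses Assumptions A and B, which Assumption C subsumes; the stronger hypotheses will be needed elsewhere in this subsection, not in this particular estimate.
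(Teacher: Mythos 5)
Your argument is exactly the paper's: the paper states that Proposition~\ref{A2bis} follows by a straightforward adaptation of Proposition~\ref{A1} to the pressure space, which is precisely your chain (reconstruction from the good neighbor, Proposition~\ref{A0} on $T'$ with $S=\mathcal{F}_{T'}$, the bound $|\Gamma_T|\le C_\Gamma h^{d-1}$, and finite-overlap summation). The only cosmetic imprecision is writing $\|\widehat{q_h}\|_{\infty,T}=\|q_h\|_{\infty,T'}$; as in Proposition~\ref{A1} this should be an inequality up to a constant obtained by scaling, since the same polynomial is measured on the neighboring element $T$ rather than on $T'$.
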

The proof is a straight-forward adaptation of Proposition \ref{A1} to the pressure space.

Another important ingredient in our analysis will be the discrete velocity-pressure inf-sup condition robust with respect to the cut triangles, cf. Proposition \ref{A4c} below. We recall first a well-known auxiliary result:
\begin{proposition}  \label{A4}
The exists a mesh independent constant $\beta > 0$ such that for
  any $q_h \in Q_h$
  \begin{equation}
    \beta h |q_h |_{1, \mathcal{F}_h^i} \le \sup_{v_h \in V_h^i} 
    \frac{\int_{\mathcal{F}} q_h \Div{v}_h}{|v_h |_{1, \mathcal{F}_h^i}}
  \end{equation}
  where $V_h^i = V_h \cap (H^1_0 (\mathcal{F}_h^i))^d$.
\end{proposition}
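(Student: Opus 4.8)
The plan is to derive this from the classical Taylor--Hood inf--sup condition, applied on the \emph{discrete} fluid domain $\mathcal{F}_h^i$, combined with an inverse inequality. The key observation is that $h|q_h|_{1,\mathcal{F}_h^i}$ is a weaker quantity than $\|q_h\|_{0,\mathcal{F}_h^i}$ — by the inverse estimate $h|q_h|_{1,\mathcal{F}_h^i}\le C\|q_h\|_{0,\mathcal{F}_h^i}$ valid on the quasi-uniform mesh $\Th^i$ — and that the constant part of $q_h$ is invisible to both sides, so only the zero-mean component of $q_h$ over $\mathcal{F}_h^i$ plays a role. Hence a pressure inf--sup stated in the $L^2(\mathcal{F}_h^i)$ norm (modulo constants) is more than enough.

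Concretely, I would first note that for $v_h\in V_h^i$ one has $v_h=0$ on $\partial\mathcal{F}_h^i$, so $\int_{\mathcal{F}_h^i}\Div v_h=0$ and, extending $v_h$ by zero, $\Div v_h$ vanishes on $\mathcal{F}\setminus\mathcal{F}_h^i$. Writing $\bar q_h=\tfrac{1}{|\mathcal{F}_h^i|}\int_{\mathcal{F}_h^i}q_h$ (the mean over $\mathcal{F}_h^i$, generally nonzero even though $\int_\FF q_h=0$), this gives
\[
\int_{\mathcal{F}}q_h\Div v_h=\int_{\mathcal{F}_h^i}q_h\Div v_h=\int_{\mathcal{F}_h^i}(q_h-\bar q_h)\Div v_h .
\]
Now $q_h-\bar q_h$ is a continuous $\P_{k-1}$ function on $\Th^i$ with zero mean on $\mathcal{F}_h^i$, and $V_h^i$ is (after extension by zero) exactly the space of continuous $\P_k$ velocities on $\Th^i$ vanishing on $\partial\mathcal{F}_h^i$; so the classical inf--sup for the $\P_k$--$\P_{k-1}$ pair on $\mathcal{F}_h^i$ furnishes a constant $\beta_0>0$ with
\[
\sup_{v_h\in V_h^i}\frac{\int_{\mathcal{F}_h^i}(q_h-\bar q_h)\Div v_h}{|v_h|_{1,\mathcal{F}_h^i}}\ \ge\ \beta_0\,\|q_h-\bar q_h\|_{0,\mathcal{F}_h^i}
\]
(replacing $v_h$ by $-v_h$ shows the supremum of the signed ratio equals that of the absolute one, so the form of the sup in the statement is irrelevant). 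Finally, since $q_h-\bar q_h$ is elementwise a polynomial of degree $\le k-1$ on the quasi-uniform mesh $\Th^i$, the inverse inequality yields $h|q_h|_{1,\mathcal{F}_h^i}=h|q_h-\bar q_h|_{1,\mathcal{F}_h^i}\le C\|q_h-\bar q_h\|_{0,\mathcal{F}_h^i}$. Chaining the three displays and setting $\beta=\beta_0/C$ gives the result.

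The delicate point — and the only real work — is the uniformity in $h$ of the Taylor--Hood constant $\beta_0$ over the \emph{family} of domains $\mathcal{F}_h^i$, which changes with $h$. This is where Assumption C is used: since $\Gamma\in C^2$ and the mesh is sufficiently fine, $\mathcal{F}_h^i$ is a nonempty connected union of at least three triangles whose boundary is a union of mesh edges, it is a shape-regular (``chunky'') polygon approximating the fixed Lipschitz domain $\mathcal{F}$, and one checks that it meets the non-degeneracy conditions on boundary triangles required by the 2D $\P_k$--$\P_{k-1}$ stability theorem. The classical arguments — either the macroelement technique, or the explicit construction of a Fortin operator $\Pi_h:H^1_0(\mathcal{F}_h^i)^d\to V_h^i$ from a Bogovskii-type right inverse of the divergence on $\mathcal{F}_h^i$ together with a Cl\'ement-type interpolation in the spirit of Proposition \ref{A3int} — then produce $\beta_0$ depending only on the shape-regularity of $\Th$ and on the Lipschitz character of $\mathcal{F}$, hence independent of $h$. (The non-obtuseness hypothesis of Assumption C is not needed for this proposition; it will be invoked only when passing to the cut triangles in the sequel.)
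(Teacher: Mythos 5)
Your reduction of the statement to a full $L^2$ inf-sup on $\mathcal{F}_h^i$ plus an inverse inequality is algebraically sound (the mean-value subtraction over $\mathcal{F}_h^i$ and the elementwise inverse estimate are fine), but it inverts the logic of the paper and, in doing so, leaves a genuine gap at the step you yourself flag as ``the only real work''. The paper does not prove Proposition \ref{A4} by any such reduction: it invokes it as a known \emph{local} (weak) inf-sup result for Taylor--Hood elements, citing the 2D proof of Boffi et al.\ under the ``at least 3 triangles'' hypothesis, precisely because that proof is built from local constructions (velocity degrees of freedom on interior edges controlling tangential derivatives of $q_h$), so the constant $\beta$ depends only on mesh regularity and not on the geometry of the $h$-dependent polygon $\mathcal{F}_h^i$. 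Your argument instead requires an $h$-uniform constant $\beta_0$ in the \emph{full} $L^2(\mathcal{F}_h^i)$ inf-sup for the pair $V_h^i\times(Q_h|_{\mathcal{F}_h^i}\cap L^2_0)$, which is a strictly stronger statement than the proposition (it implies it trivially, as your own chain shows), and it is not an off-the-shelf classical fact for the family of domains $\{\mathcal{F}_h^i\}_h$.

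Concretely, three things stand in the way of the appeal to ``classical arguments''. First, any macroelement or Fortin-type proof needs the \emph{continuous} inf-sup (Bogovskii/Ne\v{c}as) constant of $\mathcal{F}_h^i$ to be bounded uniformly in $h$; $\partial\mathcal{F}_h^i$ is an $O(h)$-scale zigzag polygon whose uniform Lipschitz/John character must be established, not assumed. Second, the standard 2D Taylor--Hood stability proofs (Verf\"urth's trick, Stenberg's macroelements) impose conditions on boundary elements, and triangles of $\mathcal{T}_h^i$ adjacent to the cut layer may well have all their vertices on $\partial\mathcal{F}_h^i$, so these hypotheses have to be checked for the unfitted interior mesh rather than quoted. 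Third, the recipe ``Fortin operator $=$ Cl\'ement $\circ$ Bogovskii'' does not work for Taylor--Hood, because the pressure is continuous and one must integrate by parts, which is exactly where the weak inf-sup of Proposition \ref{A4} (Verf\"urth's trick) enters; your proposed proof of $\beta_0$ therefore risks circularity. Indeed, the paper's Proposition \ref{A4c} exists precisely because a cut-robust $L^2$ inf-sup is \emph{derived} from Proposition \ref{A4}, the continuous inf-sup on the fixed domain $\mathcal{F}$, and the boundary-layer field $\psi_h$ of Proposition \ref{A4aux2}; if your $\beta_0$ were freely available, most of that machinery would be superfluous. (Your side remark that non-obtuseness of the triangles is not needed for this particular proposition is plausible; it is used elsewhere in the analysis of the cut elements.)
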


This result is customarily applied to the analysis of FE
discretization of the Stokes equations via the Verf{\"u}rth trick
{\cite{Ern}}. The proof in the 2D case under the assumption that the mesh contains at least 3 triangles can be found in \cite{Boffi09}. We note in passing that a 3D generalization in a similar context is presented in \cite{Guzman16}.

Let $B_h^{\Gamma}:=\mathcal{F}\setminus\mathcal{F}_h^i$  and note that the boundary of $B_h^{\Gamma}$ consists of $\partial\mathcal{F}_h^i$ and $\Gamma$. 
\begin{proposition}\label{A4aux1}
Let $p_h \in Q_h$ and $v \in H^1 (B_h^{\Gamma})$
  vanishing on $\Gamma$. Then
  \begin{equation}\label{A4aux1bound} 
    \int_{\partial \mathcal{F}_h^i} | p_h v | \leq C \|p_h
    \|_{0, B_h^{\Gamma}} |v|_{1, B^{\Gamma}_h}
  \end{equation}
\end{proposition}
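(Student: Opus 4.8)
The estimate \eqref{A4aux1bound} is a boundary trace bound on the strip $B_h^\Gamma$, and the key point is that $B_h^\Gamma$ is a thin band of width $O(h)$ sitting between $\partial\FF_h^i$ and $\Gamma$, with $v$ vanishing on the $\Gamma$-side. The plan is to work element by element. Fix $T\in\Th^\Gamma$; the portion of $\partial\FF_h^i$ inside $T$ is (a part of) one edge $E_T$ of $T$, and the portion of $B_h^\Gamma$ in $T$ is $\FF_T\setminus$ (stuff already in $\FF_h^i$); call this little region $B_T$. On $E_T$ I want
\[
  \int_{E_T} |p_h v| \le \|p_h\|_{0,E_T}\,\|v\|_{0,E_T},
\]
so I need to control each factor by something on $B_T$.

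For $v$: since $v$ vanishes on $\Gamma$ and $B_T$ is a region of diameter $O(h)$ squeezed between $E_T$ and $\Gamma_T$, a Poincaré/trace argument gives $\|v\|_{0,E_T}\le C\sqrt h\,|v|_{1,B_T}$ — this is exactly the kind of bound already used in the proof of Proposition~\ref{A3int} (there, $\|\tilde v\|_{0,\FF_h^\Gamma}\le Ch|\tilde v|_{1,\FF_h^\Gamma}$ for functions vanishing on $\Gamma$); here I use the non-obtuse-triangle hypothesis of Assumption~C and the $C^2$ regularity of $\Gamma$ to guarantee that $B_T$ really is a ``good'' strip of width $\lesssim h$ attached to $E_T$, with $\Gamma_T$ opposite to $E_T$ and no degeneracy. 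For $p_h$: since $p_h$ is a polynomial of fixed degree on $T$, Proposition~\ref{A0} applies with $S=B_T\subset T$ provided $|B_T|\ge\theta|T|$ for some fixed $\theta$. This last point is where non-obtuseness and mesh fineness enter again: for $T\in\Th^\Gamma$ the sub-band $B_T$ between the edge $E_T\subset\partial\FF_h^i$ and $\Gamma$ occupies a fixed fraction of $|T|$ (geometrically, $T$ is cut by $\Gamma$ near one of its edges, and the strip up to that edge is not arbitrarily thin because $T$ is non-obtuse and $\Gamma$ is nearly straight at this scale). Hence $\|p_h\|_{0,E_T}\le C h^{-1/2}\|p_h\|_{0,B_T}$ by inverse estimate / Proposition~\ref{A0} combined with a scaling trace bound $\|p_h\|_{0,E_T}^2\le Ch^{-1}\|p_h\|_{0,T}^2$ and then $\|p_h\|_{0,T}\le C\|p_h\|_{0,B_T}$.

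Combining the two factors on $T$ gives $\int_{E_T}|p_h v|\le C\|p_h\|_{0,B_T}|v|_{1,B_T}$, and then summing over all $T\in\Th^\Gamma$ and applying Cauchy–Schwarz over the elements yields
\[
  \int_{\partial\FF_h^i}|p_h v| \le C\Big(\sum_T\|p_h\|_{0,B_T}^2\Big)^{1/2}\Big(\sum_T|v|_{1,B_T}^2\Big)^{1/2}
  = C\|p_h\|_{0,B_h^\Gamma}|v|_{1,B_h^\Gamma},
\]
which is \eqref{A4aux1bound}. The subtlety to be careful about — and what I expect to be the main obstacle — is the geometry near $\Gamma$: one must be sure that for every cut triangle the relevant sub-region $B_T$ (the part of $\FF$ in $T$ lying on the $\partial\FF_h^i$ side up to $\Gamma$) genuinely has both a fixed-fraction volume (for Proposition~\ref{A0}) and a width $O(h)$ with $\Gamma$ as the ``far'' boundary (for the Poincaré bound on $v$). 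This is precisely what Assumption~C (non-obtuse triangles, $\Gamma\in C^2$, mesh fine enough relative to the curvature) is designed to supply, and making the constants genuinely mesh-independent requires invoking it carefully rather than the weaker Assumption~B.
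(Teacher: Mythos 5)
There is a genuine gap, and it is exactly at the step you yourself flag as the main obstacle. Your argument hinges on the claim that for every $T\in\Th^\Gamma$ with an edge $E_T\subset\partial\FF_h^i$ the region $B_T=\FF_T$ satisfies $|B_T|\ge\theta|T|$ with a fixed, mesh-independent $\theta$, so that Proposition \ref{A0} yields $\|p_h\|_{0,T}\le C\|p_h\|_{0,B_T}$. This volume-fraction claim is false: nothing in Assumption C prevents $\Gamma$ from running nearly parallel to, and arbitrarily close to, an edge of $\partial\FF_h^i$, in which case $\FF_T$ is an arbitrarily thin sliver pinched between $E_T$ and $\Gamma_T$ and $|\FF_T|/|T|$ is arbitrarily small (non-obtuseness and smallness of $h$ relative to the curvature only make $\Gamma_T$ nearly straight; they do not keep it away from mesh edges). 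Since the constant in Proposition \ref{A0} degenerates as the volume fraction goes to zero, your bound $\|p_h\|_{0,E_T}\le Ch^{-1/2}\|p_h\|_{0,B_T}$ does not hold uniformly, and the whole elementwise estimate collapses precisely for the badly cut elements that this family of methods is designed to tolerate.

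The cancellation you are implicitly relying on does exist, but it has to be extracted anisotropically rather than through a fixed-fraction inverse estimate: if the sliver has width $\varepsilon\ll h$, one has $\|v\|_{0,E_T}\lesssim\sqrt{\varepsilon}\,|v|_{1,\FF_T}$ (Poincar\'e across the width, using $v=0$ on $\Gamma_T$) and $\|p_h\|_{0,E_T}\lesssim\varepsilon^{-1/2}\|p_h\|_{0,\FF_T}$ (a one-dimensional inverse inequality across the width), and the powers of $\varepsilon$ cancel. This is what the paper's proof does: it distinguishes two regimes for each cut triangle. When $\Gamma_T$ is close to $E_T$ (ratio $r_{\max}/r_{\min}\le\rho$ in polar coordinates centred at the vertex opposite $E_T$), it writes $\int_{E_T}|p_hv|$ via the fundamental theorem of calculus in the radial variable and pairs, on each radial segment of length $l(\varphi)$ (possibly much smaller than $h$), the inverse inequality for $p_h$ (factor $1/l(\varphi)$) with the Poincar\'e inequality for $v$ (factor $l(\varphi)$), so the segment lengths cancel. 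Only in the complementary regime $|\FF_T|\ge\theta|T|$ does it use Proposition \ref{A0}, together with extension of $v$ by zero outside $\FF$ and standard trace/inverse inequalities on the whole triangle $T$ (the extension trick also avoids having to prove a uniform Poincar\'e/trace constant on the cut subdomain, another point your sketch glosses over). To repair your proof you would need to introduce this case distinction and replace the Proposition \ref{A0} step by such an anisotropic estimate in the thin-cut case; as written, the argument fails on exactly those elements.
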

\begin{proof}
  Take any triangle $T \in \mathcal{T}_h^{\Gamma}$ such that one of its
  sides $E$ is an  edge on $\partial \mathcal{F}_h^i$. Introduce the polar
  coordinates $(r, \varphi)$ centered at the vertex $O$ of $T$ opposite to
  side $E$ (thus $O$ lies outside $\mathcal{F}$). The part of $T$ inside $\mathcal{F}$
  can be represented in these coordinates as
  \[ \mathcal{F}_T = \{(r, \varphi) \text{ such that } \alpha < \varphi < \beta,
     \ r_{\Gamma} (\varphi) < r < r_i (\varphi)\} \]
  with $r_{\Gamma} (\varphi)$ and $r_i (\varphi)$ representing,
  respectively, $\Gamma$ and $E \subset \partial \mathcal{F}_h^i$. In view of
  Assumption C, $r_{\Gamma} (\varphi)$ is a $C^2$ function and there are positive numbers $r_{\min}$ and $r_{\max}$ such that $r_{\min} \leq
  r_{\Gamma} (\varphi) < r_i (\varphi) \leq r_{\max}$ for all $\varphi \in  [\alpha, \beta]$. There
  are 2 options: either $\Gamma_T$ is very close to edge $E$ so that  $\frac{r_{\max}}{r_{\min}} \leq \rho$, 
  or $\mathcal{F}_T$ covers a significant portion of $T$  so that $|\mathcal{F}_T | \geq \theta |T|$. 
  The positive numbers $\rho$ and $\theta$ here can be chosen in a mesh-independent manner.
  
  We start with the first option: $\frac{r_{\max}}{r_{\min}} \leq \rho$.
  Using the notations above and recalling $v = 0$ at $r = r_{\Gamma} (\varphi)$ gives
  \begin{multline*}
    \int_E | p_h v | \leq C \int_{\alpha}^{\beta} (| p_h v | r)_{r =r_{i} (\varphi)} d \varphi = C \int_{\alpha}^{\beta}
    \int_{r_{\Gamma} (\varphi)}^{r_i (\varphi)} \frac{\partial | p_h {vr}
    |}{\partial r} {drd} \varphi \\
    \leq C \left( \int_{\alpha}^{\beta}
    \int_{r_{\Gamma} (\varphi)}^{r_i (\varphi)} \left| \frac{\partial
    p_h}{\partial r} v \right| {rdrd} \varphi + \| p_h \|_{0, \mathcal{F}_T}
    \| \nabla v \|_{0, \mathcal{F}_T} + \frac{1}{r_{\min}} \| p_h \|_{0, \mathcal{F}_T}
    \| v \|_{0, \mathcal{F}_T} \right)\\
  \end{multline*}
  We set $l (\varphi) = r_i (\varphi) - r_{\Gamma} (\varphi)$ and bound the
  first integral above using, for any $\varphi$ fixed, an inverse inequality for
  $p_h$ on the interval $(r_{\Gamma}(\varphi),r_i(\varphi))$ and Poincar\'e inequality for $v$ on the same interval (recall that $v = 0$ at $r =
  r_{\Gamma} (\varphi)$)
  \begin{multline}\label{InverseThin}
     \int_{\alpha}^{\beta} \int_{r_{\Gamma} (\varphi)}^{r_i (\varphi)} \left|
     \frac{\partial p_h}{\partial r} v \right| {rdrd} \varphi \leq
     r_{\max} \int_{\alpha}^{\beta} \left( \int_{r_{\Gamma} (\varphi)}^{r_i
     (\varphi)} \left( \frac{\partial p_h}{\partial r} \right)^2 {dr}
     \right)^{\frac{1}{2}} \left( \int_{r_{\Gamma} (\varphi)}^{r_i (\varphi)}
     v^2 {dr} \right)^{\frac{1}{2}} d \varphi \\
     \leq Cr_{\max}
     \int_{\alpha}^{\beta} \frac{1}{l (\varphi)} \left( \int_{r_{\Gamma}
     (\varphi)}^{r_i (\varphi)} p_h^2 {dr} \right)^{\frac{1}{2}} \times l
     (\varphi) \left( \int_{r_{\Gamma} (\varphi)}^{r_i (\varphi)} \left(
     \frac{\partial v}{\partial r} \right)^2 {dr} \right)^{\frac{1}{2}} d
     \varphi \\
     \leq C \frac{r_{\max}}{r_{\min}} \|p_h \|_{0, \mathcal{F}_T} \| \nabla
     v \|_{0, \mathcal{F}_T}   
  \end{multline}
  Recalling the bound on $\frac{r_{\max}}{r_{\min}}$
  (which implies, in particular, $r_{\min} \geq \frac{h}{\rho}$) we conclude
  \begin{equation}
    \label{A4aux1T} \int_E | p_h v | \leq C \left( \| p_h \|_{0, \mathcal{F}_T} \|
    \nabla v \|_{0, \mathcal{F}_T} + \frac{1}{h} \| p_h \|_{0, \mathcal{F}_T} \| v
    \|_{0, \mathcal{F}_T} \right)
  \end{equation}
  
  On the other hand, if $| \mathcal{F}_T | \geq \theta | T |$, extending $v$ by 0
  outside $\mathcal{F}$, applying Proposition \ref{A0} and an inverse inequality
  (valid on the whole triangle $T$) also yields (\ref{A4aux1T}):
  \begin{multline*}
    \int_E | p_h v | \leq \sqrt{h}\| p_h v \|_{0,\partial T} \leq C (\| p_h v
     \|_{0, T} + h | p_h v |_{1, T}) \\
     \leq C (\| p_h
     \|_{\infty, T} \| v \|_{0, \mathcal{F}_T} + h \| \nabla p_h \|_{\infty, T} \|
     v \|_{0, \mathcal{F}_T} + h \| p_h \|_{\infty, T} \| \nabla v \|_{0,
     \mathcal{F}_T}) \\
     \leq C \left( \| p_h \|_{0, \mathcal{F}_T} \| \nabla v \|_{0,
     \mathcal{F}_T} + \frac{1}{h} \| p_h \|_{0, \mathcal{F}_T} \| v \|_{0, \mathcal{F}_T}
     \right) 
  \end{multline*}
  Summing (\ref{A4aux1T}) over all $T \in \mathcal{T}_h^{\Gamma}$ having a \
  side on $\partial \mathcal{F}_h^i$  yields
  \[ \int_{\partial \mathcal{F}_h^i} | p_h v | \leq C \| p_h \|_{0,
     B_h^{\Gamma}} \left( \| \nabla v \|_{0, B_h^{\Gamma}} + \frac{1}{h}
     \| v \|_{0, B_h^{\Gamma}} \right) \]
  Recall that $v = 0$ on $\Gamma$ and the width of $B_h^{\Gamma}$ is of
  order $h$, so that $\| v \|_{0, B_h^{\Gamma}} \leq Ch \| \nabla v
  \|_{0, B_h^{\Gamma}}$ by a Poincar\'e inequality. We have thus proved (\ref{A4aux1bound}).
 \qed
\end{proof}

\begin{proposition}\label{A4aux2}
There exists a continuous piecewise linear vector-valued
  function $\psi_h$ on mesh $\mathcal{T}_h^e$ such that $\psi_h\cdot n\ge 0$ on $\Gamma$, 
  $\Div \psi_h \ge \delta_0$ on all the triangles of $\mathcal{T}_h^{\Gamma}$, and $\Div
  \psi_h \ge - \delta_1 h$ on all the triangles of $\mathcal{T}_h^i$ with positive constants
  $\delta_0, \delta_1$. Moreover, there is a constant $C>0$ such that for any $p_h \in Q_h$ 
  \begin{equation}
    \label{A4aux2bound}  | p_h \psi_h |_{1, \mathcal{F}} + \frac{1}{\sqrt{h}}
    \|p_h \psi_h \|_{0, \Gamma} \leq C \|p_h \|_{0, \FF}  
  \end{equation}
\end{proposition}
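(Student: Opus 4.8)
The plan is to take $\psi_h$ of the form ``$\eta_h$ times a discretized unit normal field'', where $\eta_h$ is a nonnegative continuous $\P_1$ scalar that is $O(h)$ in magnitude, vanishes well inside $\FF$, and climbs to a value $\sim h$ across the collar of cut triangles along $\nabla d$, with $d$ the signed distance to $\Gamma$ (negative in $\FF$); $d$ is $C^2$ in a fixed tubular neighbourhood of $\Gamma$ since $\Gamma$ is $C^2$. Concretely, fix $\alpha>0$ and $c_1>0$ with $c_1$ larger than the mesh-dependent bound on $\operatorname{diam}(T)/h$, let $\nu$ be a fixed smooth unit field extending the outward normal $n$ of $\Gamma$ in that neighbourhood, and define $\psi_h$ as the continuous $\P_1$ function on $\Th^e$ with nodal values $\psi_h(x_j)=\eta_j\,\nu(x_j)$, where $\eta_j=\tfrac{\alpha}{c_1}\min\{c_1 h,(d(x_j)+c_1 h)_+\}$ (the value of $\nu$ at nodes far from $\Gamma$ is irrelevant since there $\eta_j=0$). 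Then $0\le\eta_j\le\alpha h$, so by the discrete maximum principle $\|\psi_h\|_{\infty,\FF_h^e}\le\alpha h$; comparing $\psi_h$ on a triangle $T$ with the constant $\eta_h(x_{j_0})\,\nu(x_{j_0})$ and using that $\nu$ and $\eta_h$ are Lipschitz gives $\|\nabla\psi_h\|_{\infty}\le C$; and on $\Gamma$, $\psi_h\cdot n$ is a convex combination of the $\eta_j\,\nu(x_j)\cdot n\ge 0$, since $\nu(x_j)\cdot n\ge 1-Ch\ge 0$ there and $\eta_j\ge 0$.

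With the two bounds $\|\psi_h\|_\infty\le Ch$ and $\|\nabla\psi_h\|_\infty\le C$ at hand, the estimate (\ref{A4aux2bound}) is routine: write $\nabla(p_h\psi_h)=\psi_h\otimes\nabla p_h+p_h\nabla\psi_h$, bound $\|\psi_h\otimes\nabla p_h\|_{0,\FF}\le Ch\|\nabla p_h\|_{0,\FF\cap\operatorname{supp}\psi_h}$ by an inverse inequality on each triangle, controlling $\|p_h\|_{0,T}$ by $\|p_h\|_{0,\FF_T}$ on good cut elements and by $\|p_h\|_{0,\FF_{T'}}$ on bad ones through Proposition \ref{A0} and Assumption A, and bound $\|p_h\nabla\psi_h\|_{0,\FF}\le C\|p_h\|_{0,\FF}$ directly; the boundary term is handled exactly as in Proposition \ref{A2bis}, namely $\tfrac1{\sqrt h}\|p_h\psi_h\|_{0,\Gamma}\le C\sqrt h\,\|p_h\|_{0,\Gamma}\le C\|p_h\|_{0,\FF}$.

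It remains to control $\operatorname{div}\psi_h$. Since $\|\psi_h\|_\infty\le Ch$ and $\nu$ is Lipschitz, on every triangle $T$ one has $\operatorname{div}\psi_h|_T=\nu(x_{j_0})\cdot\nabla\eta_h|_T+O(h)$ for any vertex $x_{j_0}$ of $T$, where $\eta_h$ is the $\P_1$ interpolant of the clamped distance. On triangles at depth $\ge c_1 h$ inside $\FF$, $\eta_h\equiv 0$ and $\operatorname{div}\psi_h=0$; on triangles lying entirely in the collar $\{-c_1 h<d<0\}$ (which contains the bulk of $\FF_h^\Gamma$), $\eta_h$ is the interpolant of the smooth function $\tfrac{\alpha}{c_1}(d+c_1 h)$, hence $\nabla\eta_h=\tfrac{\alpha}{c_1}\nabla d+O(h)=\tfrac{\alpha}{c_1}n+O(h)$ and $\operatorname{div}\psi_h\ge\tfrac{\alpha}{c_1}c_\Gamma+O(h)>0$; the choice of $c_1$ ensures cut triangles straddle at most the level $\{d=0\}$ and never $\{d=-c_1 h\}$.

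The main difficulty is the handful of triangles straddling a kink of the clamped distance, above all the \emph{badly} cut triangles, on which $\nabla\eta_h|_T$ can a priori be small or nearly tangent to $\Gamma$. The plan is to prove $\nabla\eta_h|_T\cdot n\ge c>0$ on such $T$ by expanding $\nabla\eta_h|_T=\sum_i\eta_h(v_i)\nabla\phi_i$, using that on a non-obtuse triangle $\nabla\phi_i\cdot\nabla\phi_j\le 0$ for $i\neq j$, combined with the sign pattern of the $\eta_h(v_i)$ (dictated by which side of $\Gamma$ each vertex lies on) and the fact that $d$ is within $O(h^2)$ of an affine function on $T$ because $\Gamma$ is $C^2$ and the mesh is fine; the interior/exterior ball conditions for a $C^2$ curve together with shape-regularity guarantee that at least one vertex of $T$ sits at distance $\gtrsim h$ from $\Gamma$, which is exactly what keeps $\nabla\eta_h|_T$ from degenerating. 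For the residual badly cut triangles where this still fails, I would, in the spirit of the robust reconstruction (Definition \ref{RobRec}) and Assumption A, redefine $\psi_h$ locally from the good neighbour $T'$ — equivalently, force $\eta_h$ to $0$ at a suitable vertex — so as to restore an $O(1)$ outward gradient without spoiling $\|\psi_h\|_\infty\le Ch$, $\|\nabla\psi_h\|_\infty\le C$, or the constraint $\operatorname{div}\psi_h\ge-\delta_1 h$ on $\FF_h^i$; the latter bound only needs $\nabla\eta_h\cdot n\ge-O(h)$, which holds because $\eta_h$ is the interpolant of a nondecreasing function of $d$. This geometric bookkeeping, which ties together non-obtuseness, the $C^2$-regularity of $\Gamma$, fineness of the mesh and the good-neighbour structure, is where essentially all the work lies; the remaining step is only to collect the constants $\delta_0$, $\delta_1$ and $C$.
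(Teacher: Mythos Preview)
Your construction and the paper's differ in a way that turns out to be decisive. You build $\psi_h=\eta_h\nu$ with $\eta_h\approx\alpha h$ on $\Gamma$, decaying to $0$ into $\FF$; the paper takes essentially $\psi_h\approx d\,e_2$ (signed distance times the normal direction), which \emph{vanishes} on $\Gamma$ and grows linearly with $|d|$. Because the paper's continuous field $\psi$ is smooth across $\Gamma$ with $\Div\psi=1+O(d)$, its interpolant satisfies $\Div\psi_h\ge\tfrac12$ on every cut triangle uniformly, and an $O(h^2)$ correction $\delta\psi_h$ suffices to restore $\psi_h\cdot n\ge 0$. Your $\eta$, by contrast, has its kink precisely on $\Gamma$: on a cut triangle with $v_1,v_2\in\mathcal{S}$ and $v_3\in\FF$ at distance $\varepsilon\ll h$ from $\Gamma$, the clamp gives $\eta_h(v_1)=\eta_h(v_2)=\alpha h$ and $\eta_h(v_3)=\alpha h-\alpha\varepsilon/c_1$, hence $\nabla\eta_h|_T=-(\alpha\varepsilon/c_1)\nabla\phi_3$ and $\Div\psi_h=O(\varepsilon/h)\to 0$. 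Your geometric observation that some vertex sits at distance $\gtrsim h$ from $\Gamma$ is correct but does not help here: if that vertex lies in $\mathcal{S}$ it is clamped to $\alpha h$ and contributes nothing to $\nabla\eta_h$. Your repair --- forcing $\eta_h(v_3)=0$ --- is an $O(h)$ (not $O(h^2)$) modification; on a triangle $T''\in\Th^i$ sharing $v_3$ (the star of $v_3\in\FF$ generically contains such triangles, lying entirely on the $\FF$-side) it shifts $\Div\psi_h$ by roughly $-\alpha h\,\nabla\phi_{v_3}^{T''}\!\cdot\nu\sim-\alpha$, destroying the bound $\Div\psi_h\ge-\delta_1 h$. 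After this repair $\eta_h$ is no longer the interpolant of a nondecreasing function of $d$, so your stated justification for the interior lower bound no longer applies.

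The same reversal undermines your proof of \eqref{A4aux2bound}. Since your $|\psi_h|\sim h$ on every $\mathcal{F}_T$, you must control $h\,|p_h|_{1,\mathcal{F}_T}$ on bad $T$, and the step ``control $\|p_h\|_{0,T}$ by $\|p_h\|_{0,\mathcal{F}_{T'}}$ via Proposition~\ref{A0}'' is not valid: that proposition compares two norms of \emph{one} polynomial on \emph{one} element, whereas $(p_h)_{|T}$ and $(p_h)_{|T'}$ are different polynomials agreeing only at a shared node, and there is no mesh-independent bound of the former by the latter. Similarly, Proposition~\ref{A2bis} bounds $\sqrt{h}\,\|\widehat{p_h}\|_{0,\Gamma}$ --- not $\sqrt{h}\,\|p_h\|_{0,\Gamma}$ --- by $\|p_h\|_{0,\FF}$, so your $\Gamma$-estimate is unjustified. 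The paper sidesteps both issues at once: because its $\psi_h$ vanishes on $\Gamma$ to within $O(h^2)$ and is explicitly nudged to $0$ at nodes of $\FF$ within $O(h^2)$ of $\Gamma$, one has $\|\psi_h\|_{\infty,\mathcal{F}_T}\le C\varepsilon$ with $\varepsilon$ the width of $\mathcal{F}_T$; the anisotropic inverse inequality $|p_h|_{1,\mathcal{F}_T}\le C\varepsilon^{-1}\|p_h\|_{0,\mathcal{F}_T}$ from the proof of Proposition~\ref{A4aux1} then yields $|p_h\psi_h|_{1,\mathcal{F}_T}\le C\|p_h\|_{0,\mathcal{F}_T}$ directly, with no recourse to good neighbours, and the $\Gamma$-term is recovered by transporting to $\partial\FF_h^i$ via the $H^1$-bound already in hand.
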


\begin{proof}
Let $\mathcal{B}_\eta = \{ x \in \mathbb{R}^2 / \operatorname{dist} (x, \Gamma) < \eta
  \}$ for  $\eta>0$. Thanks to the smoothness of $\Gamma$, one can introduce orthogonal
  coordinates $(\xi_1, \xi_2)$ on $\mathcal{B}_\eta$ with some mesh-independent $\eta>0$ such that $\xi_2 = 0$ on
  $\Gamma$ and $\xi_2 < 0$ on  $\FF\cap\mathcal{B}_\eta$. Let $e_i$ denote the basis vectors of these coordinates ($e_i = \partial \mathbf{r}/\partial \xi_i$). One can safely assume that $\xi_2$ measures the distance to $\Gamma$ so that $|e_2|=1$ on $B_h$ and, moreover, $|e_1|=1$ on $\Gamma$. Assuming $\eta>h$, let us introduce the vector-valued function $\psi$ given on
  $\mathcal{B}_\eta$ by $\psi = \xi_2e_2$ for $|\xi_2|<h$, $\psi = -h\frac{\eta+\xi_2}{\eta-h}e_2$ for $-\eta<\xi_2<-h$, left undefined for $h<\xi_2<\eta$, and extended by 0 on $\FF\setminus\mathcal{B}_\eta$. This function is thus well defined and continuous on $\FF$. Let $\psi_h = I_h \psi + \delta\psi_h$ where $I_h$ is the standard nodal interpolation operator to continuous $\P_1$ FE space on $\Th^e$ and $\delta\psi_h$ is a small correction of order $h^2$ at each mesh node, which is also a continuous $\P_1$ FE function on $\Th^e$ to be specified below.

Clearly, $\Div \psi = 1$ on $\Gamma$, hence $\Div \psi \geq \frac{1}{2}$ on $\mathcal{B}_h$ by
  continuity for sufficiently small $h$. Since $\mathcal{B}_h
  \supset \mathcal{F}_h^{\Gamma}$, one observes  on all the
  triangles of $\mathcal{T}_h^{\Gamma}$
  \[ \Div \psi_h \geq \frac{1}{2} - \Div (\psi - I_h\psi) + \Div \delta\psi_h \geq
     \frac{1}{2} - {Ch} \| \psi \|_{W^{2, \infty} (\mathcal{B}_h)} - \frac Ch \|\delta\psi_h\|_{\infty,\Th^\Gamma}=
     \delta_0 > 0 \]
  since $h$ is sufficiently small and $\psi$ is sufficiently smooth thanks to
  the hypothesis on $\Gamma$. 
  Turning to the triangles of $\Th^i$ we make the following observation: if $\Gamma$ were a straight line, the coordinate system $(\xi_1,\xi_2)$ would be Cartesian, $\psi\cdot e_1$ would vanish, and $\psi\cdot e_2$ would be piecewise linear function of $\xi_2$ with a positive slope on  $-h<\xi_2<h$ and with the negative slope $-\frac{h}{\eta-h}$ on $-\eta<\xi_2<-h$ so that $\Div I_h\psi\ge -\frac{h}{\eta-h}$ on the triangles of $\Th^i$. The actual geometry of $\Gamma$ and the addition of $\delta\psi_h$ introduces the corrections of order $h^2$ to the nodal values of $\psi_h$ so that one still has  $\Div\psi_h\ge -\delta_1{h}$ on these triangles. 
  We can now adjust the correction $\delta\psi_h$ in order to satisfy the remaining requirement on  $\psi_h$, namely $\psi_h\cdot n\ge 0$ on $\Gamma$. We have $\psi\cdot n=0$ so that $\psi_h\cdot n\ge -c_0h^2$ on $\Gamma$. We now set $\delta\psi_h=c_1h^2e_2$ at all the nodes of $\Th^\Gamma$ outside $\FF$, $\delta\psi_h=\min(c_1h^2,-\psi\cdot e_2)e_2$ at all the nodes of $\Th^\Gamma$ in $\bar\FF$, and $\delta\psi_h=0$ at all the interior nodes of $\Th^i$ with some constant $c_1>0$. This assures $\psi_h\cdot n\ge 0$ on $\Gamma$ with some sufficiently big $c_1$. Moreover, if a node $x$ of mesh $\Th^i$ is too close to $\Gamma$, i.e. the distance between $x$ and $\Gamma$ is smaller than $h^2$ in order of magnitude, the construction above entails $\psi_h(x)=0$. This means that $|\psi_h|$ on the cut portion $\FF_T$ of any triangle $T\in\Th^\Gamma$ is always bounded by the width of $\FF_T$ (times some mesh independent constant) even if $\FF_T$ is narrower than $h^2$. 
  
Let us now take any $p_h \in Q_h$. Using an inverse inequality we deduce on any
triangle $T \in \mathcal{T}_h^i$
\begin{equation}\label{psihphloc}
  | p_h \psi_h |_{1, T} \leq {Ch} | p_h |_{1, T} + C \| p_h \|_{0, T}
   \leq C \| p_h \|_{0, T} 
\end{equation}
since, by construction of $\psi_h$,
  $$
  \| \psi_h \|_{\infty, \mathcal{F}_h^e}\le Ch \text{ and }
  \| \nabla \psi_h \|_{\infty,\mathcal{F}_h^e} \le C
  $$               
A similar bound also holds on any cut triangle $T \in
\mathcal{T}_h^{\Gamma}$. One cannot use a straightforward inverse inequality
in this case, since the width of the cut portion $\mathcal{F}_T$, say $\varepsilon$, can be much
smaller than $h$. However, the construction of $\psi_h$ implies in such a situation $\|\psi_h\|_{\infty,\FF_T}\le C\varepsilon$. Combining this with the inverse inequality $|p_h|_{1,\FF_T} \le \frac{C}{\varepsilon} \|p_h\|_{0,\FF_T}$, as in the proof of Proposition \ref{A4aux1}, one arrives at 
$| p_h \psi_h |_{1, \FF_T} \leq C \| p_h \|_{0, \FF_T}$, similar to (\ref{psihphloc}). Summing this over all the triangles $T \in \mathcal{T}_h^e$ yields $| p_h \psi_h |_{1, \mathcal{F}} \leq C \| p_h \|_{0,\mathcal{F}} $.

Finally, in order to bound $p_h \psi_h$ in $L^2
(\Gamma)$ we recall that the distance between $\Gamma$ and $\partial\FF_h^i$ is of order $h$. Hence,
\[ \| p_h \psi_h \|_{0, \Gamma} \leq \| p_h \psi_h \|_{0, \partial
   \mathcal{F}_h^i} + C \sqrt{h} | p_h \psi_h |_{1, B_h^{\Gamma}} \leq
   {Ch} \| p_h \|_{0, \partial \mathcal{F}_h^i} + C \sqrt{h} \| p_h
   \|_{0, \mathcal{F}} \]
By scaling, $\| p_h \|_{0, E} \leq \frac{C}{\sqrt{h}} \| p_h \|_{0, T}$
for any edge $E \in \partial \mathcal{F}_h$ adjacent to a triangle $T \in
\mathcal{T}_h^i $. The summation over all such edges yields $\| p_h
\|_{0, \partial \mathcal{F}_h^i} \leq \frac{C}{\sqrt{h}} \| p_h \|_{0,
\mathcal{F}_h^i}$ and consequently $\| p_h \psi_h \|_{0, \Gamma} \leq
C \sqrt{h} \| p_h \|_{0, \mathcal{F}}$ so that (\ref{A4aux2bound}) is established.
\qed  
\end{proof}

\begin{proposition}\label{A4c}
  Under Assumption C, for any $p_h \in Q_h$ there exists $v^p_{h} \in V_h$ such that
\begin{equation}
  - \int_{\mathcal{F}} p_h \Div{v}_h^p = \| p_h \|_{0, \mathcal{F}}^2 
  \text{ and }  |v_h^p |_{1, \mathcal{F}} + \frac{1}{\sqrt{h}} \| v_h^p \|_{0,
  \Gamma} \leq C \|p_h \|_{0, \mathcal{F}} 
  \label{vhpTH}
\end{equation}
\end{proposition}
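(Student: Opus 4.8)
The plan is to combine the two discrete inf-sup ingredients established above: the interior velocity-pressure inf-sup of Proposition \ref{A4} (which controls $h|p_h|_{1,\FF_h^i}$ by a supremum over $V_h^i = V_h\cap(H^1_0(\FF_h^i))^d$) and the special divergence-positive vector field $\psi_h$ of Proposition \ref{A4aux2} (which handles the ``thin band'' $B_h^\Gamma = \FF\setminus\FF_h^i$ near $\Gamma$). The idea is a Verf\"urth-type argument: first use $\psi_h$ to control the full $L^2(\FF)$ norm of $p_h$ up to a term involving $h|p_h|_{1,\FF_h^i}$ and boundary integrals on $\partial\FF_h^i$; then absorb those terms using Proposition \ref{A4} and Proposition \ref{A4aux1}; finally form $v_h^p$ as an appropriate linear combination $v_h^p = \alpha\, \psi_h\, p_h$-type correction plus a multiple of the interior field, rescaled so that $-\int_\FF p_h\Div v_h^p = \|p_h\|_{0,\FF}^2$ exactly.

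More concretely, I would first test the continuous identity against $w_h := \Pi_h(p_h\psi_h)$, where $\Pi_h$ is a suitable $\P_k$ quasi-interpolation (or simply note $p_h\psi_h$ itself is handled via $\psi_h$ piecewise-linear times $p_h\in\P_{k-1}$, then interpolate into $V_h=\P_k$). Integrating by parts,
\[
-\int_\FF p_h \Div(p_h\psi_h) = -\int_\FF p_h^2\,\Div\psi_h - \int_\FF p_h\,\nabla p_h\cdot\psi_h .
\]
Using $\Div\psi_h\ge\delta_0$ on $\Th^\Gamma$ and $\Div\psi_h\ge-\delta_1 h$ on $\Th^i$ gives $-\int_\FF p_h^2\Div\psi_h \le -\delta_0\|p_h\|_{0,B_h^\Gamma}^2 + \delta_1 h\|p_h\|_{0,\FF_h^i}^2$, the wrong-sign term being $O(h\|p_h\|_{0,\FF}^2)$ and hence absorbable once combined with the interior estimate. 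The remaining term $\int_\FF p_h\nabla p_h\cdot\psi_h$ is bounded by $\|\psi_h\|_{\infty,\FF}\,\|p_h\|_{0,\FF}\,|p_h|_{1,\FF_h^i} \le Ch\|p_h\|_{0,\FF}|p_h|_{1,\FF_h^i}$ on the interior part (where $\|\psi_h\|_\infty\le Ch$) and, on the cut part, by the same thin-strip trick as in Proposition \ref{A4aux1} (writing $\psi_h = O(\varepsilon)$ against $|p_h|_{1,\FF_T}\le\frac{C}{\varepsilon}\|p_h\|_{0,\FF_T}$), giving $\le C\|p_h\|_{0,B_h^\Gamma}^2$, which for a small enough coefficient is absorbed into $\delta_0\|p_h\|_{0,B_h^\Gamma}^2$. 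The interpolation error $w_h - p_h\psi_h$ contributes only lower-order terms controlled by (\ref{A4aux2bound}).

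Next I would handle $\|p_h\|_{0,\FF_h^i}^2$. By Proposition \ref{A4} pick $z_h\in V_h^i$ with $\int_\FF p_h\Div z_h \ge \beta h|p_h|_{1,\FF_h^i}\,|z_h|_{1,\FF_h^i}$, suitably normalized; this converts control of $\|p_h\|_{0,\FF_h^i}$ into control of $h|p_h|_{1,\FF_h^i}$ via the standard Verf\"urth argument, noting $z_h$ extended by zero lies in $V_h$ and vanishes on $\Gamma$. Then form $v_h^p = a\,\Pi_h(p_h\psi_h) + b\,z_h$ with $a,b>0$ chosen (with $b$ large relative to $a$, then $a$ fixed small) so that $-\int_\FF p_h\Div v_h^p \ge c\|p_h\|_{0,\FF}^2$; the boundary terms on $\partial\FF_h^i$ arising when integrating by parts against $z_h$ vanish since $z_h\in H^1_0(\FF_h^i)^d$, and any boundary terms from $\Pi_h(p_h\psi_h)$ on $\partial\FF_h^i$ are controlled by Proposition \ref{A4aux1}. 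Finally rescale $v_h^p$ by the scalar $\|p_h\|_{0,\FF}^2 / (-\int_\FF p_h\Div v_h^p)$ to get the exact equality; the stability bound $|v_h^p|_{1,\FF} + \frac{1}{\sqrt h}\|v_h^p\|_{0,\Gamma}\le C\|p_h\|_{0,\FF}$ follows from $|z_h|_{1,\FF_h^i}\le C\|p_h\|_{0,\FF}$ (Verf\"urth normalization, with $z_h=0$ on $\Gamma$) and from (\ref{A4aux2bound}) applied to the $\psi_h$-part.

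The main obstacle I anticipate is the treatment of the cross term $\int_\FF p_h\nabla p_h\cdot\psi_h$ and, more generally, all $\psi_h$-against-$p_h$ interactions on the badly cut triangles: a naive inverse inequality fails because the cut width $\varepsilon$ may be far smaller than $h$, and one must exploit the crucial property (proved at the end of Proposition \ref{A4aux2}) that $|\psi_h|\lesssim\varepsilon$ on such $\FF_T$, so that $\|\psi_h\|_{\infty,\FF_T}|p_h|_{1,\FF_T}\lesssim\|p_h\|_{0,\FF_T}$ with an $\varepsilon$-independent constant. Getting the coefficients in the linear combination in the right order — large $b$ first to dominate the interior, then small $a$ to absorb the wrong-sign $O(h)$ and cut-triangle terms into $\delta_0\|p_h\|_{0,B_h^\Gamma}^2$ — is the only other delicate bookkeeping point.
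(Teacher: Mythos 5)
Your plan reuses two of the paper's ingredients (Propositions \ref{A4} and \ref{A4aux2}) but it is missing the piece that actually produces the dominant term $\|p_h\|_{0,\mathcal{F}_h^i}^2$, and this is a genuine gap. Proposition \ref{A4} only controls $h|p_h|_{1,\mathcal{F}_h^i}$; the ``standard Verf\"urth argument'' does not let you convert $\|p_h\|_{0,\mathcal{F}_h^i}$ into $h|p_h|_{1,\mathcal{F}_h^i}$ -- that direction is an inverse inequality and goes the wrong way. In Verf\"urth's trick the $L^2$ control of the pressure comes from the \emph{continuous} inf-sup field $v_p$ of (\ref{vp}); the discrete statement of Proposition \ref{A4} is only used to absorb the interpolation-error term $h|p_h|_1|v_p|_1$. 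The paper accordingly builds $v_h^p$ from three pieces: $I_h^0 v_p$ (the interpolant of Proposition \ref{A3int}, vanishing on $\mathcal{F}_h^\Gamma$), whose pairing with $p_h$ yields $\tfrac12\|p_h\|_{0,\mathcal{F}_h^i}^2$ up to $\|p_h\|_{0,B_h^\Gamma}^2$ and $h^2|p_h|_{1,\mathcal{F}_h^i}^2$ terms, with Proposition \ref{A4aux1} used precisely to bound the boundary term $\int_{\partial\mathcal{F}_h^i}p_h\, n\cdot v_p$; plus $v_h^{p,i}$ from Proposition \ref{A4}; plus $-p_h\psi_h$. Your combination $a\,\Pi_h(p_h\psi_h)+b\,z_h$ contains no term giving $\|p_h\|_{0,\mathcal{F}_h^i}^2$ with a favorable sign, so the final lower bound $-\int_\mathcal{F}p_h\Div v_h^p\ge c\|p_h\|_{0,\mathcal{F}}^2$ cannot be reached. (You invoke Proposition \ref{A4aux1} only for boundary terms of $\Pi_h(p_h\psi_h)$, where it does not even apply, since $p_h\psi_h$ does not vanish on $\Gamma$.)

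A second, independent problem is your treatment of the cross term $\int_\mathcal{F}p_h\nabla p_h\cdot\psi_h$ on the cut triangles: the bound $\|\psi_h\|_{\infty,\mathcal{F}_T}|p_h|_{1,\mathcal{F}_T}\le C\|p_h\|_{0,\mathcal{F}_T}$ gives an $O(1)$ constant $C$, and you cannot absorb $C\|p_h\|_{0,B_h^\Gamma}^2$ into $\delta_0\|p_h\|_{0,B_h^\Gamma}^2$ ``for a small enough coefficient'', because both terms come from the same test function and scale by the same factor. The paper sidesteps this entirely by the exact identity
\begin{equation*}
\int_{\mathcal{F}} p_h\nabla p_h\cdot\psi_h+\int_{\mathcal{F}}p_h^2\Div\psi_h
=\frac12\int_{\Gamma}p_h^2\,\psi_h\cdot n+\frac12\int_{\mathcal{F}}p_h^2\Div\psi_h ,
\end{equation*}
together with $\psi_h\cdot n\ge0$ on $\Gamma$, so the cross term is never estimated. (Two minor points: with the sign you chose, testing with $+p_h\psi_h$ makes the $\delta_0$-term appear with the wrong sign -- the paper takes $v_h^{p,\Gamma}=-p_h\psi_h$; and no interpolation $\Pi_h$ is needed, since $p_h\psi_h$ is already a continuous piecewise $\P_k$ function, hence in $V_h$.) The closing rescaling to force exact equality is fine once the lower bound and the stability estimate are in place.
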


\begin{proof}
  The continuous inf-sup condition implies that for all $p_h \in Q_h$ there
  exists $v_p \in (H^1_0 (\mathcal{F}))^d$ satisfying (\ref{vp}). 
  Recalling the interpolation operator $I_h^0$ from Proposition \ref{A3int}, we observe
  \begin{align}
      - \int_{\mathcal{F}} p_h \Div{I_h^0v_p} &= - \int_{\mathcal{F}_h^i} p_h \Div{I_h^0v_p} 
      =  \|p_h \|_{0, \mathcal{F}_h^i}^2 
    + \int_{\mathcal{F}_h^i} p_h \Div  (v_p-I^0_h v_p) 
    \notag\\
     &=  \|p_h \|_{0, \mathcal{F}^i_h}^2 + \int_{\partial \mathcal{F}_h^i} p_h n \cdot v_p  - \int_{\mathcal{F}_h^i} \nabla p_h \cdot (v_p - I^0_h v_p)     
     \notag\\
     &\geq  \|p_h \|_{0, \mathcal{F}^i_h}^2 -C \| p_h \|_{0, B_h^\Gamma} | v_p |_{1, B_h^\Gamma} 
      - Ch |p_h|_{1, \mathcal{F}^i_h} | v_p |_{1, \mathcal{F}_h^i} 
     \notag\\ 
      &\geq  \|p_h \|_{0, \mathcal{F}^i_h}^2 - C \left(\| p_h \|_{0, B_h^\Gamma}^2 + h^2  |p_h |_{1, \mathcal{F}_h^i}^2 \right)^{\frac 12} |v_p |_{1, \mathcal{F}}
     \notag\\
       &\geq  \frac 12 \|p_h \|_{0, \mathcal{F}^i_h}^2 - C\| p_h \|_{0, B_h^\Gamma}^2 - C h^2  |p_h |_{1, \mathcal{F}_h^i}^2
  \label{vhpM}
  \end{align}
  We have used Proposition \ref{A4aux1}, the interpolation estimate from Proposition \ref{A3int}, Young inequality and
  $| v_p |_{1, B_h^\Gamma}^2 + |v_p |_{1, \mathcal{F}_h^i}^2 = | v_p |_{1, \mathcal{F}}^2 \le C\| p_h \|_{0, \mathcal{F}}^2
    =C(\| p_h \|_{0, \mathcal{F}_h^i}^2+\| p_h \|_{0, B_h^\Gamma}^2)$.
  Moreover, thanks
  to Proposition \ref{A4} and the inverse inequality there exists $v^{p,
  i}_{h} \in V_h^i$ such that
  \begin{eqnarray}
    - \int_{\mathcal{F}} p_h \Div{v}_h^{p, i} = h^2 |p_h |_{1, \mathcal{F}_h^i}^2 &
    \text{ and } & |v_h^{p, i} |_{1, \mathcal{F}} \leq Ch |p_h |_{1,
    \mathcal{F}_h^i} \leq C \|p_h \|_{0, \mathcal{F}^i_h}  \label{vhpI}
  \end{eqnarray}
  In order to control $p_h$ on $B_h^\Gamma$, we introduce $v_h^{p, \Gamma} = -p_h \psi_h$ with $\psi_h$ from Proposition \ref{A4aux2}. Then
  \begin{multline}\label{vhpG} 
   - \int_{\mathcal{F}} p_h \Div  v^{p, \Gamma}_h =
     \int_{\FF} p_h \nabla p_h \cdot \psi_h + \int_{\FF}
     p^2_h \Div  \psi_h \\
     = \frac{1}{2} \int_{\Gamma} p^2_h
     n \cdot \psi_h + \frac{1}{2} \int_{\FF} p^2_h \Div  \psi_h
     \geq \frac{\delta_0}{2} \| p_h \|^2_{0, B_h^{\Gamma}} - {\delta_1}h \| p_h \|^2_{0, \FF_h^i}
  \end{multline}
  thanks to $n \cdot \psi_h \geq 0$ on $\Gamma$ and the bounds on  $\Div\psi_h$. 
  
  Let $v_h^p = I_h^0 v_p + \kappa v_h^{p, i} + \kappa v_h^{p, \Gamma}$. Taking the sum of  (\ref{vhpM}), (\ref{vhpI}), (\ref{vhpG}), and recalling
  $\| p_h \|_{0, \mathcal{F}}^2 = \|p_h \|_{0, \mathcal{F}_h^i}^2 + \| p_h \|_{0, B_h^\Gamma}^2$ yields for sufficiently big $\kappa > 0$ and sufficiently small $h$
  \[
      - \int_{\mathcal{F}} p_h \Div{v}_h^p       \geq  \frac{1}{2}\|p_h \|_{0, \mathcal{F}}^2     
  \]
  Turning to the second estimate in (\ref{vhpTH}), we recall
  \[ |I_h^0 v_p |_{1, \mathcal{F}} + |v_h^{p, i} |_{1, \mathcal{F}} \leq C \|p_h
     \|_{0, \mathcal{F}} \]
  and $I_h^0 v_p = v^{p, i}_h = 0$ on $\Gamma$. Moreover, $v^{p, \Gamma}_h$ is bounded thanks to (\ref{A4aux2bound}) as
  \[ |v_h^{p} |_{1, \mathcal{F}} + \frac{1}{\sqrt{h}}\| v_h^{p} \|_{0, \Gamma}
     \leq C\| p_h \|_{0, \FF}
  \]
  This entails (\ref{vhpTH}).
\qed  
\end{proof}

\begin{lemma} \label{lemmainfsup2}
 Under Assumption C, taking $\gamma_0$ small enough,
  there exists a mesh-independent constant $c > 0$ such that
  \begin{eqnarray*}
    \inf_{(u_h, p_h, \lambda_h) \in {V}_h \times Q_h \times
    {W}_h} \sup_{(v_h, q_h, \mu_h) \in {V}_h \times Q_h
    \times {W}_h}  \frac{\mathcal{A}^{HR-TH} (u_h, p_h, \lambda_h;v_h,
    q_h, \mu_h)}{\triple u_h, p_h, \lambda_h \triple  \hspace{0.25em} \triple  v_h,
    q_h, \mu_h \triple } \geq c
  \end{eqnarray*}
  where the triple norm is defined by
  \begin{eqnarray*}
    \triple u, p, \lambda \triple  = \left( |u|_{1, \mathcal{F}}^2 +\|p\|_{0, \mathcal{F}}^2 +
    h\| \lambda \|_{0, \Gamma}^2 + \frac{1}{h} \|u\|_{0, \Gamma}^2 \right)^{\frac  12}
  \end{eqnarray*}
\end{lemma}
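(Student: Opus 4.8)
The plan is to follow the proof of Lemma \ref{lemmainfsup1} as closely as possible, replacing the two ingredients that were tied to the bulk pressure stabilization. Given $(u_h,p_h,\lambda_h)\in V_h\times Q_h\times W_h$, I would test $\mathcal{A}^{HR-TH}$ against $(v_h,q_h,\mu_h)=(u_h+\kappa v_h^p,\,-p_h,\,-\lambda_h+\tfrac{\eta}{h}I_h^\lambda u_h)$, where $v_h^p$ is the field from Proposition \ref{A4c}, $I_h^\lambda$ is the extended interpolation operator of Proposition \ref{InterpLem}, and $\gamma_0,\kappa,\eta>0$ are small parameters fixed at the end. Throughout the coercivity estimate I would keep the quantity $h\|\lambda_h-\widehat{p_h}n\|_{0,\Gamma}^2$ rather than $h\|\lambda_h\|_{0,\Gamma}^2$, and only at the very end pass from one to the other via $h\|\lambda_h\|_{0,\Gamma}^2\le 2h\|\lambda_h-\widehat{p_h}n\|_{0,\Gamma}^2+2h\|\widehat{p_h}\|_{0,\Gamma}^2$ together with Proposition \ref{A2bis}; this avoids the circular trade-off between pressure and multiplier control that a premature splitting would create.

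Testing against the diagonal $(u_h,-p_h,-\lambda_h)$, the $\mathcal{A}$-part collapses to $2\|D(u_h)\|_{0,\FF}^2$ (pressure-divergence and boundary duality terms cancel), while the Barbosa--Hughes stabilization, writing $a=D(\widehat{u_h})n$ and $b=\lambda_h-\widehat{p_h}n$, equals $-\gamma_0h\int_\Gamma(|a|^2-|b|^2)$. Korn's inequality (\ref{korn}) and Proposition \ref{A1} (which bounds $h\|D(\widehat{u_h})n\|_{0,\Gamma}^2\le C|u_h|_{1,\FF}^2$) then give, for $\gamma_0$ small, $\mathcal{A}^{HR-TH}(u_h,p_h,\lambda_h;u_h,-p_h,-\lambda_h)\ge \tfrac1K|u_h|_{1,\FF}^2+\gamma_0h\|\lambda_h-\widehat{p_h}n\|_{0,\Gamma}^2$. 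Testing against $(0,0,\tfrac1h I_h^\lambda u_h)$ is handled exactly as in Lemma \ref{lemmainfsup1}: using $\tfrac1h\int_\Gamma u_h\cdot I_h^\lambda u_h\ge\tfrac1h\|u_h\|_{0,\Gamma}^2-\tfrac1h\|u_h\|_{0,\Gamma}\|u_h-I_h^\lambda u_h\|_{0,\Gamma}$, the interpolation bound $\|u_h-I_h^\lambda u_h\|_{0,\Gamma}\le C\sqrt h\,|u_h|_{1,\FF}$ from (\ref{IntEstLamOm}) with $s_\lambda=0$, and Proposition \ref{A1} for the stabilization cross-term, this produces a positive multiple of $\tfrac1h\|u_h\|_{0,\Gamma}^2$, up to terms absorbed by $|u_h|_{1,\FF}^2$ and by $\gamma_0h\|\lambda_h-\widehat{p_h}n\|_{0,\Gamma}^2$.

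The genuinely new piece is the middle test function $\kappa v_h^p$. Unlike the operator $I_h^0$ of Proposition \ref{A3int} used in Lemma \ref{lemmainfsup1}, the field $v_h^p$ does \emph{not} vanish on $\Gamma$, so besides the desired $-\kappa\int_\FF p_h\Div v_h^p=\kappa\|p_h\|_{0,\FF}^2$ one picks up the extra boundary coupling $\kappa\int_\Gamma\lambda_h\cdot v_h^p$ (the remaining terms $2\kappa\int_\FF D(u_h):D(v_h^p)$ and the stabilization contribution are harmless, being absorbed into $\kappa\|p_h\|_{0,\FF}^2$, $|u_h|_{1,\FF}^2$ and $\gamma_0h\|\lambda_h-\widehat{p_h}n\|_{0,\Gamma}^2$ via Young's inequality, Proposition \ref{A4c} and Proposition \ref{A1}). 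For the coupling term I would split $\lambda_h=(\lambda_h-\widehat{p_h}n)+\widehat{p_h}n$. The part $\kappa\int_\Gamma(\lambda_h-\widehat{p_h}n)\cdot v_h^p$ is controlled by $\tfrac1{\sqrt h}\|v_h^p\|_{0,\Gamma}\le C\|p_h\|_{0,\FF}$ (Proposition \ref{A4c}) and Young, the byproduct $h\|\lambda_h-\widehat{p_h}n\|_{0,\Gamma}^2$ being absorbed into the diagonal term and a small multiple of $\|p_h\|_{0,\FF}^2$. For the remaining part $\kappa\int_\Gamma\widehat{p_h}(n\cdot v_h^p)$ a crude Young bound is not affordable; instead I would not use Proposition \ref{A4c} as a black box but retain, from its proof, the nonnegative boundary term generated by the component $v_h^{p,\Gamma}=-p_h\psi_h$ (with $\psi_h$ from Proposition \ref{A4aux2}), namely a term $\tfrac M2\int_\Gamma p_h^2(\psi_h\cdot n)$ which was discarded in deriving (\ref{vhpG}). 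Since on $\Gamma$ one has $n\cdot v_h^p=-M p_h(\psi_h\cdot n)$ with $\psi_h\cdot n\ge0$, this allows a completion of squares, $\tfrac M2\int_\Gamma(\psi_h\cdot n)p_h^2-M\int_\Gamma(\psi_h\cdot n)\widehat{p_h}p_h=\tfrac M2\int_\Gamma(\psi_h\cdot n)(p_h-\widehat{p_h})^2-\tfrac M2\int_\Gamma(\psi_h\cdot n)\widehat{p_h}^2$; the first term is nonnegative and is dropped, while the leftover is bounded by $C\|\psi_h\|_{\infty,\Gamma}\|\widehat{p_h}\|_{0,\Gamma}^2$, and because $\psi|_\Gamma=0$ and $\psi_h=\psi+O(h^2)$ we have $\|\psi_h\|_{\infty,\Gamma}\le Ch^2$, so by Proposition \ref{A2bis} this leftover is only an $O(h)$ perturbation of $\|p_h\|_{0,\FF}^2$, harmless for $h$ small.

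Collecting the three contributions and choosing $\gamma_0,\kappa,\eta$ sufficiently small yields the intermediate bound
\[
\mathcal{A}^{HR-TH}(u_h,p_h,\lambda_h;v_h,q_h,\mu_h)\ \ge\ c\Bigl(|u_h|_{1,\FF}^2+\|p_h\|_{0,\FF}^2+h\|\lambda_h-\widehat{p_h}n\|_{0,\Gamma}^2+\tfrac1h\|u_h\|_{0,\Gamma}^2\Bigr),
\]
and then Proposition \ref{A2bis} together with $h\|\lambda_h\|_{0,\Gamma}^2\le 2h\|\lambda_h-\widehat{p_h}n\|_{0,\Gamma}^2+2h\|\widehat{p_h}\|_{0,\Gamma}^2$ upgrades the right-hand side to $c'\,\triple u_h,p_h,\lambda_h\triple^2$. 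On the other hand $\triple v_h,q_h,\mu_h\triple\le C\,\triple u_h,p_h,\lambda_h\triple$ follows from $|v_h^p|_{1,\FF}+\tfrac1{\sqrt h}\|v_h^p\|_{0,\Gamma}\le C\|p_h\|_{0,\FF}$ (Proposition \ref{A4c}), the bounds on $I_h^\lambda$ (Proposition \ref{InterpLem}) and an inverse inequality; dividing gives the asserted inf-sup estimate. The main obstacle, as indicated above, is the pressure control near $\Gamma$: with no bulk pressure stabilization available the coercivity in $\|p_h\|_{0,\FF}$ must come entirely from the cut-robust inf-sup of Proposition \ref{A4c}, and the price one pays — the nonvanishing of $v_h^p$ on $\Gamma$ and hence the term $\int_\Gamma\lambda_h\cdot v_h^p$ — is dealt with only by exploiting simultaneously the sign $\psi_h\cdot n\ge0$ and the $O(h^2)$ smallness of $\psi_h$ on $\Gamma$.
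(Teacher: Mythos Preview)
Your proof is correct, and the scaffolding---testing $\mathcal{A}^{HR-TH}$ against $(u_h+\kappa v_h^p,\,-p_h,\,-\lambda_h+\tfrac{\eta}{h}I_h^\lambda u_h)$ with $v_h^p$ from Proposition~\ref{A4c}, then bounding the test function in the triple norm---is exactly the paper's. The genuine difference is in the treatment of the new boundary term $\kappa\int_\Gamma\lambda_h\cdot v_h^p$ that arises because $v_h^p$ does not vanish on $\Gamma$.

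The paper bounds it in one stroke by Young, producing $-C\kappa h\|\lambda_h\|_{0,\Gamma}^2$ in the combined estimate, and then recovers $h\|\lambda_h\|_{0,\Gamma}^2$ from $(\tfrac{\gamma_0}{2})h\|\lambda_h-\widehat{p_h}n\|_{0,\Gamma}^2$ via the $t$-splitting of Lemma~\ref{lemmainfsup1} (with Proposition~\ref{A2bis} replacing Proposition~\ref{A2}), asserting that ``$t,\kappa,\eta$ sufficiently small'' suffices. This is shorter and uses Proposition~\ref{A4c} as a black box, but making the $\|p_h\|_{0,\FF}^2$ and $h\|\lambda_h\|_{0,\Gamma}^2$ coefficients simultaneously positive couples $\kappa$ and $t$ in a way that tacitly constrains the product of the constants from Propositions~\ref{A2bis} and~\ref{A4c}; the paper does not verify this. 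Your route---keeping $h\|\lambda_h-\widehat{p_h}n\|_{0,\Gamma}^2$ as the working quantity, splitting $\lambda_h=(\lambda_h-\widehat{p_h}n)+\widehat{p_h}n$, absorbing the first piece into the diagonal stabilization (this costs only $\kappa C\,h\|\lambda_h-\widehat{p_h}n\|_{0,\Gamma}^2$, harmless for $\kappa\ll\gamma_0$), and for the second piece opening the proof of Proposition~\ref{A4c} to use $v_h^p|_\Gamma=-Mp_h\psi_h$, the sign $\psi_h\cdot n\ge0$, and the boundary integral discarded in~(\ref{vhpG})---decouples the parameters completely via the completion of squares. The leftover $-\tfrac{M}{2}\int_\Gamma(\psi_h\cdot n)\widehat{p_h}^2$ is indeed $O(h)\|p_h\|_{0,\FF}^2$ once you justify $\|\psi_h\|_{\infty,\Gamma}\le Ch^2$; this last bound is not stated in Proposition~\ref{A4aux2} but follows since $\psi|_\Gamma=0$, every cut triangle lies in the region $|\xi_2|<h$ where $\psi$ is $W^{2,\infty}$, and the nodal correction satisfies $|\delta\psi_h|\le c_1h^2$. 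So you trade the convenience of a black-box use of Proposition~\ref{A4c} for an argument that is fully explicit about the constants.
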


\begin{proof}
  As in the proof of Lemma \ref{lemmainfsup1}, we observe that
  \begin{eqnarray*}
    \mathcal{A}^{HR-TH} (u_h, p_h, \lambda_h;u_h, - p_h, - \lambda_h) & \geq &
    \frac{1}{K} |u_h|_{1, \mathcal{F}}^2 + \gamma_0 h \| \lambda_h - \widehat{p}_h
    n\|_{0, \Gamma}^2
  \end{eqnarray*}
  thanks to Korn inequality (\ref{korn}) and the smallness of $\gamma_0$.
  Moreover, employing $v_h^p$ from Proposition \ref{A4c} and the estimates from Propositions \ref{A1} and \ref{A2bis},
\begin{eqnarray*}
  \mathcal{A}^{HR - TH} (u_h, p_h, \lambda_h ; v_h^p, 0, 0) & = & 2
  \int_{\mathcal{F}} D (u_h) : D (v_h^p) + \|p_h \|_{0, \mathcal{F}}^2 + \int_{\Gamma}
  \lambda \cdot v_h^p\\
  &  & - \gamma_0 h \int_{\Gamma} (D (\widehat{u_h}) n - \widehat{p_h} n +
  \lambda_h) \cdot D (\widehat{v^p_h}) n\\
  & \ge & \frac{1}{2} \|p_h \|_{0, \mathcal{F}}^2 - C |u_h |_{1, \mathcal{F}}^2 - Ch \|
  \lambda_h \|_{0, \Gamma}^2 - \frac{\gamma_0}{2} h \| \lambda_h -
  \widehat{p_h} n\|_{0, \Gamma}^2
\end{eqnarray*}
We proceed as in the proof of Lemma \ref{lemmainfsup1} and arrive at, cf. (\ref{ine10}),
\begin{multline*}
  \mathcal{A}^{HR - TH}  (u_h, p_h, \lambda_h ; u_h + \kappa v_h^p, - p_h, -
  \lambda_h + \frac{\eta}{h} I_h^\lambda u_h)
  \geq \frac{1}{K} |u_h|_{1, \mathcal{F}}^2 + \frac{\kappa}{2} \|p_h \|_{0, \mathcal{F}}^2 \\
  + \frac{\eta}{2 h} \|u_h \|_{0, \Gamma}^2 
  + \left( \frac{\gamma_0}{2} - C \eta
  \right) h \| \lambda_h - \widehat{p_h} n\|_{0, \Gamma}^2 - C (\kappa + \eta)
  |u_h |_{1, \mathcal{F}} - C \kappa h \| \lambda_h \|_{0, \Gamma}^2
\end{multline*}
The rest of the proof follows again that of Lemma \ref{lemmainfsup1}, with the
only modification that $\| \widehat{p_h} n\|_{0, \Gamma}^2$ rather than $\|p_h
n\|_{0, \Gamma}^2$ will appear in the calculation (\ref{plamsplit}). This gives now 
\[
  \| \widehat{p_h} n - \lambda_h \|_{0, \Gamma}^2 \geq \frac{t}{t + 1} \|
  \lambda_h \|_{0, \Gamma}^2 - \frac{Ct}{h}  \|p_h \|_{0, \mathcal{F}}^2
\]
which is established using Proposition \ref{A2bis} rather than Proposition \ref{A2}. Substituting this into the bound above and taking $t,\kappa,\eta$ sufficiently small leads to
\begin{equation*}
  \mathcal{A}^{HR - TH}  (u_h, p_h, \lambda_h ; u_h +
  \kappa v^p_h , - p_h, - \lambda_h + \frac{\eta}{h} I_h^\lambda u_h)
  \geq c \triple u_h, p_h, \lambda_h \triple ^2 
 \end{equation*}
Finally, the test function $(u_h +  \kappa v^p_h , - p_h, - \lambda_h + \frac{\eta}{h} I_h^\lambda u_h)$ can be bounded in the triple norm via $(u_h, p_h, \lambda_h)$. This ends the proof in the same way as as in the case of Lemma \ref{lemmainfsup1}.  
  \qed
\end{proof}

\begin{theorem}\label{ThPkPk1}
The following a priori error estimate hold under Assumption C for method (\ref{methTH}) with $\P_k$ FE for $v$ and $\P_{k-1}$  FE for $p$ and $\lambda$ ($k\ge 2$):
\begin{multline}
\label{ErrupPkPk1}
 |u-u_{h}|_{1,\mathcal{F}}+\|p-p_{h}\|_{0,\mathcal{F}}+\sqrt{h}\|\lambda-\lambda_{h}\|_{0,\Gamma}\\
 \le Ch^k(|u|_{k+1,\mathcal{F}}+|p|_{k,\mathcal{F}}+|\lambda|_{k-1/2,\Gamma})
\end{multline}
and, assuming the usual elliptic regularity (\ref{StokesElip}) for the Stokes problem (\ref{Stokesvq}), 
\begin{equation}\label{ErrlamPkPk1}
\left|\int_{\Gamma}(\lambda-\lambda_{h})\varphi\right|\le Ch^{k+1}(|u|_{k+1,\mathcal{F}}+|p|_{k,\mathcal{F}}+|\lambda|_{k-1/2,\Gamma}) |\varphi|_{3/2,\Gamma}
\end{equation}
for all $\varphi\in H^{3/2}(\Gamma)$.
\end{theorem}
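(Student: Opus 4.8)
The plan is to mirror the two-step argument of Theorem \ref{ThP1P1}: first establish the energy estimate (\ref{ErrupPkPk1}) by combining Galerkin orthogonality with the inf-sup Lemma \ref{lemmainfsup2} and the interpolation bounds of Proposition \ref{InterpLem}, then upgrade the multiplier estimate to (\ref{ErrlamPkPk1}) by an Aubin--Nitsche duality argument built on the adjoint Stokes problem (\ref{Stokesvq}). Throughout we assume the solution is as smooth as the right-hand side seminorms require, i.e. $u\in H^{k+1}(\FF)^d$, $p\in H^{k}(\FF)$, $\lambda\in H^{k-1/2}(\Gamma)^d$.

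For the energy estimate, note that method (\ref{methTH}) carries no bulk pressure stabilization, so consistency is exact: extending $u,p$ smoothly to $\FF_h^e$ and reading $\widehat u=u$, $\widehat p=p$ in the interface term of $\mathcal{A}^{HR-TH}$ (which enforces the physical relation $\lambda+2D(\widehat u)n-\widehat p n=0$) gives $\mathcal{A}^{HR-TH}(u-u_h,p-p_h,\lambda-\lambda_h;v_h,q_h,\mu_h)=0$ for all $(v_h,q_h,\mu_h)\in V_h\times Q_h\times W_h$. Inserting the interpolants $I_h^uu$, $I_h^pp$, $I_h^\lambda\lambda$ and expanding the bilinear form exactly as in the proof of Theorem \ref{ThP1P1} — but now with no $\theta h^2\int_{\FF_h^e}\nabla I_h^pp\cdot\nabla q_h$ term — every resulting term is of the form (interpolation error) $\times$ (test-function factor). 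Applying Proposition \ref{InterpLem} at $s_u=k$, $s_p=s_\lambda=k-1$ (in particular (\ref{IntEstU}) for $\nabla(u-\widehat{I_h^uu})$ on $\Gamma$ and the $L^2(\Gamma)$ bound on $p-\widehat{I_h^pp}$ in (\ref{IntEstP})), and using Propositions \ref{A1} and \ref{A2bis} to absorb $\sqrt h\|\nabla\widehat{v_h}\|_{0,\Gamma}$ into $|v_h|_{1,\FF}$ and $\sqrt h\|\widehat{q_h}\|_{0,\Gamma}$ into $\|q_h\|_{0,\FF}$, one obtains
\[
\mathcal{A}^{HR-TH}(u_h-I_h^uu,p_h-I_h^pp,\lambda_h-I_h^\lambda\lambda;v_h,q_h,\mu_h)\le Ch^k\big(|u|_{k+1,\FF}+|p|_{k,\FF}+|\lambda|_{k-1/2,\Gamma}\big)\,\triple v_h,q_h,\mu_h\triple .
\]
Lemma \ref{lemmainfsup2} then bounds $\triple u_h-I_h^uu,p_h-I_h^pp,\lambda_h-I_h^\lambda\lambda\triple$ by the same right-hand side, and a triangle inequality with Proposition \ref{InterpLem} yields (\ref{ErrupPkPk1}).

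For the duality estimate, fix $\varphi\in H^{3/2}(\Gamma)$, let $(v,q)$ solve (\ref{Stokesvq}) with datum $\varphi$, and set $\mu=-(2D(v)n-qn)|_\Gamma$. Because $\Div v=0$, because $\mu+2D(v)n-qn=0$ on $\Gamma$ (so the interface stabilization vanishes when tested against $(v,q,\mu)$, using $\widehat v=v$, $\widehat q=q$), and because integrating the adjoint equation by parts gives $2\int_\FF D(u-u_h):D(v)-\int_\FF q\Div(u-u_h)+\int_\Gamma\mu\cdot(u-u_h)=0$, one checks that $\mathcal{A}^{HR-TH}(u-u_h,p-p_h,\lambda-\lambda_h;v,q,\mu)=\int_\Gamma(\lambda-\lambda_h)\cdot\varphi$. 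Galerkin orthogonality lets one subtract any finite-element test function, so with $v_h=I_h^uv$, $q_h=I_h^pq$, $\mu_h=I_h^\lambda\mu$,
\[
\int_\Gamma(\lambda-\lambda_h)\cdot\varphi=\mathcal{A}^{HR-TH}(u-u_h,p-p_h,\lambda-\lambda_h;v-I_h^uv,q-I_h^pq,\mu-I_h^\lambda\mu).
\]
Expanding the right-hand side, every term is a product of a primal error — controlled by $Ch^k(|u|_{k+1,\FF}+|p|_{k,\FF}+|\lambda|_{k-1/2,\Gamma})$ via (\ref{ErrupPkPk1}), Proposition \ref{InterpLem}, and Propositions \ref{A1}, \ref{A2bis} to handle $\widehat{u_h}$, $\widehat{p_h}$ on $\Gamma$ — and a dual interpolation error — controlled by $Ch(|v|_{2,\FF}+|q|_{1,\FF}+|\mu|_{1/2,\Gamma})$ via Proposition \ref{InterpLem} at $s_u=1$, $s_p=s_\lambda=0$. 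The product is $O(h^{k+1})$, and the elliptic regularity bound (\ref{StokesElip}) turns $|v|_{2,\FF}+|q|_{1,\FF}+|\mu|_{1/2,\Gamma}$ into $C|\varphi|_{3/2,\Gamma}$, giving (\ref{ErrlamPkPk1}).

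No genuinely new obstacle appears — the argument transcribes the $\P_1$--$\P_1$ proof, the real work having been done in Lemma \ref{lemmainfsup2} and Proposition \ref{InterpLem} — but two bookkeeping points require attention. First, the robust reconstruction in the interface term must be handled by splitting $u-\widehat{u_h}=(u-\widehat{I_h^uu})+\widehat{I_h^uu-u_h}$ and $p-\widehat{p_h}=(p-\widehat{I_h^pp})+\widehat{I_h^pp-p_h}$ on $\Gamma$: the first summands are interpolation errors bounded by (\ref{IntEstU}), (\ref{IntEstP}), while the second are bounded by $\|\nabla\widehat{w_h}\|_{0,\Gamma}\le Ch^{-1/2}|w_h|_{1,\FF}$ and $\|\widehat{q_h}\|_{0,\Gamma}\le Ch^{-1/2}\|q_h\|_{0,\FF}$ (Propositions \ref{A1}, \ref{A2bis}) together with the triple-norm estimate already proved. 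Second, in the duality step the primal errors are allowed the full rate $h^k$ while the dual interpolation errors supply only one extra power of $h$; keeping track of which factor gets which regularity index is precisely what makes each product land on $h^{k+1}$.
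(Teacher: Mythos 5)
Your proposal is correct and follows essentially the same route as the paper, which simply transcribes the proof of Theorem \ref{ThP1P1} using Lemma \ref{lemmainfsup2}, Proposition \ref{InterpLem} at $s_u=k$, $s_p=s_\lambda=k-1$, and the duality argument with (\ref{Stokesvq})--(\ref{StokesElip}). Your two bookkeeping remarks (exact consistency due to the absence of bulk pressure stabilization, and the splitting of $u-\widehat{u_h}$, $p-\widehat{p_h}$ on $\Gamma$ handled by Propositions \ref{A1} and \ref{A2bis}) are exactly the adjustments the paper's argument implicitly relies on.
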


\begin{proof} The proof follows the same lines as that of Theorem \ref{ThP1P1}.
\qed
\end{proof}

\section{Methods \`a la Burman-Hansbo.}
We turn now to alternative methods generalizing that of  \cite{BurmanHansbo10} to the Stokes equations, cf. (\ref{VarOrig}). The meshes and FE spaces follow the same pattern as before, cf. (\ref{fespaces}).  We shall employ either $\P_0$ or $\P_1$ FE for $\lambda$ and several choices for velocity and pressure.  The method reads: 
\begin{equation}\label{methBH}
\begin{array}{l}
 \text{Find }(u_{h},p_{h},\lambda_{h})\in{V}_{h}\times Q_{h}\times{W}_{h}\text{ such that}\\
 \mathcal{A}^{BH-l-var}((u_{h},p_{h},\lambda_{h}; v_{h},q_{h},\mu_{h})=\mathcal{L}(v_{h},\mu_{h}),\hspace{1em}\forall(v_{h},q_{h},\mu_{h})\in{V}_{h}\times Q_{h}\times{W}_{h},
\end{array}
\end{equation}
where
\begin{equation*}
  \mathcal{A}^{BH-l-var} (u, p, \lambda;v, q, \mu) = \mathcal{A} (u, p, \lambda;v, q, \mu) 
    + \mathcal{S}^l_\lambda(\lambda,\mu) + \mathcal{S}^{var}_p(p,q)
\end{equation*}
Here, $\mathcal{S}^l_\lambda(\lambda,\mu)$ with $l\in\{0,1\}$ is the stabilization term for Lagrange multiplier discretized by $\P_l$ FE. We set
\begin{align*}
 \mathcal{S}^0_\lambda(\lambda,\mu) = - \gamma h  \sum_{E \in \mathcal{E}_h^{\Gamma}}
  \int_{E} [\lambda] \cdot [\mu]
   \quad\text{and}\quad
 \mathcal{S}^1_\lambda(\lambda,\mu) =- \gamma h^2   \int_{\mathcal{F}_h^{\Gamma}} \nabla \lambda : \nabla \mu
\end{align*}
Moreover, $\mathcal{S}^{var}_p(p,q)$ with $var\in\{BP,IP,TH\}$  is the stabilization term for pressure chosen for each velocity-pressure FE-pair as in the following table

\begin{tabular}{l|l|l|l}
 \hline
 Velocity FE & Pressure FE & Acronym & Stabilization \\[1mm]
 \hline
 $\P_1$ & $\P_1$ & BP & $\mathcal{S}^{BP}_p(p,q) = - \theta h^2  \int_{\mathcal{F}_h^e} \nabla p \cdot \nabla q $ \\[1mm]
 \hline
 $\P_1$ & $\P_0$ & IP & $\mathcal{S}^{IP}_p(p,q) = - \theta h  \sum_{E \in \mathcal{E}_h^e}  \int_{E} [p] [q]$ \\[1mm]
 \hline
 $\P_2$ & $\P_1$ & TH & $\mathcal{S}^{TH}_p(p,q) = 0 $ \\
 \hline
\end{tabular}
\\[1mm]

\begin{remark}
Several other choices for FE spaces and corresponding stabilization terms could be proposed and investigated at the expense of more complicated proofs which we hope to present elsewhere. For instance,
\begin{itemize}
 \item In the case of $\P_{1}$ space for $\lambda$, one can use stabilization 
 $$ \tilde{\mathcal{S}}^1_\lambda(\lambda,\mu) = - \gamma h^3  \sum_{E \in \mathcal{E}_h^{\Gamma}}
  \int_{E} [\nabla\lambda] : [\nabla\mu]$$
  as an alternative to $ \mathcal{S}^1$. A similar stabilization is proposed in  \cite{BurmanHansbo10IMA} in the context of interface problems on non-conforming meshes without cut triangles.   
 \item Higher order Taylor-Hood spaces ($\P_k$--$\P_{k-1}$ for $k>2$) can be used for velocity-pressure accompanied with the $\P_{k-1}$ space for $\lambda$. One should then apply a stronger stabilization to $\lambda$, in the spirit of \cite{BurmanHansbo14},  which will control its higher order derivatives.
\end{itemize}
\end{remark}

One can show that all the choices above lead to inf-sup stable methods. We provide here a detailed proof for the case $\mathcal{A}^{BH-1-BP}$ (thus employing $\P_1$ FE for all the 3 variables) and comment briefly on other cases below.

\begin{lemma}\label{lemmainfsup3}
Let $V_h,Q_h,W_h$ in (\ref{fespaces}) be $\P_1$ FE spaces on respective meshes. 
Under Assumption B, for any $\gamma,\theta>0$ there exists a mesh-independent constant $c>0$ such that 
\begin{eqnarray*}
\inf_{(u_{h},p_{h},\lambda_{h})\in{V}_{h}\times Q_{h}\times{W}_{h}}\sup_{(v_{h},q_{h},\mu_{h})\in{V}_{h}\times Q_{h}\times{W}_{h}}\frac{\mathcal{A}^{BH-1-BP}(u_{h},p_{h},\lambda_{h}; v_{h},q_{h},\mu_{h})}{\triple u_{h},p_{h},\lambda_{h}\triple \hspace{0.25em}\triple v_{h},q_{h},\mu_{h}\triple }\geq c
\end{eqnarray*}
where the triple norm is defined by 
\begin{equation*}
\triple u, p, \lambda \triple  = \left( |u|_{1, \mathcal{F}}^2 +\|p\|_{0, \mathcal{F}}^2 + h^2
|p|_{1, \mathcal{F}_h^e}^2 + h\| \lambda \|_{0, \Gamma}^2 + h^2| \lambda|_{1, \mathcal{F}_h^{\Gamma}}^2 + \frac{1}{h} \|u\|_{0, \Gamma}^2 \right)^{1 / 2}
\end{equation*}
\end{lemma}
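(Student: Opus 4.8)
The plan is to prove the inf-sup condition for $\mathcal{A}^{BH-1-BP}$ by constructing, for every $(u_h,p_h,\lambda_h)\in V_h\times Q_h\times W_h$, a test function $(v_h,q_h,\mu_h)$ whose triple norm is controlled by $\triple u_h,p_h,\lambda_h\triple$ and for which $\mathcal{A}^{BH-1-BP}(u_h,p_h,\lambda_h;v_h,q_h,\mu_h)\ge c\,\triple u_h,p_h,\lambda_h\triple^2$. Following the pattern of Lemma \ref{lemmainfsup1}, I would build $v_h$ as a combination $u_h+\kappa I_h^0 v_p$, take $q_h=-p_h$, and $\mu_h=-\lambda_h+\frac{\eta}{h}I_h^\lambda u_h$ plus, crucially, an extra ingredient to recover control of $h^2|\lambda_h|_{1,\FF_h^\Gamma}^2$, which is now part of the triple norm but was not present before. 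The key structural difference from Lemma \ref{lemmainfsup1} is that the Barbosa–Hughes stabilization (which produced $h\|\lambda_h-p_hn\|_{0,\Gamma}^2$) is replaced by the weaker term $\mathcal{S}^1_\lambda(\lambda_h,\lambda_h)=-\gamma h^2|\lambda_h|_{1,\FF_h^\Gamma}^2$, so the diagonal test $(u_h,-p_h,-\lambda_h)$ gives directly $\frac1K|u_h|_{1,\FF}^2+\gamma h^2|\lambda_h|_{1,\FF_h^\Gamma}^2+\theta h^2|p_h|_{1,\FF_h^e}^2$ — no cross term between $\lambda_h$ and $p_h$ appears, which actually simplifies the splitting step.

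First I would record the diagonal estimate
\begin{equation*}
\mathcal{A}^{BH-1-BP}(u_h,p_h,\lambda_h;u_h,-p_h,-\lambda_h)\ge \tfrac1K|u_h|_{1,\FF}^2+\gamma h^2|\lambda_h|_{1,\FF_h^\Gamma}^2+\theta h^2|p_h|_{1,\FF_h^e}^2,
\end{equation*}
using Korn (\ref{korn}) and that the term $\int_\Gamma(\lambda_h\cdot u_h+\mu_h\cdot u_h)$ cancels. Next, exactly as in (\ref{vp})--(\ref{pdivvDisc}), the pressure-recovery test $I_h^0 v_p$ from Proposition \ref{A3int} gives $\mathcal{A}^{BH-1-BP}(u_h,p_h,\lambda_h;I_h^0 v_p,0,0)\ge\frac12\|p_h\|_{0,\FF}^2-C|u_h|_{1,\FF}^2-Ch^2|p_h|_{1,\FF_h^e}^2$, since $I_h^0 v_p=0$ on $\Gamma$ so the stabilization terms $\mathcal{S}^1_\lambda$ and $\mathcal{S}^{BP}_p$ evaluated against $(I_h^0 v_p,0,0)$ vanish. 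Then the boundary test $\frac1h I_h^\lambda u_h$ yields, via $\int_\Gamma u_h\cdot I_h^\lambda u_h\ge\frac12\|u_h\|_{0,\Gamma}^2-\frac12\|u_h-I_h^\lambda u_h\|_{0,\Gamma}^2$ and (\ref{IntEstLamOm}) with $s_\lambda=0$, a bound $\mathcal{A}^{BH-1-BP}(u_h,p_h,\lambda_h;0,0,\frac1h I_h^\lambda u_h)\ge\frac1{4h}\|u_h\|_{0,\Gamma}^2-C|u_h|_{1,\FF}^2-C h^2|\lambda_h|_{1,\FF_h^\Gamma}^2-\cdots$; here I must also estimate the cross term $\mathcal{S}^1_\lambda(\lambda_h,\frac1h I_h^\lambda u_h)=-\gamma h\int_{\FF_h^\Gamma}\nabla\lambda_h:\nabla I_h^\lambda u_h$, bounding it by $C\,h^2|\lambda_h|_{1,\FF_h^\Gamma}^2+C|I_h^\lambda u_h|_{1,\FF_h^\Gamma}^2\le Ch^2|\lambda_h|_{1,\FF_h^\Gamma}^2+C|u_h|_{1,\FF}^2$ using an inverse inequality and the stability of $I_h^\lambda$ from (\ref{IntEstLamOm}).

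The remaining task is to recover $h\|\lambda_h\|_{0,\Gamma}^2$. Unlike Lemma \ref{lemmainfsup1}, where this came out of $h\|\lambda_h-p_hn\|_{0,\Gamma}^2$ by the splitting (\ref{plamsplit}), here the Barbosa term is absent, so I would use a fourth test component: namely choose $\mu_h^\ast\in W_h$ with $\mu_h^\ast=\lambda_h$ on the cut elements (simply $\mu_h^\ast=\lambda_h$ itself, paired against the $\int_\Gamma\mu_h\cdot u_h$ and $\mathcal{S}^1_\lambda$ terms) — but this only reproduces the diagonal. The genuine source of $L^2(\Gamma)$ control of $\lambda_h$ must be the term $2\int_{\FF}D(u_h):D(v_h)-\int_\FF p_h\Div v_h-\int_\Gamma\lambda_h\cdot v_h$ tested against a $v_h$ that mimics $\lambda_h$ on $\Gamma$; so I would take $v_h^\lambda\in V_h$ with $v_h^\lambda\approx h\,\widehat{\lambda_h}$ near $\Gamma$ and $v_h^\lambda=0$ in the interior, so that $-\int_\Gamma\lambda_h\cdot v_h^\lambda\approx -h\|\lambda_h\|_{0,\Gamma}^2$ up to the robust-reconstruction discrepancy, which is controlled by $h^2|\lambda_h|_{1,\FF_h^\Gamma}^2$ exactly as in Proposition \ref{A1}/\ref{A2bis}, while $|v_h^\lambda|_{1,\FF}\le C\sqrt h\|\lambda_h\|_{0,\Gamma}$ and $\|v_h^\lambda\|_{0,\Gamma}\le Ch\|\lambda_h\|_{0,\Gamma}$ by scaling on the band $\FF_h^\Gamma$. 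Testing against $-\kappa' v_h^\lambda$ and absorbing the $|u_h|_{1,\FF}$, $\|p_h\|_{0,\FF}$ cross terms by Young then produces the missing $\frac{\kappa'}{2}h\|\lambda_h\|_{0,\Gamma}^2$ at the cost of $-C\kappa' h^2|\lambda_h|_{1,\FF_h^\Gamma}^2-C\kappa'(|u_h|_{1,\FF}^2+\|p_h\|_{0,\FF}^2+h^2|p_h|_{1,\FF_h^e}^2)$. I expect \textbf{this last construction to be the main obstacle}: one must verify that the robust reconstruction $\widehat{\lambda_h}$ lets the surface integral $\int_\Gamma\lambda_h\cdot v_h^\lambda$ be bounded below by $ch\|\lambda_h\|_{0,\Gamma}^2$ with the error absorbable into $h^2|\lambda_h|_{1,\FF_h^\Gamma}^2$ (this is where Proposition \ref{A1}-type arguments on good/bad elements enter), and that $v_h^\lambda$ is genuinely in $V_h$, i.e.\ continuous piecewise $\P_1$ on $\Th^e$ and vanishing on $\Gamma_{wall}$.

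Finally, combining the diagonal estimate with $\kappa\cdot(\text{pressure test})+\eta\cdot(\text{boundary test})+\kappa'\cdot(\text{$\lambda$ test})$ and choosing $\gamma$ irrelevant-sized but $\kappa,\eta,\kappa'$ small enough (in a fixed order: first $\kappa'$ small against $\gamma$, then $\eta$ small against the recovered $\frac1h\|u_h\|_{0,\Gamma}^2$, then $\kappa$ small against $\theta$ and $\frac1K$) gives
\begin{equation*}
\mathcal{A}^{BH-1-BP}(u_h,p_h,\lambda_h;v_h,q_h,\mu_h)\ge c\left(|u_h|_{1,\FF}^2+\|p_h\|_{0,\FF}^2+h^2|p_h|_{1,\FF_h^e}^2+h\|\lambda_h\|_{0,\Gamma}^2+h^2|\lambda_h|_{1,\FF_h^\Gamma}^2+\tfrac1h\|u_h\|_{0,\Gamma}^2\right)=c\,\triple u_h,p_h,\lambda_h\triple^2,
\end{equation*}
while Propositions \ref{InterpLem}, \ref{A3int}, together with the scaling bounds on $v_h^\lambda$ and an inverse inequality for $I_h^\lambda u_h$ on $\FF_h^\Gamma$, give $\triple v_h,q_h,\mu_h\triple\le C\triple u_h,p_h,\lambda_h\triple$. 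Dividing concludes the proof.
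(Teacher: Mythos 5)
Your proposal follows essentially the same route as the paper's proof: the paper also builds the test function $(u_h+\kappa I_h^0 v_p+\rho\,h\widetilde\lambda_h,\,-p_h,\,-\lambda_h+\frac{\eta}{h}I_h^\lambda u_h)$, where $\widetilde\lambda_h$ extends $\lambda_h$ by zero at the interior nodes of $\FF_h^i$, so that the coupling term $\int_\Gamma\lambda_h\cdot h\widetilde\lambda_h=h\|\lambda_h\|_{0,\Gamma}^2$ delivers exactly the $L^2(\Gamma)$ control you sought from your $v_h^\lambda$, with the cross terms absorbed via $|h\widetilde\lambda_h|_{1,\FF}\le C(\sqrt h\|\lambda_h\|_{0,\Gamma}+h|\lambda_h|_{1,\FF_h^\Gamma})$ and the $\mathcal S^1_\lambda$ stabilization, just as in your ledger. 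The only discrepancies are cosmetic: no robust reconstruction $\widehat{\lambda_h}$ (or Assumption A) is needed since $\lambda_h$ is already a continuous $\P_1$ function on $\FF_h^\Gamma\supset\Gamma$, the sign convention on $\int_\Gamma\lambda\cdot v$ means one adds $+\rho h\widetilde\lambda_h$ rather than subtracting it, and the parameters must satisfy $\kappa'(=\rho)$ small relative to $\kappa$ (so $\kappa$ is fixed before $\kappa'$, not after, as the paper notes).
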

\begin{proof}
Take $\lambda_h\in W_h$ and let $\widetilde\lambda_h$ be the $\P_1$ FE function on  $\mathcal{F}_h^e$ that
vanishes at all the interior nodes of $\mathcal{F}_h^i$ and coincides with $\lambda_h$ on $\mathcal{F}_h^\Gamma$. 
Obviously, $h\widetilde\lambda_h\in V_h$ and 
\[ \int_{\Gamma} \lambda_h \cdot  h\widetilde\lambda_h  = h\| \lambda_h \|^2_{0,\Gamma} 
\]
Moreover, using a scaling argument and the fact that the distance between $\Gamma$ and $\partial\mathcal{F}_h^i$ is of order $h$, we get
\begin{multline}\label{lambdaht2}
  | \widetilde\lambda_h |_{1, \mathcal{F}}^2 \leq | \widetilde\lambda_h |_{1, \mathcal{F}_h^i}^2 + |
  \lambda_h |_{1, \mathcal{F}_h^{\Gamma}}^2 \leq \frac{C}{h} \| \lambda_h \|_{0,
  \partial \mathcal{F}_h^i}^2 + | \lambda_h |_{1, \mathcal{F}_h^{\Gamma}}^2\\
  \leq \frac{C}{h}  (\| \lambda_h \|_{0, \Gamma}^2 + h| \lambda_h |_{1,
  \mathcal{F}_h^{\Gamma}}^2) + | \lambda_h |_{1, \mathcal{F}_h^{\Gamma}}^2 \leq
  \frac{C}{h} \| \lambda_h \|_{0, \Gamma}^2 + C | \lambda_h |^2_{1,
  \mathcal{F}_h^{\Gamma}}
\end{multline}

   To control the pressure $p_h\in Q_h$, we recall the bound (\ref{pdivvDisc}) involving $v_p$ defined by (\ref{vp}) and interpolation operator
$I_h^0$ from Proposition \ref{A3int}. Thus, fixing $u_h,p_h,\lambda_h$ in the corresponding FE spaces, we have for any $\kappa, \rho, \eta > 0$
\begin{multline*}
  \mathcal{A}^{BH-1-BP}  (u_h, p_h, \lambda_h ; u_h + \kappa I_h^0 v_p +
  \rho h\widetilde\lambda_h, - p_h, - \lambda_h + \frac{\eta}{h}
  I_h^{\lambda} u_h) \ge \\ 2 \|D (u_h)\|_{0, \mathcal{F}}^2 - C | u_h |_{1, \mathcal{F}}
  (\kappa | I_h^0 v_p |_{1,^{} \mathcal{F}} + \rho h|\widetilde\lambda_h |_{1,
  \mathcal{F}})\\
  + \kappa \|p_h \|^2_{0, \mathcal{F}} - C \|p_h \|^{}_{0, \mathcal{F}} (\kappa h | p_h
  |_{1, \mathcal{F}^e_h} + \rho h|\widetilde\lambda_h |_{1, \mathcal{F}}) \\ 
  \rho h \| \lambda_h \|^2_{0, \Gamma} 
  + \frac{\eta}{2 h}  \|u_h \|^2_{0, \Gamma} -
  \frac{\eta}{2 h}  \|u_h - I_h^{\lambda} u_h \|^2_{0, \Gamma} \\ + \gamma h^2 |
  \lambda_h |_{1, \mathcal{F}_h^{\Gamma}}^2 - \gamma \eta h^{} | \lambda_h |_{1,
  \mathcal{F}_h^{\Gamma}} |I^{\lambda}_h u_h |_{1, \mathcal{F}_h^{\Gamma}} + \theta h^2
  |p_h |^2_{1, \mathcal{F}_h^e}
\end{multline*}
with a constant $C > 0$ independent from the mesh and from the parameters $\kappa$,
$\rho$, $\eta$, $\gamma$, $\theta$. We now apply Korn inequality (\ref{korn}),
the Young inequality and the bounds similar to those used in the proof of Lemma \ref{lemmainfsup1},
such as $| I_h^0 v_p |_{1,^{} \mathcal{F}} \leq C \|p_h \|^{}_{0, \mathcal{F}}$, $|
h\widetilde\lambda_h |_{1, \mathcal{F}} \leq C | v_{\lambda} |_{1, \mathcal{F}} \leq
C \sqrt{h} \| \lambda_h \|_{0, \Gamma}^{}$, $\|u_h - I_h^{\lambda} u_h
\|^{}_{0, \Gamma} \leq C \sqrt{h} |u_h |_{1, \mathcal{F}}$, $|I^{\lambda}_h u_h
|_{1, \mathcal{F}_h^{\Gamma}} \leq C |u_h |_{1, \mathcal{F}}$, and (\ref{lambdaht2}). 
This yields
\begin{align*}
 & \mathcal{A}^{BH - 1 - BP}  (u_h, p_h, \lambda_h ; v_h + \kappa I_h^0 v_p +
   \rho h\widetilde\lambda_h, - p_h, - \lambda_h + \frac{\eta}{h}
   I_h^{\lambda} u_h) \\ 
   & \qquad
   \ge (K - C \eta) |u_h |_{1, \mathcal{F}} + \left(
   \frac{\kappa}{2} - C \kappa^2 \right) \|p_h \|^2_{0, \mathcal{F}} + C \left(
   \rho - \rho^2 - \frac{\rho^2}{\kappa} \right) h \| \lambda_h \|^2_{0,
   \Gamma} \\ 
   &\qquad\quad
   + (\theta - C \kappa) h^2 |p_h |^2_{1, \mathcal{F}_h^e} + 
   \left(\gamma - C\rho^2 -C\frac{\rho^2}{\kappa}- \gamma^2 \eta\right) h^2 | \lambda_h |_{1, \mathcal{F}_h^{\Gamma}}^2 +
   \frac{\eta}{2 h}  \| u_h \|^2_{0, \Gamma} \\ 
   &\qquad
   \geq c\triple u_h, p_h, \lambda_h \triple ^2 
\end{align*}
if $\kappa, \rho, \eta > 0$ are chosen sufficiently small. In particular,  $\rho$
should be  small with respect to $\kappa$.

On the other hand, the test function $(u_h + \kappa I_h^0 v_p + \rho
h\widetilde\lambda_h, - p_h, - \lambda_h + \frac{\eta}{h} I_h^{\lambda}
u_h)$ can also be bound from above in the triple norm by $\triple u_h, p_h, \lambda_h \triple $
thanks to the bounds listed above. This leads to the announced inf-sup estimate. 
\qed
\end{proof}

Analogous inf-sup lemmas can be proved for all the other variants of method (\ref{methBH}) introduced above. In particular, the adaptation to the case $BH-1-TH$ is very simple: one should just use the velocity-pressure inf-sup Lemma \ref{A4c} (valid under Assumption C). The adaptation to the case $BH-0-IP$ requires some more substantial changes in the proofs as outlined below:
\begin{itemize}
 \item The term $h^2 | \lambda |^2_{1, \mathcal{F}_h^{\Gamma}}$ in the definition of the triple norm in Lemma \ref{lemmainfsup3} should be replaced by $h|\lambda|^2_{\mathcal{E}_h^{\Gamma}}:={h}\sum_{E\in\mathcal{E}_h^{\Gamma}} \|[\lambda]\|^2_{0,E}$. Similar modifications should be applied to the norm of the pressure, cf. the proof of Theorem \ref{ThP1P0}. 
 \item Using Cl{\'e}ment-type interpolation \cite{Ern}, given any $\P_0$ FE function $\lambda_h$ on $\FF_h^\Gamma$ we can construct a continuous $\P_1$ FE function $\widetilde\lambda_h$ on $\FF_h^\Gamma$ such that
 $$
  \|\widetilde\lambda_h-\lambda_h\|_{0,\Gamma} + \sqrt{h} | \widetilde\lambda_h |_{1, \mathcal{F}_h^\Gamma}
    \le C |\lambda_h|_{\mathcal{E}_h^{\Gamma}}
 $$
 We then extend it to $\widetilde\lambda_h\in V_h$ by setting its values at all the interior nodes of $\mathcal{F}_h^i$ to 0 and replace (\ref{lambdaht2}) with $| \widetilde\lambda_h |_{1, \mathcal{F}}^2 \leq 
  \frac{C}{h} \left(\| \lambda_h \|_{0, \Gamma}^2 + |\lambda_h|_{\mathcal{E}_h^{\Gamma}}^2 \right)$. The rest of the proof of Lemma \ref{lemmainfsup3} can be then reused as is. 
\end{itemize}

Having at our disposal the inf-sup Lemmas of the type \ref{lemmainfsup3}, it is easy to establish the convergence theorems completely analogous to Theorems \ref{ThP1P1}, \ref{ThP1P0}, and \ref{ThPkPk1}.

\begin{theorem}\label{ThBH} 
Consider the three variants of method (\ref{methBH}):  
$BH-1-BP$ under Assumption B with $\P_1$ FE for $v$, $p$ and $\lambda$; 
$BH-0-IP$ under Assumption B with $\P_1$ FE for $v$ and $\P_0$ FE for $p$, $\lambda$; 
$BH-1-TH$ under Assumption C with $\P_2$ FE for $v$ and $\P_1$ FE for $p$, $\lambda$. 
The following a priori error estimates hold for these methods with $k$ denoting the degree of FE space $V_h$
\begin{multline*}
 |u-u_{h}|_{1,\mathcal{F}}+\|p-p_{h}\|_{0,\mathcal{F}}+\sqrt{h}\|\lambda-\lambda_{h}\|_{0,\Gamma}\\
 \le Ch^k(|u|_{k+1,\mathcal{F}}+|p|_{k,\mathcal{F}}+|\lambda|_{k-1/2,\Gamma})
\end{multline*}
and
\begin{equation*}
\left|\int_{\Gamma}(\lambda-\lambda_{h})\varphi\right|\le Ch^{k+1}(|u|_{k+1,\mathcal{F}}+|p|_{k,\mathcal{F}}+|\lambda|_{k-1/2,\Gamma}) |\varphi|_{3/2,\Gamma}
\end{equation*}
for all $\varphi\in H^{3/2}(\Gamma)$
\end{theorem}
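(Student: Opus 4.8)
The plan is to mimic the proof of Theorem \ref{ThP1P1} line by line, replacing the inf-sup Lemma \ref{lemmainfsup1} by the corresponding inf-sup lemma for the variant under consideration (Lemma \ref{lemmainfsup3} in the case $BH-1-BP$, and its announced analogues for $BH-0-IP$ and $BH-1-TH$), and replacing the triple norm by the one appearing in that lemma. First I would write the Galerkin orthogonality relation: since the exact solution satisfies $\lambda+2D(u)n-pn=0$ on $\Gamma$, $[\lambda]=0$ across interior edges of $\Th^\Gamma$ (because $\lambda\in H^{1/2}(\Gamma)$ extends smoothly), and $\nabla p$ is well-defined, the stabilization terms $\mathcal{S}^l_\lambda$ and $\mathcal{S}^{var}_p$ applied to the exact solution vanish except for the consistency error $\mathcal{S}^{var}_p(p,q_h)$, which for $var\in\{BP,IP\}$ is $O(h)\|q_h\|$-type and for $TH$ is zero; similarly $\mathcal{S}^l_\lambda(\lambda,\mu_h)=0$ once $\lambda$ is extended by a smooth $\tilde\lambda$ vanishing appropriately. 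Concretely, with $\widehat{u}=u$ for the exact velocity and suitable smooth extensions of $p$ and $\lambda$, one gets
\[
\mathcal{A}^{BH-l-var}(u_h-u,p_h-p,\lambda_h-\lambda;v_h,q_h,\mu_h)=\mathcal{S}^{var}_p(p,q_h).
\]

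Next I would subtract and add the interpolants $I_h^u u$, $I_h^p p$, $I_h^\lambda\lambda$ from Proposition \ref{InterpLem} (taking $I_h^\lambda\lambda$ as the trace of a smooth extension, so that all the interface and bulk-on-$\FF_h^\Gamma$ interpolation estimates are available), obtaining an identity for $\mathcal{A}^{BH-l-var}(u_h-I_h^uu,p_h-I_h^pp,\lambda_h-I_h^\lambda\lambda;v_h,q_h,\mu_h)$ whose right-hand side is a sum of terms each of the form (an interpolation error)$\times$(a seminorm of $(v_h,q_h,\mu_h)$ controlled by the triple norm). The only genuinely new terms compared with Theorem \ref{ThP1P1} are the stabilization contributions: $\mathcal{S}^l_\lambda(\lambda-I_h^\lambda\lambda,\mu_h)$ and $\mathcal{S}^{var}_p(p-I_h^pp,q_h)$. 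For $\mathcal{S}^1_\lambda$ one bounds $\gamma h^2\int_{\FF_h^\Gamma}\nabla(\lambda-I_h^\lambda\lambda):\nabla\mu_h \le \gamma h^2|\lambda-I_h^\lambda\lambda|_{1,\FF_h^\Gamma}|\mu_h|_{1,\FF_h^\Gamma}$ and uses (\ref{IntEstLamOm}) to get $O(h^{s_\lambda})$ against $h|\mu_h|_{1,\FF_h^\Gamma}\le\triple\cdot\triple$; for $\mathcal{S}^0_\lambda$ one uses the edge-jump interpolation estimate analogous to that quoted in the $BH-0-IP$ discussion; for $\mathcal{S}^{BP}_p$, $\mathcal{S}^{IP}_p$ one argues exactly as in Theorems \ref{ThP1P1} and \ref{ThP1P0} respectively; for $TH$ there is nothing to do. Collecting,
\[
\mathcal{A}^{BH-l-var}(u_h-I_h^uu,p_h-I_h^pp,\lambda_h-I_h^\lambda\lambda;v_h,q_h,\mu_h)\le Ch^k(|u|_{k+1,\FF}+|p|_{k,\FF}+|\lambda|_{k-1/2,\Gamma})\,\triple v_h,q_h,\mu_h\triple,
\]
and applying the inf-sup lemma to $(u_h-I_h^uu,p_h-I_h^pp,\lambda_h-I_h^\lambda\lambda)$ followed by the triangle inequality and Proposition \ref{InterpLem} yields the first (energy-norm) estimate, after noting that in the $BH-1-BP$ and $BH-0-IP$ norms the extra term $h^2|\lambda_h|^2_{1,\FF_h^\Gamma}$ (resp. $h|\lambda_h|^2_{\mathcal{E}_h^\Gamma}$) also decays at the optimal rate thanks to (\ref{IntEstLamOm}) applied with $k$-degree multiplier.

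For the superconvergence estimate on $\int_\Gamma(\lambda-\lambda_h)\varphi$, I would again follow Theorem \ref{ThP1P1} verbatim: introduce the dual Stokes solution $(v,q)$ to (\ref{Stokesvq}) with boundary datum $\varphi$ and $\mu=-(2D(v)n-qn)|_\Gamma$, integrate by parts to get $2\int_\FF D(u-u_h):D(v)-\int_\FF q\,\Div(u-u_h)+\int_\Gamma(u-u_h)\mu=0$, subtract this from the Galerkin orthogonality relation, and then choose the discrete test functions as interpolants of $(v,q,\mu)$. The resulting bound has two families of terms: the "primal interpolation error $\times$ dual interpolation error" products, each $O(h^k)\cdot O(h)$ by Proposition \ref{InterpLem} and the elliptic regularity bound (\ref{StokesElip}); and the stabilization consistency terms $-\mathcal{S}^{var}_p(p_h,q_h)$ and $-\mathcal{S}^l_\lambda(\lambda_h,\mu_h)$. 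The latter are handled by writing $p_h=(p_h-p)+p$, $\lambda_h=(\lambda_h-\lambda)+\lambda$ and $q_h=(q_h-q)+q$, $\mu_h=(\mu_h-\mu)+\mu$, using that $\mathcal{S}^{var}_p(p,\cdot)$ and $\mathcal{S}^l_\lambda(\lambda,\cdot)$ are themselves consistency errors that are $O(h)$ (or zero for $TH$), and that $\|p_h-p\|$-type quantities are already known to be $O(h^k)$ from the first estimate while the dual-side factors are $O(h)$ from elliptic regularity; this gives the $O(h^{k+1})$ bound. I expect the main obstacle to be purely bookkeeping: verifying that each stabilization term — in particular the new multiplier-stabilization $\mathcal{S}^l_\lambda$, which has no analogue in the Haslinger-Renard theorems — is genuinely consistent (vanishes on the sufficiently smooth exact solution, suitably extended) and that its interpolation-error contribution is dominated by the triple norm with the right power of $h$; there is no conceptual difficulty beyond what is already in Sections \ref{P1P1}, \ref{P1P0}, \ref{PkPk1} and Lemma \ref{lemmainfsup3} together with the $BH-0-IP$ and $BH-1-TH$ remarks.
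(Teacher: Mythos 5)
Your overall route is exactly the one the paper intends: the published proof is only a two-line remark saying it "follows the same lines as Theorem \ref{ThP1P1}", with interpolation estimates from Proposition \ref{InterpLem} and Assumption A no longer needed, so your plan of redoing the Galerkin-orthogonality/interpolant-insertion/inf-sup argument with Lemma \ref{lemmainfsup3} (and its announced analogues) and then the duality trick is the right skeleton.

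There is, however, one concrete flaw in your bookkeeping of the multiplier stabilization. You claim that $\mathcal{S}^l_\lambda(\lambda,\mu_h)=0$ "once $\lambda$ is extended by a smooth $\tilde\lambda$ vanishing appropriately". This is true for $\mathcal{S}^0_\lambda$ (edge jumps of a continuous extension vanish), but false for $\mathcal{S}^1_\lambda(\tilde\lambda,\mu_h)=-\gamma h^2\int_{\FF_h^\Gamma}\nabla\tilde\lambda:\nabla\mu_h$: no extension of a non-constant $\lambda$ makes its gradient vanish on the strip, since the tangential derivative along $\Gamma$ is prescribed by $\lambda$ itself. Consequently your error identity is missing this weak-consistency term; equivalently, after inserting interpolants the term you must bound is $\mathcal{S}^1_\lambda(I_h^\lambda\tilde\lambda,\mu_h)$ and not merely $\mathcal{S}^1_\lambda(\tilde\lambda-I_h^\lambda\tilde\lambda,\mu_h)$. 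For the $BH$-$1$-$BP$ variant ($k=1$) the omission is harmless: $h^2|\tilde\lambda|_{1,\FF_h^\Gamma}|\mu_h|_{1,\FF_h^\Gamma}\le Ch\,|\lambda|_{1/2,\Gamma}\,\triple \mu_h\triple$, which is of the required order, and $BH$-$0$-$IP$ is untouched. But for $BH$-$1$-$TH$ ($k=2$) this dropped term is precisely the delicate one: the natural bound, using that $\FF_h^\Gamma$ is a band of width $O(h)$ so that $|\tilde\lambda|_{1,\FF_h^\Gamma}\le C\sqrt{h}\,\|\tilde\lambda\|_{2,\FF_h^\Gamma}\le C\sqrt h\,|\lambda|_{3/2,\Gamma}$, only yields $Ch^{3/2}|\lambda|_{3/2,\Gamma}\triple\mu_h\triple$, half an order short of the claimed $h^{2}$; integrating by parts on the strip does not remove the boundary contribution of the same size. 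So your assertion that the only new terms are "interpolation error times triple norm" giving a clean $O(h^k)$ hides the one step that actually requires an additional argument (or the jump-type stabilization $\tilde{\mathcal{S}}^1_\lambda$ mentioned in the paper's remark); the same dropped term also has to be tracked in your duality computation. You should restate the error equation with $-\mathcal{S}^l_\lambda(I_h^\lambda\tilde\lambda,\mu_h)-\mathcal{S}^{var}_p(I_h^p p,q_h)$ on the right-hand side and bound these interpolant terms explicitly, making clear for which variants the required power of $h$ is actually attained.
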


\begin{proof} The proof follows the same lines as that of Theorem \ref{ThP1P1}. In particular, all the necessary interpolation estimates can be taken from Proposition \ref{InterpLem}. Note that we no longer require Assumption A there since it is only necessary for the estimates involving $\widehat{I_h^uu}$ and $\widehat{I_h^pp}$.
\qed
\end{proof}

\section{Numerical experiments}
\label{sec:numerical}
In this section we present some numerical tests. 
The fluid-structure domain $D$  is set to $(0,1)^2$. The structure $\mathcal{S}$ is chosen as the disk centered in $[0.5,0.5]$ of radius $R=0.21$. We recall that the fluid domain is outside the structure, i.e. $\mathcal{F}=\mathcal{D}\setminus\bar{\mathcal{S}}$ as represented in Fig. \ref{fig:domain}. 
In practice, boundary $\Gamma$ of $\FF$ is defined by a level-set.
For all tests, the threshold ratio $\theta_{min}$ (cf. Definition \ref{RobRec}) for the "robust reconstruction" is fixed to $0.01$ and
the stabilization parameters are set as $\gamma_0= \theta_0= \gamma  = 0.05$.

The exact solution for the velocity and the pressure is chosen as
 \begin{align*}
   u(x,y) &=\left (\cos(\pi x)\sin(\pi y), -\sin(\pi x)\cos(\pi y)\right),\\
   p(x,y) &= (y-0.5)\cos(2 \pi x) + (x-0.5)\sin(2 \pi y)\\
 \end{align*}
and the right-hand side $f$ in (\ref{system1}) as well as the Dirichlet boundary conditions on $\Gamma_{wall}$ and $\Gamma$ in (\ref{system5})--(\ref{system3}) are set accordingly. We shall report the errors for velocity and pressure in the natural $H^1(\FF)$ and $L^2(\FF)$ norms. The accuracy of the Lagrange multiplier $\lambda$ will be attested only the the integral $\int_\Gamma\lambda$, which has the physical meaning of the force exerted by the fluid on the rigid particle inside. 
 
In the following,  $U$, $P$ and $\Lambda$ are the degrees of freedom vectors for $u_h$, $p_h$ and $\lambda_h$ respectively,
i.e. the coefficients in the expansions in the standard bases $\{\phi_{i_u}\}, \{\psi_{i_p}\}, \{\zeta_{i_{\lambda}}\}$ of $V_h$, $Q_h$ and $W_h$. The direct solver MUMPS \cite{MUMPS} is used for the resulting linear systems.
Rates of convergence are computed on regular meshes based on uniform subdivisions by $N$ points ($N=10, 20, 40, 80, 160$) on each side of  $\Gamma_{wall}$. At our fixed threshold, the three finer meshes require ``robust reconstruction", cf. Assumption A and Definition \ref{RobRec}). More precisely, for $N=40, 80$ and $160$ we have $8, 8$ and  $56$ ``bad elements''.

\subsection{Fictitious domain without any stabilization.} 
First, we present numerical tests without any stabilization as in (\ref{VarOrig}).
The linear system to solve is of the form
\begin{equation}
\label{sys:sans_stab}
\left (
\begin{array}{ccc}
K & B^T&C^T\\
B & 0&0\\
C & 0 & 0\\
\end{array}
\right )
\left (
\begin{array}{c}
U\\
P\\
\Lambda\\
\end{array}
\right ) = 
\left (
\begin{array}{c}
F\\
0\\
G \\
\end{array}
\right ) 
\end{equation}
where $K$, $B$, $C$, $F$ and $G$ are
$$\left ( K\right ) _{i_uj_u} = 2 \int_{\mathcal{F}} D(\phi_{i_u}):D(\phi_{j_u}), 
\left (B\right ) _{i_uj_p} = - \int_{\mathcal{F}} \psi_{j_p}  \Div (\phi_{i_u}),
\left (C\right ) _{i_uj_\lambda}= \int_{\Gamma} \zeta_{j_{\lambda}} \phi_{i_u} $$
$$\left (F\right ) _{i_u}= \int_{\mathcal{F}} f \phi_{i_u}, 
\quad \left (G\right ) _{i_\lambda} =  \int_{\Gamma} g \zeta_{i_\lambda}$$

Rates of convergence are presented in Fig. \ref{fig:sans_stab} for the triples of spaces $\P_2-\P_1-\P_1$,
$\P_2-\P_1-\P_0$, $\P_1-\P_1-\P_1$ (velocity-pressure-multiplier).  The choice $\P_1-\P_1$ for velocity-pressure suffers of course from the non-satisfaction of the mesh-independent inf-sup condition. It has to be stressed that in all the experiments without stabilization, and particularly for the $\P_1-\P_1$ case, a singular linear system could be obtained. However, we did not encounter this in our simulations (singular systems did occur in the experiments with $\P_0$  multiplier, not reported here). 

As expected, the solution with $\P_1-\P_1-\P_1$ FE  is not good. 
On the contrary, optimal convergence is observed for all the unknowns when $\P_2-\P_1$ FE spaces are used for velocity-pressure. 
However, some problems could remain when the  intersections of mesh elements with $\mathcal{F}$ are too small.
We refer to \cite{JFS-Fournie-Court} where this aspect is addressed in more detail.
 
\vspace*{-0.5cm}

\begin{center}
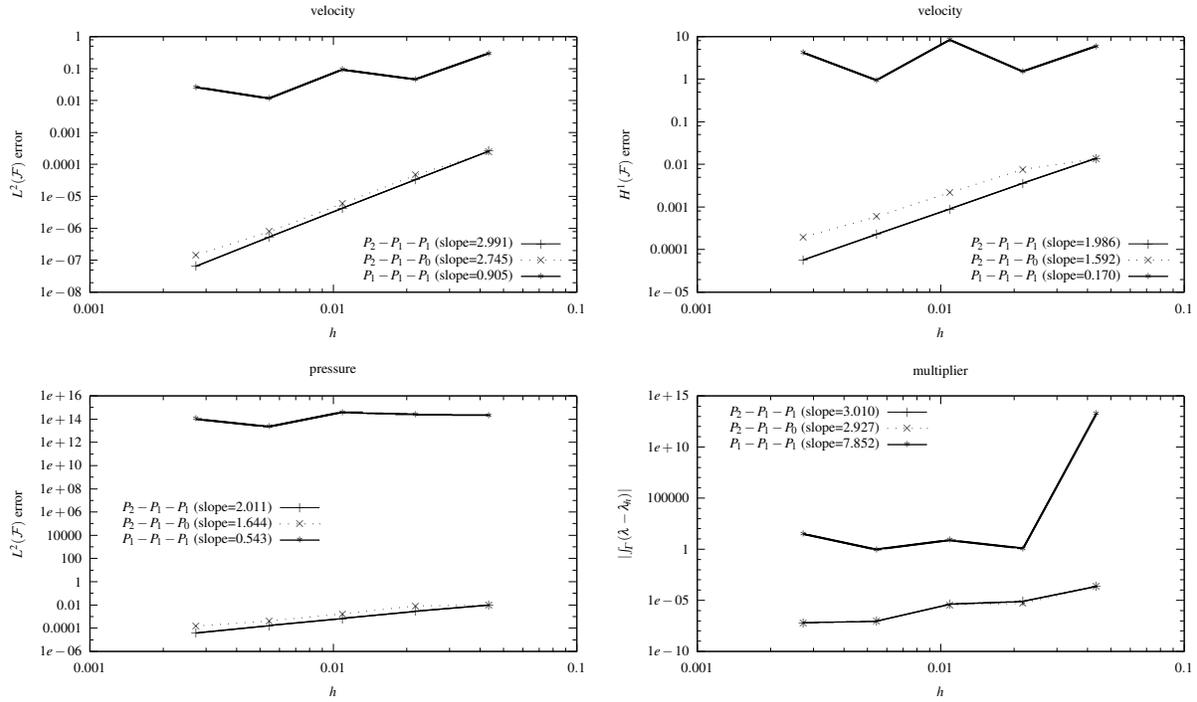
\begin{figure}[h] 
\hspace*{-2cm}
\begin{minipage}{20cm}
 \resizebox{8cm}{!}{% GNUPLOT: LaTeX picture
\setlength{\unitlength}{0.240900pt}
\ifx\plotpoint\undefined\newsavebox{\plotpoint}\fi
\sbox{\plotpoint}{\rule[-0.200pt]{0.400pt}{0.400pt}}%
\begin{picture}(1500,900)(0,0)
\sbox{\plotpoint}{\rule[-0.200pt]{0.400pt}{0.400pt}}%
\put(211.0,131.0){\rule[-0.200pt]{4.818pt}{0.400pt}}
\put(191,131){\makebox(0,0)[r]{$1e-08$}}
\put(1419.0,131.0){\rule[-0.200pt]{4.818pt}{0.400pt}}
\put(211.0,155.0){\rule[-0.200pt]{2.409pt}{0.400pt}}
\put(1429.0,155.0){\rule[-0.200pt]{2.409pt}{0.400pt}}
\put(211.0,187.0){\rule[-0.200pt]{2.409pt}{0.400pt}}
\put(1429.0,187.0){\rule[-0.200pt]{2.409pt}{0.400pt}}
\put(211.0,204.0){\rule[-0.200pt]{2.409pt}{0.400pt}}
\put(1429.0,204.0){\rule[-0.200pt]{2.409pt}{0.400pt}}
\put(211.0,212.0){\rule[-0.200pt]{4.818pt}{0.400pt}}
\put(191,212){\makebox(0,0)[r]{$1e-07$}}
\put(1419.0,212.0){\rule[-0.200pt]{4.818pt}{0.400pt}}
\put(211.0,236.0){\rule[-0.200pt]{2.409pt}{0.400pt}}
\put(1429.0,236.0){\rule[-0.200pt]{2.409pt}{0.400pt}}
\put(211.0,268.0){\rule[-0.200pt]{2.409pt}{0.400pt}}
\put(1429.0,268.0){\rule[-0.200pt]{2.409pt}{0.400pt}}
\put(211.0,284.0){\rule[-0.200pt]{2.409pt}{0.400pt}}
\put(1429.0,284.0){\rule[-0.200pt]{2.409pt}{0.400pt}}
\put(211.0,292.0){\rule[-0.200pt]{4.818pt}{0.400pt}}
\put(191,292){\makebox(0,0)[r]{$1e-06$}}
\put(1419.0,292.0){\rule[-0.200pt]{4.818pt}{0.400pt}}
\put(211.0,317.0){\rule[-0.200pt]{2.409pt}{0.400pt}}
\put(1429.0,317.0){\rule[-0.200pt]{2.409pt}{0.400pt}}
\put(211.0,349.0){\rule[-0.200pt]{2.409pt}{0.400pt}}
\put(1429.0,349.0){\rule[-0.200pt]{2.409pt}{0.400pt}}
\put(211.0,365.0){\rule[-0.200pt]{2.409pt}{0.400pt}}
\put(1429.0,365.0){\rule[-0.200pt]{2.409pt}{0.400pt}}
\put(211.0,373.0){\rule[-0.200pt]{4.818pt}{0.400pt}}
\put(191,373){\makebox(0,0)[r]{$1e-05$}}
\put(1419.0,373.0){\rule[-0.200pt]{4.818pt}{0.400pt}}
\put(211.0,397.0){\rule[-0.200pt]{2.409pt}{0.400pt}}
\put(1429.0,397.0){\rule[-0.200pt]{2.409pt}{0.400pt}}
\put(211.0,429.0){\rule[-0.200pt]{2.409pt}{0.400pt}}
\put(1429.0,429.0){\rule[-0.200pt]{2.409pt}{0.400pt}}
\put(211.0,446.0){\rule[-0.200pt]{2.409pt}{0.400pt}}
\put(1429.0,446.0){\rule[-0.200pt]{2.409pt}{0.400pt}}
\put(211.0,454.0){\rule[-0.200pt]{4.818pt}{0.400pt}}
\put(191,454){\makebox(0,0)[r]{$0.0001$}}
\put(1419.0,454.0){\rule[-0.200pt]{4.818pt}{0.400pt}}
\put(211.0,478.0){\rule[-0.200pt]{2.409pt}{0.400pt}}
\put(1429.0,478.0){\rule[-0.200pt]{2.409pt}{0.400pt}}
\put(211.0,510.0){\rule[-0.200pt]{2.409pt}{0.400pt}}
\put(1429.0,510.0){\rule[-0.200pt]{2.409pt}{0.400pt}}
\put(211.0,526.0){\rule[-0.200pt]{2.409pt}{0.400pt}}
\put(1429.0,526.0){\rule[-0.200pt]{2.409pt}{0.400pt}}
\put(211.0,534.0){\rule[-0.200pt]{4.818pt}{0.400pt}}
\put(191,534){\makebox(0,0)[r]{$0.001$}}
\put(1419.0,534.0){\rule[-0.200pt]{4.818pt}{0.400pt}}
\put(211.0,558.0){\rule[-0.200pt]{2.409pt}{0.400pt}}
\put(1429.0,558.0){\rule[-0.200pt]{2.409pt}{0.400pt}}
\put(211.0,590.0){\rule[-0.200pt]{2.409pt}{0.400pt}}
\put(1429.0,590.0){\rule[-0.200pt]{2.409pt}{0.400pt}}
\put(211.0,607.0){\rule[-0.200pt]{2.409pt}{0.400pt}}
\put(1429.0,607.0){\rule[-0.200pt]{2.409pt}{0.400pt}}
\put(211.0,615.0){\rule[-0.200pt]{4.818pt}{0.400pt}}
\put(191,615){\makebox(0,0)[r]{$0.01$}}
\put(1419.0,615.0){\rule[-0.200pt]{4.818pt}{0.400pt}}
\put(211.0,639.0){\rule[-0.200pt]{2.409pt}{0.400pt}}
\put(1429.0,639.0){\rule[-0.200pt]{2.409pt}{0.400pt}}
\put(211.0,671.0){\rule[-0.200pt]{2.409pt}{0.400pt}}
\put(1429.0,671.0){\rule[-0.200pt]{2.409pt}{0.400pt}}
\put(211.0,688.0){\rule[-0.200pt]{2.409pt}{0.400pt}}
\put(1429.0,688.0){\rule[-0.200pt]{2.409pt}{0.400pt}}
\put(211.0,695.0){\rule[-0.200pt]{4.818pt}{0.400pt}}
\put(191,695){\makebox(0,0)[r]{$0.1$}}
\put(1419.0,695.0){\rule[-0.200pt]{4.818pt}{0.400pt}}
\put(211.0,720.0){\rule[-0.200pt]{2.409pt}{0.400pt}}
\put(1429.0,720.0){\rule[-0.200pt]{2.409pt}{0.400pt}}
\put(211.0,752.0){\rule[-0.200pt]{2.409pt}{0.400pt}}
\put(1429.0,752.0){\rule[-0.200pt]{2.409pt}{0.400pt}}
\put(211.0,768.0){\rule[-0.200pt]{2.409pt}{0.400pt}}
\put(1429.0,768.0){\rule[-0.200pt]{2.409pt}{0.400pt}}
\put(211.0,776.0){\rule[-0.200pt]{4.818pt}{0.400pt}}
\put(191,776){\makebox(0,0)[r]{$1$}}
\put(1419.0,776.0){\rule[-0.200pt]{4.818pt}{0.400pt}}
\put(211.0,131.0){\rule[-0.200pt]{0.400pt}{4.818pt}}
\put(211,90){\makebox(0,0){$0.001$}}
\put(211.0,756.0){\rule[-0.200pt]{0.400pt}{4.818pt}}
\put(396.0,131.0){\rule[-0.200pt]{0.400pt}{2.409pt}}
\put(396.0,766.0){\rule[-0.200pt]{0.400pt}{2.409pt}}
\put(504.0,131.0){\rule[-0.200pt]{0.400pt}{2.409pt}}
\put(504.0,766.0){\rule[-0.200pt]{0.400pt}{2.409pt}}
\put(581.0,131.0){\rule[-0.200pt]{0.400pt}{2.409pt}}
\put(581.0,766.0){\rule[-0.200pt]{0.400pt}{2.409pt}}
\put(640.0,131.0){\rule[-0.200pt]{0.400pt}{2.409pt}}
\put(640.0,766.0){\rule[-0.200pt]{0.400pt}{2.409pt}}
\put(689.0,131.0){\rule[-0.200pt]{0.400pt}{2.409pt}}
\put(689.0,766.0){\rule[-0.200pt]{0.400pt}{2.409pt}}
\put(730.0,131.0){\rule[-0.200pt]{0.400pt}{2.409pt}}
\put(730.0,766.0){\rule[-0.200pt]{0.400pt}{2.409pt}}
\put(765.0,131.0){\rule[-0.200pt]{0.400pt}{2.409pt}}
\put(765.0,766.0){\rule[-0.200pt]{0.400pt}{2.409pt}}
\put(797.0,131.0){\rule[-0.200pt]{0.400pt}{2.409pt}}
\put(797.0,766.0){\rule[-0.200pt]{0.400pt}{2.409pt}}
\put(825.0,131.0){\rule[-0.200pt]{0.400pt}{4.818pt}}
\put(825,90){\makebox(0,0){$0.01$}}
\put(825.0,756.0){\rule[-0.200pt]{0.400pt}{4.818pt}}
\put(1010.0,131.0){\rule[-0.200pt]{0.400pt}{2.409pt}}
\put(1010.0,766.0){\rule[-0.200pt]{0.400pt}{2.409pt}}
\put(1118.0,131.0){\rule[-0.200pt]{0.400pt}{2.409pt}}
\put(1118.0,766.0){\rule[-0.200pt]{0.400pt}{2.409pt}}
\put(1195.0,131.0){\rule[-0.200pt]{0.400pt}{2.409pt}}
\put(1195.0,766.0){\rule[-0.200pt]{0.400pt}{2.409pt}}
\put(1254.0,131.0){\rule[-0.200pt]{0.400pt}{2.409pt}}
\put(1254.0,766.0){\rule[-0.200pt]{0.400pt}{2.409pt}}
\put(1303.0,131.0){\rule[-0.200pt]{0.400pt}{2.409pt}}
\put(1303.0,766.0){\rule[-0.200pt]{0.400pt}{2.409pt}}
\put(1344.0,131.0){\rule[-0.200pt]{0.400pt}{2.409pt}}
\put(1344.0,766.0){\rule[-0.200pt]{0.400pt}{2.409pt}}
\put(1379.0,131.0){\rule[-0.200pt]{0.400pt}{2.409pt}}
\put(1379.0,766.0){\rule[-0.200pt]{0.400pt}{2.409pt}}
\put(1411.0,131.0){\rule[-0.200pt]{0.400pt}{2.409pt}}
\put(1411.0,766.0){\rule[-0.200pt]{0.400pt}{2.409pt}}
\put(1439.0,131.0){\rule[-0.200pt]{0.400pt}{4.818pt}}
\put(1439,90){\makebox(0,0){$0.1$}}
\put(1439.0,756.0){\rule[-0.200pt]{0.400pt}{4.818pt}}
\put(211.0,131.0){\rule[-0.200pt]{0.400pt}{155.380pt}}
\put(211.0,131.0){\rule[-0.200pt]{295.825pt}{0.400pt}}
\put(1439.0,131.0){\rule[-0.200pt]{0.400pt}{155.380pt}}
\put(211.0,776.0){\rule[-0.200pt]{295.825pt}{0.400pt}}
\put(30,453){\makebox(0,0){\rotatebox{90}{$L^2(\mathcal{F})$ error}}}
\put(825,29){\makebox(0,0){$h$}}
\put(825,838){\makebox(0,0){velocity}}
\put(1279,253){\makebox(0,0)[r]{$P_2-P_1-P_1$ (slope=2.991)}}
\put(1299.0,253.0){\rule[-0.200pt]{24.090pt}{0.400pt}}
\put(1218,487){\usebox{\plotpoint}}
\multiput(1213.32,485.92)(-1.288,-0.499){141}{\rule{1.128pt}{0.120pt}}
\multiput(1215.66,486.17)(-182.659,-72.000){2}{\rule{0.564pt}{0.400pt}}
\multiput(1028.34,413.92)(-1.281,-0.499){141}{\rule{1.122pt}{0.120pt}}
\multiput(1030.67,414.17)(-181.671,-72.000){2}{\rule{0.561pt}{0.400pt}}
\multiput(844.38,341.92)(-1.270,-0.499){143}{\rule{1.114pt}{0.120pt}}
\multiput(846.69,342.17)(-182.688,-73.000){2}{\rule{0.557pt}{0.400pt}}
\multiput(659.43,268.92)(-1.253,-0.499){145}{\rule{1.100pt}{0.120pt}}
\multiput(661.72,269.17)(-182.717,-74.000){2}{\rule{0.550pt}{0.400pt}}
\put(1218,487){\makebox(0,0){$+$}}
\put(1033,415){\makebox(0,0){$+$}}
\put(849,343){\makebox(0,0){$+$}}
\put(664,270){\makebox(0,0){$+$}}
\put(479,196){\makebox(0,0){$+$}}
\put(1349,253){\makebox(0,0){$+$}}
\put(1279,212){\makebox(0,0)[r]{$P_2-P_1-P_0$ (slope=2.745)}}
\multiput(1299,212)(20.756,0.000){5}{\usebox{\plotpoint}}
\put(1399,212){\usebox{\plotpoint}}
\put(1218,486){\usebox{\plotpoint}}
\multiput(1218,486)(-19.774,-6.306){10}{\usebox{\plotpoint}}
\multiput(1033,427)(-19.328,-7.563){9}{\usebox{\plotpoint}}
\multiput(849,355)(-19.377,-7.437){10}{\usebox{\plotpoint}}
\multiput(664,284)(-19.774,-6.306){9}{\usebox{\plotpoint}}
\put(479,225){\usebox{\plotpoint}}
\put(1218,486){\makebox(0,0){$\times$}}
\put(1033,427){\makebox(0,0){$\times$}}
\put(849,355){\makebox(0,0){$\times$}}
\put(664,284){\makebox(0,0){$\times$}}
\put(479,225){\makebox(0,0){$\times$}}
\put(1349,212){\makebox(0,0){$\times$}}
\sbox{\plotpoint}{\rule[-0.400pt]{0.800pt}{0.800pt}}%
\sbox{\plotpoint}{\rule[-0.200pt]{0.400pt}{0.400pt}}%
\put(1279,171){\makebox(0,0)[r]{$P_1-P_1-P_1$ (slope=0.905)}}
\sbox{\plotpoint}{\rule[-0.400pt]{0.800pt}{0.800pt}}%
\put(1299.0,171.0){\rule[-0.400pt]{24.090pt}{0.800pt}}
\put(1218,734){\usebox{\plotpoint}}
\multiput(1207.86,732.09)(-1.410,-0.501){125}{\rule{2.442pt}{0.121pt}}
\multiput(1212.93,732.34)(-179.931,-66.000){2}{\rule{1.221pt}{0.800pt}}
\multiput(1007.73,669.41)(-3.779,0.504){43}{\rule{6.088pt}{0.121pt}}
\multiput(1020.36,666.34)(-171.364,25.000){2}{\rule{3.044pt}{0.800pt}}
\multiput(839.75,691.09)(-1.274,-0.501){139}{\rule{2.227pt}{0.121pt}}
\multiput(844.38,691.34)(-180.377,-73.000){2}{\rule{1.114pt}{0.800pt}}
\multiput(641.98,621.41)(-3.260,0.504){51}{\rule{5.303pt}{0.121pt}}
\multiput(652.99,618.34)(-173.992,29.000){2}{\rule{2.652pt}{0.800pt}}
\put(1218,734){\makebox(0,0){$\ast$}}
\put(1033,668){\makebox(0,0){$\ast$}}
\put(849,693){\makebox(0,0){$\ast$}}
\put(664,620){\makebox(0,0){$\ast$}}
\put(479,649){\makebox(0,0){$\ast$}}
\put(1349,171){\makebox(0,0){$\ast$}}
\sbox{\plotpoint}{\rule[-0.200pt]{0.400pt}{0.400pt}}%
\put(211.0,131.0){\rule[-0.200pt]{0.400pt}{155.380pt}}
\put(211.0,131.0){\rule[-0.200pt]{295.825pt}{0.400pt}}
\put(1439.0,131.0){\rule[-0.200pt]{0.400pt}{155.380pt}}
\put(211.0,776.0){\rule[-0.200pt]{295.825pt}{0.400pt}}
\end{picture}}
 \resizebox{8cm}{!}{% GNUPLOT: LaTeX picture
\setlength{\unitlength}{0.240900pt}
\ifx\plotpoint\undefined\newsavebox{\plotpoint}\fi
\begin{picture}(1500,900)(0,0)
\sbox{\plotpoint}{\rule[-0.200pt]{0.400pt}{0.400pt}}%
\put(211.0,131.0){\rule[-0.200pt]{4.818pt}{0.400pt}}
\put(191,131){\makebox(0,0)[r]{$1e-05$}}
\put(1419.0,131.0){\rule[-0.200pt]{4.818pt}{0.400pt}}
\put(211.0,163.0){\rule[-0.200pt]{2.409pt}{0.400pt}}
\put(1429.0,163.0){\rule[-0.200pt]{2.409pt}{0.400pt}}
\put(211.0,206.0){\rule[-0.200pt]{2.409pt}{0.400pt}}
\put(1429.0,206.0){\rule[-0.200pt]{2.409pt}{0.400pt}}
\put(211.0,228.0){\rule[-0.200pt]{2.409pt}{0.400pt}}
\put(1429.0,228.0){\rule[-0.200pt]{2.409pt}{0.400pt}}
\put(211.0,239.0){\rule[-0.200pt]{4.818pt}{0.400pt}}
\put(191,239){\makebox(0,0)[r]{$0.0001$}}
\put(1419.0,239.0){\rule[-0.200pt]{4.818pt}{0.400pt}}
\put(211.0,271.0){\rule[-0.200pt]{2.409pt}{0.400pt}}
\put(1429.0,271.0){\rule[-0.200pt]{2.409pt}{0.400pt}}
\put(211.0,314.0){\rule[-0.200pt]{2.409pt}{0.400pt}}
\put(1429.0,314.0){\rule[-0.200pt]{2.409pt}{0.400pt}}
\put(211.0,336.0){\rule[-0.200pt]{2.409pt}{0.400pt}}
\put(1429.0,336.0){\rule[-0.200pt]{2.409pt}{0.400pt}}
\put(211.0,346.0){\rule[-0.200pt]{4.818pt}{0.400pt}}
\put(191,346){\makebox(0,0)[r]{$0.001$}}
\put(1419.0,346.0){\rule[-0.200pt]{4.818pt}{0.400pt}}
\put(211.0,378.0){\rule[-0.200pt]{2.409pt}{0.400pt}}
\put(1429.0,378.0){\rule[-0.200pt]{2.409pt}{0.400pt}}
\put(211.0,421.0){\rule[-0.200pt]{2.409pt}{0.400pt}}
\put(1429.0,421.0){\rule[-0.200pt]{2.409pt}{0.400pt}}
\put(211.0,443.0){\rule[-0.200pt]{2.409pt}{0.400pt}}
\put(1429.0,443.0){\rule[-0.200pt]{2.409pt}{0.400pt}}
\put(211.0,454.0){\rule[-0.200pt]{4.818pt}{0.400pt}}
\put(191,454){\makebox(0,0)[r]{$0.01$}}
\put(1419.0,454.0){\rule[-0.200pt]{4.818pt}{0.400pt}}
\put(211.0,486.0){\rule[-0.200pt]{2.409pt}{0.400pt}}
\put(1429.0,486.0){\rule[-0.200pt]{2.409pt}{0.400pt}}
\put(211.0,529.0){\rule[-0.200pt]{2.409pt}{0.400pt}}
\put(1429.0,529.0){\rule[-0.200pt]{2.409pt}{0.400pt}}
\put(211.0,551.0){\rule[-0.200pt]{2.409pt}{0.400pt}}
\put(1429.0,551.0){\rule[-0.200pt]{2.409pt}{0.400pt}}
\put(211.0,561.0){\rule[-0.200pt]{4.818pt}{0.400pt}}
\put(191,561){\makebox(0,0)[r]{$0.1$}}
\put(1419.0,561.0){\rule[-0.200pt]{4.818pt}{0.400pt}}
\put(211.0,593.0){\rule[-0.200pt]{2.409pt}{0.400pt}}
\put(1429.0,593.0){\rule[-0.200pt]{2.409pt}{0.400pt}}
\put(211.0,636.0){\rule[-0.200pt]{2.409pt}{0.400pt}}
\put(1429.0,636.0){\rule[-0.200pt]{2.409pt}{0.400pt}}
\put(211.0,658.0){\rule[-0.200pt]{2.409pt}{0.400pt}}
\put(1429.0,658.0){\rule[-0.200pt]{2.409pt}{0.400pt}}
\put(211.0,669.0){\rule[-0.200pt]{4.818pt}{0.400pt}}
\put(191,669){\makebox(0,0)[r]{$1$}}
\put(1419.0,669.0){\rule[-0.200pt]{4.818pt}{0.400pt}}
\put(211.0,701.0){\rule[-0.200pt]{2.409pt}{0.400pt}}
\put(1429.0,701.0){\rule[-0.200pt]{2.409pt}{0.400pt}}
\put(211.0,744.0){\rule[-0.200pt]{2.409pt}{0.400pt}}
\put(1429.0,744.0){\rule[-0.200pt]{2.409pt}{0.400pt}}
\put(211.0,766.0){\rule[-0.200pt]{2.409pt}{0.400pt}}
\put(1429.0,766.0){\rule[-0.200pt]{2.409pt}{0.400pt}}
\put(211.0,776.0){\rule[-0.200pt]{4.818pt}{0.400pt}}
\put(191,776){\makebox(0,0)[r]{$10$}}
\put(1419.0,776.0){\rule[-0.200pt]{4.818pt}{0.400pt}}
\put(211.0,131.0){\rule[-0.200pt]{0.400pt}{4.818pt}}
\put(211,90){\makebox(0,0){$0.001$}}
\put(211.0,756.0){\rule[-0.200pt]{0.400pt}{4.818pt}}
\put(396.0,131.0){\rule[-0.200pt]{0.400pt}{2.409pt}}
\put(396.0,766.0){\rule[-0.200pt]{0.400pt}{2.409pt}}
\put(504.0,131.0){\rule[-0.200pt]{0.400pt}{2.409pt}}
\put(504.0,766.0){\rule[-0.200pt]{0.400pt}{2.409pt}}
\put(581.0,131.0){\rule[-0.200pt]{0.400pt}{2.409pt}}
\put(581.0,766.0){\rule[-0.200pt]{0.400pt}{2.409pt}}
\put(640.0,131.0){\rule[-0.200pt]{0.400pt}{2.409pt}}
\put(640.0,766.0){\rule[-0.200pt]{0.400pt}{2.409pt}}
\put(689.0,131.0){\rule[-0.200pt]{0.400pt}{2.409pt}}
\put(689.0,766.0){\rule[-0.200pt]{0.400pt}{2.409pt}}
\put(730.0,131.0){\rule[-0.200pt]{0.400pt}{2.409pt}}
\put(730.0,766.0){\rule[-0.200pt]{0.400pt}{2.409pt}}
\put(765.0,131.0){\rule[-0.200pt]{0.400pt}{2.409pt}}
\put(765.0,766.0){\rule[-0.200pt]{0.400pt}{2.409pt}}
\put(797.0,131.0){\rule[-0.200pt]{0.400pt}{2.409pt}}
\put(797.0,766.0){\rule[-0.200pt]{0.400pt}{2.409pt}}
\put(825.0,131.0){\rule[-0.200pt]{0.400pt}{4.818pt}}
\put(825,90){\makebox(0,0){$0.01$}}
\put(825.0,756.0){\rule[-0.200pt]{0.400pt}{4.818pt}}
\put(1010.0,131.0){\rule[-0.200pt]{0.400pt}{2.409pt}}
\put(1010.0,766.0){\rule[-0.200pt]{0.400pt}{2.409pt}}
\put(1118.0,131.0){\rule[-0.200pt]{0.400pt}{2.409pt}}
\put(1118.0,766.0){\rule[-0.200pt]{0.400pt}{2.409pt}}
\put(1195.0,131.0){\rule[-0.200pt]{0.400pt}{2.409pt}}
\put(1195.0,766.0){\rule[-0.200pt]{0.400pt}{2.409pt}}
\put(1254.0,131.0){\rule[-0.200pt]{0.400pt}{2.409pt}}
\put(1254.0,766.0){\rule[-0.200pt]{0.400pt}{2.409pt}}
\put(1303.0,131.0){\rule[-0.200pt]{0.400pt}{2.409pt}}
\put(1303.0,766.0){\rule[-0.200pt]{0.400pt}{2.409pt}}
\put(1344.0,131.0){\rule[-0.200pt]{0.400pt}{2.409pt}}
\put(1344.0,766.0){\rule[-0.200pt]{0.400pt}{2.409pt}}
\put(1379.0,131.0){\rule[-0.200pt]{0.400pt}{2.409pt}}
\put(1379.0,766.0){\rule[-0.200pt]{0.400pt}{2.409pt}}
\put(1411.0,131.0){\rule[-0.200pt]{0.400pt}{2.409pt}}
\put(1411.0,766.0){\rule[-0.200pt]{0.400pt}{2.409pt}}
\put(1439.0,131.0){\rule[-0.200pt]{0.400pt}{4.818pt}}
\put(1439,90){\makebox(0,0){$0.1$}}
\put(1439.0,756.0){\rule[-0.200pt]{0.400pt}{4.818pt}}
\put(211.0,131.0){\rule[-0.200pt]{0.400pt}{155.380pt}}
\put(211.0,131.0){\rule[-0.200pt]{295.825pt}{0.400pt}}
\put(1439.0,131.0){\rule[-0.200pt]{0.400pt}{155.380pt}}
\put(211.0,776.0){\rule[-0.200pt]{295.825pt}{0.400pt}}
\put(30,453){\makebox(0,0){\rotatebox{90}{$H^1(\mathcal{F})$ error}}}
\put(825,29){\makebox(0,0){$h$}}
\put(825,838){\makebox(0,0){velocity}}
\put(1279,253){\makebox(0,0)[r]{$P_2-P_1-P_1$ (slope=1.986)}}
\put(1299.0,253.0){\rule[-0.200pt]{24.090pt}{0.400pt}}
\put(1218,469){\usebox{\plotpoint}}
\multiput(1212.71,467.92)(-1.473,-0.499){123}{\rule{1.275pt}{0.120pt}}
\multiput(1215.35,468.17)(-182.354,-63.000){2}{\rule{0.637pt}{0.400pt}}
\multiput(1027.88,404.92)(-1.420,-0.499){127}{\rule{1.232pt}{0.120pt}}
\multiput(1030.44,405.17)(-181.442,-65.000){2}{\rule{0.616pt}{0.400pt}}
\multiput(843.79,339.92)(-1.450,-0.499){125}{\rule{1.256pt}{0.120pt}}
\multiput(846.39,340.17)(-182.393,-64.000){2}{\rule{0.628pt}{0.400pt}}
\multiput(658.86,275.92)(-1.427,-0.499){127}{\rule{1.238pt}{0.120pt}}
\multiput(661.43,276.17)(-182.430,-65.000){2}{\rule{0.619pt}{0.400pt}}
\put(1218,469){\makebox(0,0){$+$}}
\put(1033,406){\makebox(0,0){$+$}}
\put(849,341){\makebox(0,0){$+$}}
\put(664,277){\makebox(0,0){$+$}}
\put(479,212){\makebox(0,0){$+$}}
\put(1349,253){\makebox(0,0){$+$}}
\put(1279,212){\makebox(0,0)[r]{$P_2-P_1-P_0$ (slope=1.592)}}
\multiput(1299,212)(20.756,0.000){5}{\usebox{\plotpoint}}
\put(1399,212){\usebox{\plotpoint}}
\put(1218,468){\usebox{\plotpoint}}
\multiput(1218,468)(-20.538,-2.997){10}{\usebox{\plotpoint}}
\multiput(1033,441)(-19.795,-6.240){9}{\usebox{\plotpoint}}
\multiput(849,383)(-19.712,-6.500){9}{\usebox{\plotpoint}}
\multiput(664,322)(-19.981,-5.616){9}{\usebox{\plotpoint}}
\put(479,270){\usebox{\plotpoint}}
\put(1218,468){\makebox(0,0){$\times$}}
\put(1033,441){\makebox(0,0){$\times$}}
\put(849,383){\makebox(0,0){$\times$}}
\put(664,322){\makebox(0,0){$\times$}}
\put(479,270){\makebox(0,0){$\times$}}
\put(1349,212){\makebox(0,0){$\times$}}
\sbox{\plotpoint}{\rule[-0.400pt]{0.800pt}{0.800pt}}%
\sbox{\plotpoint}{\rule[-0.200pt]{0.400pt}{0.400pt}}%
\put(1279,171){\makebox(0,0)[r]{$P_1-P_1-P_1$ (slope=0.170)}}
\sbox{\plotpoint}{\rule[-0.400pt]{0.800pt}{0.800pt}}%
\put(1299.0,171.0){\rule[-0.400pt]{24.090pt}{0.800pt}}
\put(1218,752){\usebox{\plotpoint}}
\multiput(1207.57,750.09)(-1.455,-0.501){121}{\rule{2.513pt}{0.121pt}}
\multiput(1212.79,750.34)(-179.785,-64.000){2}{\rule{1.256pt}{0.800pt}}
\multiput(1024.53,689.41)(-1.155,0.501){153}{\rule{2.040pt}{0.121pt}}
\multiput(1028.77,686.34)(-179.766,80.000){2}{\rule{1.020pt}{0.800pt}}
\multiput(842.15,766.09)(-0.909,-0.501){197}{\rule{1.651pt}{0.121pt}}
\multiput(845.57,766.34)(-181.573,-102.000){2}{\rule{0.825pt}{0.800pt}}
\multiput(654.39,667.41)(-1.329,0.501){133}{\rule{2.314pt}{0.121pt}}
\multiput(659.20,664.34)(-180.197,70.000){2}{\rule{1.157pt}{0.800pt}}
\put(1218,752){\makebox(0,0){$\ast$}}
\put(1033,688){\makebox(0,0){$\ast$}}
\put(849,768){\makebox(0,0){$\ast$}}
\put(664,666){\makebox(0,0){$\ast$}}
\put(479,736){\makebox(0,0){$\ast$}}
\put(1349,171){\makebox(0,0){$\ast$}}
\sbox{\plotpoint}{\rule[-0.200pt]{0.400pt}{0.400pt}}%
\put(211.0,131.0){\rule[-0.200pt]{0.400pt}{155.380pt}}
\put(211.0,131.0){\rule[-0.200pt]{295.825pt}{0.400pt}}
\put(1439.0,131.0){\rule[-0.200pt]{0.400pt}{155.380pt}}
\put(211.0,776.0){\rule[-0.200pt]{295.825pt}{0.400pt}}
\end{picture}}\\
\resizebox{8cm}{!}{% GNUPLOT: LaTeX picture
\setlength{\unitlength}{0.240900pt}
\ifx\plotpoint\undefined\newsavebox{\plotpoint}\fi
\begin{picture}(1500,900)(0,0)
\sbox{\plotpoint}{\rule[-0.200pt]{0.400pt}{0.400pt}}%
\put(211.0,131.0){\rule[-0.200pt]{4.818pt}{0.400pt}}
\put(191,131){\makebox(0,0)[r]{$1e-06$}}
\put(1419.0,131.0){\rule[-0.200pt]{4.818pt}{0.400pt}}
\put(211.0,160.0){\rule[-0.200pt]{2.409pt}{0.400pt}}
\put(1429.0,160.0){\rule[-0.200pt]{2.409pt}{0.400pt}}
\put(211.0,190.0){\rule[-0.200pt]{4.818pt}{0.400pt}}
\put(191,190){\makebox(0,0)[r]{$0.0001$}}
\put(1419.0,190.0){\rule[-0.200pt]{4.818pt}{0.400pt}}
\put(211.0,219.0){\rule[-0.200pt]{2.409pt}{0.400pt}}
\put(1429.0,219.0){\rule[-0.200pt]{2.409pt}{0.400pt}}
\put(211.0,248.0){\rule[-0.200pt]{4.818pt}{0.400pt}}
\put(191,248){\makebox(0,0)[r]{$0.01$}}
\put(1419.0,248.0){\rule[-0.200pt]{4.818pt}{0.400pt}}
\put(211.0,278.0){\rule[-0.200pt]{2.409pt}{0.400pt}}
\put(1429.0,278.0){\rule[-0.200pt]{2.409pt}{0.400pt}}
\put(211.0,307.0){\rule[-0.200pt]{4.818pt}{0.400pt}}
\put(191,307){\makebox(0,0)[r]{$1$}}
\put(1419.0,307.0){\rule[-0.200pt]{4.818pt}{0.400pt}}
\put(211.0,336.0){\rule[-0.200pt]{2.409pt}{0.400pt}}
\put(1429.0,336.0){\rule[-0.200pt]{2.409pt}{0.400pt}}
\put(211.0,366.0){\rule[-0.200pt]{4.818pt}{0.400pt}}
\put(191,366){\makebox(0,0)[r]{$100$}}
\put(1419.0,366.0){\rule[-0.200pt]{4.818pt}{0.400pt}}
\put(211.0,395.0){\rule[-0.200pt]{2.409pt}{0.400pt}}
\put(1429.0,395.0){\rule[-0.200pt]{2.409pt}{0.400pt}}
\put(211.0,424.0){\rule[-0.200pt]{4.818pt}{0.400pt}}
\put(191,424){\makebox(0,0)[r]{$10000$}}
\put(1419.0,424.0){\rule[-0.200pt]{4.818pt}{0.400pt}}
\put(211.0,454.0){\rule[-0.200pt]{2.409pt}{0.400pt}}
\put(1429.0,454.0){\rule[-0.200pt]{2.409pt}{0.400pt}}
\put(211.0,483.0){\rule[-0.200pt]{4.818pt}{0.400pt}}
\put(191,483){\makebox(0,0)[r]{$1e+06$}}
\put(1419.0,483.0){\rule[-0.200pt]{4.818pt}{0.400pt}}
\put(211.0,512.0){\rule[-0.200pt]{2.409pt}{0.400pt}}
\put(1429.0,512.0){\rule[-0.200pt]{2.409pt}{0.400pt}}
\put(211.0,541.0){\rule[-0.200pt]{4.818pt}{0.400pt}}
\put(191,541){\makebox(0,0)[r]{$1e+08$}}
\put(1419.0,541.0){\rule[-0.200pt]{4.818pt}{0.400pt}}
\put(211.0,571.0){\rule[-0.200pt]{2.409pt}{0.400pt}}
\put(1429.0,571.0){\rule[-0.200pt]{2.409pt}{0.400pt}}
\put(211.0,600.0){\rule[-0.200pt]{4.818pt}{0.400pt}}
\put(191,600){\makebox(0,0)[r]{$1e+10$}}
\put(1419.0,600.0){\rule[-0.200pt]{4.818pt}{0.400pt}}
\put(211.0,629.0){\rule[-0.200pt]{2.409pt}{0.400pt}}
\put(1429.0,629.0){\rule[-0.200pt]{2.409pt}{0.400pt}}
\put(211.0,659.0){\rule[-0.200pt]{4.818pt}{0.400pt}}
\put(191,659){\makebox(0,0)[r]{$1e+12$}}
\put(1419.0,659.0){\rule[-0.200pt]{4.818pt}{0.400pt}}
\put(211.0,688.0){\rule[-0.200pt]{2.409pt}{0.400pt}}
\put(1429.0,688.0){\rule[-0.200pt]{2.409pt}{0.400pt}}
\put(211.0,717.0){\rule[-0.200pt]{4.818pt}{0.400pt}}
\put(191,717){\makebox(0,0)[r]{$1e+14$}}
\put(1419.0,717.0){\rule[-0.200pt]{4.818pt}{0.400pt}}
\put(211.0,747.0){\rule[-0.200pt]{2.409pt}{0.400pt}}
\put(1429.0,747.0){\rule[-0.200pt]{2.409pt}{0.400pt}}
\put(211.0,776.0){\rule[-0.200pt]{4.818pt}{0.400pt}}
\put(191,776){\makebox(0,0)[r]{$1e+16$}}
\put(1419.0,776.0){\rule[-0.200pt]{4.818pt}{0.400pt}}
\put(211.0,131.0){\rule[-0.200pt]{0.400pt}{4.818pt}}
\put(211,90){\makebox(0,0){$0.001$}}
\put(211.0,756.0){\rule[-0.200pt]{0.400pt}{4.818pt}}
\put(396.0,131.0){\rule[-0.200pt]{0.400pt}{2.409pt}}
\put(396.0,766.0){\rule[-0.200pt]{0.400pt}{2.409pt}}
\put(504.0,131.0){\rule[-0.200pt]{0.400pt}{2.409pt}}
\put(504.0,766.0){\rule[-0.200pt]{0.400pt}{2.409pt}}
\put(581.0,131.0){\rule[-0.200pt]{0.400pt}{2.409pt}}
\put(581.0,766.0){\rule[-0.200pt]{0.400pt}{2.409pt}}
\put(640.0,131.0){\rule[-0.200pt]{0.400pt}{2.409pt}}
\put(640.0,766.0){\rule[-0.200pt]{0.400pt}{2.409pt}}
\put(689.0,131.0){\rule[-0.200pt]{0.400pt}{2.409pt}}
\put(689.0,766.0){\rule[-0.200pt]{0.400pt}{2.409pt}}
\put(730.0,131.0){\rule[-0.200pt]{0.400pt}{2.409pt}}
\put(730.0,766.0){\rule[-0.200pt]{0.400pt}{2.409pt}}
\put(765.0,131.0){\rule[-0.200pt]{0.400pt}{2.409pt}}
\put(765.0,766.0){\rule[-0.200pt]{0.400pt}{2.409pt}}
\put(797.0,131.0){\rule[-0.200pt]{0.400pt}{2.409pt}}
\put(797.0,766.0){\rule[-0.200pt]{0.400pt}{2.409pt}}
\put(825.0,131.0){\rule[-0.200pt]{0.400pt}{4.818pt}}
\put(825,90){\makebox(0,0){$0.01$}}
\put(825.0,756.0){\rule[-0.200pt]{0.400pt}{4.818pt}}
\put(1010.0,131.0){\rule[-0.200pt]{0.400pt}{2.409pt}}
\put(1010.0,766.0){\rule[-0.200pt]{0.400pt}{2.409pt}}
\put(1118.0,131.0){\rule[-0.200pt]{0.400pt}{2.409pt}}
\put(1118.0,766.0){\rule[-0.200pt]{0.400pt}{2.409pt}}
\put(1195.0,131.0){\rule[-0.200pt]{0.400pt}{2.409pt}}
\put(1195.0,766.0){\rule[-0.200pt]{0.400pt}{2.409pt}}
\put(1254.0,131.0){\rule[-0.200pt]{0.400pt}{2.409pt}}
\put(1254.0,766.0){\rule[-0.200pt]{0.400pt}{2.409pt}}
\put(1303.0,131.0){\rule[-0.200pt]{0.400pt}{2.409pt}}
\put(1303.0,766.0){\rule[-0.200pt]{0.400pt}{2.409pt}}
\put(1344.0,131.0){\rule[-0.200pt]{0.400pt}{2.409pt}}
\put(1344.0,766.0){\rule[-0.200pt]{0.400pt}{2.409pt}}
\put(1379.0,131.0){\rule[-0.200pt]{0.400pt}{2.409pt}}
\put(1379.0,766.0){\rule[-0.200pt]{0.400pt}{2.409pt}}
\put(1411.0,131.0){\rule[-0.200pt]{0.400pt}{2.409pt}}
\put(1411.0,766.0){\rule[-0.200pt]{0.400pt}{2.409pt}}
\put(1439.0,131.0){\rule[-0.200pt]{0.400pt}{4.818pt}}
\put(1439,90){\makebox(0,0){$0.1$}}
\put(1439.0,756.0){\rule[-0.200pt]{0.400pt}{4.818pt}}
\put(211.0,131.0){\rule[-0.200pt]{0.400pt}{155.380pt}}
\put(211.0,131.0){\rule[-0.200pt]{295.825pt}{0.400pt}}
\put(1439.0,131.0){\rule[-0.200pt]{0.400pt}{155.380pt}}
\put(211.0,776.0){\rule[-0.200pt]{295.825pt}{0.400pt}}
\put(30,453){\makebox(0,0){\rotatebox{90}{$L^2(\mathcal{F})$ error}}}
\put(825,29){\makebox(0,0){$h$}}
\put(825,838){\makebox(0,0){pressure}}
\put(671,494){\makebox(0,0)[r]{$P_2-P_1-P_1$ (slope=2.011)}}
\put(691.0,494.0){\rule[-0.200pt]{24.090pt}{0.400pt}}
\put(1218,248){\usebox{\plotpoint}}
\multiput(1198.39,246.92)(-5.907,-0.494){29}{\rule{4.725pt}{0.119pt}}
\multiput(1208.19,247.17)(-175.193,-16.000){2}{\rule{2.363pt}{0.400pt}}
\multiput(1015.61,230.92)(-5.207,-0.495){33}{\rule{4.189pt}{0.119pt}}
\multiput(1024.31,231.17)(-175.306,-18.000){2}{\rule{2.094pt}{0.400pt}}
\multiput(831.52,212.92)(-5.235,-0.495){33}{\rule{4.211pt}{0.119pt}}
\multiput(840.26,213.17)(-176.260,-18.000){2}{\rule{2.106pt}{0.400pt}}
\multiput(647.42,194.92)(-4.953,-0.495){35}{\rule{3.995pt}{0.119pt}}
\multiput(655.71,195.17)(-176.709,-19.000){2}{\rule{1.997pt}{0.400pt}}
\put(1218,248){\makebox(0,0){$+$}}
\put(1033,232){\makebox(0,0){$+$}}
\put(849,214){\makebox(0,0){$+$}}
\put(664,196){\makebox(0,0){$+$}}
\put(479,177){\makebox(0,0){$+$}}
\put(741,494){\makebox(0,0){$+$}}
\put(671,453){\makebox(0,0)[r]{$P_2-P_1-P_0$ (slope=1.644)}}
\multiput(691,453)(20.756,0.000){5}{\usebox{\plotpoint}}
\put(791,453){\usebox{\plotpoint}}
\put(1218,248){\usebox{\plotpoint}}
\multiput(1218,248)(-20.754,-0.224){9}{\usebox{\plotpoint}}
\multiput(1033,246)(-20.634,-2.243){9}{\usebox{\plotpoint}}
\multiput(849,226)(-20.658,-2.010){9}{\usebox{\plotpoint}}
\multiput(664,208)(-20.704,-1.455){9}{\usebox{\plotpoint}}
\put(479,195){\usebox{\plotpoint}}
\put(1218,248){\makebox(0,0){$\times$}}
\put(1033,246){\makebox(0,0){$\times$}}
\put(849,226){\makebox(0,0){$\times$}}
\put(664,208){\makebox(0,0){$\times$}}
\put(479,195){\makebox(0,0){$\times$}}
\put(741,453){\makebox(0,0){$\times$}}
\sbox{\plotpoint}{\rule[-0.400pt]{0.800pt}{0.800pt}}%
\sbox{\plotpoint}{\rule[-0.200pt]{0.400pt}{0.400pt}}%
\put(671,412){\makebox(0,0)[r]{$P_1-P_1-P_1$ (slope=0.543)}}
\sbox{\plotpoint}{\rule[-0.400pt]{0.800pt}{0.800pt}}%
\put(691.0,412.0){\rule[-0.400pt]{24.090pt}{0.800pt}}
\put(1218,727){\usebox{\plotpoint}}
\put(1033,726.34){\rule{44.567pt}{0.800pt}}
\multiput(1125.50,725.34)(-92.500,2.000){2}{\rule{22.283pt}{0.800pt}}
\multiput(930.33,730.39)(-20.332,0.536){5}{\rule{24.733pt}{0.129pt}}
\multiput(981.66,727.34)(-132.665,6.000){2}{\rule{12.367pt}{0.800pt}}
\multiput(831.57,733.09)(-2.539,-0.503){67}{\rule{4.200pt}{0.121pt}}
\multiput(840.28,733.34)(-176.283,-37.000){2}{\rule{2.100pt}{0.800pt}}
\multiput(632.45,699.41)(-4.795,0.505){33}{\rule{7.600pt}{0.122pt}}
\multiput(648.23,696.34)(-169.226,20.000){2}{\rule{3.800pt}{0.800pt}}
\put(1218,727){\makebox(0,0){$\ast$}}
\put(1033,729){\makebox(0,0){$\ast$}}
\put(849,735){\makebox(0,0){$\ast$}}
\put(664,698){\makebox(0,0){$\ast$}}
\put(479,718){\makebox(0,0){$\ast$}}
\put(741,412){\makebox(0,0){$\ast$}}
\sbox{\plotpoint}{\rule[-0.200pt]{0.400pt}{0.400pt}}%
\put(211.0,131.0){\rule[-0.200pt]{0.400pt}{155.380pt}}
\put(211.0,131.0){\rule[-0.200pt]{295.825pt}{0.400pt}}
\put(1439.0,131.0){\rule[-0.200pt]{0.400pt}{155.380pt}}
\put(211.0,776.0){\rule[-0.200pt]{295.825pt}{0.400pt}}
\end{picture}}
%\resizebox{8cm}{!}{\input{FIGURES/fig1-l}}\\
\resizebox{8cm}{!}{% GNUPLOT: LaTeX picture
\setlength{\unitlength}{0.240900pt}
\ifx\plotpoint\undefined\newsavebox{\plotpoint}\fi
\begin{picture}(1500,900)(0,0)
\sbox{\plotpoint}{\rule[-0.200pt]{0.400pt}{0.400pt}}%
\put(211.0,131.0){\rule[-0.200pt]{4.818pt}{0.400pt}}
\put(191,131){\makebox(0,0)[r]{$1e-10$}}
\put(1419.0,131.0){\rule[-0.200pt]{4.818pt}{0.400pt}}
\put(211.0,157.0){\rule[-0.200pt]{2.409pt}{0.400pt}}
\put(1429.0,157.0){\rule[-0.200pt]{2.409pt}{0.400pt}}
\put(211.0,183.0){\rule[-0.200pt]{2.409pt}{0.400pt}}
\put(1429.0,183.0){\rule[-0.200pt]{2.409pt}{0.400pt}}
\put(211.0,208.0){\rule[-0.200pt]{2.409pt}{0.400pt}}
\put(1429.0,208.0){\rule[-0.200pt]{2.409pt}{0.400pt}}
\put(211.0,234.0){\rule[-0.200pt]{2.409pt}{0.400pt}}
\put(1429.0,234.0){\rule[-0.200pt]{2.409pt}{0.400pt}}
\put(211.0,260.0){\rule[-0.200pt]{4.818pt}{0.400pt}}
\put(191,260){\makebox(0,0)[r]{$1e-05$}}
\put(1419.0,260.0){\rule[-0.200pt]{4.818pt}{0.400pt}}
\put(211.0,286.0){\rule[-0.200pt]{2.409pt}{0.400pt}}
\put(1429.0,286.0){\rule[-0.200pt]{2.409pt}{0.400pt}}
\put(211.0,312.0){\rule[-0.200pt]{2.409pt}{0.400pt}}
\put(1429.0,312.0){\rule[-0.200pt]{2.409pt}{0.400pt}}
\put(211.0,337.0){\rule[-0.200pt]{2.409pt}{0.400pt}}
\put(1429.0,337.0){\rule[-0.200pt]{2.409pt}{0.400pt}}
\put(211.0,363.0){\rule[-0.200pt]{2.409pt}{0.400pt}}
\put(1429.0,363.0){\rule[-0.200pt]{2.409pt}{0.400pt}}
\put(211.0,389.0){\rule[-0.200pt]{4.818pt}{0.400pt}}
\put(191,389){\makebox(0,0)[r]{$1$}}
\put(1419.0,389.0){\rule[-0.200pt]{4.818pt}{0.400pt}}
\put(211.0,415.0){\rule[-0.200pt]{2.409pt}{0.400pt}}
\put(1429.0,415.0){\rule[-0.200pt]{2.409pt}{0.400pt}}
\put(211.0,441.0){\rule[-0.200pt]{2.409pt}{0.400pt}}
\put(1429.0,441.0){\rule[-0.200pt]{2.409pt}{0.400pt}}
\put(211.0,466.0){\rule[-0.200pt]{2.409pt}{0.400pt}}
\put(1429.0,466.0){\rule[-0.200pt]{2.409pt}{0.400pt}}
\put(211.0,492.0){\rule[-0.200pt]{2.409pt}{0.400pt}}
\put(1429.0,492.0){\rule[-0.200pt]{2.409pt}{0.400pt}}
\put(211.0,518.0){\rule[-0.200pt]{4.818pt}{0.400pt}}
\put(191,518){\makebox(0,0)[r]{$100000$}}
\put(1419.0,518.0){\rule[-0.200pt]{4.818pt}{0.400pt}}
\put(211.0,544.0){\rule[-0.200pt]{2.409pt}{0.400pt}}
\put(1429.0,544.0){\rule[-0.200pt]{2.409pt}{0.400pt}}
\put(211.0,570.0){\rule[-0.200pt]{2.409pt}{0.400pt}}
\put(1429.0,570.0){\rule[-0.200pt]{2.409pt}{0.400pt}}
\put(211.0,595.0){\rule[-0.200pt]{2.409pt}{0.400pt}}
\put(1429.0,595.0){\rule[-0.200pt]{2.409pt}{0.400pt}}
\put(211.0,621.0){\rule[-0.200pt]{2.409pt}{0.400pt}}
\put(1429.0,621.0){\rule[-0.200pt]{2.409pt}{0.400pt}}
\put(211.0,647.0){\rule[-0.200pt]{4.818pt}{0.400pt}}
\put(191,647){\makebox(0,0)[r]{$1e+10$}}
\put(1419.0,647.0){\rule[-0.200pt]{4.818pt}{0.400pt}}
\put(211.0,673.0){\rule[-0.200pt]{2.409pt}{0.400pt}}
\put(1429.0,673.0){\rule[-0.200pt]{2.409pt}{0.400pt}}
\put(211.0,699.0){\rule[-0.200pt]{2.409pt}{0.400pt}}
\put(1429.0,699.0){\rule[-0.200pt]{2.409pt}{0.400pt}}
\put(211.0,724.0){\rule[-0.200pt]{2.409pt}{0.400pt}}
\put(1429.0,724.0){\rule[-0.200pt]{2.409pt}{0.400pt}}
\put(211.0,750.0){\rule[-0.200pt]{2.409pt}{0.400pt}}
\put(1429.0,750.0){\rule[-0.200pt]{2.409pt}{0.400pt}}
\put(211.0,776.0){\rule[-0.200pt]{4.818pt}{0.400pt}}
\put(191,776){\makebox(0,0)[r]{$1e+15$}}
\put(1419.0,776.0){\rule[-0.200pt]{4.818pt}{0.400pt}}
\put(211.0,131.0){\rule[-0.200pt]{0.400pt}{4.818pt}}
\put(211,90){\makebox(0,0){$0.001$}}
\put(211.0,756.0){\rule[-0.200pt]{0.400pt}{4.818pt}}
\put(396.0,131.0){\rule[-0.200pt]{0.400pt}{2.409pt}}
\put(396.0,766.0){\rule[-0.200pt]{0.400pt}{2.409pt}}
\put(504.0,131.0){\rule[-0.200pt]{0.400pt}{2.409pt}}
\put(504.0,766.0){\rule[-0.200pt]{0.400pt}{2.409pt}}
\put(581.0,131.0){\rule[-0.200pt]{0.400pt}{2.409pt}}
\put(581.0,766.0){\rule[-0.200pt]{0.400pt}{2.409pt}}
\put(640.0,131.0){\rule[-0.200pt]{0.400pt}{2.409pt}}
\put(640.0,766.0){\rule[-0.200pt]{0.400pt}{2.409pt}}
\put(689.0,131.0){\rule[-0.200pt]{0.400pt}{2.409pt}}
\put(689.0,766.0){\rule[-0.200pt]{0.400pt}{2.409pt}}
\put(730.0,131.0){\rule[-0.200pt]{0.400pt}{2.409pt}}
\put(730.0,766.0){\rule[-0.200pt]{0.400pt}{2.409pt}}
\put(765.0,131.0){\rule[-0.200pt]{0.400pt}{2.409pt}}
\put(765.0,766.0){\rule[-0.200pt]{0.400pt}{2.409pt}}
\put(797.0,131.0){\rule[-0.200pt]{0.400pt}{2.409pt}}
\put(797.0,766.0){\rule[-0.200pt]{0.400pt}{2.409pt}}
\put(825.0,131.0){\rule[-0.200pt]{0.400pt}{4.818pt}}
\put(825,90){\makebox(0,0){$0.01$}}
\put(825.0,756.0){\rule[-0.200pt]{0.400pt}{4.818pt}}
\put(1010.0,131.0){\rule[-0.200pt]{0.400pt}{2.409pt}}
\put(1010.0,766.0){\rule[-0.200pt]{0.400pt}{2.409pt}}
\put(1118.0,131.0){\rule[-0.200pt]{0.400pt}{2.409pt}}
\put(1118.0,766.0){\rule[-0.200pt]{0.400pt}{2.409pt}}
\put(1195.0,131.0){\rule[-0.200pt]{0.400pt}{2.409pt}}
\put(1195.0,766.0){\rule[-0.200pt]{0.400pt}{2.409pt}}
\put(1254.0,131.0){\rule[-0.200pt]{0.400pt}{2.409pt}}
\put(1254.0,766.0){\rule[-0.200pt]{0.400pt}{2.409pt}}
\put(1303.0,131.0){\rule[-0.200pt]{0.400pt}{2.409pt}}
\put(1303.0,766.0){\rule[-0.200pt]{0.400pt}{2.409pt}}
\put(1344.0,131.0){\rule[-0.200pt]{0.400pt}{2.409pt}}
\put(1344.0,766.0){\rule[-0.200pt]{0.400pt}{2.409pt}}
\put(1379.0,131.0){\rule[-0.200pt]{0.400pt}{2.409pt}}
\put(1379.0,766.0){\rule[-0.200pt]{0.400pt}{2.409pt}}
\put(1411.0,131.0){\rule[-0.200pt]{0.400pt}{2.409pt}}
\put(1411.0,766.0){\rule[-0.200pt]{0.400pt}{2.409pt}}
\put(1439.0,131.0){\rule[-0.200pt]{0.400pt}{4.818pt}}
\put(1439,90){\makebox(0,0){$0.1$}}
\put(1439.0,756.0){\rule[-0.200pt]{0.400pt}{4.818pt}}
\put(211.0,131.0){\rule[-0.200pt]{0.400pt}{155.380pt}}
\put(211.0,131.0){\rule[-0.200pt]{295.825pt}{0.400pt}}
\put(1439.0,131.0){\rule[-0.200pt]{0.400pt}{155.380pt}}
\put(211.0,776.0){\rule[-0.200pt]{295.825pt}{0.400pt}}
\put(30,453){\makebox(0,0){\rotatebox{90}{$\left|\int_{\Gamma}(\lambda-\lambda_{h})\right|$}}}
\put(825,29){\makebox(0,0){$h$}}
\put(825,838){\makebox(0,0){multiplier}}
\put(671,735){\makebox(0,0)[r]{$P_2-P_1-P_1$ (slope=3.010)}}
\put(691.0,735.0){\rule[-0.200pt]{24.090pt}{0.400pt}}
\put(1218,295){\usebox{\plotpoint}}
\multiput(1209.50,293.92)(-2.451,-0.498){73}{\rule{2.047pt}{0.120pt}}
\multiput(1213.75,294.17)(-180.751,-38.000){2}{\rule{1.024pt}{0.400pt}}
\multiput(988.94,255.93)(-13.994,-0.485){11}{\rule{10.614pt}{0.117pt}}
\multiput(1010.97,256.17)(-161.970,-7.000){2}{\rule{5.307pt}{0.400pt}}
\multiput(841.44,248.92)(-2.164,-0.498){83}{\rule{1.821pt}{0.120pt}}
\multiput(845.22,249.17)(-181.221,-43.000){2}{\rule{0.910pt}{0.400pt}}
\multiput(586.79,205.94)(-26.947,-0.468){5}{\rule{18.600pt}{0.113pt}}
\multiput(625.39,206.17)(-146.395,-4.000){2}{\rule{9.300pt}{0.400pt}}
\put(1218,295){\makebox(0,0){$+$}}
\put(1033,257){\makebox(0,0){$+$}}
\put(849,250){\makebox(0,0){$+$}}
\put(664,207){\makebox(0,0){$+$}}
\put(479,203){\makebox(0,0){$+$}}
\put(741,735){\makebox(0,0){$+$}}
\put(671,694){\makebox(0,0)[r]{$P_2-P_1-P_0$ (slope=2.927)}}
\multiput(691,694)(20.756,0.000){5}{\usebox{\plotpoint}}
\put(791,694){\usebox{\plotpoint}}
\put(1218,295){\usebox{\plotpoint}}
\multiput(1218,295)(-20.217,-4.699){10}{\usebox{\plotpoint}}
\multiput(1033,252)(-20.753,-0.338){9}{\usebox{\plotpoint}}
\multiput(849,249)(-20.264,-4.491){9}{\usebox{\plotpoint}}
\multiput(664,208)(-20.748,-0.561){9}{\usebox{\plotpoint}}
\put(479,203){\usebox{\plotpoint}}
\put(1218,295){\makebox(0,0){$\times$}}
\put(1033,252){\makebox(0,0){$\times$}}
\put(849,249){\makebox(0,0){$\times$}}
\put(664,208){\makebox(0,0){$\times$}}
\put(479,203){\makebox(0,0){$\times$}}
\put(741,694){\makebox(0,0){$\times$}}
\sbox{\plotpoint}{\rule[-0.400pt]{0.800pt}{0.800pt}}%
\sbox{\plotpoint}{\rule[-0.200pt]{0.400pt}{0.400pt}}%
\put(671,653){\makebox(0,0)[r]{$P_1-P_1-P_1$ (slope=7.852)}}
\sbox{\plotpoint}{\rule[-0.400pt]{0.800pt}{0.800pt}}%
\put(691.0,653.0){\rule[-0.400pt]{24.090pt}{0.800pt}}
\put(1218,731){\usebox{\plotpoint}}
\multiput(1216.09,724.07)(-0.500,-0.920){363}{\rule{0.121pt}{1.670pt}}
\multiput(1216.34,727.53)(-185.000,-336.533){2}{\rule{0.800pt}{0.835pt}}
\multiput(1003.07,392.41)(-4.532,0.505){35}{\rule{7.210pt}{0.122pt}}
\multiput(1018.04,389.34)(-169.036,21.000){2}{\rule{3.605pt}{0.800pt}}
\multiput(822.57,410.09)(-3.964,-0.504){41}{\rule{6.367pt}{0.122pt}}
\multiput(835.79,410.34)(-171.786,-24.000){2}{\rule{3.183pt}{0.800pt}}
\multiput(647.81,389.41)(-2.345,0.502){73}{\rule{3.900pt}{0.121pt}}
\multiput(655.91,386.34)(-176.905,40.000){2}{\rule{1.950pt}{0.800pt}}
\put(1218,731){\makebox(0,0){$\ast$}}
\put(1033,391){\makebox(0,0){$\ast$}}
\put(849,412){\makebox(0,0){$\ast$}}
\put(664,388){\makebox(0,0){$\ast$}}
\put(479,428){\makebox(0,0){$\ast$}}
\put(741,653){\makebox(0,0){$\ast$}}
\sbox{\plotpoint}{\rule[-0.200pt]{0.400pt}{0.400pt}}%
\put(211.0,131.0){\rule[-0.200pt]{0.400pt}{155.380pt}}
\put(211.0,131.0){\rule[-0.200pt]{295.825pt}{0.400pt}}
\put(1439.0,131.0){\rule[-0.200pt]{0.400pt}{155.380pt}}
\put(211.0,776.0){\rule[-0.200pt]{295.825pt}{0.400pt}}
\end{picture}}
\end{minipage}
\caption{Rates of convergence without stabilization for $\|u-u_h\|_{0,\mathcal{F}}$, 
$\|u-u_h\|_{1,\mathcal{F}}$, $\|p-p_h\|_{0,\mathcal{F}}$ and $\left|\int_{\Gamma}(\lambda-\lambda_{h})\right|$ }
\label{fig:sans_stab}
\end{figure}
\end{center}
\vspace*{-1.5cm}

\subsection{Methods \`a la Barbosa-Hughes.}
\label{sec:Barbosa-Hughes}
We consider now stabilization  \`a la Barbosa-Hughes, i.e. (\ref{methBP}) or (\ref{methTH}) without the distinction between good and bad triangles or pressure stabilization ($\theta_{\min}=\theta=0)$. Stabilization terms multiplied by $\gamma_0h$ are thus added to system (\ref{sys:sans_stab}):
\begin{equation}
\label{sys:Barbosa-Hughes}
\left (
\begin{array}{ccc}
K +S^{\gamma_0}_{uu} & B^T+{S^{\gamma_0}_{up}}^{\hspace*{-0.1cm}T}&C^T + {S^{\gamma_0}_{u\lambda}}^{\hspace*{-0.1cm}T}\\
B + S^{\gamma_0}_{up} & S^{\gamma_0}_{pp}& {S^{\gamma_0}_{p\lambda}}^{\hspace*{-0.1cm}T}\\
C + S^{\gamma_0}_{u \lambda} & S^{\gamma_0}_{p \lambda} & S^{\gamma_0}_{\lambda \lambda}\\
\end{array}
\right )
\left (
\begin{array}{c}
U\\
P\\
\Lambda\\
\end{array}
\right ) = 
\left (
\begin{array}{c}
F\\
0\\
G \\
\end{array}
\right ) 
\end{equation}
where
$$
\left (S^{\gamma_0}_{uu}\right )_{i_uj_u} = -4 \gamma_0h \int_{\Gamma} D(\phi_{i_u}) n\cdot D(\phi_{j_u})n, \
\left (S^{\gamma_0}_{up}\right )_{i_uj_p}   = 2\gamma_0h \int_{\Gamma} D(\phi_{i_u})n\cdot  \psi_{j_p}n, \
\left (S^{\gamma_0}_{u \lambda}\right )_{i_uj_\lambda}   = -2\gamma_0h \int_{\Gamma} D(\phi_{i_u})n\cdot  \zeta_{j_\lambda}$$
$$
\left (S^{\gamma_0}_{pp}\right )_{i_pj_p}   = -\gamma_0h \int_{\Gamma} \psi_{i_p} \psi_{j_p}, \
\left (S^{\gamma_0}_{p \lambda}\right )_{i_pj_\lambda}  = \gamma_0 h \int_{\Gamma}\psi_{i_p} n\cdot \zeta_{j_\lambda}, \
\left (S^{\gamma_0}_{\lambda \lambda}\right )_{i_\lambda j_\lambda}  = -\gamma_0 h \int_{\Gamma}\zeta_{i_\lambda} .\zeta_{j_\lambda}
$$
Notice that no "robust reconstruction" is applied (cf. Definition \ref{RobRec}) although
small intersections with the domain do occur.
We report in Fig. \ref{fig:Barbosa-Hughes}  the rates of convergence (cf.  \cite{Court14} as well).
The spaces considered are the same as in the previous tests without stabilization. 
Results for velocity and pressure are similar with optimal rates of convergence.
The improvement is clear in the
$\P_1-\P_1-\P_1$ case where the force on $\Gamma$ is well computed with optimal error.

\vspace*{-0.5cm} 

\begin{center}
\begin{figure}
\hspace*{-2cm}
\begin{minipage}{20cm}
 \resizebox{8cm}{!}{% GNUPLOT: LaTeX picture
\setlength{\unitlength}{0.240900pt}
\ifx\plotpoint\undefined\newsavebox{\plotpoint}\fi
\begin{picture}(1500,900)(0,0)
\sbox{\plotpoint}{\rule[-0.200pt]{0.400pt}{0.400pt}}%
\put(211.0,131.0){\rule[-0.200pt]{4.818pt}{0.400pt}}
\put(191,131){\makebox(0,0)[r]{$1e-08$}}
\put(1419.0,131.0){\rule[-0.200pt]{4.818pt}{0.400pt}}
\put(211.0,159.0){\rule[-0.200pt]{2.409pt}{0.400pt}}
\put(1429.0,159.0){\rule[-0.200pt]{2.409pt}{0.400pt}}
\put(211.0,195.0){\rule[-0.200pt]{2.409pt}{0.400pt}}
\put(1429.0,195.0){\rule[-0.200pt]{2.409pt}{0.400pt}}
\put(211.0,214.0){\rule[-0.200pt]{2.409pt}{0.400pt}}
\put(1429.0,214.0){\rule[-0.200pt]{2.409pt}{0.400pt}}
\put(211.0,223.0){\rule[-0.200pt]{4.818pt}{0.400pt}}
\put(191,223){\makebox(0,0)[r]{$1e-07$}}
\put(1419.0,223.0){\rule[-0.200pt]{4.818pt}{0.400pt}}
\put(211.0,251.0){\rule[-0.200pt]{2.409pt}{0.400pt}}
\put(1429.0,251.0){\rule[-0.200pt]{2.409pt}{0.400pt}}
\put(211.0,288.0){\rule[-0.200pt]{2.409pt}{0.400pt}}
\put(1429.0,288.0){\rule[-0.200pt]{2.409pt}{0.400pt}}
\put(211.0,306.0){\rule[-0.200pt]{2.409pt}{0.400pt}}
\put(1429.0,306.0){\rule[-0.200pt]{2.409pt}{0.400pt}}
\put(211.0,315.0){\rule[-0.200pt]{4.818pt}{0.400pt}}
\put(191,315){\makebox(0,0)[r]{$1e-06$}}
\put(1419.0,315.0){\rule[-0.200pt]{4.818pt}{0.400pt}}
\put(211.0,343.0){\rule[-0.200pt]{2.409pt}{0.400pt}}
\put(1429.0,343.0){\rule[-0.200pt]{2.409pt}{0.400pt}}
\put(211.0,380.0){\rule[-0.200pt]{2.409pt}{0.400pt}}
\put(1429.0,380.0){\rule[-0.200pt]{2.409pt}{0.400pt}}
\put(211.0,398.0){\rule[-0.200pt]{2.409pt}{0.400pt}}
\put(1429.0,398.0){\rule[-0.200pt]{2.409pt}{0.400pt}}
\put(211.0,407.0){\rule[-0.200pt]{4.818pt}{0.400pt}}
\put(191,407){\makebox(0,0)[r]{$1e-05$}}
\put(1419.0,407.0){\rule[-0.200pt]{4.818pt}{0.400pt}}
\put(211.0,435.0){\rule[-0.200pt]{2.409pt}{0.400pt}}
\put(1429.0,435.0){\rule[-0.200pt]{2.409pt}{0.400pt}}
\put(211.0,472.0){\rule[-0.200pt]{2.409pt}{0.400pt}}
\put(1429.0,472.0){\rule[-0.200pt]{2.409pt}{0.400pt}}
\put(211.0,491.0){\rule[-0.200pt]{2.409pt}{0.400pt}}
\put(1429.0,491.0){\rule[-0.200pt]{2.409pt}{0.400pt}}
\put(211.0,500.0){\rule[-0.200pt]{4.818pt}{0.400pt}}
\put(191,500){\makebox(0,0)[r]{$0.0001$}}
\put(1419.0,500.0){\rule[-0.200pt]{4.818pt}{0.400pt}}
\put(211.0,527.0){\rule[-0.200pt]{2.409pt}{0.400pt}}
\put(1429.0,527.0){\rule[-0.200pt]{2.409pt}{0.400pt}}
\put(211.0,564.0){\rule[-0.200pt]{2.409pt}{0.400pt}}
\put(1429.0,564.0){\rule[-0.200pt]{2.409pt}{0.400pt}}
\put(211.0,583.0){\rule[-0.200pt]{2.409pt}{0.400pt}}
\put(1429.0,583.0){\rule[-0.200pt]{2.409pt}{0.400pt}}
\put(211.0,592.0){\rule[-0.200pt]{4.818pt}{0.400pt}}
\put(191,592){\makebox(0,0)[r]{$0.001$}}
\put(1419.0,592.0){\rule[-0.200pt]{4.818pt}{0.400pt}}
\put(211.0,619.0){\rule[-0.200pt]{2.409pt}{0.400pt}}
\put(1429.0,619.0){\rule[-0.200pt]{2.409pt}{0.400pt}}
\put(211.0,656.0){\rule[-0.200pt]{2.409pt}{0.400pt}}
\put(1429.0,656.0){\rule[-0.200pt]{2.409pt}{0.400pt}}
\put(211.0,675.0){\rule[-0.200pt]{2.409pt}{0.400pt}}
\put(1429.0,675.0){\rule[-0.200pt]{2.409pt}{0.400pt}}
\put(211.0,684.0){\rule[-0.200pt]{4.818pt}{0.400pt}}
\put(191,684){\makebox(0,0)[r]{$0.01$}}
\put(1419.0,684.0){\rule[-0.200pt]{4.818pt}{0.400pt}}
\put(211.0,712.0){\rule[-0.200pt]{2.409pt}{0.400pt}}
\put(1429.0,712.0){\rule[-0.200pt]{2.409pt}{0.400pt}}
\put(211.0,748.0){\rule[-0.200pt]{2.409pt}{0.400pt}}
\put(1429.0,748.0){\rule[-0.200pt]{2.409pt}{0.400pt}}
\put(211.0,767.0){\rule[-0.200pt]{2.409pt}{0.400pt}}
\put(1429.0,767.0){\rule[-0.200pt]{2.409pt}{0.400pt}}
\put(211.0,776.0){\rule[-0.200pt]{4.818pt}{0.400pt}}
\put(191,776){\makebox(0,0)[r]{$0.1$}}
\put(1419.0,776.0){\rule[-0.200pt]{4.818pt}{0.400pt}}
\put(211.0,131.0){\rule[-0.200pt]{0.400pt}{4.818pt}}
\put(211,90){\makebox(0,0){$0.001$}}
\put(211.0,756.0){\rule[-0.200pt]{0.400pt}{4.818pt}}
\put(396.0,131.0){\rule[-0.200pt]{0.400pt}{2.409pt}}
\put(396.0,766.0){\rule[-0.200pt]{0.400pt}{2.409pt}}
\put(504.0,131.0){\rule[-0.200pt]{0.400pt}{2.409pt}}
\put(504.0,766.0){\rule[-0.200pt]{0.400pt}{2.409pt}}
\put(581.0,131.0){\rule[-0.200pt]{0.400pt}{2.409pt}}
\put(581.0,766.0){\rule[-0.200pt]{0.400pt}{2.409pt}}
\put(640.0,131.0){\rule[-0.200pt]{0.400pt}{2.409pt}}
\put(640.0,766.0){\rule[-0.200pt]{0.400pt}{2.409pt}}
\put(689.0,131.0){\rule[-0.200pt]{0.400pt}{2.409pt}}
\put(689.0,766.0){\rule[-0.200pt]{0.400pt}{2.409pt}}
\put(730.0,131.0){\rule[-0.200pt]{0.400pt}{2.409pt}}
\put(730.0,766.0){\rule[-0.200pt]{0.400pt}{2.409pt}}
\put(765.0,131.0){\rule[-0.200pt]{0.400pt}{2.409pt}}
\put(765.0,766.0){\rule[-0.200pt]{0.400pt}{2.409pt}}
\put(797.0,131.0){\rule[-0.200pt]{0.400pt}{2.409pt}}
\put(797.0,766.0){\rule[-0.200pt]{0.400pt}{2.409pt}}
\put(825.0,131.0){\rule[-0.200pt]{0.400pt}{4.818pt}}
\put(825,90){\makebox(0,0){$0.01$}}
\put(825.0,756.0){\rule[-0.200pt]{0.400pt}{4.818pt}}
\put(1010.0,131.0){\rule[-0.200pt]{0.400pt}{2.409pt}}
\put(1010.0,766.0){\rule[-0.200pt]{0.400pt}{2.409pt}}
\put(1118.0,131.0){\rule[-0.200pt]{0.400pt}{2.409pt}}
\put(1118.0,766.0){\rule[-0.200pt]{0.400pt}{2.409pt}}
\put(1195.0,131.0){\rule[-0.200pt]{0.400pt}{2.409pt}}
\put(1195.0,766.0){\rule[-0.200pt]{0.400pt}{2.409pt}}
\put(1254.0,131.0){\rule[-0.200pt]{0.400pt}{2.409pt}}
\put(1254.0,766.0){\rule[-0.200pt]{0.400pt}{2.409pt}}
\put(1303.0,131.0){\rule[-0.200pt]{0.400pt}{2.409pt}}
\put(1303.0,766.0){\rule[-0.200pt]{0.400pt}{2.409pt}}
\put(1344.0,131.0){\rule[-0.200pt]{0.400pt}{2.409pt}}
\put(1344.0,766.0){\rule[-0.200pt]{0.400pt}{2.409pt}}
\put(1379.0,131.0){\rule[-0.200pt]{0.400pt}{2.409pt}}
\put(1379.0,766.0){\rule[-0.200pt]{0.400pt}{2.409pt}}
\put(1411.0,131.0){\rule[-0.200pt]{0.400pt}{2.409pt}}
\put(1411.0,766.0){\rule[-0.200pt]{0.400pt}{2.409pt}}
\put(1439.0,131.0){\rule[-0.200pt]{0.400pt}{4.818pt}}
\put(1439,90){\makebox(0,0){$0.1$}}
\put(1439.0,756.0){\rule[-0.200pt]{0.400pt}{4.818pt}}
\put(211.0,131.0){\rule[-0.200pt]{0.400pt}{155.380pt}}
\put(211.0,131.0){\rule[-0.200pt]{295.825pt}{0.400pt}}
\put(1439.0,131.0){\rule[-0.200pt]{0.400pt}{155.380pt}}
\put(211.0,776.0){\rule[-0.200pt]{295.825pt}{0.400pt}}
\put(30,453){\makebox(0,0){\rotatebox{90}{$L^2(\mathcal{F})$ error}}}
\put(825,29){\makebox(0,0){$h$}}
\put(825,838){\makebox(0,0){velocity}}
\put(1279,253){\makebox(0,0)[r]{$P_2-P_1-P_1$ (slope=2.991)}}
\put(1299.0,253.0){\rule[-0.200pt]{24.090pt}{0.400pt}}
\put(1218,538){\usebox{\plotpoint}}
\multiput(1213.84,536.92)(-1.130,-0.499){161}{\rule{1.002pt}{0.120pt}}
\multiput(1215.92,537.17)(-182.919,-82.000){2}{\rule{0.501pt}{0.400pt}}
\multiput(1028.90,454.92)(-1.111,-0.499){163}{\rule{0.987pt}{0.120pt}}
\multiput(1030.95,455.17)(-181.952,-83.000){2}{\rule{0.493pt}{0.400pt}}
\multiput(844.88,371.92)(-1.117,-0.499){163}{\rule{0.992pt}{0.120pt}}
\multiput(846.94,372.17)(-182.942,-83.000){2}{\rule{0.496pt}{0.400pt}}
\multiput(659.93,288.92)(-1.103,-0.499){165}{\rule{0.981pt}{0.120pt}}
\multiput(661.96,289.17)(-182.964,-84.000){2}{\rule{0.490pt}{0.400pt}}
\put(1218,538){\makebox(0,0){$+$}}
\put(1033,456){\makebox(0,0){$+$}}
\put(849,373){\makebox(0,0){$+$}}
\put(664,290){\makebox(0,0){$+$}}
\put(479,206){\makebox(0,0){$+$}}
\put(1349,253){\makebox(0,0){$+$}}
\put(1279,212){\makebox(0,0)[r]{$P_2-P_1-P_0$ (slope=2.699)}}
\multiput(1299,212)(20.756,0.000){5}{\usebox{\plotpoint}}
\put(1399,212){\usebox{\plotpoint}}
\put(1218,540){\usebox{\plotpoint}}
\multiput(1218,540)(-20.142,-5.008){10}{\usebox{\plotpoint}}
\multiput(1033,494)(-18.111,-10.138){10}{\usebox{\plotpoint}}
\multiput(849,391)(-19.377,-7.437){9}{\usebox{\plotpoint}}
\multiput(664,320)(-19.515,-7.068){10}{\usebox{\plotpoint}}
\put(479,253){\usebox{\plotpoint}}
\put(1218,540){\makebox(0,0){$\times$}}
\put(1033,494){\makebox(0,0){$\times$}}
\put(849,391){\makebox(0,0){$\times$}}
\put(664,320){\makebox(0,0){$\times$}}
\put(479,253){\makebox(0,0){$\times$}}
\put(1349,212){\makebox(0,0){$\times$}}
\sbox{\plotpoint}{\rule[-0.400pt]{0.800pt}{0.800pt}}%
\sbox{\plotpoint}{\rule[-0.200pt]{0.400pt}{0.400pt}}%
\put(1279,171){\makebox(0,0)[r]{$P_1-P_1-P_1$ (slope=2.117)}}
\sbox{\plotpoint}{\rule[-0.400pt]{0.800pt}{0.800pt}}%
\put(1299.0,171.0){\rule[-0.400pt]{24.090pt}{0.800pt}}
\put(1218,722){\usebox{\plotpoint}}
\multiput(1207.72,720.09)(-1.432,-0.501){123}{\rule{2.477pt}{0.121pt}}
\multiput(1212.86,720.34)(-179.859,-65.000){2}{\rule{1.238pt}{0.800pt}}
\multiput(1021.45,655.09)(-1.627,-0.502){107}{\rule{2.782pt}{0.121pt}}
\multiput(1027.22,655.34)(-178.225,-57.000){2}{\rule{1.391pt}{0.800pt}}
\multiput(837.58,598.09)(-1.607,-0.502){109}{\rule{2.752pt}{0.121pt}}
\multiput(843.29,598.34)(-179.289,-58.000){2}{\rule{1.376pt}{0.800pt}}
\multiput(652.20,540.09)(-1.665,-0.502){105}{\rule{2.843pt}{0.121pt}}
\multiput(658.10,540.34)(-179.100,-56.000){2}{\rule{1.421pt}{0.800pt}}
\put(1218,722){\makebox(0,0){$\ast$}}
\put(1033,657){\makebox(0,0){$\ast$}}
\put(849,600){\makebox(0,0){$\ast$}}
\put(664,542){\makebox(0,0){$\ast$}}
\put(479,486){\makebox(0,0){$\ast$}}
\put(1349,171){\makebox(0,0){$\ast$}}
\sbox{\plotpoint}{\rule[-0.200pt]{0.400pt}{0.400pt}}%
\put(211.0,131.0){\rule[-0.200pt]{0.400pt}{155.380pt}}
\put(211.0,131.0){\rule[-0.200pt]{295.825pt}{0.400pt}}
\put(1439.0,131.0){\rule[-0.200pt]{0.400pt}{155.380pt}}
\put(211.0,776.0){\rule[-0.200pt]{295.825pt}{0.400pt}}
\end{picture}}
 \resizebox{8cm}{!}{% GNUPLOT: LaTeX picture
\setlength{\unitlength}{0.240900pt}
\ifx\plotpoint\undefined\newsavebox{\plotpoint}\fi
\begin{picture}(1500,900)(0,0)
\sbox{\plotpoint}{\rule[-0.200pt]{0.400pt}{0.400pt}}%
\put(211.0,131.0){\rule[-0.200pt]{4.818pt}{0.400pt}}
\put(191,131){\makebox(0,0)[r]{$1e-05$}}
\put(1419.0,131.0){\rule[-0.200pt]{4.818pt}{0.400pt}}
\put(211.0,170.0){\rule[-0.200pt]{2.409pt}{0.400pt}}
\put(1429.0,170.0){\rule[-0.200pt]{2.409pt}{0.400pt}}
\put(211.0,221.0){\rule[-0.200pt]{2.409pt}{0.400pt}}
\put(1429.0,221.0){\rule[-0.200pt]{2.409pt}{0.400pt}}
\put(211.0,247.0){\rule[-0.200pt]{2.409pt}{0.400pt}}
\put(1429.0,247.0){\rule[-0.200pt]{2.409pt}{0.400pt}}
\put(211.0,260.0){\rule[-0.200pt]{4.818pt}{0.400pt}}
\put(191,260){\makebox(0,0)[r]{$0.0001$}}
\put(1419.0,260.0){\rule[-0.200pt]{4.818pt}{0.400pt}}
\put(211.0,299.0){\rule[-0.200pt]{2.409pt}{0.400pt}}
\put(1429.0,299.0){\rule[-0.200pt]{2.409pt}{0.400pt}}
\put(211.0,350.0){\rule[-0.200pt]{2.409pt}{0.400pt}}
\put(1429.0,350.0){\rule[-0.200pt]{2.409pt}{0.400pt}}
\put(211.0,376.0){\rule[-0.200pt]{2.409pt}{0.400pt}}
\put(1429.0,376.0){\rule[-0.200pt]{2.409pt}{0.400pt}}
\put(211.0,389.0){\rule[-0.200pt]{4.818pt}{0.400pt}}
\put(191,389){\makebox(0,0)[r]{$0.001$}}
\put(1419.0,389.0){\rule[-0.200pt]{4.818pt}{0.400pt}}
\put(211.0,428.0){\rule[-0.200pt]{2.409pt}{0.400pt}}
\put(1429.0,428.0){\rule[-0.200pt]{2.409pt}{0.400pt}}
\put(211.0,479.0){\rule[-0.200pt]{2.409pt}{0.400pt}}
\put(1429.0,479.0){\rule[-0.200pt]{2.409pt}{0.400pt}}
\put(211.0,505.0){\rule[-0.200pt]{2.409pt}{0.400pt}}
\put(1429.0,505.0){\rule[-0.200pt]{2.409pt}{0.400pt}}
\put(211.0,518.0){\rule[-0.200pt]{4.818pt}{0.400pt}}
\put(191,518){\makebox(0,0)[r]{$0.01$}}
\put(1419.0,518.0){\rule[-0.200pt]{4.818pt}{0.400pt}}
\put(211.0,557.0){\rule[-0.200pt]{2.409pt}{0.400pt}}
\put(1429.0,557.0){\rule[-0.200pt]{2.409pt}{0.400pt}}
\put(211.0,608.0){\rule[-0.200pt]{2.409pt}{0.400pt}}
\put(1429.0,608.0){\rule[-0.200pt]{2.409pt}{0.400pt}}
\put(211.0,634.0){\rule[-0.200pt]{2.409pt}{0.400pt}}
\put(1429.0,634.0){\rule[-0.200pt]{2.409pt}{0.400pt}}
\put(211.0,647.0){\rule[-0.200pt]{4.818pt}{0.400pt}}
\put(191,647){\makebox(0,0)[r]{$0.1$}}
\put(1419.0,647.0){\rule[-0.200pt]{4.818pt}{0.400pt}}
\put(211.0,686.0){\rule[-0.200pt]{2.409pt}{0.400pt}}
\put(1429.0,686.0){\rule[-0.200pt]{2.409pt}{0.400pt}}
\put(211.0,737.0){\rule[-0.200pt]{2.409pt}{0.400pt}}
\put(1429.0,737.0){\rule[-0.200pt]{2.409pt}{0.400pt}}
\put(211.0,763.0){\rule[-0.200pt]{2.409pt}{0.400pt}}
\put(1429.0,763.0){\rule[-0.200pt]{2.409pt}{0.400pt}}
\put(211.0,776.0){\rule[-0.200pt]{4.818pt}{0.400pt}}
\put(191,776){\makebox(0,0)[r]{$1$}}
\put(1419.0,776.0){\rule[-0.200pt]{4.818pt}{0.400pt}}
\put(211.0,131.0){\rule[-0.200pt]{0.400pt}{4.818pt}}
\put(211,90){\makebox(0,0){$0.001$}}
\put(211.0,756.0){\rule[-0.200pt]{0.400pt}{4.818pt}}
\put(396.0,131.0){\rule[-0.200pt]{0.400pt}{2.409pt}}
\put(396.0,766.0){\rule[-0.200pt]{0.400pt}{2.409pt}}
\put(504.0,131.0){\rule[-0.200pt]{0.400pt}{2.409pt}}
\put(504.0,766.0){\rule[-0.200pt]{0.400pt}{2.409pt}}
\put(581.0,131.0){\rule[-0.200pt]{0.400pt}{2.409pt}}
\put(581.0,766.0){\rule[-0.200pt]{0.400pt}{2.409pt}}
\put(640.0,131.0){\rule[-0.200pt]{0.400pt}{2.409pt}}
\put(640.0,766.0){\rule[-0.200pt]{0.400pt}{2.409pt}}
\put(689.0,131.0){\rule[-0.200pt]{0.400pt}{2.409pt}}
\put(689.0,766.0){\rule[-0.200pt]{0.400pt}{2.409pt}}
\put(730.0,131.0){\rule[-0.200pt]{0.400pt}{2.409pt}}
\put(730.0,766.0){\rule[-0.200pt]{0.400pt}{2.409pt}}
\put(765.0,131.0){\rule[-0.200pt]{0.400pt}{2.409pt}}
\put(765.0,766.0){\rule[-0.200pt]{0.400pt}{2.409pt}}
\put(797.0,131.0){\rule[-0.200pt]{0.400pt}{2.409pt}}
\put(797.0,766.0){\rule[-0.200pt]{0.400pt}{2.409pt}}
\put(825.0,131.0){\rule[-0.200pt]{0.400pt}{4.818pt}}
\put(825,90){\makebox(0,0){$0.01$}}
\put(825.0,756.0){\rule[-0.200pt]{0.400pt}{4.818pt}}
\put(1010.0,131.0){\rule[-0.200pt]{0.400pt}{2.409pt}}
\put(1010.0,766.0){\rule[-0.200pt]{0.400pt}{2.409pt}}
\put(1118.0,131.0){\rule[-0.200pt]{0.400pt}{2.409pt}}
\put(1118.0,766.0){\rule[-0.200pt]{0.400pt}{2.409pt}}
\put(1195.0,131.0){\rule[-0.200pt]{0.400pt}{2.409pt}}
\put(1195.0,766.0){\rule[-0.200pt]{0.400pt}{2.409pt}}
\put(1254.0,131.0){\rule[-0.200pt]{0.400pt}{2.409pt}}
\put(1254.0,766.0){\rule[-0.200pt]{0.400pt}{2.409pt}}
\put(1303.0,131.0){\rule[-0.200pt]{0.400pt}{2.409pt}}
\put(1303.0,766.0){\rule[-0.200pt]{0.400pt}{2.409pt}}
\put(1344.0,131.0){\rule[-0.200pt]{0.400pt}{2.409pt}}
\put(1344.0,766.0){\rule[-0.200pt]{0.400pt}{2.409pt}}
\put(1379.0,131.0){\rule[-0.200pt]{0.400pt}{2.409pt}}
\put(1379.0,766.0){\rule[-0.200pt]{0.400pt}{2.409pt}}
\put(1411.0,131.0){\rule[-0.200pt]{0.400pt}{2.409pt}}
\put(1411.0,766.0){\rule[-0.200pt]{0.400pt}{2.409pt}}
\put(1439.0,131.0){\rule[-0.200pt]{0.400pt}{4.818pt}}
\put(1439,90){\makebox(0,0){$0.1$}}
\put(1439.0,756.0){\rule[-0.200pt]{0.400pt}{4.818pt}}
\put(211.0,131.0){\rule[-0.200pt]{0.400pt}{155.380pt}}
\put(211.0,131.0){\rule[-0.200pt]{295.825pt}{0.400pt}}
\put(1439.0,131.0){\rule[-0.200pt]{0.400pt}{155.380pt}}
\put(211.0,776.0){\rule[-0.200pt]{295.825pt}{0.400pt}}
\put(30,453){\makebox(0,0){\rotatebox{90}{$H^1(\mathcal{F})$ error}}}
\put(825,29){\makebox(0,0){$h$}}
\put(825,838){\makebox(0,0){velocity}}
\put(1279,253){\makebox(0,0)[r]{$P_2-P_1-P_1$ (slope=1.948)}}
\put(1299.0,253.0){\rule[-0.200pt]{24.090pt}{0.400pt}}
\put(1218,537){\usebox{\plotpoint}}
\multiput(1213.49,535.92)(-1.236,-0.499){147}{\rule{1.087pt}{0.120pt}}
\multiput(1215.74,536.17)(-182.745,-75.000){2}{\rule{0.543pt}{0.400pt}}
\multiput(1028.67,460.92)(-1.182,-0.499){153}{\rule{1.044pt}{0.120pt}}
\multiput(1030.83,461.17)(-181.834,-78.000){2}{\rule{0.522pt}{0.400pt}}
\multiput(843.55,382.92)(-1.522,-0.499){119}{\rule{1.313pt}{0.120pt}}
\multiput(846.27,383.17)(-182.275,-61.000){2}{\rule{0.657pt}{0.400pt}}
\multiput(660.35,321.92)(-0.975,-0.499){187}{\rule{0.879pt}{0.120pt}}
\multiput(662.18,322.17)(-183.176,-95.000){2}{\rule{0.439pt}{0.400pt}}
\put(1218,537){\makebox(0,0){$+$}}
\put(1033,462){\makebox(0,0){$+$}}
\put(849,384){\makebox(0,0){$+$}}
\put(664,323){\makebox(0,0){$+$}}
\put(479,228){\makebox(0,0){$+$}}
\put(1349,253){\makebox(0,0){$+$}}
\put(1279,212){\makebox(0,0)[r]{$P_2-P_1-P_0$ (slope=1.590)}}
\multiput(1299,212)(20.756,0.000){5}{\usebox{\plotpoint}}
\put(1399,212){\usebox{\plotpoint}}
\put(1218,541){\usebox{\plotpoint}}
\multiput(1218,541)(-20.009,5.516){10}{\usebox{\plotpoint}}
\multiput(1033,592)(-15.704,-13.571){11}{\usebox{\plotpoint}}
\multiput(849,433)(-20.741,-0.785){9}{\usebox{\plotpoint}}
\multiput(664,426)(-17.798,-10.679){11}{\usebox{\plotpoint}}
\put(479,315){\usebox{\plotpoint}}
\put(1218,541){\makebox(0,0){$\times$}}
\put(1033,592){\makebox(0,0){$\times$}}
\put(849,433){\makebox(0,0){$\times$}}
\put(664,426){\makebox(0,0){$\times$}}
\put(479,315){\makebox(0,0){$\times$}}
\put(1349,212){\makebox(0,0){$\times$}}
\sbox{\plotpoint}{\rule[-0.400pt]{0.800pt}{0.800pt}}%
\sbox{\plotpoint}{\rule[-0.200pt]{0.400pt}{0.400pt}}%
\put(1279,171){\makebox(0,0)[r]{$P_1-P_1-P_1$ (slope=1.097)}}
\sbox{\plotpoint}{\rule[-0.400pt]{0.800pt}{0.800pt}}%
\put(1299.0,171.0){\rule[-0.400pt]{24.090pt}{0.800pt}}
\put(1218,772){\usebox{\plotpoint}}
\multiput(1205.58,770.09)(-1.761,-0.502){99}{\rule{2.992pt}{0.121pt}}
\multiput(1211.79,770.34)(-178.789,-53.000){2}{\rule{1.496pt}{0.800pt}}
\multiput(1017.27,717.09)(-2.274,-0.502){75}{\rule{3.790pt}{0.121pt}}
\multiput(1025.13,717.34)(-176.133,-41.000){2}{\rule{1.895pt}{0.800pt}}
\multiput(832.81,676.09)(-2.345,-0.502){73}{\rule{3.900pt}{0.121pt}}
\multiput(840.91,676.34)(-176.905,-40.000){2}{\rule{1.950pt}{0.800pt}}
\multiput(647.00,636.09)(-2.471,-0.503){69}{\rule{4.095pt}{0.121pt}}
\multiput(655.50,636.34)(-176.501,-38.000){2}{\rule{2.047pt}{0.800pt}}
\put(1218,772){\makebox(0,0){$\ast$}}
\put(1033,719){\makebox(0,0){$\ast$}}
\put(849,678){\makebox(0,0){$\ast$}}
\put(664,638){\makebox(0,0){$\ast$}}
\put(479,600){\makebox(0,0){$\ast$}}
\put(1349,171){\makebox(0,0){$\ast$}}
\sbox{\plotpoint}{\rule[-0.200pt]{0.400pt}{0.400pt}}%
\put(211.0,131.0){\rule[-0.200pt]{0.400pt}{155.380pt}}
\put(211.0,131.0){\rule[-0.200pt]{295.825pt}{0.400pt}}
\put(1439.0,131.0){\rule[-0.200pt]{0.400pt}{155.380pt}}
\put(211.0,776.0){\rule[-0.200pt]{295.825pt}{0.400pt}}
\end{picture}}\\
\resizebox{8cm}{!}{\input{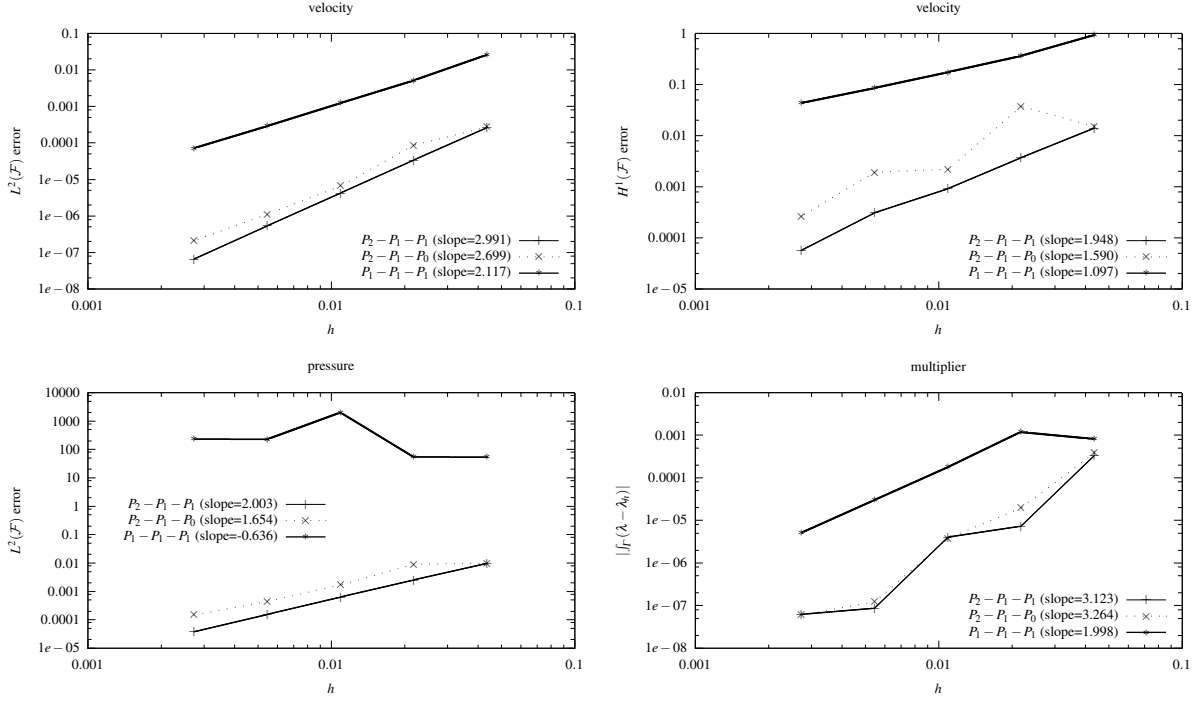}}
%\resizebox{8cm}{!}{\input{FIGURES/fig2-l}}\\
\resizebox{8cm}{!}{% GNUPLOT: LaTeX picture
\setlength{\unitlength}{0.240900pt}
\ifx\plotpoint\undefined\newsavebox{\plotpoint}\fi
\begin{picture}(1500,900)(0,0)
\sbox{\plotpoint}{\rule[-0.200pt]{0.400pt}{0.400pt}}%
\put(211.0,131.0){\rule[-0.200pt]{4.818pt}{0.400pt}}
\put(191,131){\makebox(0,0)[r]{$1e-08$}}
\put(1419.0,131.0){\rule[-0.200pt]{4.818pt}{0.400pt}}
\put(211.0,163.0){\rule[-0.200pt]{2.409pt}{0.400pt}}
\put(1429.0,163.0){\rule[-0.200pt]{2.409pt}{0.400pt}}
\put(211.0,206.0){\rule[-0.200pt]{2.409pt}{0.400pt}}
\put(1429.0,206.0){\rule[-0.200pt]{2.409pt}{0.400pt}}
\put(211.0,228.0){\rule[-0.200pt]{2.409pt}{0.400pt}}
\put(1429.0,228.0){\rule[-0.200pt]{2.409pt}{0.400pt}}
\put(211.0,239.0){\rule[-0.200pt]{4.818pt}{0.400pt}}
\put(191,239){\makebox(0,0)[r]{$1e-07$}}
\put(1419.0,239.0){\rule[-0.200pt]{4.818pt}{0.400pt}}
\put(211.0,271.0){\rule[-0.200pt]{2.409pt}{0.400pt}}
\put(1429.0,271.0){\rule[-0.200pt]{2.409pt}{0.400pt}}
\put(211.0,314.0){\rule[-0.200pt]{2.409pt}{0.400pt}}
\put(1429.0,314.0){\rule[-0.200pt]{2.409pt}{0.400pt}}
\put(211.0,336.0){\rule[-0.200pt]{2.409pt}{0.400pt}}
\put(1429.0,336.0){\rule[-0.200pt]{2.409pt}{0.400pt}}
\put(211.0,346.0){\rule[-0.200pt]{4.818pt}{0.400pt}}
\put(191,346){\makebox(0,0)[r]{$1e-06$}}
\put(1419.0,346.0){\rule[-0.200pt]{4.818pt}{0.400pt}}
\put(211.0,378.0){\rule[-0.200pt]{2.409pt}{0.400pt}}
\put(1429.0,378.0){\rule[-0.200pt]{2.409pt}{0.400pt}}
\put(211.0,421.0){\rule[-0.200pt]{2.409pt}{0.400pt}}
\put(1429.0,421.0){\rule[-0.200pt]{2.409pt}{0.400pt}}
\put(211.0,443.0){\rule[-0.200pt]{2.409pt}{0.400pt}}
\put(1429.0,443.0){\rule[-0.200pt]{2.409pt}{0.400pt}}
\put(211.0,454.0){\rule[-0.200pt]{4.818pt}{0.400pt}}
\put(191,454){\makebox(0,0)[r]{$1e-05$}}
\put(1419.0,454.0){\rule[-0.200pt]{4.818pt}{0.400pt}}
\put(211.0,486.0){\rule[-0.200pt]{2.409pt}{0.400pt}}
\put(1429.0,486.0){\rule[-0.200pt]{2.409pt}{0.400pt}}
\put(211.0,529.0){\rule[-0.200pt]{2.409pt}{0.400pt}}
\put(1429.0,529.0){\rule[-0.200pt]{2.409pt}{0.400pt}}
\put(211.0,551.0){\rule[-0.200pt]{2.409pt}{0.400pt}}
\put(1429.0,551.0){\rule[-0.200pt]{2.409pt}{0.400pt}}
\put(211.0,561.0){\rule[-0.200pt]{4.818pt}{0.400pt}}
\put(191,561){\makebox(0,0)[r]{$0.0001$}}
\put(1419.0,561.0){\rule[-0.200pt]{4.818pt}{0.400pt}}
\put(211.0,593.0){\rule[-0.200pt]{2.409pt}{0.400pt}}
\put(1429.0,593.0){\rule[-0.200pt]{2.409pt}{0.400pt}}
\put(211.0,636.0){\rule[-0.200pt]{2.409pt}{0.400pt}}
\put(1429.0,636.0){\rule[-0.200pt]{2.409pt}{0.400pt}}
\put(211.0,658.0){\rule[-0.200pt]{2.409pt}{0.400pt}}
\put(1429.0,658.0){\rule[-0.200pt]{2.409pt}{0.400pt}}
\put(211.0,669.0){\rule[-0.200pt]{4.818pt}{0.400pt}}
\put(191,669){\makebox(0,0)[r]{$0.001$}}
\put(1419.0,669.0){\rule[-0.200pt]{4.818pt}{0.400pt}}
\put(211.0,701.0){\rule[-0.200pt]{2.409pt}{0.400pt}}
\put(1429.0,701.0){\rule[-0.200pt]{2.409pt}{0.400pt}}
\put(211.0,744.0){\rule[-0.200pt]{2.409pt}{0.400pt}}
\put(1429.0,744.0){\rule[-0.200pt]{2.409pt}{0.400pt}}
\put(211.0,766.0){\rule[-0.200pt]{2.409pt}{0.400pt}}
\put(1429.0,766.0){\rule[-0.200pt]{2.409pt}{0.400pt}}
\put(211.0,776.0){\rule[-0.200pt]{4.818pt}{0.400pt}}
\put(191,776){\makebox(0,0)[r]{$0.01$}}
\put(1419.0,776.0){\rule[-0.200pt]{4.818pt}{0.400pt}}
\put(211.0,131.0){\rule[-0.200pt]{0.400pt}{4.818pt}}
\put(211,90){\makebox(0,0){$0.001$}}
\put(211.0,756.0){\rule[-0.200pt]{0.400pt}{4.818pt}}
\put(396.0,131.0){\rule[-0.200pt]{0.400pt}{2.409pt}}
\put(396.0,766.0){\rule[-0.200pt]{0.400pt}{2.409pt}}
\put(504.0,131.0){\rule[-0.200pt]{0.400pt}{2.409pt}}
\put(504.0,766.0){\rule[-0.200pt]{0.400pt}{2.409pt}}
\put(581.0,131.0){\rule[-0.200pt]{0.400pt}{2.409pt}}
\put(581.0,766.0){\rule[-0.200pt]{0.400pt}{2.409pt}}
\put(640.0,131.0){\rule[-0.200pt]{0.400pt}{2.409pt}}
\put(640.0,766.0){\rule[-0.200pt]{0.400pt}{2.409pt}}
\put(689.0,131.0){\rule[-0.200pt]{0.400pt}{2.409pt}}
\put(689.0,766.0){\rule[-0.200pt]{0.400pt}{2.409pt}}
\put(730.0,131.0){\rule[-0.200pt]{0.400pt}{2.409pt}}
\put(730.0,766.0){\rule[-0.200pt]{0.400pt}{2.409pt}}
\put(765.0,131.0){\rule[-0.200pt]{0.400pt}{2.409pt}}
\put(765.0,766.0){\rule[-0.200pt]{0.400pt}{2.409pt}}
\put(797.0,131.0){\rule[-0.200pt]{0.400pt}{2.409pt}}
\put(797.0,766.0){\rule[-0.200pt]{0.400pt}{2.409pt}}
\put(825.0,131.0){\rule[-0.200pt]{0.400pt}{4.818pt}}
\put(825,90){\makebox(0,0){$0.01$}}
\put(825.0,756.0){\rule[-0.200pt]{0.400pt}{4.818pt}}
\put(1010.0,131.0){\rule[-0.200pt]{0.400pt}{2.409pt}}
\put(1010.0,766.0){\rule[-0.200pt]{0.400pt}{2.409pt}}
\put(1118.0,131.0){\rule[-0.200pt]{0.400pt}{2.409pt}}
\put(1118.0,766.0){\rule[-0.200pt]{0.400pt}{2.409pt}}
\put(1195.0,131.0){\rule[-0.200pt]{0.400pt}{2.409pt}}
\put(1195.0,766.0){\rule[-0.200pt]{0.400pt}{2.409pt}}
\put(1254.0,131.0){\rule[-0.200pt]{0.400pt}{2.409pt}}
\put(1254.0,766.0){\rule[-0.200pt]{0.400pt}{2.409pt}}
\put(1303.0,131.0){\rule[-0.200pt]{0.400pt}{2.409pt}}
\put(1303.0,766.0){\rule[-0.200pt]{0.400pt}{2.409pt}}
\put(1344.0,131.0){\rule[-0.200pt]{0.400pt}{2.409pt}}
\put(1344.0,766.0){\rule[-0.200pt]{0.400pt}{2.409pt}}
\put(1379.0,131.0){\rule[-0.200pt]{0.400pt}{2.409pt}}
\put(1379.0,766.0){\rule[-0.200pt]{0.400pt}{2.409pt}}
\put(1411.0,131.0){\rule[-0.200pt]{0.400pt}{2.409pt}}
\put(1411.0,766.0){\rule[-0.200pt]{0.400pt}{2.409pt}}
\put(1439.0,131.0){\rule[-0.200pt]{0.400pt}{4.818pt}}
\put(1439,90){\makebox(0,0){$0.1$}}
\put(1439.0,756.0){\rule[-0.200pt]{0.400pt}{4.818pt}}
\put(211.0,131.0){\rule[-0.200pt]{0.400pt}{155.380pt}}
\put(211.0,131.0){\rule[-0.200pt]{295.825pt}{0.400pt}}
\put(1439.0,131.0){\rule[-0.200pt]{0.400pt}{155.380pt}}
\put(211.0,776.0){\rule[-0.200pt]{295.825pt}{0.400pt}}
\put(30,453){\makebox(0,0){\rotatebox{90}{$\left|\int_{\Gamma}(\lambda-\lambda_{h})\right|$}}}
\put(825,29){\makebox(0,0){$h$}}
\put(825,838){\makebox(0,0){multiplier}}
\put(1279,253){\makebox(0,0)[r]{$P_2-P_1-P_1$ (slope=3.123)}}
\put(1299.0,253.0){\rule[-0.200pt]{24.090pt}{0.400pt}}
\put(1218,617){\usebox{\plotpoint}}
\multiput(1215.86,615.92)(-0.519,-0.500){353}{\rule{0.516pt}{0.120pt}}
\multiput(1216.93,616.17)(-183.930,-178.000){2}{\rule{0.258pt}{0.400pt}}
\multiput(1021.67,437.92)(-3.321,-0.497){53}{\rule{2.729pt}{0.120pt}}
\multiput(1027.34,438.17)(-178.337,-28.000){2}{\rule{1.364pt}{0.400pt}}
\multiput(846.87,409.92)(-0.517,-0.500){355}{\rule{0.513pt}{0.120pt}}
\multiput(847.93,410.17)(-183.934,-179.000){2}{\rule{0.257pt}{0.400pt}}
\multiput(644.39,230.92)(-5.907,-0.494){29}{\rule{4.725pt}{0.119pt}}
\multiput(654.19,231.17)(-175.193,-16.000){2}{\rule{2.363pt}{0.400pt}}
\put(1218,617){\makebox(0,0){$+$}}
\put(1033,439){\makebox(0,0){$+$}}
\put(849,411){\makebox(0,0){$+$}}
\put(664,232){\makebox(0,0){$+$}}
\put(479,216){\makebox(0,0){$+$}}
\put(1349,253){\makebox(0,0){$+$}}
\put(1279,212){\makebox(0,0)[r]{$P_2-P_1-P_0$ (slope=3.264)}}
\multiput(1299,212)(20.756,0.000){5}{\usebox{\plotpoint}}
\put(1399,212){\usebox{\plotpoint}}
\put(1218,624){\usebox{\plotpoint}}
\multiput(1218,624)(-16.594,-12.468){12}{\usebox{\plotpoint}}
\multiput(1033,485)(-19.184,-7.924){9}{\usebox{\plotpoint}}
\multiput(849,409)(-15.657,-13.626){12}{\usebox{\plotpoint}}
\multiput(664,248)(-20.433,-3.645){9}{\usebox{\plotpoint}}
\put(479,215){\usebox{\plotpoint}}
\put(1218,624){\makebox(0,0){$\times$}}
\put(1033,485){\makebox(0,0){$\times$}}
\put(849,409){\makebox(0,0){$\times$}}
\put(664,248){\makebox(0,0){$\times$}}
\put(479,215){\makebox(0,0){$\times$}}
\put(1349,212){\makebox(0,0){$\times$}}
\sbox{\plotpoint}{\rule[-0.400pt]{0.800pt}{0.800pt}}%
\sbox{\plotpoint}{\rule[-0.200pt]{0.400pt}{0.400pt}}%
\put(1279,171){\makebox(0,0)[r]{$P_1-P_1-P_1$ (slope=1.998)}}
\sbox{\plotpoint}{\rule[-0.400pt]{0.800pt}{0.800pt}}%
\put(1299.0,171.0){\rule[-0.400pt]{24.090pt}{0.800pt}}
\put(1218,659){\usebox{\plotpoint}}
\multiput(1183.04,660.41)(-5.359,0.506){29}{\rule{8.422pt}{0.122pt}}
\multiput(1200.52,657.34)(-167.519,18.000){2}{\rule{4.211pt}{0.800pt}}
\multiput(1025.30,675.09)(-1.037,-0.501){171}{\rule{1.854pt}{0.121pt}}
\multiput(1029.15,675.34)(-180.152,-89.000){2}{\rule{0.927pt}{0.800pt}}
\multiput(840.77,586.09)(-1.119,-0.501){159}{\rule{1.983pt}{0.121pt}}
\multiput(844.88,586.34)(-180.884,-83.000){2}{\rule{0.992pt}{0.800pt}}
\multiput(655.77,503.09)(-1.119,-0.501){159}{\rule{1.983pt}{0.121pt}}
\multiput(659.88,503.34)(-180.884,-83.000){2}{\rule{0.992pt}{0.800pt}}
\put(1218,659){\makebox(0,0){$\ast$}}
\put(1033,677){\makebox(0,0){$\ast$}}
\put(849,588){\makebox(0,0){$\ast$}}
\put(664,505){\makebox(0,0){$\ast$}}
\put(479,422){\makebox(0,0){$\ast$}}
\put(1349,171){\makebox(0,0){$\ast$}}
\sbox{\plotpoint}{\rule[-0.200pt]{0.400pt}{0.400pt}}%
\put(211.0,131.0){\rule[-0.200pt]{0.400pt}{155.380pt}}
\put(211.0,131.0){\rule[-0.200pt]{295.825pt}{0.400pt}}
\put(1439.0,131.0){\rule[-0.200pt]{0.400pt}{155.380pt}}
\put(211.0,776.0){\rule[-0.200pt]{295.825pt}{0.400pt}}
\end{picture}}
\end{minipage}
\caption{Rates of convergence with Barbosa-Hughes stabilization for $\|u-u_h\|_{0,\mathcal{F}}$, 
$\|u-u_h\|_{1,\mathcal{F}}$, $\|p-p_h\|_{0,\mathcal{F}}$ and $\left|\int_{\Gamma}(\lambda-\lambda_{h})\right|$}
\label{fig:Barbosa-Hughes}
\end{figure}
\end{center}

\subsection{Methods \`a la Haslinger-Renard.}
\subsubsection{$\P_1-\P_1$ velocity-pressure spaces with Brezzi-Pitk{\"a}ranta stabilization.}

Here, the system (\ref{sys:Barbosa-Hughes}) is modified using Haslinger-Renard strategy of robust reconstruction (Definition \ref{RobRec}) for $u$ only and  adding the term $S^{\theta_0}_{pp}$ defined by
$$\left ( S^{\theta_0}_{pp} \right )_{i_pj_p} = -\theta_0 h^2 \int_{\mathcal{F}_h^e} \nabla \psi_{i_p} . \nabla \psi_{j_p}$$
The system to solve is thus
\begin{equation}
\label{sys:Brezzi-Pitkaranta}
\left (
\begin{array}{ccc}
K +S^{\gamma_0}_{\hat{u}\hat{u}} & B^T+{S^{\gamma_0}_{\hat{u}p}}^{\hspace*{-0.1cm}T}&C^T + {S^{\gamma_0}_{\hat{u} \lambda}}^{\hspace*{-0.1cm}T}\\
B + S^{\gamma_0}_{\hat{u}p} & S^{\gamma_0}_{pp} + S^{\theta_0}_{pp}& {S^{\gamma_0}_{p \lambda}}^{\hspace*{-0.1cm}T}\\
C + S^{\gamma_0}_{\hat{u} \lambda} & S^{\gamma_0}_{p \lambda} & S^{\gamma_0}_{\lambda \lambda}\\
\end{array}
\right )
\left (
\begin{array}{c}
U\\
P\\
\Lambda\\
\end{array}
\right ) = 
\left (
\begin{array}{c}
F\\
0\\
G \\
\end{array}
\right ) 
\end{equation}
where $S^{\gamma_0}_{\hat{u}\hat{u}}$, $S^{\gamma_0}_{\hat{u}p}$, $S^{\gamma_0}_{\hat{u} \lambda}$ are modified from $S^{\gamma_0}_{{u}{u}}$, $S^{\gamma_0}_{{u}p}$, $S^{\gamma_0}_{{u} \lambda}$ by incorporating the extensions of polynomials from ``good'' to ``bad'' triangles. For example,
\begin{equation}\label{SMatHat}
\left(S^{\gamma_0}_{uu}\right )_{i_uj_u} = -4 \gamma_0h \int_{\Gamma} D(\widehat{\phi_{i_u}}) n\cdot D(\widehat{\phi_{j_u}})n
\end{equation}
with $\widehat{\cdot}$ from Definition \ref{RobRec}. 

The results are reported in Fig. \ref{fig:Brezzi-Pitkaranta}.
The method is indeed robust. The optimal rates of convergence are clearly observed.
As expected, much better results are observed for the pressure in comparison with Fig. \ref{fig:Barbosa-Hughes}.
The difference between $\P_1-\P_1-\P_1$ and $\P_1-\P_1-\P_0$ variants is very small.

\vspace*{-1cm} 

\begin{center}
\begin{figure}
\hspace*{-2cm}
\begin{minipage}{20cm}
 \resizebox{8cm}{!}{% GNUPLOT: LaTeX picture
\setlength{\unitlength}{0.240900pt}
\ifx\plotpoint\undefined\newsavebox{\plotpoint}\fi
\begin{picture}(1500,900)(0,0)
\sbox{\plotpoint}{\rule[-0.200pt]{0.400pt}{0.400pt}}%
\put(211.0,131.0){\rule[-0.200pt]{4.818pt}{0.400pt}}
\put(191,131){\makebox(0,0)[r]{$1e-05$}}
\put(1419.0,131.0){\rule[-0.200pt]{4.818pt}{0.400pt}}
\put(211.0,196.0){\rule[-0.200pt]{2.409pt}{0.400pt}}
\put(1429.0,196.0){\rule[-0.200pt]{2.409pt}{0.400pt}}
\put(211.0,234.0){\rule[-0.200pt]{2.409pt}{0.400pt}}
\put(1429.0,234.0){\rule[-0.200pt]{2.409pt}{0.400pt}}
\put(211.0,260.0){\rule[-0.200pt]{2.409pt}{0.400pt}}
\put(1429.0,260.0){\rule[-0.200pt]{2.409pt}{0.400pt}}
\put(211.0,281.0){\rule[-0.200pt]{2.409pt}{0.400pt}}
\put(1429.0,281.0){\rule[-0.200pt]{2.409pt}{0.400pt}}
\put(211.0,298.0){\rule[-0.200pt]{2.409pt}{0.400pt}}
\put(1429.0,298.0){\rule[-0.200pt]{2.409pt}{0.400pt}}
\put(211.0,313.0){\rule[-0.200pt]{2.409pt}{0.400pt}}
\put(1429.0,313.0){\rule[-0.200pt]{2.409pt}{0.400pt}}
\put(211.0,325.0){\rule[-0.200pt]{2.409pt}{0.400pt}}
\put(1429.0,325.0){\rule[-0.200pt]{2.409pt}{0.400pt}}
\put(211.0,336.0){\rule[-0.200pt]{2.409pt}{0.400pt}}
\put(1429.0,336.0){\rule[-0.200pt]{2.409pt}{0.400pt}}
\put(211.0,346.0){\rule[-0.200pt]{4.818pt}{0.400pt}}
\put(191,346){\makebox(0,0)[r]{$0.0001$}}
\put(1419.0,346.0){\rule[-0.200pt]{4.818pt}{0.400pt}}
\put(211.0,411.0){\rule[-0.200pt]{2.409pt}{0.400pt}}
\put(1429.0,411.0){\rule[-0.200pt]{2.409pt}{0.400pt}}
\put(211.0,449.0){\rule[-0.200pt]{2.409pt}{0.400pt}}
\put(1429.0,449.0){\rule[-0.200pt]{2.409pt}{0.400pt}}
\put(211.0,475.0){\rule[-0.200pt]{2.409pt}{0.400pt}}
\put(1429.0,475.0){\rule[-0.200pt]{2.409pt}{0.400pt}}
\put(211.0,496.0){\rule[-0.200pt]{2.409pt}{0.400pt}}
\put(1429.0,496.0){\rule[-0.200pt]{2.409pt}{0.400pt}}
\put(211.0,513.0){\rule[-0.200pt]{2.409pt}{0.400pt}}
\put(1429.0,513.0){\rule[-0.200pt]{2.409pt}{0.400pt}}
\put(211.0,528.0){\rule[-0.200pt]{2.409pt}{0.400pt}}
\put(1429.0,528.0){\rule[-0.200pt]{2.409pt}{0.400pt}}
\put(211.0,540.0){\rule[-0.200pt]{2.409pt}{0.400pt}}
\put(1429.0,540.0){\rule[-0.200pt]{2.409pt}{0.400pt}}
\put(211.0,551.0){\rule[-0.200pt]{2.409pt}{0.400pt}}
\put(1429.0,551.0){\rule[-0.200pt]{2.409pt}{0.400pt}}
\put(211.0,561.0){\rule[-0.200pt]{4.818pt}{0.400pt}}
\put(191,561){\makebox(0,0)[r]{$0.001$}}
\put(1419.0,561.0){\rule[-0.200pt]{4.818pt}{0.400pt}}
\put(211.0,626.0){\rule[-0.200pt]{2.409pt}{0.400pt}}
\put(1429.0,626.0){\rule[-0.200pt]{2.409pt}{0.400pt}}
\put(211.0,664.0){\rule[-0.200pt]{2.409pt}{0.400pt}}
\put(1429.0,664.0){\rule[-0.200pt]{2.409pt}{0.400pt}}
\put(211.0,690.0){\rule[-0.200pt]{2.409pt}{0.400pt}}
\put(1429.0,690.0){\rule[-0.200pt]{2.409pt}{0.400pt}}
\put(211.0,711.0){\rule[-0.200pt]{2.409pt}{0.400pt}}
\put(1429.0,711.0){\rule[-0.200pt]{2.409pt}{0.400pt}}
\put(211.0,728.0){\rule[-0.200pt]{2.409pt}{0.400pt}}
\put(1429.0,728.0){\rule[-0.200pt]{2.409pt}{0.400pt}}
\put(211.0,743.0){\rule[-0.200pt]{2.409pt}{0.400pt}}
\put(1429.0,743.0){\rule[-0.200pt]{2.409pt}{0.400pt}}
\put(211.0,755.0){\rule[-0.200pt]{2.409pt}{0.400pt}}
\put(1429.0,755.0){\rule[-0.200pt]{2.409pt}{0.400pt}}
\put(211.0,766.0){\rule[-0.200pt]{2.409pt}{0.400pt}}
\put(1429.0,766.0){\rule[-0.200pt]{2.409pt}{0.400pt}}
\put(211.0,776.0){\rule[-0.200pt]{4.818pt}{0.400pt}}
\put(191,776){\makebox(0,0)[r]{$0.01$}}
\put(1419.0,776.0){\rule[-0.200pt]{4.818pt}{0.400pt}}
\put(211.0,131.0){\rule[-0.200pt]{0.400pt}{4.818pt}}
\put(211,90){\makebox(0,0){$0.001$}}
\put(211.0,756.0){\rule[-0.200pt]{0.400pt}{4.818pt}}
\put(396.0,131.0){\rule[-0.200pt]{0.400pt}{2.409pt}}
\put(396.0,766.0){\rule[-0.200pt]{0.400pt}{2.409pt}}
\put(504.0,131.0){\rule[-0.200pt]{0.400pt}{2.409pt}}
\put(504.0,766.0){\rule[-0.200pt]{0.400pt}{2.409pt}}
\put(581.0,131.0){\rule[-0.200pt]{0.400pt}{2.409pt}}
\put(581.0,766.0){\rule[-0.200pt]{0.400pt}{2.409pt}}
\put(640.0,131.0){\rule[-0.200pt]{0.400pt}{2.409pt}}
\put(640.0,766.0){\rule[-0.200pt]{0.400pt}{2.409pt}}
\put(689.0,131.0){\rule[-0.200pt]{0.400pt}{2.409pt}}
\put(689.0,766.0){\rule[-0.200pt]{0.400pt}{2.409pt}}
\put(730.0,131.0){\rule[-0.200pt]{0.400pt}{2.409pt}}
\put(730.0,766.0){\rule[-0.200pt]{0.400pt}{2.409pt}}
\put(765.0,131.0){\rule[-0.200pt]{0.400pt}{2.409pt}}
\put(765.0,766.0){\rule[-0.200pt]{0.400pt}{2.409pt}}
\put(797.0,131.0){\rule[-0.200pt]{0.400pt}{2.409pt}}
\put(797.0,766.0){\rule[-0.200pt]{0.400pt}{2.409pt}}
\put(825.0,131.0){\rule[-0.200pt]{0.400pt}{4.818pt}}
\put(825,90){\makebox(0,0){$0.01$}}
\put(825.0,756.0){\rule[-0.200pt]{0.400pt}{4.818pt}}
\put(1010.0,131.0){\rule[-0.200pt]{0.400pt}{2.409pt}}
\put(1010.0,766.0){\rule[-0.200pt]{0.400pt}{2.409pt}}
\put(1118.0,131.0){\rule[-0.200pt]{0.400pt}{2.409pt}}
\put(1118.0,766.0){\rule[-0.200pt]{0.400pt}{2.409pt}}
\put(1195.0,131.0){\rule[-0.200pt]{0.400pt}{2.409pt}}
\put(1195.0,766.0){\rule[-0.200pt]{0.400pt}{2.409pt}}
\put(1254.0,131.0){\rule[-0.200pt]{0.400pt}{2.409pt}}
\put(1254.0,766.0){\rule[-0.200pt]{0.400pt}{2.409pt}}
\put(1303.0,131.0){\rule[-0.200pt]{0.400pt}{2.409pt}}
\put(1303.0,766.0){\rule[-0.200pt]{0.400pt}{2.409pt}}
\put(1344.0,131.0){\rule[-0.200pt]{0.400pt}{2.409pt}}
\put(1344.0,766.0){\rule[-0.200pt]{0.400pt}{2.409pt}}
\put(1379.0,131.0){\rule[-0.200pt]{0.400pt}{2.409pt}}
\put(1379.0,766.0){\rule[-0.200pt]{0.400pt}{2.409pt}}
\put(1411.0,131.0){\rule[-0.200pt]{0.400pt}{2.409pt}}
\put(1411.0,766.0){\rule[-0.200pt]{0.400pt}{2.409pt}}
\put(1439.0,131.0){\rule[-0.200pt]{0.400pt}{4.818pt}}
\put(1439,90){\makebox(0,0){$0.1$}}
\put(1439.0,756.0){\rule[-0.200pt]{0.400pt}{4.818pt}}
\put(211.0,131.0){\rule[-0.200pt]{0.400pt}{155.380pt}}
\put(211.0,131.0){\rule[-0.200pt]{295.825pt}{0.400pt}}
\put(1439.0,131.0){\rule[-0.200pt]{0.400pt}{155.380pt}}
\put(211.0,776.0){\rule[-0.200pt]{295.825pt}{0.400pt}}
\put(30,453){\makebox(0,0){\rotatebox{90}{$L^2(\mathcal{F})$ error}}}
\put(825,29){\makebox(0,0){$h$}}
\put(825,838){\makebox(0,0){velocity}}
\put(1279,212){\makebox(0,0)[r]{$P_1-P_1-P_1$ (slope=1.981)}}
\put(1299.0,212.0){\rule[-0.200pt]{24.090pt}{0.400pt}}
\put(1218,752){\usebox{\plotpoint}}
\multiput(1215.15,750.92)(-0.734,-0.499){249}{\rule{0.687pt}{0.120pt}}
\multiput(1216.57,751.17)(-183.573,-126.000){2}{\rule{0.344pt}{0.400pt}}
\multiput(1030.22,624.92)(-0.713,-0.499){255}{\rule{0.671pt}{0.120pt}}
\multiput(1031.61,625.17)(-182.608,-129.000){2}{\rule{0.335pt}{0.400pt}}
\multiput(846.20,495.92)(-0.717,-0.499){255}{\rule{0.674pt}{0.120pt}}
\multiput(847.60,496.17)(-183.602,-129.000){2}{\rule{0.337pt}{0.400pt}}
\multiput(661.19,366.92)(-0.723,-0.499){253}{\rule{0.678pt}{0.120pt}}
\multiput(662.59,367.17)(-183.593,-128.000){2}{\rule{0.339pt}{0.400pt}}
\put(1218,752){\makebox(0,0){$+$}}
\put(1033,626){\makebox(0,0){$+$}}
\put(849,497){\makebox(0,0){$+$}}
\put(664,368){\makebox(0,0){$+$}}
\put(479,240){\makebox(0,0){$+$}}
\put(1349,212){\makebox(0,0){$+$}}
\put(1279,171){\makebox(0,0)[r]{$P_1-P_1-P_0$ (slope=1.982)}}
\multiput(1299,171)(20.756,0.000){5}{\usebox{\plotpoint}}
\put(1399,171){\usebox{\plotpoint}}
\put(1218,752){\usebox{\plotpoint}}
\multiput(1218,752)(-17.155,-11.684){11}{\usebox{\plotpoint}}
\multiput(1033,626)(-16.995,-11.915){11}{\usebox{\plotpoint}}
\multiput(849,497)(-17.025,-11.872){11}{\usebox{\plotpoint}}
\multiput(664,368)(-17.068,-11.809){11}{\usebox{\plotpoint}}
\put(479,240){\usebox{\plotpoint}}
\put(1218,752){\makebox(0,0){$\times$}}
\put(1033,626){\makebox(0,0){$\times$}}
\put(849,497){\makebox(0,0){$\times$}}
\put(664,368){\makebox(0,0){$\times$}}
\put(479,240){\makebox(0,0){$\times$}}
\put(1349,171){\makebox(0,0){$\times$}}
\put(211.0,131.0){\rule[-0.200pt]{0.400pt}{155.380pt}}
\put(211.0,131.0){\rule[-0.200pt]{295.825pt}{0.400pt}}
\put(1439.0,131.0){\rule[-0.200pt]{0.400pt}{155.380pt}}
\put(211.0,776.0){\rule[-0.200pt]{295.825pt}{0.400pt}}
\end{picture}}
 \resizebox{8cm}{!}{% GNUPLOT: LaTeX picture
\setlength{\unitlength}{0.240900pt}
\ifx\plotpoint\undefined\newsavebox{\plotpoint}\fi
\begin{picture}(1500,900)(0,0)
\sbox{\plotpoint}{\rule[-0.200pt]{0.400pt}{0.400pt}}%
\put(171.0,131.0){\rule[-0.200pt]{4.818pt}{0.400pt}}
\put(151,131){\makebox(0,0)[r]{$0.01$}}
\put(1419.0,131.0){\rule[-0.200pt]{4.818pt}{0.400pt}}
\put(171.0,228.0){\rule[-0.200pt]{2.409pt}{0.400pt}}
\put(1429.0,228.0){\rule[-0.200pt]{2.409pt}{0.400pt}}
\put(171.0,285.0){\rule[-0.200pt]{2.409pt}{0.400pt}}
\put(1429.0,285.0){\rule[-0.200pt]{2.409pt}{0.400pt}}
\put(171.0,325.0){\rule[-0.200pt]{2.409pt}{0.400pt}}
\put(1429.0,325.0){\rule[-0.200pt]{2.409pt}{0.400pt}}
\put(171.0,356.0){\rule[-0.200pt]{2.409pt}{0.400pt}}
\put(1429.0,356.0){\rule[-0.200pt]{2.409pt}{0.400pt}}
\put(171.0,382.0){\rule[-0.200pt]{2.409pt}{0.400pt}}
\put(1429.0,382.0){\rule[-0.200pt]{2.409pt}{0.400pt}}
\put(171.0,404.0){\rule[-0.200pt]{2.409pt}{0.400pt}}
\put(1429.0,404.0){\rule[-0.200pt]{2.409pt}{0.400pt}}
\put(171.0,422.0){\rule[-0.200pt]{2.409pt}{0.400pt}}
\put(1429.0,422.0){\rule[-0.200pt]{2.409pt}{0.400pt}}
\put(171.0,439.0){\rule[-0.200pt]{2.409pt}{0.400pt}}
\put(1429.0,439.0){\rule[-0.200pt]{2.409pt}{0.400pt}}
\put(171.0,454.0){\rule[-0.200pt]{4.818pt}{0.400pt}}
\put(151,454){\makebox(0,0)[r]{$0.1$}}
\put(1419.0,454.0){\rule[-0.200pt]{4.818pt}{0.400pt}}
\put(171.0,551.0){\rule[-0.200pt]{2.409pt}{0.400pt}}
\put(1429.0,551.0){\rule[-0.200pt]{2.409pt}{0.400pt}}
\put(171.0,607.0){\rule[-0.200pt]{2.409pt}{0.400pt}}
\put(1429.0,607.0){\rule[-0.200pt]{2.409pt}{0.400pt}}
\put(171.0,648.0){\rule[-0.200pt]{2.409pt}{0.400pt}}
\put(1429.0,648.0){\rule[-0.200pt]{2.409pt}{0.400pt}}
\put(171.0,679.0){\rule[-0.200pt]{2.409pt}{0.400pt}}
\put(1429.0,679.0){\rule[-0.200pt]{2.409pt}{0.400pt}}
\put(171.0,704.0){\rule[-0.200pt]{2.409pt}{0.400pt}}
\put(1429.0,704.0){\rule[-0.200pt]{2.409pt}{0.400pt}}
\put(171.0,726.0){\rule[-0.200pt]{2.409pt}{0.400pt}}
\put(1429.0,726.0){\rule[-0.200pt]{2.409pt}{0.400pt}}
\put(171.0,745.0){\rule[-0.200pt]{2.409pt}{0.400pt}}
\put(1429.0,745.0){\rule[-0.200pt]{2.409pt}{0.400pt}}
\put(171.0,761.0){\rule[-0.200pt]{2.409pt}{0.400pt}}
\put(1429.0,761.0){\rule[-0.200pt]{2.409pt}{0.400pt}}
\put(171.0,776.0){\rule[-0.200pt]{4.818pt}{0.400pt}}
\put(151,776){\makebox(0,0)[r]{$1$}}
\put(1419.0,776.0){\rule[-0.200pt]{4.818pt}{0.400pt}}
\put(171.0,131.0){\rule[-0.200pt]{0.400pt}{4.818pt}}
\put(171,90){\makebox(0,0){$0.001$}}
\put(171.0,756.0){\rule[-0.200pt]{0.400pt}{4.818pt}}
\put(362.0,131.0){\rule[-0.200pt]{0.400pt}{2.409pt}}
\put(362.0,766.0){\rule[-0.200pt]{0.400pt}{2.409pt}}
\put(473.0,131.0){\rule[-0.200pt]{0.400pt}{2.409pt}}
\put(473.0,766.0){\rule[-0.200pt]{0.400pt}{2.409pt}}
\put(553.0,131.0){\rule[-0.200pt]{0.400pt}{2.409pt}}
\put(553.0,766.0){\rule[-0.200pt]{0.400pt}{2.409pt}}
\put(614.0,131.0){\rule[-0.200pt]{0.400pt}{2.409pt}}
\put(614.0,766.0){\rule[-0.200pt]{0.400pt}{2.409pt}}
\put(664.0,131.0){\rule[-0.200pt]{0.400pt}{2.409pt}}
\put(664.0,766.0){\rule[-0.200pt]{0.400pt}{2.409pt}}
\put(707.0,131.0){\rule[-0.200pt]{0.400pt}{2.409pt}}
\put(707.0,766.0){\rule[-0.200pt]{0.400pt}{2.409pt}}
\put(744.0,131.0){\rule[-0.200pt]{0.400pt}{2.409pt}}
\put(744.0,766.0){\rule[-0.200pt]{0.400pt}{2.409pt}}
\put(776.0,131.0){\rule[-0.200pt]{0.400pt}{2.409pt}}
\put(776.0,766.0){\rule[-0.200pt]{0.400pt}{2.409pt}}
\put(805.0,131.0){\rule[-0.200pt]{0.400pt}{4.818pt}}
\put(805,90){\makebox(0,0){$0.01$}}
\put(805.0,756.0){\rule[-0.200pt]{0.400pt}{4.818pt}}
\put(996.0,131.0){\rule[-0.200pt]{0.400pt}{2.409pt}}
\put(996.0,766.0){\rule[-0.200pt]{0.400pt}{2.409pt}}
\put(1107.0,131.0){\rule[-0.200pt]{0.400pt}{2.409pt}}
\put(1107.0,766.0){\rule[-0.200pt]{0.400pt}{2.409pt}}
\put(1187.0,131.0){\rule[-0.200pt]{0.400pt}{2.409pt}}
\put(1187.0,766.0){\rule[-0.200pt]{0.400pt}{2.409pt}}
\put(1248.0,131.0){\rule[-0.200pt]{0.400pt}{2.409pt}}
\put(1248.0,766.0){\rule[-0.200pt]{0.400pt}{2.409pt}}
\put(1298.0,131.0){\rule[-0.200pt]{0.400pt}{2.409pt}}
\put(1298.0,766.0){\rule[-0.200pt]{0.400pt}{2.409pt}}
\put(1341.0,131.0){\rule[-0.200pt]{0.400pt}{2.409pt}}
\put(1341.0,766.0){\rule[-0.200pt]{0.400pt}{2.409pt}}
\put(1378.0,131.0){\rule[-0.200pt]{0.400pt}{2.409pt}}
\put(1378.0,766.0){\rule[-0.200pt]{0.400pt}{2.409pt}}
\put(1410.0,131.0){\rule[-0.200pt]{0.400pt}{2.409pt}}
\put(1410.0,766.0){\rule[-0.200pt]{0.400pt}{2.409pt}}
\put(1439.0,131.0){\rule[-0.200pt]{0.400pt}{4.818pt}}
\put(1439,90){\makebox(0,0){$0.1$}}
\put(1439.0,756.0){\rule[-0.200pt]{0.400pt}{4.818pt}}
\put(171.0,131.0){\rule[-0.200pt]{0.400pt}{155.380pt}}
\put(171.0,131.0){\rule[-0.200pt]{305.461pt}{0.400pt}}
\put(1439.0,131.0){\rule[-0.200pt]{0.400pt}{155.380pt}}
\put(171.0,776.0){\rule[-0.200pt]{305.461pt}{0.400pt}}
\put(30,453){\makebox(0,0){\rotatebox{90}{$H^1(\mathcal{F})$ error}}}
\put(805,29){\makebox(0,0){$h$}}
\put(805,838){\makebox(0,0){velocity}}
\put(1279,212){\makebox(0,0)[r]{$P_1-P_1-P_1$ (slope=1.025)}}
\put(1299.0,212.0){\rule[-0.200pt]{24.090pt}{0.400pt}}
\put(1211,601){\usebox{\plotpoint}}
\multiput(1207.32,599.92)(-0.986,-0.499){191}{\rule{0.888pt}{0.120pt}}
\multiput(1209.16,600.17)(-189.158,-97.000){2}{\rule{0.444pt}{0.400pt}}
\multiput(1016.51,502.92)(-0.928,-0.499){203}{\rule{0.842pt}{0.120pt}}
\multiput(1018.25,503.17)(-189.253,-103.000){2}{\rule{0.421pt}{0.400pt}}
\multiput(825.40,399.92)(-0.961,-0.499){195}{\rule{0.868pt}{0.120pt}}
\multiput(827.20,400.17)(-188.199,-99.000){2}{\rule{0.434pt}{0.400pt}}
\multiput(635.35,300.92)(-0.976,-0.499){193}{\rule{0.880pt}{0.120pt}}
\multiput(637.17,301.17)(-189.174,-98.000){2}{\rule{0.440pt}{0.400pt}}
\put(1211,601){\makebox(0,0){$+$}}
\put(1020,504){\makebox(0,0){$+$}}
\put(829,401){\makebox(0,0){$+$}}
\put(639,302){\makebox(0,0){$+$}}
\put(448,204){\makebox(0,0){$+$}}
\put(1349,212){\makebox(0,0){$+$}}
\put(1279,171){\makebox(0,0)[r]{$P_1-P_1-P_0$ (slope=1.027)}}
\multiput(1299,171)(20.756,0.000){5}{\usebox{\plotpoint}}
\put(1399,171){\usebox{\plotpoint}}
\put(1211,602){\usebox{\plotpoint}}
\multiput(1211,602)(-18.467,-9.475){11}{\usebox{\plotpoint}}
\multiput(1020,504)(-18.308,-9.777){10}{\usebox{\plotpoint}}
\multiput(829,402)(-18.367,-9.667){11}{\usebox{\plotpoint}}
\multiput(639,302)(-18.467,-9.475){10}{\usebox{\plotpoint}}
\put(448,204){\usebox{\plotpoint}}
\put(1211,602){\makebox(0,0){$\times$}}
\put(1020,504){\makebox(0,0){$\times$}}
\put(829,402){\makebox(0,0){$\times$}}
\put(639,302){\makebox(0,0){$\times$}}
\put(448,204){\makebox(0,0){$\times$}}
\put(1349,171){\makebox(0,0){$\times$}}
\put(171.0,131.0){\rule[-0.200pt]{0.400pt}{155.380pt}}
\put(171.0,131.0){\rule[-0.200pt]{305.461pt}{0.400pt}}
\put(1439.0,131.0){\rule[-0.200pt]{0.400pt}{155.380pt}}
\put(171.0,776.0){\rule[-0.200pt]{305.461pt}{0.400pt}}
\end{picture}}\\
\resizebox{8cm}{!}{% GNUPLOT: LaTeX picture
\setlength{\unitlength}{0.240900pt}
\ifx\plotpoint\undefined\newsavebox{\plotpoint}\fi
\begin{picture}(1500,900)(0,0)
\sbox{\plotpoint}{\rule[-0.200pt]{0.400pt}{0.400pt}}%
\put(191.0,131.0){\rule[-0.200pt]{4.818pt}{0.400pt}}
\put(171,131){\makebox(0,0)[r]{$0.001$}}
\put(1419.0,131.0){\rule[-0.200pt]{4.818pt}{0.400pt}}
\put(191.0,196.0){\rule[-0.200pt]{2.409pt}{0.400pt}}
\put(1429.0,196.0){\rule[-0.200pt]{2.409pt}{0.400pt}}
\put(191.0,234.0){\rule[-0.200pt]{2.409pt}{0.400pt}}
\put(1429.0,234.0){\rule[-0.200pt]{2.409pt}{0.400pt}}
\put(191.0,260.0){\rule[-0.200pt]{2.409pt}{0.400pt}}
\put(1429.0,260.0){\rule[-0.200pt]{2.409pt}{0.400pt}}
\put(191.0,281.0){\rule[-0.200pt]{2.409pt}{0.400pt}}
\put(1429.0,281.0){\rule[-0.200pt]{2.409pt}{0.400pt}}
\put(191.0,298.0){\rule[-0.200pt]{2.409pt}{0.400pt}}
\put(1429.0,298.0){\rule[-0.200pt]{2.409pt}{0.400pt}}
\put(191.0,313.0){\rule[-0.200pt]{2.409pt}{0.400pt}}
\put(1429.0,313.0){\rule[-0.200pt]{2.409pt}{0.400pt}}
\put(191.0,325.0){\rule[-0.200pt]{2.409pt}{0.400pt}}
\put(1429.0,325.0){\rule[-0.200pt]{2.409pt}{0.400pt}}
\put(191.0,336.0){\rule[-0.200pt]{2.409pt}{0.400pt}}
\put(1429.0,336.0){\rule[-0.200pt]{2.409pt}{0.400pt}}
\put(191.0,346.0){\rule[-0.200pt]{4.818pt}{0.400pt}}
\put(171,346){\makebox(0,0)[r]{$0.01$}}
\put(1419.0,346.0){\rule[-0.200pt]{4.818pt}{0.400pt}}
\put(191.0,411.0){\rule[-0.200pt]{2.409pt}{0.400pt}}
\put(1429.0,411.0){\rule[-0.200pt]{2.409pt}{0.400pt}}
\put(191.0,449.0){\rule[-0.200pt]{2.409pt}{0.400pt}}
\put(1429.0,449.0){\rule[-0.200pt]{2.409pt}{0.400pt}}
\put(191.0,475.0){\rule[-0.200pt]{2.409pt}{0.400pt}}
\put(1429.0,475.0){\rule[-0.200pt]{2.409pt}{0.400pt}}
\put(191.0,496.0){\rule[-0.200pt]{2.409pt}{0.400pt}}
\put(1429.0,496.0){\rule[-0.200pt]{2.409pt}{0.400pt}}
\put(191.0,513.0){\rule[-0.200pt]{2.409pt}{0.400pt}}
\put(1429.0,513.0){\rule[-0.200pt]{2.409pt}{0.400pt}}
\put(191.0,528.0){\rule[-0.200pt]{2.409pt}{0.400pt}}
\put(1429.0,528.0){\rule[-0.200pt]{2.409pt}{0.400pt}}
\put(191.0,540.0){\rule[-0.200pt]{2.409pt}{0.400pt}}
\put(1429.0,540.0){\rule[-0.200pt]{2.409pt}{0.400pt}}
\put(191.0,551.0){\rule[-0.200pt]{2.409pt}{0.400pt}}
\put(1429.0,551.0){\rule[-0.200pt]{2.409pt}{0.400pt}}
\put(191.0,561.0){\rule[-0.200pt]{4.818pt}{0.400pt}}
\put(171,561){\makebox(0,0)[r]{$0.1$}}
\put(1419.0,561.0){\rule[-0.200pt]{4.818pt}{0.400pt}}
\put(191.0,626.0){\rule[-0.200pt]{2.409pt}{0.400pt}}
\put(1429.0,626.0){\rule[-0.200pt]{2.409pt}{0.400pt}}
\put(191.0,664.0){\rule[-0.200pt]{2.409pt}{0.400pt}}
\put(1429.0,664.0){\rule[-0.200pt]{2.409pt}{0.400pt}}
\put(191.0,690.0){\rule[-0.200pt]{2.409pt}{0.400pt}}
\put(1429.0,690.0){\rule[-0.200pt]{2.409pt}{0.400pt}}
\put(191.0,711.0){\rule[-0.200pt]{2.409pt}{0.400pt}}
\put(1429.0,711.0){\rule[-0.200pt]{2.409pt}{0.400pt}}
\put(191.0,728.0){\rule[-0.200pt]{2.409pt}{0.400pt}}
\put(1429.0,728.0){\rule[-0.200pt]{2.409pt}{0.400pt}}
\put(191.0,743.0){\rule[-0.200pt]{2.409pt}{0.400pt}}
\put(1429.0,743.0){\rule[-0.200pt]{2.409pt}{0.400pt}}
\put(191.0,755.0){\rule[-0.200pt]{2.409pt}{0.400pt}}
\put(1429.0,755.0){\rule[-0.200pt]{2.409pt}{0.400pt}}
\put(191.0,766.0){\rule[-0.200pt]{2.409pt}{0.400pt}}
\put(1429.0,766.0){\rule[-0.200pt]{2.409pt}{0.400pt}}
\put(191.0,776.0){\rule[-0.200pt]{4.818pt}{0.400pt}}
\put(171,776){\makebox(0,0)[r]{$1$}}
\put(1419.0,776.0){\rule[-0.200pt]{4.818pt}{0.400pt}}
\put(191.0,131.0){\rule[-0.200pt]{0.400pt}{4.818pt}}
\put(191,90){\makebox(0,0){$0.001$}}
\put(191.0,756.0){\rule[-0.200pt]{0.400pt}{4.818pt}}
\put(379.0,131.0){\rule[-0.200pt]{0.400pt}{2.409pt}}
\put(379.0,766.0){\rule[-0.200pt]{0.400pt}{2.409pt}}
\put(489.0,131.0){\rule[-0.200pt]{0.400pt}{2.409pt}}
\put(489.0,766.0){\rule[-0.200pt]{0.400pt}{2.409pt}}
\put(567.0,131.0){\rule[-0.200pt]{0.400pt}{2.409pt}}
\put(567.0,766.0){\rule[-0.200pt]{0.400pt}{2.409pt}}
\put(627.0,131.0){\rule[-0.200pt]{0.400pt}{2.409pt}}
\put(627.0,766.0){\rule[-0.200pt]{0.400pt}{2.409pt}}
\put(677.0,131.0){\rule[-0.200pt]{0.400pt}{2.409pt}}
\put(677.0,766.0){\rule[-0.200pt]{0.400pt}{2.409pt}}
\put(718.0,131.0){\rule[-0.200pt]{0.400pt}{2.409pt}}
\put(718.0,766.0){\rule[-0.200pt]{0.400pt}{2.409pt}}
\put(755.0,131.0){\rule[-0.200pt]{0.400pt}{2.409pt}}
\put(755.0,766.0){\rule[-0.200pt]{0.400pt}{2.409pt}}
\put(786.0,131.0){\rule[-0.200pt]{0.400pt}{2.409pt}}
\put(786.0,766.0){\rule[-0.200pt]{0.400pt}{2.409pt}}
\put(815.0,131.0){\rule[-0.200pt]{0.400pt}{4.818pt}}
\put(815,90){\makebox(0,0){$0.01$}}
\put(815.0,756.0){\rule[-0.200pt]{0.400pt}{4.818pt}}
\put(1003.0,131.0){\rule[-0.200pt]{0.400pt}{2.409pt}}
\put(1003.0,766.0){\rule[-0.200pt]{0.400pt}{2.409pt}}
\put(1113.0,131.0){\rule[-0.200pt]{0.400pt}{2.409pt}}
\put(1113.0,766.0){\rule[-0.200pt]{0.400pt}{2.409pt}}
\put(1191.0,131.0){\rule[-0.200pt]{0.400pt}{2.409pt}}
\put(1191.0,766.0){\rule[-0.200pt]{0.400pt}{2.409pt}}
\put(1251.0,131.0){\rule[-0.200pt]{0.400pt}{2.409pt}}
\put(1251.0,766.0){\rule[-0.200pt]{0.400pt}{2.409pt}}
\put(1301.0,131.0){\rule[-0.200pt]{0.400pt}{2.409pt}}
\put(1301.0,766.0){\rule[-0.200pt]{0.400pt}{2.409pt}}
\put(1342.0,131.0){\rule[-0.200pt]{0.400pt}{2.409pt}}
\put(1342.0,766.0){\rule[-0.200pt]{0.400pt}{2.409pt}}
\put(1379.0,131.0){\rule[-0.200pt]{0.400pt}{2.409pt}}
\put(1379.0,766.0){\rule[-0.200pt]{0.400pt}{2.409pt}}
\put(1410.0,131.0){\rule[-0.200pt]{0.400pt}{2.409pt}}
\put(1410.0,766.0){\rule[-0.200pt]{0.400pt}{2.409pt}}
\put(1439.0,131.0){\rule[-0.200pt]{0.400pt}{4.818pt}}
\put(1439,90){\makebox(0,0){$0.1$}}
\put(1439.0,756.0){\rule[-0.200pt]{0.400pt}{4.818pt}}
\put(191.0,131.0){\rule[-0.200pt]{0.400pt}{155.380pt}}
\put(191.0,131.0){\rule[-0.200pt]{300.643pt}{0.400pt}}
\put(1439.0,131.0){\rule[-0.200pt]{0.400pt}{155.380pt}}
\put(191.0,776.0){\rule[-0.200pt]{300.643pt}{0.400pt}}
\put(30,453){\makebox(0,0){\rotatebox{90}{$L^2(\mathcal{F})$ error}}}
\put(815,29){\makebox(0,0){$h$}}
\put(815,838){\makebox(0,0){pressure}}
\put(1279,212){\makebox(0,0)[r]{$P_1-P_1-P_1$ (slope=1.564)}}
\put(1299.0,212.0){\rule[-0.200pt]{24.090pt}{0.400pt}}
\put(1215,729){\usebox{\plotpoint}}
\multiput(1211.85,727.92)(-0.825,-0.499){225}{\rule{0.760pt}{0.120pt}}
\multiput(1213.42,728.17)(-186.423,-114.000){2}{\rule{0.380pt}{0.400pt}}
\multiput(1023.43,613.92)(-0.951,-0.499){195}{\rule{0.860pt}{0.120pt}}
\multiput(1025.22,614.17)(-186.216,-99.000){2}{\rule{0.430pt}{0.400pt}}
\multiput(835.37,514.92)(-0.970,-0.499){191}{\rule{0.875pt}{0.120pt}}
\multiput(837.18,515.17)(-186.183,-97.000){2}{\rule{0.438pt}{0.400pt}}
\multiput(647.40,417.92)(-0.960,-0.499){193}{\rule{0.867pt}{0.120pt}}
\multiput(649.20,418.17)(-186.200,-98.000){2}{\rule{0.434pt}{0.400pt}}
\put(1215,729){\makebox(0,0){$+$}}
\put(1027,615){\makebox(0,0){$+$}}
\put(839,516){\makebox(0,0){$+$}}
\put(651,419){\makebox(0,0){$+$}}
\put(463,321){\makebox(0,0){$+$}}
\put(1349,212){\makebox(0,0){$+$}}
\put(1279,171){\makebox(0,0)[r]{$P_1-P_1-P_0$ (slope=1.561)}}
\multiput(1299,171)(20.756,0.000){5}{\usebox{\plotpoint}}
\put(1399,171){\usebox{\plotpoint}}
\put(1215,728){\usebox{\plotpoint}}
\multiput(1215,728)(-17.789,-10.693){11}{\usebox{\plotpoint}}
\multiput(1027,615)(-18.365,-9.671){10}{\usebox{\plotpoint}}
\multiput(839,516)(-18.405,-9.594){11}{\usebox{\plotpoint}}
\multiput(651,418)(-18.445,-9.517){10}{\usebox{\plotpoint}}
\put(463,321){\usebox{\plotpoint}}
\put(1215,728){\makebox(0,0){$\times$}}
\put(1027,615){\makebox(0,0){$\times$}}
\put(839,516){\makebox(0,0){$\times$}}
\put(651,418){\makebox(0,0){$\times$}}
\put(463,321){\makebox(0,0){$\times$}}
\put(1349,171){\makebox(0,0){$\times$}}
\put(191.0,131.0){\rule[-0.200pt]{0.400pt}{155.380pt}}
\put(191.0,131.0){\rule[-0.200pt]{300.643pt}{0.400pt}}
\put(1439.0,131.0){\rule[-0.200pt]{0.400pt}{155.380pt}}
\put(191.0,776.0){\rule[-0.200pt]{300.643pt}{0.400pt}}
\end{picture}}
%\resizebox{8cm}{!}{\input{FIGURES/fig3-l}}\\
\resizebox{8cm}{!}{% GNUPLOT: LaTeX picture
\setlength{\unitlength}{0.240900pt}
\ifx\plotpoint\undefined\newsavebox{\plotpoint}\fi
\begin{picture}(1500,900)(0,0)
\sbox{\plotpoint}{\rule[-0.200pt]{0.400pt}{0.400pt}}%
\put(211.0,131.0){\rule[-0.200pt]{4.818pt}{0.400pt}}
\put(191,131){\makebox(0,0)[r]{$1e-05$}}
\put(1419.0,131.0){\rule[-0.200pt]{4.818pt}{0.400pt}}
\put(211.0,196.0){\rule[-0.200pt]{2.409pt}{0.400pt}}
\put(1429.0,196.0){\rule[-0.200pt]{2.409pt}{0.400pt}}
\put(211.0,234.0){\rule[-0.200pt]{2.409pt}{0.400pt}}
\put(1429.0,234.0){\rule[-0.200pt]{2.409pt}{0.400pt}}
\put(211.0,260.0){\rule[-0.200pt]{2.409pt}{0.400pt}}
\put(1429.0,260.0){\rule[-0.200pt]{2.409pt}{0.400pt}}
\put(211.0,281.0){\rule[-0.200pt]{2.409pt}{0.400pt}}
\put(1429.0,281.0){\rule[-0.200pt]{2.409pt}{0.400pt}}
\put(211.0,298.0){\rule[-0.200pt]{2.409pt}{0.400pt}}
\put(1429.0,298.0){\rule[-0.200pt]{2.409pt}{0.400pt}}
\put(211.0,313.0){\rule[-0.200pt]{2.409pt}{0.400pt}}
\put(1429.0,313.0){\rule[-0.200pt]{2.409pt}{0.400pt}}
\put(211.0,325.0){\rule[-0.200pt]{2.409pt}{0.400pt}}
\put(1429.0,325.0){\rule[-0.200pt]{2.409pt}{0.400pt}}
\put(211.0,336.0){\rule[-0.200pt]{2.409pt}{0.400pt}}
\put(1429.0,336.0){\rule[-0.200pt]{2.409pt}{0.400pt}}
\put(211.0,346.0){\rule[-0.200pt]{4.818pt}{0.400pt}}
\put(191,346){\makebox(0,0)[r]{$0.0001$}}
\put(1419.0,346.0){\rule[-0.200pt]{4.818pt}{0.400pt}}
\put(211.0,411.0){\rule[-0.200pt]{2.409pt}{0.400pt}}
\put(1429.0,411.0){\rule[-0.200pt]{2.409pt}{0.400pt}}
\put(211.0,449.0){\rule[-0.200pt]{2.409pt}{0.400pt}}
\put(1429.0,449.0){\rule[-0.200pt]{2.409pt}{0.400pt}}
\put(211.0,475.0){\rule[-0.200pt]{2.409pt}{0.400pt}}
\put(1429.0,475.0){\rule[-0.200pt]{2.409pt}{0.400pt}}
\put(211.0,496.0){\rule[-0.200pt]{2.409pt}{0.400pt}}
\put(1429.0,496.0){\rule[-0.200pt]{2.409pt}{0.400pt}}
\put(211.0,513.0){\rule[-0.200pt]{2.409pt}{0.400pt}}
\put(1429.0,513.0){\rule[-0.200pt]{2.409pt}{0.400pt}}
\put(211.0,528.0){\rule[-0.200pt]{2.409pt}{0.400pt}}
\put(1429.0,528.0){\rule[-0.200pt]{2.409pt}{0.400pt}}
\put(211.0,540.0){\rule[-0.200pt]{2.409pt}{0.400pt}}
\put(1429.0,540.0){\rule[-0.200pt]{2.409pt}{0.400pt}}
\put(211.0,551.0){\rule[-0.200pt]{2.409pt}{0.400pt}}
\put(1429.0,551.0){\rule[-0.200pt]{2.409pt}{0.400pt}}
\put(211.0,561.0){\rule[-0.200pt]{4.818pt}{0.400pt}}
\put(191,561){\makebox(0,0)[r]{$0.001$}}
\put(1419.0,561.0){\rule[-0.200pt]{4.818pt}{0.400pt}}
\put(211.0,626.0){\rule[-0.200pt]{2.409pt}{0.400pt}}
\put(1429.0,626.0){\rule[-0.200pt]{2.409pt}{0.400pt}}
\put(211.0,664.0){\rule[-0.200pt]{2.409pt}{0.400pt}}
\put(1429.0,664.0){\rule[-0.200pt]{2.409pt}{0.400pt}}
\put(211.0,690.0){\rule[-0.200pt]{2.409pt}{0.400pt}}
\put(1429.0,690.0){\rule[-0.200pt]{2.409pt}{0.400pt}}
\put(211.0,711.0){\rule[-0.200pt]{2.409pt}{0.400pt}}
\put(1429.0,711.0){\rule[-0.200pt]{2.409pt}{0.400pt}}
\put(211.0,728.0){\rule[-0.200pt]{2.409pt}{0.400pt}}
\put(1429.0,728.0){\rule[-0.200pt]{2.409pt}{0.400pt}}
\put(211.0,743.0){\rule[-0.200pt]{2.409pt}{0.400pt}}
\put(1429.0,743.0){\rule[-0.200pt]{2.409pt}{0.400pt}}
\put(211.0,755.0){\rule[-0.200pt]{2.409pt}{0.400pt}}
\put(1429.0,755.0){\rule[-0.200pt]{2.409pt}{0.400pt}}
\put(211.0,766.0){\rule[-0.200pt]{2.409pt}{0.400pt}}
\put(1429.0,766.0){\rule[-0.200pt]{2.409pt}{0.400pt}}
\put(211.0,776.0){\rule[-0.200pt]{4.818pt}{0.400pt}}
\put(191,776){\makebox(0,0)[r]{$0.01$}}
\put(1419.0,776.0){\rule[-0.200pt]{4.818pt}{0.400pt}}
\put(211.0,131.0){\rule[-0.200pt]{0.400pt}{4.818pt}}
\put(211,90){\makebox(0,0){$0.001$}}
\put(211.0,756.0){\rule[-0.200pt]{0.400pt}{4.818pt}}
\put(396.0,131.0){\rule[-0.200pt]{0.400pt}{2.409pt}}
\put(396.0,766.0){\rule[-0.200pt]{0.400pt}{2.409pt}}
\put(504.0,131.0){\rule[-0.200pt]{0.400pt}{2.409pt}}
\put(504.0,766.0){\rule[-0.200pt]{0.400pt}{2.409pt}}
\put(581.0,131.0){\rule[-0.200pt]{0.400pt}{2.409pt}}
\put(581.0,766.0){\rule[-0.200pt]{0.400pt}{2.409pt}}
\put(640.0,131.0){\rule[-0.200pt]{0.400pt}{2.409pt}}
\put(640.0,766.0){\rule[-0.200pt]{0.400pt}{2.409pt}}
\put(689.0,131.0){\rule[-0.200pt]{0.400pt}{2.409pt}}
\put(689.0,766.0){\rule[-0.200pt]{0.400pt}{2.409pt}}
\put(730.0,131.0){\rule[-0.200pt]{0.400pt}{2.409pt}}
\put(730.0,766.0){\rule[-0.200pt]{0.400pt}{2.409pt}}
\put(765.0,131.0){\rule[-0.200pt]{0.400pt}{2.409pt}}
\put(765.0,766.0){\rule[-0.200pt]{0.400pt}{2.409pt}}
\put(797.0,131.0){\rule[-0.200pt]{0.400pt}{2.409pt}}
\put(797.0,766.0){\rule[-0.200pt]{0.400pt}{2.409pt}}
\put(825.0,131.0){\rule[-0.200pt]{0.400pt}{4.818pt}}
\put(825,90){\makebox(0,0){$0.01$}}
\put(825.0,756.0){\rule[-0.200pt]{0.400pt}{4.818pt}}
\put(1010.0,131.0){\rule[-0.200pt]{0.400pt}{2.409pt}}
\put(1010.0,766.0){\rule[-0.200pt]{0.400pt}{2.409pt}}
\put(1118.0,131.0){\rule[-0.200pt]{0.400pt}{2.409pt}}
\put(1118.0,766.0){\rule[-0.200pt]{0.400pt}{2.409pt}}
\put(1195.0,131.0){\rule[-0.200pt]{0.400pt}{2.409pt}}
\put(1195.0,766.0){\rule[-0.200pt]{0.400pt}{2.409pt}}
\put(1254.0,131.0){\rule[-0.200pt]{0.400pt}{2.409pt}}
\put(1254.0,766.0){\rule[-0.200pt]{0.400pt}{2.409pt}}
\put(1303.0,131.0){\rule[-0.200pt]{0.400pt}{2.409pt}}
\put(1303.0,766.0){\rule[-0.200pt]{0.400pt}{2.409pt}}
\put(1344.0,131.0){\rule[-0.200pt]{0.400pt}{2.409pt}}
\put(1344.0,766.0){\rule[-0.200pt]{0.400pt}{2.409pt}}
\put(1379.0,131.0){\rule[-0.200pt]{0.400pt}{2.409pt}}
\put(1379.0,766.0){\rule[-0.200pt]{0.400pt}{2.409pt}}
\put(1411.0,131.0){\rule[-0.200pt]{0.400pt}{2.409pt}}
\put(1411.0,766.0){\rule[-0.200pt]{0.400pt}{2.409pt}}
\put(1439.0,131.0){\rule[-0.200pt]{0.400pt}{4.818pt}}
\put(1439,90){\makebox(0,0){$0.1$}}
\put(1439.0,756.0){\rule[-0.200pt]{0.400pt}{4.818pt}}
\put(211.0,131.0){\rule[-0.200pt]{0.400pt}{155.380pt}}
\put(211.0,131.0){\rule[-0.200pt]{295.825pt}{0.400pt}}
\put(1439.0,131.0){\rule[-0.200pt]{0.400pt}{155.380pt}}
\put(211.0,776.0){\rule[-0.200pt]{295.825pt}{0.400pt}}
\put(30,453){\makebox(0,0){\rotatebox{90}{$\left|\int_{\Gamma}(\lambda-\lambda_{h})\right|$}}}
\put(825,29){\makebox(0,0){$h$}}
\put(825,838){\makebox(0,0){multiplier}}
\put(1279,212){\makebox(0,0)[r]{$P_1-P_1-P_1$ (slope=2.131)}}
\put(1299.0,212.0){\rule[-0.200pt]{24.090pt}{0.400pt}}
\put(1218,680){\usebox{\plotpoint}}
\multiput(1212.79,678.92)(-1.450,-0.499){125}{\rule{1.256pt}{0.120pt}}
\multiput(1215.39,679.17)(-182.393,-64.000){2}{\rule{0.628pt}{0.400pt}}
\multiput(1030.72,614.92)(-0.561,-0.500){325}{\rule{0.549pt}{0.120pt}}
\multiput(1031.86,615.17)(-182.861,-164.000){2}{\rule{0.274pt}{0.400pt}}
\multiput(846.60,450.92)(-0.597,-0.499){307}{\rule{0.577pt}{0.120pt}}
\multiput(847.80,451.17)(-183.802,-155.000){2}{\rule{0.289pt}{0.400pt}}
\multiput(661.51,295.92)(-0.625,-0.499){293}{\rule{0.600pt}{0.120pt}}
\multiput(662.75,296.17)(-183.755,-148.000){2}{\rule{0.300pt}{0.400pt}}
\put(1218,680){\makebox(0,0){$+$}}
\put(1033,616){\makebox(0,0){$+$}}
\put(849,452){\makebox(0,0){$+$}}
\put(664,297){\makebox(0,0){$+$}}
\put(479,149){\makebox(0,0){$+$}}
\put(1349,212){\makebox(0,0){$+$}}
\put(1279,171){\makebox(0,0)[r]{$P_1-P_1-P_0$ (slope=2.140)}}
\multiput(1299,171)(20.756,0.000){5}{\usebox{\plotpoint}}
\put(1399,171){\usebox{\plotpoint}}
\put(1218,681){\usebox{\plotpoint}}
\multiput(1218,681)(-19.712,-6.500){10}{\usebox{\plotpoint}}
\multiput(1033,620)(-15.328,-13.995){12}{\usebox{\plotpoint}}
\multiput(849,452)(-15.910,-13.330){12}{\usebox{\plotpoint}}
\multiput(664,297)(-16.250,-12.912){11}{\usebox{\plotpoint}}
\put(479,150){\usebox{\plotpoint}}
\put(1218,681){\makebox(0,0){$\times$}}
\put(1033,620){\makebox(0,0){$\times$}}
\put(849,452){\makebox(0,0){$\times$}}
\put(664,297){\makebox(0,0){$\times$}}
\put(479,150){\makebox(0,0){$\times$}}
\put(1349,171){\makebox(0,0){$\times$}}
\put(211.0,131.0){\rule[-0.200pt]{0.400pt}{155.380pt}}
\put(211.0,131.0){\rule[-0.200pt]{295.825pt}{0.400pt}}
\put(1439.0,131.0){\rule[-0.200pt]{0.400pt}{155.380pt}}
\put(211.0,776.0){\rule[-0.200pt]{295.825pt}{0.400pt}}
\end{picture}}
\end{minipage}
\caption{Rates of convergence with Brezzi-Pitkaranta stabilization for $\|u-u_h\|_{0,\mathcal{F}}$, 
$\|u-u_h\|_{1,\mathcal{F}}$, $\|p-p_h\|_{0,\mathcal{F}}$ and $\left|\int_{\Gamma}(\lambda-\lambda_{h})\right|$ }
\label{fig:Brezzi-Pitkaranta}
\end{figure}
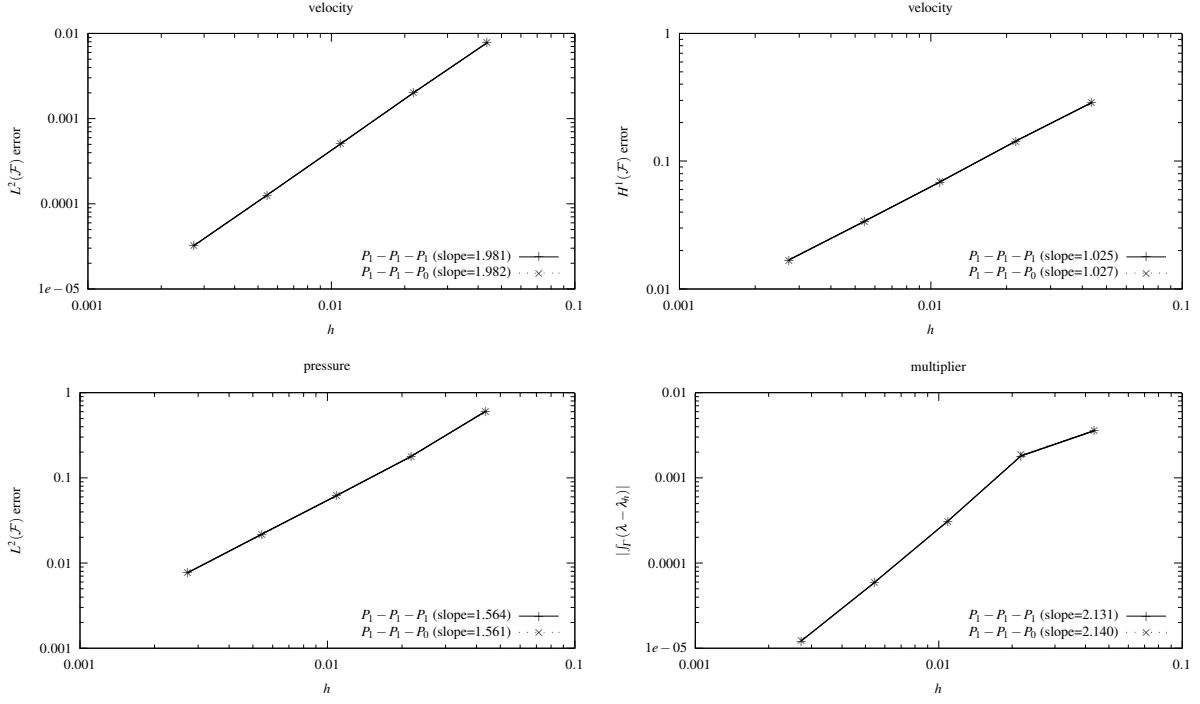
\end{center}

\subsubsection{$\P_1-\P_0$ velocity-pressure spaces with interior penalty stabilization.}
The system to solve is the same as (\ref{sys:Brezzi-Pitkaranta}) but $S^{\theta_0}_{pp}$ is replaced by $S^{\theta_0}_{[p][p]}$ with 
$$ \left (S^{\theta_0}_{[p][p]} \right )_{i_pj_p} =  - \theta_0 h  \sum_{E \in \mathcal{E}_h^e} \int_{E} [\psi_{i_p}] [\psi_{j_p}]$$
The system is thus given by
\begin{equation}
\label{sys:Interior-Penalty}
\left (
\begin{array}{ccc}
K +S^{\gamma_0}_{\hat{u}\hat{u}} & B^T+{S^{\gamma_0}_{\hat{u}p}}^{\hspace*{-0.1cm}T}&C^T + {S^{\gamma_0}_{\hat{u} \lambda}}^{\hspace*{-0.1cm}T}\\
B + S^{\gamma_0}_{\hat{u}p} & S^{\gamma_0}_{pp} +S^{\theta_0}_{[p][p]} & {S^{\gamma_0}_{p \lambda}}^{\hspace*{-0.1cm}T}\\
C + S^{\gamma_0}_{\hat{u} \lambda} & S^{\gamma_0}_{p \lambda} & S^{\gamma_0}_{\lambda \lambda}\\
\end{array}
\right )
\left (
\begin{array}{c}
U\\
P\\
\Lambda\\
\end{array}
\right ) = 
\left (
\begin{array}{c}
F\\
0\\
G \\
\end{array}
\right ) 
\end{equation}
The results are reported in Fig. \ref{fig:Interior-Penalty} and are close to those in Fig. \ref{fig:Brezzi-Pitkaranta} except for the pressure which is less accurate.
Here again, the difference between $\P_1-\P_0-\P_1$ and $\P_1-\P_0-\P_0$ is very small.
\begin{center}
\begin{figure}
\hspace*{-2cm}
\begin{minipage}{20cm}
 \resizebox{8cm}{!}{% GNUPLOT: LaTeX picture
\setlength{\unitlength}{0.240900pt}
\ifx\plotpoint\undefined\newsavebox{\plotpoint}\fi
\begin{picture}(1500,900)(0,0)
\sbox{\plotpoint}{\rule[-0.200pt]{0.400pt}{0.400pt}}%
\put(211.0,131.0){\rule[-0.200pt]{4.818pt}{0.400pt}}
\put(191,131){\makebox(0,0)[r]{$1e-05$}}
\put(1419.0,131.0){\rule[-0.200pt]{4.818pt}{0.400pt}}
\put(211.0,196.0){\rule[-0.200pt]{2.409pt}{0.400pt}}
\put(1429.0,196.0){\rule[-0.200pt]{2.409pt}{0.400pt}}
\put(211.0,234.0){\rule[-0.200pt]{2.409pt}{0.400pt}}
\put(1429.0,234.0){\rule[-0.200pt]{2.409pt}{0.400pt}}
\put(211.0,260.0){\rule[-0.200pt]{2.409pt}{0.400pt}}
\put(1429.0,260.0){\rule[-0.200pt]{2.409pt}{0.400pt}}
\put(211.0,281.0){\rule[-0.200pt]{2.409pt}{0.400pt}}
\put(1429.0,281.0){\rule[-0.200pt]{2.409pt}{0.400pt}}
\put(211.0,298.0){\rule[-0.200pt]{2.409pt}{0.400pt}}
\put(1429.0,298.0){\rule[-0.200pt]{2.409pt}{0.400pt}}
\put(211.0,313.0){\rule[-0.200pt]{2.409pt}{0.400pt}}
\put(1429.0,313.0){\rule[-0.200pt]{2.409pt}{0.400pt}}
\put(211.0,325.0){\rule[-0.200pt]{2.409pt}{0.400pt}}
\put(1429.0,325.0){\rule[-0.200pt]{2.409pt}{0.400pt}}
\put(211.0,336.0){\rule[-0.200pt]{2.409pt}{0.400pt}}
\put(1429.0,336.0){\rule[-0.200pt]{2.409pt}{0.400pt}}
\put(211.0,346.0){\rule[-0.200pt]{4.818pt}{0.400pt}}
\put(191,346){\makebox(0,0)[r]{$0.0001$}}
\put(1419.0,346.0){\rule[-0.200pt]{4.818pt}{0.400pt}}
\put(211.0,411.0){\rule[-0.200pt]{2.409pt}{0.400pt}}
\put(1429.0,411.0){\rule[-0.200pt]{2.409pt}{0.400pt}}
\put(211.0,449.0){\rule[-0.200pt]{2.409pt}{0.400pt}}
\put(1429.0,449.0){\rule[-0.200pt]{2.409pt}{0.400pt}}
\put(211.0,475.0){\rule[-0.200pt]{2.409pt}{0.400pt}}
\put(1429.0,475.0){\rule[-0.200pt]{2.409pt}{0.400pt}}
\put(211.0,496.0){\rule[-0.200pt]{2.409pt}{0.400pt}}
\put(1429.0,496.0){\rule[-0.200pt]{2.409pt}{0.400pt}}
\put(211.0,513.0){\rule[-0.200pt]{2.409pt}{0.400pt}}
\put(1429.0,513.0){\rule[-0.200pt]{2.409pt}{0.400pt}}
\put(211.0,528.0){\rule[-0.200pt]{2.409pt}{0.400pt}}
\put(1429.0,528.0){\rule[-0.200pt]{2.409pt}{0.400pt}}
\put(211.0,540.0){\rule[-0.200pt]{2.409pt}{0.400pt}}
\put(1429.0,540.0){\rule[-0.200pt]{2.409pt}{0.400pt}}
\put(211.0,551.0){\rule[-0.200pt]{2.409pt}{0.400pt}}
\put(1429.0,551.0){\rule[-0.200pt]{2.409pt}{0.400pt}}
\put(211.0,561.0){\rule[-0.200pt]{4.818pt}{0.400pt}}
\put(191,561){\makebox(0,0)[r]{$0.001$}}
\put(1419.0,561.0){\rule[-0.200pt]{4.818pt}{0.400pt}}
\put(211.0,626.0){\rule[-0.200pt]{2.409pt}{0.400pt}}
\put(1429.0,626.0){\rule[-0.200pt]{2.409pt}{0.400pt}}
\put(211.0,664.0){\rule[-0.200pt]{2.409pt}{0.400pt}}
\put(1429.0,664.0){\rule[-0.200pt]{2.409pt}{0.400pt}}
\put(211.0,690.0){\rule[-0.200pt]{2.409pt}{0.400pt}}
\put(1429.0,690.0){\rule[-0.200pt]{2.409pt}{0.400pt}}
\put(211.0,711.0){\rule[-0.200pt]{2.409pt}{0.400pt}}
\put(1429.0,711.0){\rule[-0.200pt]{2.409pt}{0.400pt}}
\put(211.0,728.0){\rule[-0.200pt]{2.409pt}{0.400pt}}
\put(1429.0,728.0){\rule[-0.200pt]{2.409pt}{0.400pt}}
\put(211.0,743.0){\rule[-0.200pt]{2.409pt}{0.400pt}}
\put(1429.0,743.0){\rule[-0.200pt]{2.409pt}{0.400pt}}
\put(211.0,755.0){\rule[-0.200pt]{2.409pt}{0.400pt}}
\put(1429.0,755.0){\rule[-0.200pt]{2.409pt}{0.400pt}}
\put(211.0,766.0){\rule[-0.200pt]{2.409pt}{0.400pt}}
\put(1429.0,766.0){\rule[-0.200pt]{2.409pt}{0.400pt}}
\put(211.0,776.0){\rule[-0.200pt]{4.818pt}{0.400pt}}
\put(191,776){\makebox(0,0)[r]{$0.01$}}
\put(1419.0,776.0){\rule[-0.200pt]{4.818pt}{0.400pt}}
\put(211.0,131.0){\rule[-0.200pt]{0.400pt}{4.818pt}}
\put(211,90){\makebox(0,0){$0.001$}}
\put(211.0,756.0){\rule[-0.200pt]{0.400pt}{4.818pt}}
\put(396.0,131.0){\rule[-0.200pt]{0.400pt}{2.409pt}}
\put(396.0,766.0){\rule[-0.200pt]{0.400pt}{2.409pt}}
\put(504.0,131.0){\rule[-0.200pt]{0.400pt}{2.409pt}}
\put(504.0,766.0){\rule[-0.200pt]{0.400pt}{2.409pt}}
\put(581.0,131.0){\rule[-0.200pt]{0.400pt}{2.409pt}}
\put(581.0,766.0){\rule[-0.200pt]{0.400pt}{2.409pt}}
\put(640.0,131.0){\rule[-0.200pt]{0.400pt}{2.409pt}}
\put(640.0,766.0){\rule[-0.200pt]{0.400pt}{2.409pt}}
\put(689.0,131.0){\rule[-0.200pt]{0.400pt}{2.409pt}}
\put(689.0,766.0){\rule[-0.200pt]{0.400pt}{2.409pt}}
\put(730.0,131.0){\rule[-0.200pt]{0.400pt}{2.409pt}}
\put(730.0,766.0){\rule[-0.200pt]{0.400pt}{2.409pt}}
\put(765.0,131.0){\rule[-0.200pt]{0.400pt}{2.409pt}}
\put(765.0,766.0){\rule[-0.200pt]{0.400pt}{2.409pt}}
\put(797.0,131.0){\rule[-0.200pt]{0.400pt}{2.409pt}}
\put(797.0,766.0){\rule[-0.200pt]{0.400pt}{2.409pt}}
\put(825.0,131.0){\rule[-0.200pt]{0.400pt}{4.818pt}}
\put(825,90){\makebox(0,0){$0.01$}}
\put(825.0,756.0){\rule[-0.200pt]{0.400pt}{4.818pt}}
\put(1010.0,131.0){\rule[-0.200pt]{0.400pt}{2.409pt}}
\put(1010.0,766.0){\rule[-0.200pt]{0.400pt}{2.409pt}}
\put(1118.0,131.0){\rule[-0.200pt]{0.400pt}{2.409pt}}
\put(1118.0,766.0){\rule[-0.200pt]{0.400pt}{2.409pt}}
\put(1195.0,131.0){\rule[-0.200pt]{0.400pt}{2.409pt}}
\put(1195.0,766.0){\rule[-0.200pt]{0.400pt}{2.409pt}}
\put(1254.0,131.0){\rule[-0.200pt]{0.400pt}{2.409pt}}
\put(1254.0,766.0){\rule[-0.200pt]{0.400pt}{2.409pt}}
\put(1303.0,131.0){\rule[-0.200pt]{0.400pt}{2.409pt}}
\put(1303.0,766.0){\rule[-0.200pt]{0.400pt}{2.409pt}}
\put(1344.0,131.0){\rule[-0.200pt]{0.400pt}{2.409pt}}
\put(1344.0,766.0){\rule[-0.200pt]{0.400pt}{2.409pt}}
\put(1379.0,131.0){\rule[-0.200pt]{0.400pt}{2.409pt}}
\put(1379.0,766.0){\rule[-0.200pt]{0.400pt}{2.409pt}}
\put(1411.0,131.0){\rule[-0.200pt]{0.400pt}{2.409pt}}
\put(1411.0,766.0){\rule[-0.200pt]{0.400pt}{2.409pt}}
\put(1439.0,131.0){\rule[-0.200pt]{0.400pt}{4.818pt}}
\put(1439,90){\makebox(0,0){$0.1$}}
\put(1439.0,756.0){\rule[-0.200pt]{0.400pt}{4.818pt}}
\put(211.0,131.0){\rule[-0.200pt]{0.400pt}{155.380pt}}
\put(211.0,131.0){\rule[-0.200pt]{295.825pt}{0.400pt}}
\put(1439.0,131.0){\rule[-0.200pt]{0.400pt}{155.380pt}}
\put(211.0,776.0){\rule[-0.200pt]{295.825pt}{0.400pt}}
\put(30,453){\makebox(0,0){\rotatebox{90}{$L^2(\mathcal{F})$ error}}}
\put(825,29){\makebox(0,0){$h$}}
\put(825,838){\makebox(0,0){velocity}}
\put(1279,212){\makebox(0,0)[r]{$P_1-P_0-P_1$ (slope=1.989)}}
\put(1299.0,212.0){\rule[-0.200pt]{24.090pt}{0.400pt}}
\put(1218,771){\usebox{\plotpoint}}
\multiput(1215.17,769.92)(-0.729,-0.499){251}{\rule{0.683pt}{0.120pt}}
\multiput(1216.58,770.17)(-183.583,-127.000){2}{\rule{0.341pt}{0.400pt}}
\multiput(1030.23,642.92)(-0.708,-0.499){257}{\rule{0.666pt}{0.120pt}}
\multiput(1031.62,643.17)(-182.617,-130.000){2}{\rule{0.333pt}{0.400pt}}
\multiput(846.19,512.92)(-0.723,-0.499){253}{\rule{0.678pt}{0.120pt}}
\multiput(847.59,513.17)(-183.593,-128.000){2}{\rule{0.339pt}{0.400pt}}
\multiput(661.20,384.92)(-0.717,-0.499){255}{\rule{0.674pt}{0.120pt}}
\multiput(662.60,385.17)(-183.602,-129.000){2}{\rule{0.337pt}{0.400pt}}
\put(1218,771){\makebox(0,0){$+$}}
\put(1033,644){\makebox(0,0){$+$}}
\put(849,514){\makebox(0,0){$+$}}
\put(664,386){\makebox(0,0){$+$}}
\put(479,257){\makebox(0,0){$+$}}
\put(1349,212){\makebox(0,0){$+$}}
\put(1279,171){\makebox(0,0)[r]{$P_1-P_0-P_0$ (slope=1.990)}}
\multiput(1299,171)(20.756,0.000){5}{\usebox{\plotpoint}}
\put(1399,171){\usebox{\plotpoint}}
\put(1218,772){\usebox{\plotpoint}}
\multiput(1218,772)(-17.068,-11.809){11}{\usebox{\plotpoint}}
\multiput(1033,644)(-16.995,-11.915){11}{\usebox{\plotpoint}}
\multiput(849,515)(-17.025,-11.872){11}{\usebox{\plotpoint}}
\multiput(664,386)(-17.025,-11.872){11}{\usebox{\plotpoint}}
\put(479,257){\usebox{\plotpoint}}
\put(1218,772){\makebox(0,0){$\times$}}
\put(1033,644){\makebox(0,0){$\times$}}
\put(849,515){\makebox(0,0){$\times$}}
\put(664,386){\makebox(0,0){$\times$}}
\put(479,257){\makebox(0,0){$\times$}}
\put(1349,171){\makebox(0,0){$\times$}}
\put(211.0,131.0){\rule[-0.200pt]{0.400pt}{155.380pt}}
\put(211.0,131.0){\rule[-0.200pt]{295.825pt}{0.400pt}}
\put(1439.0,131.0){\rule[-0.200pt]{0.400pt}{155.380pt}}
\put(211.0,776.0){\rule[-0.200pt]{295.825pt}{0.400pt}}
\end{picture}}
 \resizebox{8cm}{!}{% GNUPLOT: LaTeX picture
\setlength{\unitlength}{0.240900pt}
\ifx\plotpoint\undefined\newsavebox{\plotpoint}\fi
\begin{picture}(1500,900)(0,0)
\sbox{\plotpoint}{\rule[-0.200pt]{0.400pt}{0.400pt}}%
\put(171.0,131.0){\rule[-0.200pt]{4.818pt}{0.400pt}}
\put(151,131){\makebox(0,0)[r]{$0.01$}}
\put(1419.0,131.0){\rule[-0.200pt]{4.818pt}{0.400pt}}
\put(171.0,228.0){\rule[-0.200pt]{2.409pt}{0.400pt}}
\put(1429.0,228.0){\rule[-0.200pt]{2.409pt}{0.400pt}}
\put(171.0,285.0){\rule[-0.200pt]{2.409pt}{0.400pt}}
\put(1429.0,285.0){\rule[-0.200pt]{2.409pt}{0.400pt}}
\put(171.0,325.0){\rule[-0.200pt]{2.409pt}{0.400pt}}
\put(1429.0,325.0){\rule[-0.200pt]{2.409pt}{0.400pt}}
\put(171.0,356.0){\rule[-0.200pt]{2.409pt}{0.400pt}}
\put(1429.0,356.0){\rule[-0.200pt]{2.409pt}{0.400pt}}
\put(171.0,382.0){\rule[-0.200pt]{2.409pt}{0.400pt}}
\put(1429.0,382.0){\rule[-0.200pt]{2.409pt}{0.400pt}}
\put(171.0,404.0){\rule[-0.200pt]{2.409pt}{0.400pt}}
\put(1429.0,404.0){\rule[-0.200pt]{2.409pt}{0.400pt}}
\put(171.0,422.0){\rule[-0.200pt]{2.409pt}{0.400pt}}
\put(1429.0,422.0){\rule[-0.200pt]{2.409pt}{0.400pt}}
\put(171.0,439.0){\rule[-0.200pt]{2.409pt}{0.400pt}}
\put(1429.0,439.0){\rule[-0.200pt]{2.409pt}{0.400pt}}
\put(171.0,454.0){\rule[-0.200pt]{4.818pt}{0.400pt}}
\put(151,454){\makebox(0,0)[r]{$0.1$}}
\put(1419.0,454.0){\rule[-0.200pt]{4.818pt}{0.400pt}}
\put(171.0,551.0){\rule[-0.200pt]{2.409pt}{0.400pt}}
\put(1429.0,551.0){\rule[-0.200pt]{2.409pt}{0.400pt}}
\put(171.0,607.0){\rule[-0.200pt]{2.409pt}{0.400pt}}
\put(1429.0,607.0){\rule[-0.200pt]{2.409pt}{0.400pt}}
\put(171.0,648.0){\rule[-0.200pt]{2.409pt}{0.400pt}}
\put(1429.0,648.0){\rule[-0.200pt]{2.409pt}{0.400pt}}
\put(171.0,679.0){\rule[-0.200pt]{2.409pt}{0.400pt}}
\put(1429.0,679.0){\rule[-0.200pt]{2.409pt}{0.400pt}}
\put(171.0,704.0){\rule[-0.200pt]{2.409pt}{0.400pt}}
\put(1429.0,704.0){\rule[-0.200pt]{2.409pt}{0.400pt}}
\put(171.0,726.0){\rule[-0.200pt]{2.409pt}{0.400pt}}
\put(1429.0,726.0){\rule[-0.200pt]{2.409pt}{0.400pt}}
\put(171.0,745.0){\rule[-0.200pt]{2.409pt}{0.400pt}}
\put(1429.0,745.0){\rule[-0.200pt]{2.409pt}{0.400pt}}
\put(171.0,761.0){\rule[-0.200pt]{2.409pt}{0.400pt}}
\put(1429.0,761.0){\rule[-0.200pt]{2.409pt}{0.400pt}}
\put(171.0,776.0){\rule[-0.200pt]{4.818pt}{0.400pt}}
\put(151,776){\makebox(0,0)[r]{$1$}}
\put(1419.0,776.0){\rule[-0.200pt]{4.818pt}{0.400pt}}
\put(171.0,131.0){\rule[-0.200pt]{0.400pt}{4.818pt}}
\put(171,90){\makebox(0,0){$0.001$}}
\put(171.0,756.0){\rule[-0.200pt]{0.400pt}{4.818pt}}
\put(362.0,131.0){\rule[-0.200pt]{0.400pt}{2.409pt}}
\put(362.0,766.0){\rule[-0.200pt]{0.400pt}{2.409pt}}
\put(473.0,131.0){\rule[-0.200pt]{0.400pt}{2.409pt}}
\put(473.0,766.0){\rule[-0.200pt]{0.400pt}{2.409pt}}
\put(553.0,131.0){\rule[-0.200pt]{0.400pt}{2.409pt}}
\put(553.0,766.0){\rule[-0.200pt]{0.400pt}{2.409pt}}
\put(614.0,131.0){\rule[-0.200pt]{0.400pt}{2.409pt}}
\put(614.0,766.0){\rule[-0.200pt]{0.400pt}{2.409pt}}
\put(664.0,131.0){\rule[-0.200pt]{0.400pt}{2.409pt}}
\put(664.0,766.0){\rule[-0.200pt]{0.400pt}{2.409pt}}
\put(707.0,131.0){\rule[-0.200pt]{0.400pt}{2.409pt}}
\put(707.0,766.0){\rule[-0.200pt]{0.400pt}{2.409pt}}
\put(744.0,131.0){\rule[-0.200pt]{0.400pt}{2.409pt}}
\put(744.0,766.0){\rule[-0.200pt]{0.400pt}{2.409pt}}
\put(776.0,131.0){\rule[-0.200pt]{0.400pt}{2.409pt}}
\put(776.0,766.0){\rule[-0.200pt]{0.400pt}{2.409pt}}
\put(805.0,131.0){\rule[-0.200pt]{0.400pt}{4.818pt}}
\put(805,90){\makebox(0,0){$0.01$}}
\put(805.0,756.0){\rule[-0.200pt]{0.400pt}{4.818pt}}
\put(996.0,131.0){\rule[-0.200pt]{0.400pt}{2.409pt}}
\put(996.0,766.0){\rule[-0.200pt]{0.400pt}{2.409pt}}
\put(1107.0,131.0){\rule[-0.200pt]{0.400pt}{2.409pt}}
\put(1107.0,766.0){\rule[-0.200pt]{0.400pt}{2.409pt}}
\put(1187.0,131.0){\rule[-0.200pt]{0.400pt}{2.409pt}}
\put(1187.0,766.0){\rule[-0.200pt]{0.400pt}{2.409pt}}
\put(1248.0,131.0){\rule[-0.200pt]{0.400pt}{2.409pt}}
\put(1248.0,766.0){\rule[-0.200pt]{0.400pt}{2.409pt}}
\put(1298.0,131.0){\rule[-0.200pt]{0.400pt}{2.409pt}}
\put(1298.0,766.0){\rule[-0.200pt]{0.400pt}{2.409pt}}
\put(1341.0,131.0){\rule[-0.200pt]{0.400pt}{2.409pt}}
\put(1341.0,766.0){\rule[-0.200pt]{0.400pt}{2.409pt}}
\put(1378.0,131.0){\rule[-0.200pt]{0.400pt}{2.409pt}}
\put(1378.0,766.0){\rule[-0.200pt]{0.400pt}{2.409pt}}
\put(1410.0,131.0){\rule[-0.200pt]{0.400pt}{2.409pt}}
\put(1410.0,766.0){\rule[-0.200pt]{0.400pt}{2.409pt}}
\put(1439.0,131.0){\rule[-0.200pt]{0.400pt}{4.818pt}}
\put(1439,90){\makebox(0,0){$0.1$}}
\put(1439.0,756.0){\rule[-0.200pt]{0.400pt}{4.818pt}}
\put(171.0,131.0){\rule[-0.200pt]{0.400pt}{155.380pt}}
\put(171.0,131.0){\rule[-0.200pt]{305.461pt}{0.400pt}}
\put(1439.0,131.0){\rule[-0.200pt]{0.400pt}{155.380pt}}
\put(171.0,776.0){\rule[-0.200pt]{305.461pt}{0.400pt}}
\put(30,453){\makebox(0,0){\rotatebox{90}{$H^1(\mathcal{F})$ error}}}
\put(805,29){\makebox(0,0){$h$}}
\put(805,838){\makebox(0,0){velocity}}
\put(1279,212){\makebox(0,0)[r]{$P_1-P_0-P_1$ (slope=1.003)}}
\put(1299.0,212.0){\rule[-0.200pt]{24.090pt}{0.400pt}}
\put(1211,647){\usebox{\plotpoint}}
\multiput(1207.21,645.92)(-1.017,-0.499){185}{\rule{0.913pt}{0.120pt}}
\multiput(1209.11,646.17)(-189.106,-94.000){2}{\rule{0.456pt}{0.400pt}}
\multiput(1016.38,551.92)(-0.966,-0.499){195}{\rule{0.872pt}{0.120pt}}
\multiput(1018.19,552.17)(-189.191,-99.000){2}{\rule{0.436pt}{0.400pt}}
\multiput(825.37,452.92)(-0.971,-0.499){193}{\rule{0.876pt}{0.120pt}}
\multiput(827.18,453.17)(-188.183,-98.000){2}{\rule{0.438pt}{0.400pt}}
\multiput(635.32,354.92)(-0.986,-0.499){191}{\rule{0.888pt}{0.120pt}}
\multiput(637.16,355.17)(-189.158,-97.000){2}{\rule{0.444pt}{0.400pt}}
\put(1211,647){\makebox(0,0){$+$}}
\put(1020,553){\makebox(0,0){$+$}}
\put(829,454){\makebox(0,0){$+$}}
\put(639,356){\makebox(0,0){$+$}}
\put(448,259){\makebox(0,0){$+$}}
\put(1349,212){\makebox(0,0){$+$}}
\put(1279,171){\makebox(0,0)[r]{$P_1-P_0-P_0$ (slope=1.004)}}
\multiput(1299,171)(20.756,0.000){5}{\usebox{\plotpoint}}
\put(1399,171){\usebox{\plotpoint}}
\put(1211,648){\usebox{\plotpoint}}
\multiput(1211,648)(-18.584,-9.243){11}{\usebox{\plotpoint}}
\multiput(1020,553)(-18.427,-9.551){10}{\usebox{\plotpoint}}
\multiput(829,454)(-18.446,-9.514){10}{\usebox{\plotpoint}}
\multiput(639,356)(-18.506,-9.398){11}{\usebox{\plotpoint}}
\put(448,259){\usebox{\plotpoint}}
\put(1211,648){\makebox(0,0){$\times$}}
\put(1020,553){\makebox(0,0){$\times$}}
\put(829,454){\makebox(0,0){$\times$}}
\put(639,356){\makebox(0,0){$\times$}}
\put(448,259){\makebox(0,0){$\times$}}
\put(1349,171){\makebox(0,0){$\times$}}
\put(171.0,131.0){\rule[-0.200pt]{0.400pt}{155.380pt}}
\put(171.0,131.0){\rule[-0.200pt]{305.461pt}{0.400pt}}
\put(1439.0,131.0){\rule[-0.200pt]{0.400pt}{155.380pt}}
\put(171.0,776.0){\rule[-0.200pt]{305.461pt}{0.400pt}}
\end{picture}}\\
\resizebox{8cm}{!}{% GNUPLOT: LaTeX picture
\setlength{\unitlength}{0.240900pt}
\ifx\plotpoint\undefined\newsavebox{\plotpoint}\fi
\begin{picture}(1500,900)(0,0)
\sbox{\plotpoint}{\rule[-0.200pt]{0.400pt}{0.400pt}}%
\put(171.0,131.0){\rule[-0.200pt]{4.818pt}{0.400pt}}
\put(151,131){\makebox(0,0)[r]{$0.01$}}
\put(1419.0,131.0){\rule[-0.200pt]{4.818pt}{0.400pt}}
\put(171.0,228.0){\rule[-0.200pt]{2.409pt}{0.400pt}}
\put(1429.0,228.0){\rule[-0.200pt]{2.409pt}{0.400pt}}
\put(171.0,285.0){\rule[-0.200pt]{2.409pt}{0.400pt}}
\put(1429.0,285.0){\rule[-0.200pt]{2.409pt}{0.400pt}}
\put(171.0,325.0){\rule[-0.200pt]{2.409pt}{0.400pt}}
\put(1429.0,325.0){\rule[-0.200pt]{2.409pt}{0.400pt}}
\put(171.0,356.0){\rule[-0.200pt]{2.409pt}{0.400pt}}
\put(1429.0,356.0){\rule[-0.200pt]{2.409pt}{0.400pt}}
\put(171.0,382.0){\rule[-0.200pt]{2.409pt}{0.400pt}}
\put(1429.0,382.0){\rule[-0.200pt]{2.409pt}{0.400pt}}
\put(171.0,404.0){\rule[-0.200pt]{2.409pt}{0.400pt}}
\put(1429.0,404.0){\rule[-0.200pt]{2.409pt}{0.400pt}}
\put(171.0,422.0){\rule[-0.200pt]{2.409pt}{0.400pt}}
\put(1429.0,422.0){\rule[-0.200pt]{2.409pt}{0.400pt}}
\put(171.0,439.0){\rule[-0.200pt]{2.409pt}{0.400pt}}
\put(1429.0,439.0){\rule[-0.200pt]{2.409pt}{0.400pt}}
\put(171.0,454.0){\rule[-0.200pt]{4.818pt}{0.400pt}}
\put(151,454){\makebox(0,0)[r]{$0.1$}}
\put(1419.0,454.0){\rule[-0.200pt]{4.818pt}{0.400pt}}
\put(171.0,551.0){\rule[-0.200pt]{2.409pt}{0.400pt}}
\put(1429.0,551.0){\rule[-0.200pt]{2.409pt}{0.400pt}}
\put(171.0,607.0){\rule[-0.200pt]{2.409pt}{0.400pt}}
\put(1429.0,607.0){\rule[-0.200pt]{2.409pt}{0.400pt}}
\put(171.0,648.0){\rule[-0.200pt]{2.409pt}{0.400pt}}
\put(1429.0,648.0){\rule[-0.200pt]{2.409pt}{0.400pt}}
\put(171.0,679.0){\rule[-0.200pt]{2.409pt}{0.400pt}}
\put(1429.0,679.0){\rule[-0.200pt]{2.409pt}{0.400pt}}
\put(171.0,704.0){\rule[-0.200pt]{2.409pt}{0.400pt}}
\put(1429.0,704.0){\rule[-0.200pt]{2.409pt}{0.400pt}}
\put(171.0,726.0){\rule[-0.200pt]{2.409pt}{0.400pt}}
\put(1429.0,726.0){\rule[-0.200pt]{2.409pt}{0.400pt}}
\put(171.0,745.0){\rule[-0.200pt]{2.409pt}{0.400pt}}
\put(1429.0,745.0){\rule[-0.200pt]{2.409pt}{0.400pt}}
\put(171.0,761.0){\rule[-0.200pt]{2.409pt}{0.400pt}}
\put(1429.0,761.0){\rule[-0.200pt]{2.409pt}{0.400pt}}
\put(171.0,776.0){\rule[-0.200pt]{4.818pt}{0.400pt}}
\put(151,776){\makebox(0,0)[r]{$1$}}
\put(1419.0,776.0){\rule[-0.200pt]{4.818pt}{0.400pt}}
\put(171.0,131.0){\rule[-0.200pt]{0.400pt}{4.818pt}}
\put(171,90){\makebox(0,0){$0.001$}}
\put(171.0,756.0){\rule[-0.200pt]{0.400pt}{4.818pt}}
\put(362.0,131.0){\rule[-0.200pt]{0.400pt}{2.409pt}}
\put(362.0,766.0){\rule[-0.200pt]{0.400pt}{2.409pt}}
\put(473.0,131.0){\rule[-0.200pt]{0.400pt}{2.409pt}}
\put(473.0,766.0){\rule[-0.200pt]{0.400pt}{2.409pt}}
\put(553.0,131.0){\rule[-0.200pt]{0.400pt}{2.409pt}}
\put(553.0,766.0){\rule[-0.200pt]{0.400pt}{2.409pt}}
\put(614.0,131.0){\rule[-0.200pt]{0.400pt}{2.409pt}}
\put(614.0,766.0){\rule[-0.200pt]{0.400pt}{2.409pt}}
\put(664.0,131.0){\rule[-0.200pt]{0.400pt}{2.409pt}}
\put(664.0,766.0){\rule[-0.200pt]{0.400pt}{2.409pt}}
\put(707.0,131.0){\rule[-0.200pt]{0.400pt}{2.409pt}}
\put(707.0,766.0){\rule[-0.200pt]{0.400pt}{2.409pt}}
\put(744.0,131.0){\rule[-0.200pt]{0.400pt}{2.409pt}}
\put(744.0,766.0){\rule[-0.200pt]{0.400pt}{2.409pt}}
\put(776.0,131.0){\rule[-0.200pt]{0.400pt}{2.409pt}}
\put(776.0,766.0){\rule[-0.200pt]{0.400pt}{2.409pt}}
\put(805.0,131.0){\rule[-0.200pt]{0.400pt}{4.818pt}}
\put(805,90){\makebox(0,0){$0.01$}}
\put(805.0,756.0){\rule[-0.200pt]{0.400pt}{4.818pt}}
\put(996.0,131.0){\rule[-0.200pt]{0.400pt}{2.409pt}}
\put(996.0,766.0){\rule[-0.200pt]{0.400pt}{2.409pt}}
\put(1107.0,131.0){\rule[-0.200pt]{0.400pt}{2.409pt}}
\put(1107.0,766.0){\rule[-0.200pt]{0.400pt}{2.409pt}}
\put(1187.0,131.0){\rule[-0.200pt]{0.400pt}{2.409pt}}
\put(1187.0,766.0){\rule[-0.200pt]{0.400pt}{2.409pt}}
\put(1248.0,131.0){\rule[-0.200pt]{0.400pt}{2.409pt}}
\put(1248.0,766.0){\rule[-0.200pt]{0.400pt}{2.409pt}}
\put(1298.0,131.0){\rule[-0.200pt]{0.400pt}{2.409pt}}
\put(1298.0,766.0){\rule[-0.200pt]{0.400pt}{2.409pt}}
\put(1341.0,131.0){\rule[-0.200pt]{0.400pt}{2.409pt}}
\put(1341.0,766.0){\rule[-0.200pt]{0.400pt}{2.409pt}}
\put(1378.0,131.0){\rule[-0.200pt]{0.400pt}{2.409pt}}
\put(1378.0,766.0){\rule[-0.200pt]{0.400pt}{2.409pt}}
\put(1410.0,131.0){\rule[-0.200pt]{0.400pt}{2.409pt}}
\put(1410.0,766.0){\rule[-0.200pt]{0.400pt}{2.409pt}}
\put(1439.0,131.0){\rule[-0.200pt]{0.400pt}{4.818pt}}
\put(1439,90){\makebox(0,0){$0.1$}}
\put(1439.0,756.0){\rule[-0.200pt]{0.400pt}{4.818pt}}
\put(171.0,131.0){\rule[-0.200pt]{0.400pt}{155.380pt}}
\put(171.0,131.0){\rule[-0.200pt]{305.461pt}{0.400pt}}
\put(1439.0,131.0){\rule[-0.200pt]{0.400pt}{155.380pt}}
\put(171.0,776.0){\rule[-0.200pt]{305.461pt}{0.400pt}}
\put(30,453){\makebox(0,0){\rotatebox{90}{$L^2(\mathcal{F})$ error}}}
\put(805,29){\makebox(0,0){$h$}}
\put(805,838){\makebox(0,0){pressure}}
\put(1279,212){\makebox(0,0)[r]{$P_1-P_0-P_1$ (slope=1.302)}}
\put(1299.0,212.0){\rule[-0.200pt]{24.090pt}{0.400pt}}
\put(1211,733){\usebox{\plotpoint}}
\multiput(1208.56,731.92)(-0.608,-0.499){311}{\rule{0.587pt}{0.120pt}}
\multiput(1209.78,732.17)(-189.782,-157.000){2}{\rule{0.293pt}{0.400pt}}
\multiput(1017.07,574.92)(-0.758,-0.499){249}{\rule{0.706pt}{0.120pt}}
\multiput(1018.53,575.17)(-189.534,-126.000){2}{\rule{0.353pt}{0.400pt}}
\multiput(825.91,448.92)(-0.806,-0.499){233}{\rule{0.744pt}{0.120pt}}
\multiput(827.46,449.17)(-188.456,-118.000){2}{\rule{0.372pt}{0.400pt}}
\multiput(635.68,330.92)(-0.877,-0.499){215}{\rule{0.801pt}{0.120pt}}
\multiput(637.34,331.17)(-189.338,-109.000){2}{\rule{0.400pt}{0.400pt}}
\put(1211,733){\makebox(0,0){$+$}}
\put(1020,576){\makebox(0,0){$+$}}
\put(829,450){\makebox(0,0){$+$}}
\put(639,332){\makebox(0,0){$+$}}
\put(448,223){\makebox(0,0){$+$}}
\put(1349,212){\makebox(0,0){$+$}}
\put(1279,171){\makebox(0,0)[r]{$P_1-P_0-P_0$ (slope=1.298)}}
\multiput(1299,171)(20.756,0.000){5}{\usebox{\plotpoint}}
\put(1399,171){\usebox{\plotpoint}}
\put(1211,731){\usebox{\plotpoint}}
\multiput(1211,731)(-16.116,-13.079){12}{\usebox{\plotpoint}}
\multiput(1020,576)(-17.284,-11.492){11}{\usebox{\plotpoint}}
\multiput(829,449)(-17.673,-10.883){11}{\usebox{\plotpoint}}
\multiput(639,332)(-18.027,-10.287){11}{\usebox{\plotpoint}}
\put(448,223){\usebox{\plotpoint}}
\put(1211,731){\makebox(0,0){$\times$}}
\put(1020,576){\makebox(0,0){$\times$}}
\put(829,449){\makebox(0,0){$\times$}}
\put(639,332){\makebox(0,0){$\times$}}
\put(448,223){\makebox(0,0){$\times$}}
\put(1349,171){\makebox(0,0){$\times$}}
\put(171.0,131.0){\rule[-0.200pt]{0.400pt}{155.380pt}}
\put(171.0,131.0){\rule[-0.200pt]{305.461pt}{0.400pt}}
\put(1439.0,131.0){\rule[-0.200pt]{0.400pt}{155.380pt}}
\put(171.0,776.0){\rule[-0.200pt]{305.461pt}{0.400pt}}
\end{picture}}
%\resizebox{8cm}{!}{\input{FIGURES/fig4-l}}\\
\resizebox{8cm}{!}{% GNUPLOT: LaTeX picture
\setlength{\unitlength}{0.240900pt}
\ifx\plotpoint\undefined\newsavebox{\plotpoint}\fi
\begin{picture}(1500,900)(0,0)
\sbox{\plotpoint}{\rule[-0.200pt]{0.400pt}{0.400pt}}%
\put(211.0,131.0){\rule[-0.200pt]{4.818pt}{0.400pt}}
\put(191,131){\makebox(0,0)[r]{$1e-05$}}
\put(1419.0,131.0){\rule[-0.200pt]{4.818pt}{0.400pt}}
\put(211.0,196.0){\rule[-0.200pt]{2.409pt}{0.400pt}}
\put(1429.0,196.0){\rule[-0.200pt]{2.409pt}{0.400pt}}
\put(211.0,234.0){\rule[-0.200pt]{2.409pt}{0.400pt}}
\put(1429.0,234.0){\rule[-0.200pt]{2.409pt}{0.400pt}}
\put(211.0,260.0){\rule[-0.200pt]{2.409pt}{0.400pt}}
\put(1429.0,260.0){\rule[-0.200pt]{2.409pt}{0.400pt}}
\put(211.0,281.0){\rule[-0.200pt]{2.409pt}{0.400pt}}
\put(1429.0,281.0){\rule[-0.200pt]{2.409pt}{0.400pt}}
\put(211.0,298.0){\rule[-0.200pt]{2.409pt}{0.400pt}}
\put(1429.0,298.0){\rule[-0.200pt]{2.409pt}{0.400pt}}
\put(211.0,313.0){\rule[-0.200pt]{2.409pt}{0.400pt}}
\put(1429.0,313.0){\rule[-0.200pt]{2.409pt}{0.400pt}}
\put(211.0,325.0){\rule[-0.200pt]{2.409pt}{0.400pt}}
\put(1429.0,325.0){\rule[-0.200pt]{2.409pt}{0.400pt}}
\put(211.0,336.0){\rule[-0.200pt]{2.409pt}{0.400pt}}
\put(1429.0,336.0){\rule[-0.200pt]{2.409pt}{0.400pt}}
\put(211.0,346.0){\rule[-0.200pt]{4.818pt}{0.400pt}}
\put(191,346){\makebox(0,0)[r]{$0.0001$}}
\put(1419.0,346.0){\rule[-0.200pt]{4.818pt}{0.400pt}}
\put(211.0,411.0){\rule[-0.200pt]{2.409pt}{0.400pt}}
\put(1429.0,411.0){\rule[-0.200pt]{2.409pt}{0.400pt}}
\put(211.0,449.0){\rule[-0.200pt]{2.409pt}{0.400pt}}
\put(1429.0,449.0){\rule[-0.200pt]{2.409pt}{0.400pt}}
\put(211.0,475.0){\rule[-0.200pt]{2.409pt}{0.400pt}}
\put(1429.0,475.0){\rule[-0.200pt]{2.409pt}{0.400pt}}
\put(211.0,496.0){\rule[-0.200pt]{2.409pt}{0.400pt}}
\put(1429.0,496.0){\rule[-0.200pt]{2.409pt}{0.400pt}}
\put(211.0,513.0){\rule[-0.200pt]{2.409pt}{0.400pt}}
\put(1429.0,513.0){\rule[-0.200pt]{2.409pt}{0.400pt}}
\put(211.0,528.0){\rule[-0.200pt]{2.409pt}{0.400pt}}
\put(1429.0,528.0){\rule[-0.200pt]{2.409pt}{0.400pt}}
\put(211.0,540.0){\rule[-0.200pt]{2.409pt}{0.400pt}}
\put(1429.0,540.0){\rule[-0.200pt]{2.409pt}{0.400pt}}
\put(211.0,551.0){\rule[-0.200pt]{2.409pt}{0.400pt}}
\put(1429.0,551.0){\rule[-0.200pt]{2.409pt}{0.400pt}}
\put(211.0,561.0){\rule[-0.200pt]{4.818pt}{0.400pt}}
\put(191,561){\makebox(0,0)[r]{$0.001$}}
\put(1419.0,561.0){\rule[-0.200pt]{4.818pt}{0.400pt}}
\put(211.0,626.0){\rule[-0.200pt]{2.409pt}{0.400pt}}
\put(1429.0,626.0){\rule[-0.200pt]{2.409pt}{0.400pt}}
\put(211.0,664.0){\rule[-0.200pt]{2.409pt}{0.400pt}}
\put(1429.0,664.0){\rule[-0.200pt]{2.409pt}{0.400pt}}
\put(211.0,690.0){\rule[-0.200pt]{2.409pt}{0.400pt}}
\put(1429.0,690.0){\rule[-0.200pt]{2.409pt}{0.400pt}}
\put(211.0,711.0){\rule[-0.200pt]{2.409pt}{0.400pt}}
\put(1429.0,711.0){\rule[-0.200pt]{2.409pt}{0.400pt}}
\put(211.0,728.0){\rule[-0.200pt]{2.409pt}{0.400pt}}
\put(1429.0,728.0){\rule[-0.200pt]{2.409pt}{0.400pt}}
\put(211.0,743.0){\rule[-0.200pt]{2.409pt}{0.400pt}}
\put(1429.0,743.0){\rule[-0.200pt]{2.409pt}{0.400pt}}
\put(211.0,755.0){\rule[-0.200pt]{2.409pt}{0.400pt}}
\put(1429.0,755.0){\rule[-0.200pt]{2.409pt}{0.400pt}}
\put(211.0,766.0){\rule[-0.200pt]{2.409pt}{0.400pt}}
\put(1429.0,766.0){\rule[-0.200pt]{2.409pt}{0.400pt}}
\put(211.0,776.0){\rule[-0.200pt]{4.818pt}{0.400pt}}
\put(191,776){\makebox(0,0)[r]{$0.01$}}
\put(1419.0,776.0){\rule[-0.200pt]{4.818pt}{0.400pt}}
\put(211.0,131.0){\rule[-0.200pt]{0.400pt}{4.818pt}}
\put(211,90){\makebox(0,0){$0.001$}}
\put(211.0,756.0){\rule[-0.200pt]{0.400pt}{4.818pt}}
\put(396.0,131.0){\rule[-0.200pt]{0.400pt}{2.409pt}}
\put(396.0,766.0){\rule[-0.200pt]{0.400pt}{2.409pt}}
\put(504.0,131.0){\rule[-0.200pt]{0.400pt}{2.409pt}}
\put(504.0,766.0){\rule[-0.200pt]{0.400pt}{2.409pt}}
\put(581.0,131.0){\rule[-0.200pt]{0.400pt}{2.409pt}}
\put(581.0,766.0){\rule[-0.200pt]{0.400pt}{2.409pt}}
\put(640.0,131.0){\rule[-0.200pt]{0.400pt}{2.409pt}}
\put(640.0,766.0){\rule[-0.200pt]{0.400pt}{2.409pt}}
\put(689.0,131.0){\rule[-0.200pt]{0.400pt}{2.409pt}}
\put(689.0,766.0){\rule[-0.200pt]{0.400pt}{2.409pt}}
\put(730.0,131.0){\rule[-0.200pt]{0.400pt}{2.409pt}}
\put(730.0,766.0){\rule[-0.200pt]{0.400pt}{2.409pt}}
\put(765.0,131.0){\rule[-0.200pt]{0.400pt}{2.409pt}}
\put(765.0,766.0){\rule[-0.200pt]{0.400pt}{2.409pt}}
\put(797.0,131.0){\rule[-0.200pt]{0.400pt}{2.409pt}}
\put(797.0,766.0){\rule[-0.200pt]{0.400pt}{2.409pt}}
\put(825.0,131.0){\rule[-0.200pt]{0.400pt}{4.818pt}}
\put(825,90){\makebox(0,0){$0.01$}}
\put(825.0,756.0){\rule[-0.200pt]{0.400pt}{4.818pt}}
\put(1010.0,131.0){\rule[-0.200pt]{0.400pt}{2.409pt}}
\put(1010.0,766.0){\rule[-0.200pt]{0.400pt}{2.409pt}}
\put(1118.0,131.0){\rule[-0.200pt]{0.400pt}{2.409pt}}
\put(1118.0,766.0){\rule[-0.200pt]{0.400pt}{2.409pt}}
\put(1195.0,131.0){\rule[-0.200pt]{0.400pt}{2.409pt}}
\put(1195.0,766.0){\rule[-0.200pt]{0.400pt}{2.409pt}}
\put(1254.0,131.0){\rule[-0.200pt]{0.400pt}{2.409pt}}
\put(1254.0,766.0){\rule[-0.200pt]{0.400pt}{2.409pt}}
\put(1303.0,131.0){\rule[-0.200pt]{0.400pt}{2.409pt}}
\put(1303.0,766.0){\rule[-0.200pt]{0.400pt}{2.409pt}}
\put(1344.0,131.0){\rule[-0.200pt]{0.400pt}{2.409pt}}
\put(1344.0,766.0){\rule[-0.200pt]{0.400pt}{2.409pt}}
\put(1379.0,131.0){\rule[-0.200pt]{0.400pt}{2.409pt}}
\put(1379.0,766.0){\rule[-0.200pt]{0.400pt}{2.409pt}}
\put(1411.0,131.0){\rule[-0.200pt]{0.400pt}{2.409pt}}
\put(1411.0,766.0){\rule[-0.200pt]{0.400pt}{2.409pt}}
\put(1439.0,131.0){\rule[-0.200pt]{0.400pt}{4.818pt}}
\put(1439,90){\makebox(0,0){$0.1$}}
\put(1439.0,756.0){\rule[-0.200pt]{0.400pt}{4.818pt}}
\put(211.0,131.0){\rule[-0.200pt]{0.400pt}{155.380pt}}
\put(211.0,131.0){\rule[-0.200pt]{295.825pt}{0.400pt}}
\put(1439.0,131.0){\rule[-0.200pt]{0.400pt}{155.380pt}}
\put(211.0,776.0){\rule[-0.200pt]{295.825pt}{0.400pt}}
\put(30,453){\makebox(0,0){\rotatebox{90}{$\left|\int_{\Gamma}(\lambda-\lambda_{h})\right|$}}}
\put(825,29){\makebox(0,0){$h$}}
\put(825,838){\makebox(0,0){multiplier}}
\put(1279,212){\makebox(0,0)[r]{$P_1-P_0-P_1$ (slope=2.166)}}
\put(1299.0,212.0){\rule[-0.200pt]{24.090pt}{0.400pt}}
\put(1218,682){\usebox{\plotpoint}}
\multiput(1213.07,680.92)(-1.364,-0.499){133}{\rule{1.188pt}{0.120pt}}
\multiput(1215.53,681.17)(-182.534,-68.000){2}{\rule{0.594pt}{0.400pt}}
\multiput(1030.74,612.92)(-0.554,-0.500){329}{\rule{0.543pt}{0.120pt}}
\multiput(1031.87,613.17)(-182.872,-166.000){2}{\rule{0.272pt}{0.400pt}}
\multiput(846.62,446.92)(-0.593,-0.499){309}{\rule{0.574pt}{0.120pt}}
\multiput(847.81,447.17)(-183.808,-156.000){2}{\rule{0.287pt}{0.400pt}}
\multiput(661.54,290.92)(-0.617,-0.499){297}{\rule{0.593pt}{0.120pt}}
\multiput(662.77,291.17)(-183.769,-150.000){2}{\rule{0.297pt}{0.400pt}}
\put(1218,682){\makebox(0,0){$+$}}
\put(1033,614){\makebox(0,0){$+$}}
\put(849,448){\makebox(0,0){$+$}}
\put(664,292){\makebox(0,0){$+$}}
\put(479,142){\makebox(0,0){$+$}}
\put(1349,212){\makebox(0,0){$+$}}
\put(1279,171){\makebox(0,0)[r]{$P_1-P_0-P_0$ (slope=2.164)}}
\multiput(1299,171)(20.756,0.000){5}{\usebox{\plotpoint}}
\put(1399,171){\usebox{\plotpoint}}
\put(1218,680){\usebox{\plotpoint}}
\multiput(1218,680)(-19.680,-6.595){10}{\usebox{\plotpoint}}
\multiput(1033,618)(-15.245,-14.085){12}{\usebox{\plotpoint}}
\multiput(849,448)(-15.825,-13.430){12}{\usebox{\plotpoint}}
\multiput(664,291)(-16.207,-12.966){11}{\usebox{\plotpoint}}
\put(479,143){\usebox{\plotpoint}}
\put(1218,680){\makebox(0,0){$\times$}}
\put(1033,618){\makebox(0,0){$\times$}}
\put(849,448){\makebox(0,0){$\times$}}
\put(664,291){\makebox(0,0){$\times$}}
\put(479,143){\makebox(0,0){$\times$}}
\put(1349,171){\makebox(0,0){$\times$}}
\put(211.0,131.0){\rule[-0.200pt]{0.400pt}{155.380pt}}
\put(211.0,131.0){\rule[-0.200pt]{295.825pt}{0.400pt}}
\put(1439.0,131.0){\rule[-0.200pt]{0.400pt}{155.380pt}}
\put(211.0,776.0){\rule[-0.200pt]{295.825pt}{0.400pt}}
\end{picture}}
\end{minipage}
\caption{Rates of convergence with Interior Penalty stabilization for $\|u-u_h\|_{0,\mathcal{F}}$, 
$\|u-u_h\|_{1,\mathcal{F}}$,
$\|p-p_h\|_{0,\mathcal{F}}$ and $\left|\int_{\Gamma}(\lambda-\lambda_{h})\right|$}
\label{fig:Interior-Penalty}
\end{figure}
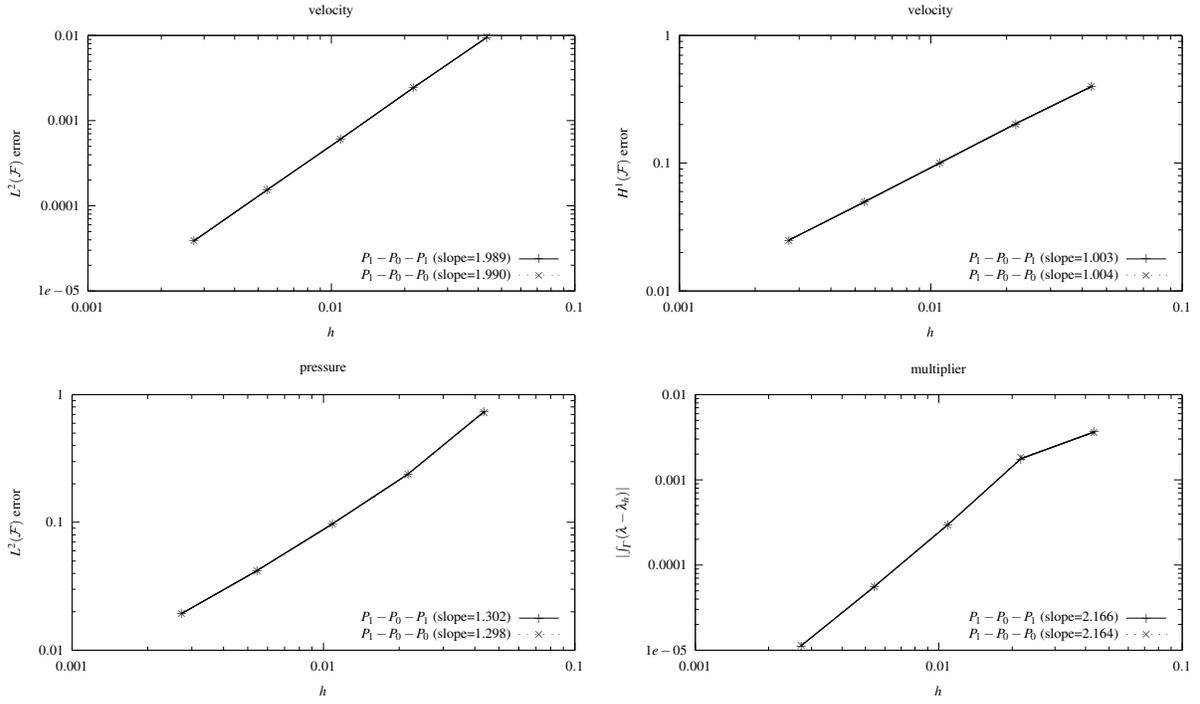
\end{center}

\subsubsection{Taylor-Hood spaces.}
Here, system (\ref{sys:Barbosa-Hughes}) is modified using the robust reconstruction from Definition \ref{RobRec} for both $u$ and $p$. 
This gives
\begin{equation}
\label{sys:Taylor-Hood}
\left (
\begin{array}{ccc}
K +S^{\gamma_0}_{\hat{u}\hat{u}} & B^T+{S^{\gamma_0}_{\hat{u}\hat{p}}}^{\hspace*{-0.1cm}T}&C^T + {S^{\gamma_0}_{\hat{u}l}}^{\hspace*{-0.03cm}T}\\
B + S^{\gamma_0}_{\hat{u}\hat{p}} & S^{\gamma_0}_{\hat{p}\hat{p}}& {S^{\gamma_0}_{\hat{p} \lambda}}^{\hspace*{-0.1cm}T}\\
C + S^{\gamma_0}_{\hat{u} \lambda} & S^{\gamma_0}_{\hat{p} \lambda} & S^{\gamma_0}_{\lambda \lambda}\\
\end{array}
\right )
\left (
\begin{array}{c}
U\\
P\\
\Lambda\\
\end{array}
\right ) = 
\left (
\begin{array}{c}
F\\
0\\
G \\
\end{array}
\right ) 
\end{equation}
where matrices $S^{\gamma_0}_{\hat{u}\hat{p}}, S^{\gamma_0}_{\hat{p}\hat{p}},\ldots$ are constructed from $S^{\gamma_0}_{\hat{u}{p}}, S^{\gamma_0}_{{p}{p}},\ldots$ by adding the ``robust reconstruction'' of $p$ similarly to that of $u$ in (\ref{SMatHat}).

The results are presented in Fig. \ref{fig:Taylor-Hood}. Comparing them to those in Fig. \ref{fig:Barbosa-Hughes} (Barbosa-Hughes stabilization without the ``robust reconstruction'') we observe that they are very close to each other.
This is due to the simple configurations considered in the present study. 
 We refer to \cite{JFS-Fournie-Court} for more considerations.
 
\begin{center}
\begin{figure}
\hspace*{-2cm}
\begin{minipage}{20cm}
 \resizebox{8cm}{!}{% GNUPLOT: LaTeX picture
\setlength{\unitlength}{0.240900pt}
\ifx\plotpoint\undefined\newsavebox{\plotpoint}\fi
\begin{picture}(1500,900)(0,0)
\sbox{\plotpoint}{\rule[-0.200pt]{0.400pt}{0.400pt}}%
\put(211.0,131.0){\rule[-0.200pt]{4.818pt}{0.400pt}}
\put(191,131){\makebox(0,0)[r]{$1e-08$}}
\put(1419.0,131.0){\rule[-0.200pt]{4.818pt}{0.400pt}}
\put(211.0,170.0){\rule[-0.200pt]{2.409pt}{0.400pt}}
\put(1429.0,170.0){\rule[-0.200pt]{2.409pt}{0.400pt}}
\put(211.0,221.0){\rule[-0.200pt]{2.409pt}{0.400pt}}
\put(1429.0,221.0){\rule[-0.200pt]{2.409pt}{0.400pt}}
\put(211.0,247.0){\rule[-0.200pt]{2.409pt}{0.400pt}}
\put(1429.0,247.0){\rule[-0.200pt]{2.409pt}{0.400pt}}
\put(211.0,260.0){\rule[-0.200pt]{4.818pt}{0.400pt}}
\put(191,260){\makebox(0,0)[r]{$1e-07$}}
\put(1419.0,260.0){\rule[-0.200pt]{4.818pt}{0.400pt}}
\put(211.0,299.0){\rule[-0.200pt]{2.409pt}{0.400pt}}
\put(1429.0,299.0){\rule[-0.200pt]{2.409pt}{0.400pt}}
\put(211.0,350.0){\rule[-0.200pt]{2.409pt}{0.400pt}}
\put(1429.0,350.0){\rule[-0.200pt]{2.409pt}{0.400pt}}
\put(211.0,376.0){\rule[-0.200pt]{2.409pt}{0.400pt}}
\put(1429.0,376.0){\rule[-0.200pt]{2.409pt}{0.400pt}}
\put(211.0,389.0){\rule[-0.200pt]{4.818pt}{0.400pt}}
\put(191,389){\makebox(0,0)[r]{$1e-06$}}
\put(1419.0,389.0){\rule[-0.200pt]{4.818pt}{0.400pt}}
\put(211.0,428.0){\rule[-0.200pt]{2.409pt}{0.400pt}}
\put(1429.0,428.0){\rule[-0.200pt]{2.409pt}{0.400pt}}
\put(211.0,479.0){\rule[-0.200pt]{2.409pt}{0.400pt}}
\put(1429.0,479.0){\rule[-0.200pt]{2.409pt}{0.400pt}}
\put(211.0,505.0){\rule[-0.200pt]{2.409pt}{0.400pt}}
\put(1429.0,505.0){\rule[-0.200pt]{2.409pt}{0.400pt}}
\put(211.0,518.0){\rule[-0.200pt]{4.818pt}{0.400pt}}
\put(191,518){\makebox(0,0)[r]{$1e-05$}}
\put(1419.0,518.0){\rule[-0.200pt]{4.818pt}{0.400pt}}
\put(211.0,557.0){\rule[-0.200pt]{2.409pt}{0.400pt}}
\put(1429.0,557.0){\rule[-0.200pt]{2.409pt}{0.400pt}}
\put(211.0,608.0){\rule[-0.200pt]{2.409pt}{0.400pt}}
\put(1429.0,608.0){\rule[-0.200pt]{2.409pt}{0.400pt}}
\put(211.0,634.0){\rule[-0.200pt]{2.409pt}{0.400pt}}
\put(1429.0,634.0){\rule[-0.200pt]{2.409pt}{0.400pt}}
\put(211.0,647.0){\rule[-0.200pt]{4.818pt}{0.400pt}}
\put(191,647){\makebox(0,0)[r]{$0.0001$}}
\put(1419.0,647.0){\rule[-0.200pt]{4.818pt}{0.400pt}}
\put(211.0,686.0){\rule[-0.200pt]{2.409pt}{0.400pt}}
\put(1429.0,686.0){\rule[-0.200pt]{2.409pt}{0.400pt}}
\put(211.0,737.0){\rule[-0.200pt]{2.409pt}{0.400pt}}
\put(1429.0,737.0){\rule[-0.200pt]{2.409pt}{0.400pt}}
\put(211.0,763.0){\rule[-0.200pt]{2.409pt}{0.400pt}}
\put(1429.0,763.0){\rule[-0.200pt]{2.409pt}{0.400pt}}
\put(211.0,776.0){\rule[-0.200pt]{4.818pt}{0.400pt}}
\put(191,776){\makebox(0,0)[r]{$0.001$}}
\put(1419.0,776.0){\rule[-0.200pt]{4.818pt}{0.400pt}}
\put(211.0,131.0){\rule[-0.200pt]{0.400pt}{4.818pt}}
\put(211,90){\makebox(0,0){$0.001$}}
\put(211.0,756.0){\rule[-0.200pt]{0.400pt}{4.818pt}}
\put(396.0,131.0){\rule[-0.200pt]{0.400pt}{2.409pt}}
\put(396.0,766.0){\rule[-0.200pt]{0.400pt}{2.409pt}}
\put(504.0,131.0){\rule[-0.200pt]{0.400pt}{2.409pt}}
\put(504.0,766.0){\rule[-0.200pt]{0.400pt}{2.409pt}}
\put(581.0,131.0){\rule[-0.200pt]{0.400pt}{2.409pt}}
\put(581.0,766.0){\rule[-0.200pt]{0.400pt}{2.409pt}}
\put(640.0,131.0){\rule[-0.200pt]{0.400pt}{2.409pt}}
\put(640.0,766.0){\rule[-0.200pt]{0.400pt}{2.409pt}}
\put(689.0,131.0){\rule[-0.200pt]{0.400pt}{2.409pt}}
\put(689.0,766.0){\rule[-0.200pt]{0.400pt}{2.409pt}}
\put(730.0,131.0){\rule[-0.200pt]{0.400pt}{2.409pt}}
\put(730.0,766.0){\rule[-0.200pt]{0.400pt}{2.409pt}}
\put(765.0,131.0){\rule[-0.200pt]{0.400pt}{2.409pt}}
\put(765.0,766.0){\rule[-0.200pt]{0.400pt}{2.409pt}}
\put(797.0,131.0){\rule[-0.200pt]{0.400pt}{2.409pt}}
\put(797.0,766.0){\rule[-0.200pt]{0.400pt}{2.409pt}}
\put(825.0,131.0){\rule[-0.200pt]{0.400pt}{4.818pt}}
\put(825,90){\makebox(0,0){$0.01$}}
\put(825.0,756.0){\rule[-0.200pt]{0.400pt}{4.818pt}}
\put(1010.0,131.0){\rule[-0.200pt]{0.400pt}{2.409pt}}
\put(1010.0,766.0){\rule[-0.200pt]{0.400pt}{2.409pt}}
\put(1118.0,131.0){\rule[-0.200pt]{0.400pt}{2.409pt}}
\put(1118.0,766.0){\rule[-0.200pt]{0.400pt}{2.409pt}}
\put(1195.0,131.0){\rule[-0.200pt]{0.400pt}{2.409pt}}
\put(1195.0,766.0){\rule[-0.200pt]{0.400pt}{2.409pt}}
\put(1254.0,131.0){\rule[-0.200pt]{0.400pt}{2.409pt}}
\put(1254.0,766.0){\rule[-0.200pt]{0.400pt}{2.409pt}}
\put(1303.0,131.0){\rule[-0.200pt]{0.400pt}{2.409pt}}
\put(1303.0,766.0){\rule[-0.200pt]{0.400pt}{2.409pt}}
\put(1344.0,131.0){\rule[-0.200pt]{0.400pt}{2.409pt}}
\put(1344.0,766.0){\rule[-0.200pt]{0.400pt}{2.409pt}}
\put(1379.0,131.0){\rule[-0.200pt]{0.400pt}{2.409pt}}
\put(1379.0,766.0){\rule[-0.200pt]{0.400pt}{2.409pt}}
\put(1411.0,131.0){\rule[-0.200pt]{0.400pt}{2.409pt}}
\put(1411.0,766.0){\rule[-0.200pt]{0.400pt}{2.409pt}}
\put(1439.0,131.0){\rule[-0.200pt]{0.400pt}{4.818pt}}
\put(1439,90){\makebox(0,0){$0.1$}}
\put(1439.0,756.0){\rule[-0.200pt]{0.400pt}{4.818pt}}
\put(211.0,131.0){\rule[-0.200pt]{0.400pt}{155.380pt}}
\put(211.0,131.0){\rule[-0.200pt]{295.825pt}{0.400pt}}
\put(1439.0,131.0){\rule[-0.200pt]{0.400pt}{155.380pt}}
\put(211.0,776.0){\rule[-0.200pt]{295.825pt}{0.400pt}}
\put(30,453){\makebox(0,0){\rotatebox{90}{$L^2(\mathcal{F})$ error}}}
\put(825,29){\makebox(0,0){$h$}}
\put(825,838){\makebox(0,0){velocity}}
\put(1279,212){\makebox(0,0)[r]{$P_2-P_1-P_1$ (slope=2.991)}}
\put(1299.0,212.0){\rule[-0.200pt]{24.090pt}{0.400pt}}
\put(1218,701){\usebox{\plotpoint}}
\multiput(1214.94,699.92)(-0.798,-0.499){229}{\rule{0.738pt}{0.120pt}}
\multiput(1216.47,700.17)(-183.468,-116.000){2}{\rule{0.369pt}{0.400pt}}
\multiput(1029.95,583.92)(-0.794,-0.499){229}{\rule{0.734pt}{0.120pt}}
\multiput(1031.48,584.17)(-182.476,-116.000){2}{\rule{0.367pt}{0.400pt}}
\multiput(845.94,467.92)(-0.798,-0.499){229}{\rule{0.738pt}{0.120pt}}
\multiput(847.47,468.17)(-183.468,-116.000){2}{\rule{0.369pt}{0.400pt}}
\multiput(660.96,351.92)(-0.791,-0.499){231}{\rule{0.732pt}{0.120pt}}
\multiput(662.48,352.17)(-183.480,-117.000){2}{\rule{0.366pt}{0.400pt}}
\put(1218,701){\makebox(0,0){$+$}}
\put(1033,585){\makebox(0,0){$+$}}
\put(849,469){\makebox(0,0){$+$}}
\put(664,353){\makebox(0,0){$+$}}
\put(479,236){\makebox(0,0){$+$}}
\put(1349,212){\makebox(0,0){$+$}}
\put(1279,171){\makebox(0,0)[r]{$P_2-P_1-P_0$ (slope=2.650)}}
\multiput(1299,171)(20.756,0.000){5}{\usebox{\plotpoint}}
\put(1399,171){\usebox{\plotpoint}}
\put(1218,703){\usebox{\plotpoint}}
\multiput(1218,703)(-18.937,-8.496){10}{\usebox{\plotpoint}}
\multiput(1033,620)(-17.212,-11.599){11}{\usebox{\plotpoint}}
\multiput(849,496)(-18.217,-9.946){10}{\usebox{\plotpoint}}
\multiput(664,395)(-18.504,-9.402){10}{\usebox{\plotpoint}}
\put(479,301){\usebox{\plotpoint}}
\put(1218,703){\makebox(0,0){$\times$}}
\put(1033,620){\makebox(0,0){$\times$}}
\put(849,496){\makebox(0,0){$\times$}}
\put(664,395){\makebox(0,0){$\times$}}
\put(479,301){\makebox(0,0){$\times$}}
\put(1349,171){\makebox(0,0){$\times$}}
\put(211.0,131.0){\rule[-0.200pt]{0.400pt}{155.380pt}}
\put(211.0,131.0){\rule[-0.200pt]{295.825pt}{0.400pt}}
\put(1439.0,131.0){\rule[-0.200pt]{0.400pt}{155.380pt}}
\put(211.0,776.0){\rule[-0.200pt]{295.825pt}{0.400pt}}
\end{picture}}
 \resizebox{8cm}{!}{% GNUPLOT: LaTeX picture
\setlength{\unitlength}{0.240900pt}
\ifx\plotpoint\undefined\newsavebox{\plotpoint}\fi
\begin{picture}(1500,900)(0,0)
\sbox{\plotpoint}{\rule[-0.200pt]{0.400pt}{0.400pt}}%
\put(211.0,131.0){\rule[-0.200pt]{4.818pt}{0.400pt}}
\put(191,131){\makebox(0,0)[r]{$1e-05$}}
\put(1419.0,131.0){\rule[-0.200pt]{4.818pt}{0.400pt}}
\put(211.0,180.0){\rule[-0.200pt]{2.409pt}{0.400pt}}
\put(1429.0,180.0){\rule[-0.200pt]{2.409pt}{0.400pt}}
\put(211.0,208.0){\rule[-0.200pt]{2.409pt}{0.400pt}}
\put(1429.0,208.0){\rule[-0.200pt]{2.409pt}{0.400pt}}
\put(211.0,228.0){\rule[-0.200pt]{2.409pt}{0.400pt}}
\put(1429.0,228.0){\rule[-0.200pt]{2.409pt}{0.400pt}}
\put(211.0,244.0){\rule[-0.200pt]{2.409pt}{0.400pt}}
\put(1429.0,244.0){\rule[-0.200pt]{2.409pt}{0.400pt}}
\put(211.0,256.0){\rule[-0.200pt]{2.409pt}{0.400pt}}
\put(1429.0,256.0){\rule[-0.200pt]{2.409pt}{0.400pt}}
\put(211.0,267.0){\rule[-0.200pt]{2.409pt}{0.400pt}}
\put(1429.0,267.0){\rule[-0.200pt]{2.409pt}{0.400pt}}
\put(211.0,277.0){\rule[-0.200pt]{2.409pt}{0.400pt}}
\put(1429.0,277.0){\rule[-0.200pt]{2.409pt}{0.400pt}}
\put(211.0,285.0){\rule[-0.200pt]{2.409pt}{0.400pt}}
\put(1429.0,285.0){\rule[-0.200pt]{2.409pt}{0.400pt}}
\put(211.0,292.0){\rule[-0.200pt]{4.818pt}{0.400pt}}
\put(191,292){\makebox(0,0)[r]{$0.0001$}}
\put(1419.0,292.0){\rule[-0.200pt]{4.818pt}{0.400pt}}
\put(211.0,341.0){\rule[-0.200pt]{2.409pt}{0.400pt}}
\put(1429.0,341.0){\rule[-0.200pt]{2.409pt}{0.400pt}}
\put(211.0,369.0){\rule[-0.200pt]{2.409pt}{0.400pt}}
\put(1429.0,369.0){\rule[-0.200pt]{2.409pt}{0.400pt}}
\put(211.0,389.0){\rule[-0.200pt]{2.409pt}{0.400pt}}
\put(1429.0,389.0){\rule[-0.200pt]{2.409pt}{0.400pt}}
\put(211.0,405.0){\rule[-0.200pt]{2.409pt}{0.400pt}}
\put(1429.0,405.0){\rule[-0.200pt]{2.409pt}{0.400pt}}
\put(211.0,418.0){\rule[-0.200pt]{2.409pt}{0.400pt}}
\put(1429.0,418.0){\rule[-0.200pt]{2.409pt}{0.400pt}}
\put(211.0,429.0){\rule[-0.200pt]{2.409pt}{0.400pt}}
\put(1429.0,429.0){\rule[-0.200pt]{2.409pt}{0.400pt}}
\put(211.0,438.0){\rule[-0.200pt]{2.409pt}{0.400pt}}
\put(1429.0,438.0){\rule[-0.200pt]{2.409pt}{0.400pt}}
\put(211.0,446.0){\rule[-0.200pt]{2.409pt}{0.400pt}}
\put(1429.0,446.0){\rule[-0.200pt]{2.409pt}{0.400pt}}
\put(211.0,454.0){\rule[-0.200pt]{4.818pt}{0.400pt}}
\put(191,454){\makebox(0,0)[r]{$0.001$}}
\put(1419.0,454.0){\rule[-0.200pt]{4.818pt}{0.400pt}}
\put(211.0,502.0){\rule[-0.200pt]{2.409pt}{0.400pt}}
\put(1429.0,502.0){\rule[-0.200pt]{2.409pt}{0.400pt}}
\put(211.0,530.0){\rule[-0.200pt]{2.409pt}{0.400pt}}
\put(1429.0,530.0){\rule[-0.200pt]{2.409pt}{0.400pt}}
\put(211.0,551.0){\rule[-0.200pt]{2.409pt}{0.400pt}}
\put(1429.0,551.0){\rule[-0.200pt]{2.409pt}{0.400pt}}
\put(211.0,566.0){\rule[-0.200pt]{2.409pt}{0.400pt}}
\put(1429.0,566.0){\rule[-0.200pt]{2.409pt}{0.400pt}}
\put(211.0,579.0){\rule[-0.200pt]{2.409pt}{0.400pt}}
\put(1429.0,579.0){\rule[-0.200pt]{2.409pt}{0.400pt}}
\put(211.0,590.0){\rule[-0.200pt]{2.409pt}{0.400pt}}
\put(1429.0,590.0){\rule[-0.200pt]{2.409pt}{0.400pt}}
\put(211.0,599.0){\rule[-0.200pt]{2.409pt}{0.400pt}}
\put(1429.0,599.0){\rule[-0.200pt]{2.409pt}{0.400pt}}
\put(211.0,607.0){\rule[-0.200pt]{2.409pt}{0.400pt}}
\put(1429.0,607.0){\rule[-0.200pt]{2.409pt}{0.400pt}}
\put(211.0,615.0){\rule[-0.200pt]{4.818pt}{0.400pt}}
\put(191,615){\makebox(0,0)[r]{$0.01$}}
\put(1419.0,615.0){\rule[-0.200pt]{4.818pt}{0.400pt}}
\put(211.0,663.0){\rule[-0.200pt]{2.409pt}{0.400pt}}
\put(1429.0,663.0){\rule[-0.200pt]{2.409pt}{0.400pt}}
\put(211.0,692.0){\rule[-0.200pt]{2.409pt}{0.400pt}}
\put(1429.0,692.0){\rule[-0.200pt]{2.409pt}{0.400pt}}
\put(211.0,712.0){\rule[-0.200pt]{2.409pt}{0.400pt}}
\put(1429.0,712.0){\rule[-0.200pt]{2.409pt}{0.400pt}}
\put(211.0,727.0){\rule[-0.200pt]{2.409pt}{0.400pt}}
\put(1429.0,727.0){\rule[-0.200pt]{2.409pt}{0.400pt}}
\put(211.0,740.0){\rule[-0.200pt]{2.409pt}{0.400pt}}
\put(1429.0,740.0){\rule[-0.200pt]{2.409pt}{0.400pt}}
\put(211.0,751.0){\rule[-0.200pt]{2.409pt}{0.400pt}}
\put(1429.0,751.0){\rule[-0.200pt]{2.409pt}{0.400pt}}
\put(211.0,760.0){\rule[-0.200pt]{2.409pt}{0.400pt}}
\put(1429.0,760.0){\rule[-0.200pt]{2.409pt}{0.400pt}}
\put(211.0,769.0){\rule[-0.200pt]{2.409pt}{0.400pt}}
\put(1429.0,769.0){\rule[-0.200pt]{2.409pt}{0.400pt}}
\put(211.0,776.0){\rule[-0.200pt]{4.818pt}{0.400pt}}
\put(191,776){\makebox(0,0)[r]{$0.1$}}
\put(1419.0,776.0){\rule[-0.200pt]{4.818pt}{0.400pt}}
\put(211.0,131.0){\rule[-0.200pt]{0.400pt}{4.818pt}}
\put(211,90){\makebox(0,0){$0.001$}}
\put(211.0,756.0){\rule[-0.200pt]{0.400pt}{4.818pt}}
\put(396.0,131.0){\rule[-0.200pt]{0.400pt}{2.409pt}}
\put(396.0,766.0){\rule[-0.200pt]{0.400pt}{2.409pt}}
\put(504.0,131.0){\rule[-0.200pt]{0.400pt}{2.409pt}}
\put(504.0,766.0){\rule[-0.200pt]{0.400pt}{2.409pt}}
\put(581.0,131.0){\rule[-0.200pt]{0.400pt}{2.409pt}}
\put(581.0,766.0){\rule[-0.200pt]{0.400pt}{2.409pt}}
\put(640.0,131.0){\rule[-0.200pt]{0.400pt}{2.409pt}}
\put(640.0,766.0){\rule[-0.200pt]{0.400pt}{2.409pt}}
\put(689.0,131.0){\rule[-0.200pt]{0.400pt}{2.409pt}}
\put(689.0,766.0){\rule[-0.200pt]{0.400pt}{2.409pt}}
\put(730.0,131.0){\rule[-0.200pt]{0.400pt}{2.409pt}}
\put(730.0,766.0){\rule[-0.200pt]{0.400pt}{2.409pt}}
\put(765.0,131.0){\rule[-0.200pt]{0.400pt}{2.409pt}}
\put(765.0,766.0){\rule[-0.200pt]{0.400pt}{2.409pt}}
\put(797.0,131.0){\rule[-0.200pt]{0.400pt}{2.409pt}}
\put(797.0,766.0){\rule[-0.200pt]{0.400pt}{2.409pt}}
\put(825.0,131.0){\rule[-0.200pt]{0.400pt}{4.818pt}}
\put(825,90){\makebox(0,0){$0.01$}}
\put(825.0,756.0){\rule[-0.200pt]{0.400pt}{4.818pt}}
\put(1010.0,131.0){\rule[-0.200pt]{0.400pt}{2.409pt}}
\put(1010.0,766.0){\rule[-0.200pt]{0.400pt}{2.409pt}}
\put(1118.0,131.0){\rule[-0.200pt]{0.400pt}{2.409pt}}
\put(1118.0,766.0){\rule[-0.200pt]{0.400pt}{2.409pt}}
\put(1195.0,131.0){\rule[-0.200pt]{0.400pt}{2.409pt}}
\put(1195.0,766.0){\rule[-0.200pt]{0.400pt}{2.409pt}}
\put(1254.0,131.0){\rule[-0.200pt]{0.400pt}{2.409pt}}
\put(1254.0,766.0){\rule[-0.200pt]{0.400pt}{2.409pt}}
\put(1303.0,131.0){\rule[-0.200pt]{0.400pt}{2.409pt}}
\put(1303.0,766.0){\rule[-0.200pt]{0.400pt}{2.409pt}}
\put(1344.0,131.0){\rule[-0.200pt]{0.400pt}{2.409pt}}
\put(1344.0,766.0){\rule[-0.200pt]{0.400pt}{2.409pt}}
\put(1379.0,131.0){\rule[-0.200pt]{0.400pt}{2.409pt}}
\put(1379.0,766.0){\rule[-0.200pt]{0.400pt}{2.409pt}}
\put(1411.0,131.0){\rule[-0.200pt]{0.400pt}{2.409pt}}
\put(1411.0,766.0){\rule[-0.200pt]{0.400pt}{2.409pt}}
\put(1439.0,131.0){\rule[-0.200pt]{0.400pt}{4.818pt}}
\put(1439,90){\makebox(0,0){$0.1$}}
\put(1439.0,756.0){\rule[-0.200pt]{0.400pt}{4.818pt}}
\put(211.0,131.0){\rule[-0.200pt]{0.400pt}{155.380pt}}
\put(211.0,131.0){\rule[-0.200pt]{295.825pt}{0.400pt}}
\put(1439.0,131.0){\rule[-0.200pt]{0.400pt}{155.380pt}}
\put(211.0,776.0){\rule[-0.200pt]{295.825pt}{0.400pt}}
\put(30,453){\makebox(0,0){\rotatebox{90}{$H^1(\mathcal{F})$ error}}}
\put(825,29){\makebox(0,0){$h$}}
\put(825,838){\makebox(0,0){velocity}}
\put(1279,212){\makebox(0,0)[r]{$P_2-P_1-P_1$ (slope=1.947)}}
\put(1299.0,212.0){\rule[-0.200pt]{24.090pt}{0.400pt}}
\put(1218,638){\usebox{\plotpoint}}
\multiput(1214.28,636.92)(-0.996,-0.499){183}{\rule{0.896pt}{0.120pt}}
\multiput(1216.14,637.17)(-183.141,-93.000){2}{\rule{0.448pt}{0.400pt}}
\multiput(1029.47,543.92)(-0.940,-0.499){193}{\rule{0.851pt}{0.120pt}}
\multiput(1031.23,544.17)(-182.234,-98.000){2}{\rule{0.426pt}{0.400pt}}
\multiput(844.54,445.92)(-1.220,-0.499){149}{\rule{1.074pt}{0.120pt}}
\multiput(846.77,446.17)(-182.772,-76.000){2}{\rule{0.537pt}{0.400pt}}
\multiput(660.98,369.92)(-0.784,-0.499){233}{\rule{0.727pt}{0.120pt}}
\multiput(662.49,370.17)(-183.491,-118.000){2}{\rule{0.364pt}{0.400pt}}
\put(1218,638){\makebox(0,0){$+$}}
\put(1033,545){\makebox(0,0){$+$}}
\put(849,447){\makebox(0,0){$+$}}
\put(664,371){\makebox(0,0){$+$}}
\put(479,253){\makebox(0,0){$+$}}
\put(1349,212){\makebox(0,0){$+$}}
\put(1279,171){\makebox(0,0)[r]{$P_2-P_1-P_0$ (slope=1.485)}}
\multiput(1299,171)(20.756,0.000){5}{\usebox{\plotpoint}}
\put(1399,171){\usebox{\plotpoint}}
\put(1218,643){\usebox{\plotpoint}}
\multiput(1218,643)(-20.696,1.566){9}{\usebox{\plotpoint}}
\multiput(1033,657)(-16.044,-13.167){12}{\usebox{\plotpoint}}
\multiput(849,506)(-20.745,-0.673){9}{\usebox{\plotpoint}}
\multiput(664,500)(-16.594,-12.468){11}{\usebox{\plotpoint}}
\put(479,361){\usebox{\plotpoint}}
\put(1218,643){\makebox(0,0){$\times$}}
\put(1033,657){\makebox(0,0){$\times$}}
\put(849,506){\makebox(0,0){$\times$}}
\put(664,500){\makebox(0,0){$\times$}}
\put(479,361){\makebox(0,0){$\times$}}
\put(1349,171){\makebox(0,0){$\times$}}
\put(211.0,131.0){\rule[-0.200pt]{0.400pt}{155.380pt}}
\put(211.0,131.0){\rule[-0.200pt]{295.825pt}{0.400pt}}
\put(1439.0,131.0){\rule[-0.200pt]{0.400pt}{155.380pt}}
\put(211.0,776.0){\rule[-0.200pt]{295.825pt}{0.400pt}}
\end{picture}}\\
\resizebox{8cm}{!}{% GNUPLOT: LaTeX picture
\setlength{\unitlength}{0.240900pt}
\ifx\plotpoint\undefined\newsavebox{\plotpoint}\fi
\begin{picture}(1500,900)(0,0)
\sbox{\plotpoint}{\rule[-0.200pt]{0.400pt}{0.400pt}}%
\put(211.0,131.0){\rule[-0.200pt]{4.818pt}{0.400pt}}
\put(191,131){\makebox(0,0)[r]{$1e-05$}}
\put(1419.0,131.0){\rule[-0.200pt]{4.818pt}{0.400pt}}
\put(211.0,180.0){\rule[-0.200pt]{2.409pt}{0.400pt}}
\put(1429.0,180.0){\rule[-0.200pt]{2.409pt}{0.400pt}}
\put(211.0,208.0){\rule[-0.200pt]{2.409pt}{0.400pt}}
\put(1429.0,208.0){\rule[-0.200pt]{2.409pt}{0.400pt}}
\put(211.0,228.0){\rule[-0.200pt]{2.409pt}{0.400pt}}
\put(1429.0,228.0){\rule[-0.200pt]{2.409pt}{0.400pt}}
\put(211.0,244.0){\rule[-0.200pt]{2.409pt}{0.400pt}}
\put(1429.0,244.0){\rule[-0.200pt]{2.409pt}{0.400pt}}
\put(211.0,256.0){\rule[-0.200pt]{2.409pt}{0.400pt}}
\put(1429.0,256.0){\rule[-0.200pt]{2.409pt}{0.400pt}}
\put(211.0,267.0){\rule[-0.200pt]{2.409pt}{0.400pt}}
\put(1429.0,267.0){\rule[-0.200pt]{2.409pt}{0.400pt}}
\put(211.0,277.0){\rule[-0.200pt]{2.409pt}{0.400pt}}
\put(1429.0,277.0){\rule[-0.200pt]{2.409pt}{0.400pt}}
\put(211.0,285.0){\rule[-0.200pt]{2.409pt}{0.400pt}}
\put(1429.0,285.0){\rule[-0.200pt]{2.409pt}{0.400pt}}
\put(211.0,292.0){\rule[-0.200pt]{4.818pt}{0.400pt}}
\put(191,292){\makebox(0,0)[r]{$0.0001$}}
\put(1419.0,292.0){\rule[-0.200pt]{4.818pt}{0.400pt}}
\put(211.0,341.0){\rule[-0.200pt]{2.409pt}{0.400pt}}
\put(1429.0,341.0){\rule[-0.200pt]{2.409pt}{0.400pt}}
\put(211.0,369.0){\rule[-0.200pt]{2.409pt}{0.400pt}}
\put(1429.0,369.0){\rule[-0.200pt]{2.409pt}{0.400pt}}
\put(211.0,389.0){\rule[-0.200pt]{2.409pt}{0.400pt}}
\put(1429.0,389.0){\rule[-0.200pt]{2.409pt}{0.400pt}}
\put(211.0,405.0){\rule[-0.200pt]{2.409pt}{0.400pt}}
\put(1429.0,405.0){\rule[-0.200pt]{2.409pt}{0.400pt}}
\put(211.0,418.0){\rule[-0.200pt]{2.409pt}{0.400pt}}
\put(1429.0,418.0){\rule[-0.200pt]{2.409pt}{0.400pt}}
\put(211.0,429.0){\rule[-0.200pt]{2.409pt}{0.400pt}}
\put(1429.0,429.0){\rule[-0.200pt]{2.409pt}{0.400pt}}
\put(211.0,438.0){\rule[-0.200pt]{2.409pt}{0.400pt}}
\put(1429.0,438.0){\rule[-0.200pt]{2.409pt}{0.400pt}}
\put(211.0,446.0){\rule[-0.200pt]{2.409pt}{0.400pt}}
\put(1429.0,446.0){\rule[-0.200pt]{2.409pt}{0.400pt}}
\put(211.0,454.0){\rule[-0.200pt]{4.818pt}{0.400pt}}
\put(191,454){\makebox(0,0)[r]{$0.001$}}
\put(1419.0,454.0){\rule[-0.200pt]{4.818pt}{0.400pt}}
\put(211.0,502.0){\rule[-0.200pt]{2.409pt}{0.400pt}}
\put(1429.0,502.0){\rule[-0.200pt]{2.409pt}{0.400pt}}
\put(211.0,530.0){\rule[-0.200pt]{2.409pt}{0.400pt}}
\put(1429.0,530.0){\rule[-0.200pt]{2.409pt}{0.400pt}}
\put(211.0,551.0){\rule[-0.200pt]{2.409pt}{0.400pt}}
\put(1429.0,551.0){\rule[-0.200pt]{2.409pt}{0.400pt}}
\put(211.0,566.0){\rule[-0.200pt]{2.409pt}{0.400pt}}
\put(1429.0,566.0){\rule[-0.200pt]{2.409pt}{0.400pt}}
\put(211.0,579.0){\rule[-0.200pt]{2.409pt}{0.400pt}}
\put(1429.0,579.0){\rule[-0.200pt]{2.409pt}{0.400pt}}
\put(211.0,590.0){\rule[-0.200pt]{2.409pt}{0.400pt}}
\put(1429.0,590.0){\rule[-0.200pt]{2.409pt}{0.400pt}}
\put(211.0,599.0){\rule[-0.200pt]{2.409pt}{0.400pt}}
\put(1429.0,599.0){\rule[-0.200pt]{2.409pt}{0.400pt}}
\put(211.0,607.0){\rule[-0.200pt]{2.409pt}{0.400pt}}
\put(1429.0,607.0){\rule[-0.200pt]{2.409pt}{0.400pt}}
\put(211.0,615.0){\rule[-0.200pt]{4.818pt}{0.400pt}}
\put(191,615){\makebox(0,0)[r]{$0.01$}}
\put(1419.0,615.0){\rule[-0.200pt]{4.818pt}{0.400pt}}
\put(211.0,663.0){\rule[-0.200pt]{2.409pt}{0.400pt}}
\put(1429.0,663.0){\rule[-0.200pt]{2.409pt}{0.400pt}}
\put(211.0,692.0){\rule[-0.200pt]{2.409pt}{0.400pt}}
\put(1429.0,692.0){\rule[-0.200pt]{2.409pt}{0.400pt}}
\put(211.0,712.0){\rule[-0.200pt]{2.409pt}{0.400pt}}
\put(1429.0,712.0){\rule[-0.200pt]{2.409pt}{0.400pt}}
\put(211.0,727.0){\rule[-0.200pt]{2.409pt}{0.400pt}}
\put(1429.0,727.0){\rule[-0.200pt]{2.409pt}{0.400pt}}
\put(211.0,740.0){\rule[-0.200pt]{2.409pt}{0.400pt}}
\put(1429.0,740.0){\rule[-0.200pt]{2.409pt}{0.400pt}}
\put(211.0,751.0){\rule[-0.200pt]{2.409pt}{0.400pt}}
\put(1429.0,751.0){\rule[-0.200pt]{2.409pt}{0.400pt}}
\put(211.0,760.0){\rule[-0.200pt]{2.409pt}{0.400pt}}
\put(1429.0,760.0){\rule[-0.200pt]{2.409pt}{0.400pt}}
\put(211.0,769.0){\rule[-0.200pt]{2.409pt}{0.400pt}}
\put(1429.0,769.0){\rule[-0.200pt]{2.409pt}{0.400pt}}
\put(211.0,776.0){\rule[-0.200pt]{4.818pt}{0.400pt}}
\put(191,776){\makebox(0,0)[r]{$0.1$}}
\put(1419.0,776.0){\rule[-0.200pt]{4.818pt}{0.400pt}}
\put(211.0,131.0){\rule[-0.200pt]{0.400pt}{4.818pt}}
\put(211,90){\makebox(0,0){$0.001$}}
\put(211.0,756.0){\rule[-0.200pt]{0.400pt}{4.818pt}}
\put(396.0,131.0){\rule[-0.200pt]{0.400pt}{2.409pt}}
\put(396.0,766.0){\rule[-0.200pt]{0.400pt}{2.409pt}}
\put(504.0,131.0){\rule[-0.200pt]{0.400pt}{2.409pt}}
\put(504.0,766.0){\rule[-0.200pt]{0.400pt}{2.409pt}}
\put(581.0,131.0){\rule[-0.200pt]{0.400pt}{2.409pt}}
\put(581.0,766.0){\rule[-0.200pt]{0.400pt}{2.409pt}}
\put(640.0,131.0){\rule[-0.200pt]{0.400pt}{2.409pt}}
\put(640.0,766.0){\rule[-0.200pt]{0.400pt}{2.409pt}}
\put(689.0,131.0){\rule[-0.200pt]{0.400pt}{2.409pt}}
\put(689.0,766.0){\rule[-0.200pt]{0.400pt}{2.409pt}}
\put(730.0,131.0){\rule[-0.200pt]{0.400pt}{2.409pt}}
\put(730.0,766.0){\rule[-0.200pt]{0.400pt}{2.409pt}}
\put(765.0,131.0){\rule[-0.200pt]{0.400pt}{2.409pt}}
\put(765.0,766.0){\rule[-0.200pt]{0.400pt}{2.409pt}}
\put(797.0,131.0){\rule[-0.200pt]{0.400pt}{2.409pt}}
\put(797.0,766.0){\rule[-0.200pt]{0.400pt}{2.409pt}}
\put(825.0,131.0){\rule[-0.200pt]{0.400pt}{4.818pt}}
\put(825,90){\makebox(0,0){$0.01$}}
\put(825.0,756.0){\rule[-0.200pt]{0.400pt}{4.818pt}}
\put(1010.0,131.0){\rule[-0.200pt]{0.400pt}{2.409pt}}
\put(1010.0,766.0){\rule[-0.200pt]{0.400pt}{2.409pt}}
\put(1118.0,131.0){\rule[-0.200pt]{0.400pt}{2.409pt}}
\put(1118.0,766.0){\rule[-0.200pt]{0.400pt}{2.409pt}}
\put(1195.0,131.0){\rule[-0.200pt]{0.400pt}{2.409pt}}
\put(1195.0,766.0){\rule[-0.200pt]{0.400pt}{2.409pt}}
\put(1254.0,131.0){\rule[-0.200pt]{0.400pt}{2.409pt}}
\put(1254.0,766.0){\rule[-0.200pt]{0.400pt}{2.409pt}}
\put(1303.0,131.0){\rule[-0.200pt]{0.400pt}{2.409pt}}
\put(1303.0,766.0){\rule[-0.200pt]{0.400pt}{2.409pt}}
\put(1344.0,131.0){\rule[-0.200pt]{0.400pt}{2.409pt}}
\put(1344.0,766.0){\rule[-0.200pt]{0.400pt}{2.409pt}}
\put(1379.0,131.0){\rule[-0.200pt]{0.400pt}{2.409pt}}
\put(1379.0,766.0){\rule[-0.200pt]{0.400pt}{2.409pt}}
\put(1411.0,131.0){\rule[-0.200pt]{0.400pt}{2.409pt}}
\put(1411.0,766.0){\rule[-0.200pt]{0.400pt}{2.409pt}}
\put(1439.0,131.0){\rule[-0.200pt]{0.400pt}{4.818pt}}
\put(1439,90){\makebox(0,0){$0.1$}}
\put(1439.0,756.0){\rule[-0.200pt]{0.400pt}{4.818pt}}
\put(211.0,131.0){\rule[-0.200pt]{0.400pt}{155.380pt}}
\put(211.0,131.0){\rule[-0.200pt]{295.825pt}{0.400pt}}
\put(1439.0,131.0){\rule[-0.200pt]{0.400pt}{155.380pt}}
\put(211.0,776.0){\rule[-0.200pt]{295.825pt}{0.400pt}}
\put(30,453){\makebox(0,0){\rotatebox{90}{$L^2(\mathcal{F})$ error}}}
\put(825,29){\makebox(0,0){$h$}}
\put(825,838){\makebox(0,0){pressure}}
\put(1279,212){\makebox(0,0)[r]{$P_2-P_1-P_1$ (slope=2.004)}}
\put(1299.0,212.0){\rule[-0.200pt]{24.090pt}{0.400pt}}
\put(1218,613){\usebox{\plotpoint}}
\multiput(1214.39,611.92)(-0.965,-0.499){189}{\rule{0.871pt}{0.120pt}}
\multiput(1216.19,612.17)(-183.193,-96.000){2}{\rule{0.435pt}{0.400pt}}
\multiput(1029.40,515.92)(-0.960,-0.499){189}{\rule{0.867pt}{0.120pt}}
\multiput(1031.20,516.17)(-182.201,-96.000){2}{\rule{0.433pt}{0.400pt}}
\multiput(845.45,419.92)(-0.945,-0.499){193}{\rule{0.855pt}{0.120pt}}
\multiput(847.23,420.17)(-183.225,-98.000){2}{\rule{0.428pt}{0.400pt}}
\multiput(660.48,321.92)(-0.935,-0.499){195}{\rule{0.847pt}{0.120pt}}
\multiput(662.24,322.17)(-183.241,-99.000){2}{\rule{0.424pt}{0.400pt}}
\put(1218,613){\makebox(0,0){$+$}}
\put(1033,517){\makebox(0,0){$+$}}
\put(849,421){\makebox(0,0){$+$}}
\put(664,323){\makebox(0,0){$+$}}
\put(479,224){\makebox(0,0){$+$}}
\put(1349,212){\makebox(0,0){$+$}}
\put(1279,171){\makebox(0,0)[r]{$P_2-P_1-P_0$ (slope=1.637)}}
\multiput(1299,171)(20.756,0.000){5}{\usebox{\plotpoint}}
\put(1399,171){\usebox{\plotpoint}}
\put(1218,616){\usebox{\plotpoint}}
\multiput(1218,616)(-20.688,-1.677){9}{\usebox{\plotpoint}}
\multiput(1033,601)(-17.644,-10.931){11}{\usebox{\plotpoint}}
\multiput(849,487)(-18.664,-9.080){10}{\usebox{\plotpoint}}
\multiput(664,397)(-19.199,-7.887){9}{\usebox{\plotpoint}}
\put(479,321){\usebox{\plotpoint}}
\put(1218,616){\makebox(0,0){$\times$}}
\put(1033,601){\makebox(0,0){$\times$}}
\put(849,487){\makebox(0,0){$\times$}}
\put(664,397){\makebox(0,0){$\times$}}
\put(479,321){\makebox(0,0){$\times$}}
\put(1349,171){\makebox(0,0){$\times$}}
\put(211.0,131.0){\rule[-0.200pt]{0.400pt}{155.380pt}}
\put(211.0,131.0){\rule[-0.200pt]{295.825pt}{0.400pt}}
\put(1439.0,131.0){\rule[-0.200pt]{0.400pt}{155.380pt}}
\put(211.0,776.0){\rule[-0.200pt]{295.825pt}{0.400pt}}
\end{picture}}
%\resizebox{8cm}{!}{\input{FIGURES/fig5-l}}\\
\resizebox{8cm}{!}{% GNUPLOT: LaTeX picture
\setlength{\unitlength}{0.240900pt}
\ifx\plotpoint\undefined\newsavebox{\plotpoint}\fi
\begin{picture}(1500,900)(0,0)
\sbox{\plotpoint}{\rule[-0.200pt]{0.400pt}{0.400pt}}%
\put(211.0,131.0){\rule[-0.200pt]{4.818pt}{0.400pt}}
\put(191,131){\makebox(0,0)[r]{$1e-08$}}
\put(1419.0,131.0){\rule[-0.200pt]{4.818pt}{0.400pt}}
\put(211.0,170.0){\rule[-0.200pt]{2.409pt}{0.400pt}}
\put(1429.0,170.0){\rule[-0.200pt]{2.409pt}{0.400pt}}
\put(211.0,221.0){\rule[-0.200pt]{2.409pt}{0.400pt}}
\put(1429.0,221.0){\rule[-0.200pt]{2.409pt}{0.400pt}}
\put(211.0,247.0){\rule[-0.200pt]{2.409pt}{0.400pt}}
\put(1429.0,247.0){\rule[-0.200pt]{2.409pt}{0.400pt}}
\put(211.0,260.0){\rule[-0.200pt]{4.818pt}{0.400pt}}
\put(191,260){\makebox(0,0)[r]{$1e-07$}}
\put(1419.0,260.0){\rule[-0.200pt]{4.818pt}{0.400pt}}
\put(211.0,299.0){\rule[-0.200pt]{2.409pt}{0.400pt}}
\put(1429.0,299.0){\rule[-0.200pt]{2.409pt}{0.400pt}}
\put(211.0,350.0){\rule[-0.200pt]{2.409pt}{0.400pt}}
\put(1429.0,350.0){\rule[-0.200pt]{2.409pt}{0.400pt}}
\put(211.0,376.0){\rule[-0.200pt]{2.409pt}{0.400pt}}
\put(1429.0,376.0){\rule[-0.200pt]{2.409pt}{0.400pt}}
\put(211.0,389.0){\rule[-0.200pt]{4.818pt}{0.400pt}}
\put(191,389){\makebox(0,0)[r]{$1e-06$}}
\put(1419.0,389.0){\rule[-0.200pt]{4.818pt}{0.400pt}}
\put(211.0,428.0){\rule[-0.200pt]{2.409pt}{0.400pt}}
\put(1429.0,428.0){\rule[-0.200pt]{2.409pt}{0.400pt}}
\put(211.0,479.0){\rule[-0.200pt]{2.409pt}{0.400pt}}
\put(1429.0,479.0){\rule[-0.200pt]{2.409pt}{0.400pt}}
\put(211.0,505.0){\rule[-0.200pt]{2.409pt}{0.400pt}}
\put(1429.0,505.0){\rule[-0.200pt]{2.409pt}{0.400pt}}
\put(211.0,518.0){\rule[-0.200pt]{4.818pt}{0.400pt}}
\put(191,518){\makebox(0,0)[r]{$1e-05$}}
\put(1419.0,518.0){\rule[-0.200pt]{4.818pt}{0.400pt}}
\put(211.0,557.0){\rule[-0.200pt]{2.409pt}{0.400pt}}
\put(1429.0,557.0){\rule[-0.200pt]{2.409pt}{0.400pt}}
\put(211.0,608.0){\rule[-0.200pt]{2.409pt}{0.400pt}}
\put(1429.0,608.0){\rule[-0.200pt]{2.409pt}{0.400pt}}
\put(211.0,634.0){\rule[-0.200pt]{2.409pt}{0.400pt}}
\put(1429.0,634.0){\rule[-0.200pt]{2.409pt}{0.400pt}}
\put(211.0,647.0){\rule[-0.200pt]{4.818pt}{0.400pt}}
\put(191,647){\makebox(0,0)[r]{$0.0001$}}
\put(1419.0,647.0){\rule[-0.200pt]{4.818pt}{0.400pt}}
\put(211.0,686.0){\rule[-0.200pt]{2.409pt}{0.400pt}}
\put(1429.0,686.0){\rule[-0.200pt]{2.409pt}{0.400pt}}
\put(211.0,737.0){\rule[-0.200pt]{2.409pt}{0.400pt}}
\put(1429.0,737.0){\rule[-0.200pt]{2.409pt}{0.400pt}}
\put(211.0,763.0){\rule[-0.200pt]{2.409pt}{0.400pt}}
\put(1429.0,763.0){\rule[-0.200pt]{2.409pt}{0.400pt}}
\put(211.0,776.0){\rule[-0.200pt]{4.818pt}{0.400pt}}
\put(191,776){\makebox(0,0)[r]{$0.001$}}
\put(1419.0,776.0){\rule[-0.200pt]{4.818pt}{0.400pt}}
\put(211.0,131.0){\rule[-0.200pt]{0.400pt}{4.818pt}}
\put(211,90){\makebox(0,0){$0.001$}}
\put(211.0,756.0){\rule[-0.200pt]{0.400pt}{4.818pt}}
\put(396.0,131.0){\rule[-0.200pt]{0.400pt}{2.409pt}}
\put(396.0,766.0){\rule[-0.200pt]{0.400pt}{2.409pt}}
\put(504.0,131.0){\rule[-0.200pt]{0.400pt}{2.409pt}}
\put(504.0,766.0){\rule[-0.200pt]{0.400pt}{2.409pt}}
\put(581.0,131.0){\rule[-0.200pt]{0.400pt}{2.409pt}}
\put(581.0,766.0){\rule[-0.200pt]{0.400pt}{2.409pt}}
\put(640.0,131.0){\rule[-0.200pt]{0.400pt}{2.409pt}}
\put(640.0,766.0){\rule[-0.200pt]{0.400pt}{2.409pt}}
\put(689.0,131.0){\rule[-0.200pt]{0.400pt}{2.409pt}}
\put(689.0,766.0){\rule[-0.200pt]{0.400pt}{2.409pt}}
\put(730.0,131.0){\rule[-0.200pt]{0.400pt}{2.409pt}}
\put(730.0,766.0){\rule[-0.200pt]{0.400pt}{2.409pt}}
\put(765.0,131.0){\rule[-0.200pt]{0.400pt}{2.409pt}}
\put(765.0,766.0){\rule[-0.200pt]{0.400pt}{2.409pt}}
\put(797.0,131.0){\rule[-0.200pt]{0.400pt}{2.409pt}}
\put(797.0,766.0){\rule[-0.200pt]{0.400pt}{2.409pt}}
\put(825.0,131.0){\rule[-0.200pt]{0.400pt}{4.818pt}}
\put(825,90){\makebox(0,0){$0.01$}}
\put(825.0,756.0){\rule[-0.200pt]{0.400pt}{4.818pt}}
\put(1010.0,131.0){\rule[-0.200pt]{0.400pt}{2.409pt}}
\put(1010.0,766.0){\rule[-0.200pt]{0.400pt}{2.409pt}}
\put(1118.0,131.0){\rule[-0.200pt]{0.400pt}{2.409pt}}
\put(1118.0,766.0){\rule[-0.200pt]{0.400pt}{2.409pt}}
\put(1195.0,131.0){\rule[-0.200pt]{0.400pt}{2.409pt}}
\put(1195.0,766.0){\rule[-0.200pt]{0.400pt}{2.409pt}}
\put(1254.0,131.0){\rule[-0.200pt]{0.400pt}{2.409pt}}
\put(1254.0,766.0){\rule[-0.200pt]{0.400pt}{2.409pt}}
\put(1303.0,131.0){\rule[-0.200pt]{0.400pt}{2.409pt}}
\put(1303.0,766.0){\rule[-0.200pt]{0.400pt}{2.409pt}}
\put(1344.0,131.0){\rule[-0.200pt]{0.400pt}{2.409pt}}
\put(1344.0,766.0){\rule[-0.200pt]{0.400pt}{2.409pt}}
\put(1379.0,131.0){\rule[-0.200pt]{0.400pt}{2.409pt}}
\put(1379.0,766.0){\rule[-0.200pt]{0.400pt}{2.409pt}}
\put(1411.0,131.0){\rule[-0.200pt]{0.400pt}{2.409pt}}
\put(1411.0,766.0){\rule[-0.200pt]{0.400pt}{2.409pt}}
\put(1439.0,131.0){\rule[-0.200pt]{0.400pt}{4.818pt}}
\put(1439,90){\makebox(0,0){$0.1$}}
\put(1439.0,756.0){\rule[-0.200pt]{0.400pt}{4.818pt}}
\put(211.0,131.0){\rule[-0.200pt]{0.400pt}{155.380pt}}
\put(211.0,131.0){\rule[-0.200pt]{295.825pt}{0.400pt}}
\put(1439.0,131.0){\rule[-0.200pt]{0.400pt}{155.380pt}}
\put(211.0,776.0){\rule[-0.200pt]{295.825pt}{0.400pt}}
\put(30,453){\makebox(0,0){\rotatebox{90}{$\left|\int_{\Gamma}(\lambda-\lambda_{h})\right|$}}}
\put(825,29){\makebox(0,0){$h$}}
\put(825,838){\makebox(0,0){multiplier}}
\put(1279,212){\makebox(0,0)[r]{$P_2-P_1-P_1$ (slope=3.116)}}
\put(1299.0,212.0){\rule[-0.200pt]{24.090pt}{0.400pt}}
\put(1218,714){\usebox{\plotpoint}}
\multiput(1216.92,711.65)(-0.500,-0.584){367}{\rule{0.120pt}{0.567pt}}
\multiput(1217.17,712.82)(-185.000,-214.823){2}{\rule{0.400pt}{0.284pt}}
\multiput(1022.73,496.92)(-2.995,-0.497){59}{\rule{2.474pt}{0.120pt}}
\multiput(1027.86,497.17)(-178.865,-31.000){2}{\rule{1.237pt}{0.400pt}}
\multiput(847.92,464.66)(-0.500,-0.581){367}{\rule{0.120pt}{0.565pt}}
\multiput(848.17,465.83)(-185.000,-213.828){2}{\rule{0.400pt}{0.282pt}}
\multiput(648.23,250.92)(-4.701,-0.496){37}{\rule{3.800pt}{0.119pt}}
\multiput(656.11,251.17)(-177.113,-20.000){2}{\rule{1.900pt}{0.400pt}}
\put(1218,714){\makebox(0,0){$+$}}
\put(1033,498){\makebox(0,0){$+$}}
\put(849,467){\makebox(0,0){$+$}}
\put(664,252){\makebox(0,0){$+$}}
\put(479,232){\makebox(0,0){$+$}}
\put(1349,212){\makebox(0,0){$+$}}
\put(1279,171){\makebox(0,0)[r]{$P_2-P_1-P_0$ (slope=3.301)}}
\multiput(1299,171)(20.756,0.000){5}{\usebox{\plotpoint}}
\put(1399,171){\usebox{\plotpoint}}
\put(1218,723){\usebox{\plotpoint}}
\multiput(1218,723)(-15.994,-13.228){12}{\usebox{\plotpoint}}
\multiput(1033,570)(-18.027,-10.287){10}{\usebox{\plotpoint}}
\multiput(849,465)(-14.324,-15.021){13}{\usebox{\plotpoint}}
\multiput(664,271)(-20.309,-4.281){9}{\usebox{\plotpoint}}
\put(479,232){\usebox{\plotpoint}}
\put(1218,723){\makebox(0,0){$\times$}}
\put(1033,570){\makebox(0,0){$\times$}}
\put(849,465){\makebox(0,0){$\times$}}
\put(664,271){\makebox(0,0){$\times$}}
\put(479,232){\makebox(0,0){$\times$}}
\put(1349,171){\makebox(0,0){$\times$}}
\put(211.0,131.0){\rule[-0.200pt]{0.400pt}{155.380pt}}
\put(211.0,131.0){\rule[-0.200pt]{295.825pt}{0.400pt}}
\put(1439.0,131.0){\rule[-0.200pt]{0.400pt}{155.380pt}}
\put(211.0,776.0){\rule[-0.200pt]{295.825pt}{0.400pt}}
\end{picture}}
\end{minipage}
\caption{Rates of convergence for Taylor-Hood elements with Haslinger-Renard stabilization for $\|u-u_h\|_{0,\mathcal{F}}$, 
$\|u-u_h\|_{1,\mathcal{F}}$, $\|p-p_h\|_{0,\mathcal{F}}$ and $\left|\int_{\Gamma}(\lambda-\lambda_{h})\right|$ }
\label{fig:Taylor-Hood}
\end{figure}
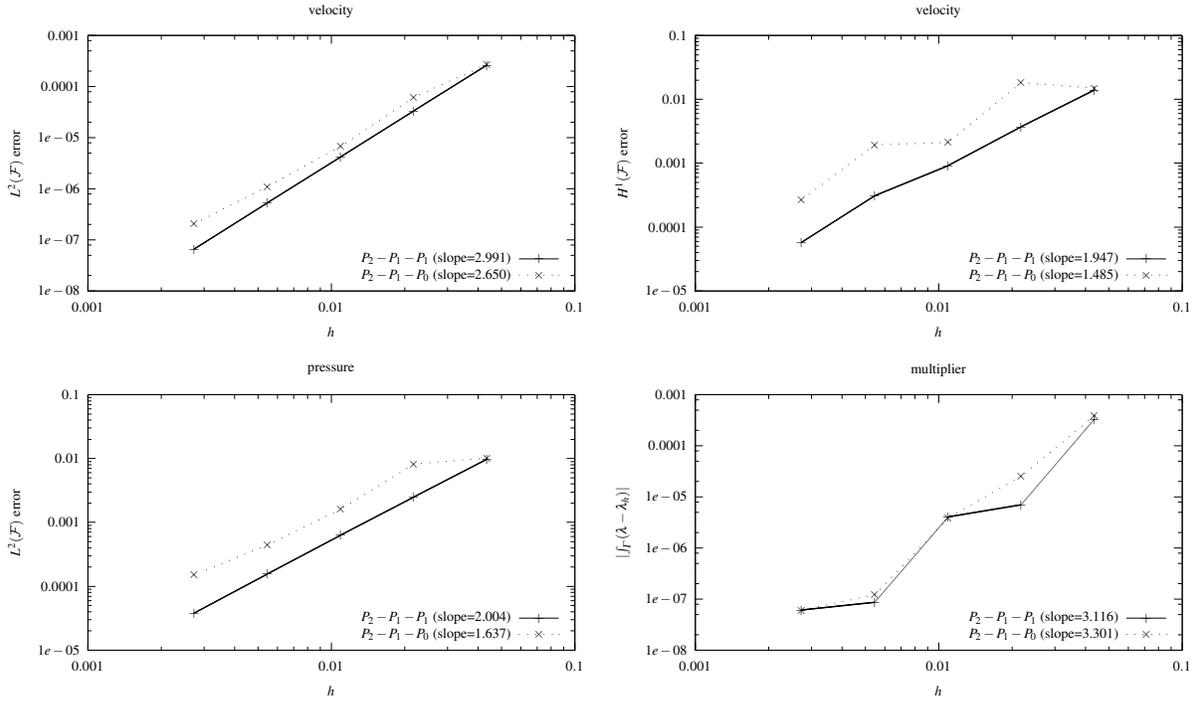
\end{center}

\subsection{Methods \`a la Burman-Hansbo.}
For the methods \`a la Burman-Hansbo, some stabilization terms (multiplied by $\gamma$ and, eventually, $\theta$) are added to the system (\ref{sys:sans_stab}).
This yields
\begin{equation}
\label{sys:burman-hansbo}
\left (
\begin{array}{ccc}
K & B^T&C^T\\
B & S_{l_{p}}^{\theta_0}& 0\\
C & 0& S^{\gamma}_{l_{\lambda}}\\
\end{array}
\right )
\left (
\begin{array}{c}
U\\
P\\
\Lambda\\
\end{array}
\right ) = 
\left (
\begin{array}{c}
F\\
0\\
G \\
\end{array}
\right ) 
\end{equation}
for $l_{\lambda}=0,1$ and $l_{p}=0,1,2$ with \\
$
\begin{array}{l}
 \left ( S^{\gamma}_{0} \right )_{i_\lambda j_\lambda} =  \left ( S^{\gamma,0}_{[\lambda][\lambda]} \right )_{i_\lambda j_\lambda}= - \gamma h  \sum_{E \in \mathcal{E}_h^{\Gamma}}
  \int_{E} [\zeta_{i_\lambda}] \cdot [\zeta_{j_\lambda}],
 \quad \left ( S^{\gamma}_{1} \right )_{i_\lambda j_\lambda}= \left ( S^{\gamma,1}_{\lambda \lambda} \right )_{i_\lambda j_\lambda}=- \gamma h^2   \int_{\mathcal{F}_h^{\Gamma}} \nabla \zeta_{i_\lambda} . \nabla \zeta_{j_\lambda}\\
 S^{\theta}_{0} =  S_{pp}^{\theta}, \quad   S^{\theta}_{1}=S_{[p][p]}^{\theta},  \quad  S^{\theta}_{2}= 0
\end{array}
$

\begin{center}
\begin{figure}
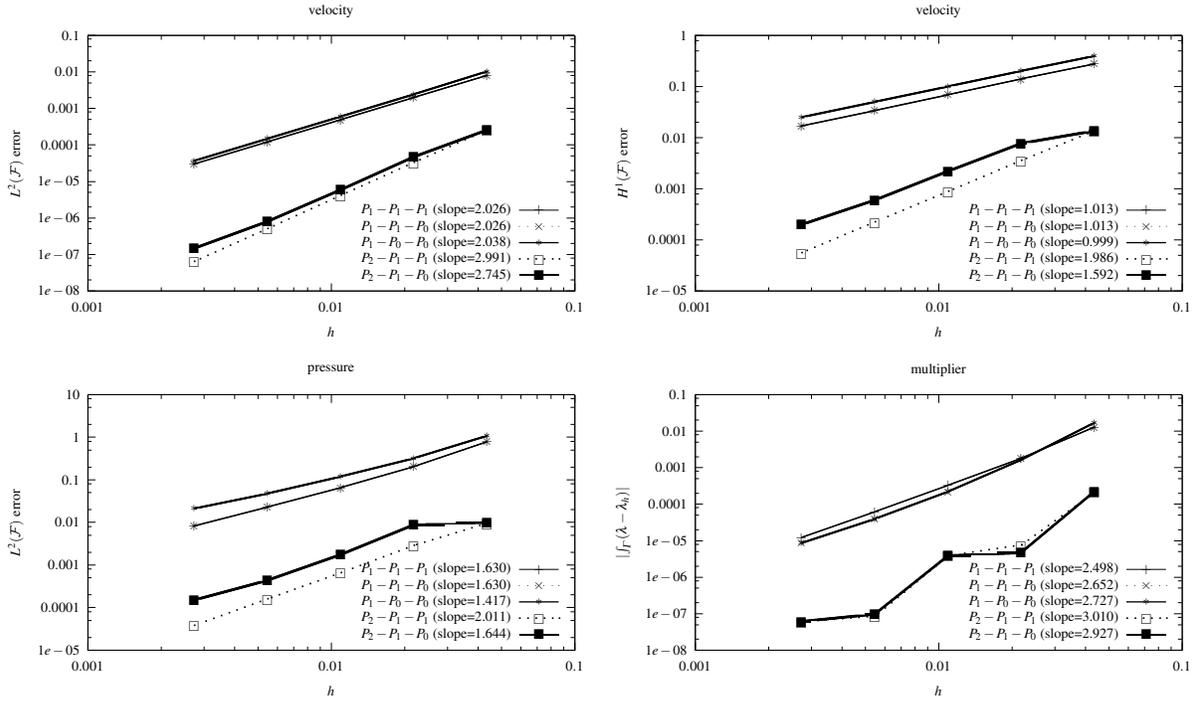

\hspace*{-2cm}
\begin{minipage}{20cm}
 \resizebox{8cm}{!}{\input{FIGURES/fig6-u}}
  \resizebox{8cm}{!}{\input{FIGURES/fig6-u-h1}}\\
\resizebox{8cm}{!}{\input{FIGURES/fig6-p}}
%\resizebox{8cm}{!}{\input{FIGURES/fig6-l}}\\
\resizebox{8cm}{!}{\input{FIGURES/fig6-l-force}}
\end{minipage}
\caption{Rates of convergence with Burman-Hansbo stabilization for $\|u-u_h\|_{0,\mathcal{F}}$, 
$\|u-u_h\|_{1,\mathcal{F}}$, $\|p-p_h\|_{0,\mathcal{F}}$ and $\left|\int_{\Gamma}(\lambda-\lambda_{h})\right|$ }
\label{fig:Burman-Hansbo}
\end{figure}
\end{center}

No "robust reconstruction" is applied here. The choice of the stabilization matrix for the multiplier $\lambda$ is determined by its FE space: we use $S_0^{\gamma}$ or  $S_1^{\gamma}$  for $\P_0$ or $\P_1$ space $W_h$ respectively.
The stabilization matrices for the pressure are added as in the preceding variants depending on the velocity-pressure FE couple ($S^{\theta}_{0}$ for $\P_1-\P_1$, $S^{\theta}_{1}$ for $\P_1-\P_0$, $S^{\theta}_{2}$ for $\P_2-\P_1$). 

The results are reported in Fig. \ref{fig:Burman-Hansbo}.  The optimal rates of convergence are recovered for all the variants. The accuracy of the method is close to that of the methods \`a la Haslinger-Renard, considered in the preceding subsections. For example, the results with $\P_2-\P_1-\P_1$ FE are comparable with those reported in Fig. \ref{fig:Taylor-Hood}.

\section{Conclusion}

In this paper, we have proposed fictitious domain methods for the Stokes problem that can be used in the context of fluid-structure interaction with complex interface.
We combine the Barbosa-Hughes approach with several stabilization strategies involving a "robust reconstruction" (Haslinger-Renard)  when small intersections of the mesh elements with the domain are present.  The optimal error estimates proven theoretically under non-restrictive assumptions are also confirmed numerically.
Alternative methods \`a la Burman-Hansbo are considered theoretically and numerically for Stokes problem and allow to recover similar results. 

\begin{acknowledgement}
We wish to thank Prof. Erik Burman for giving us the occasion to participate in the ``Unfitted FEM'' workshop and to contribute to this volume.  
We are indebted to Prof. Yves Renard -- the main developer of GetFEM++ library, used for all our numerical experiments -- for adapting this library for our needs and for useful advice. 
\end{acknowledgement}

\bibliographystyle{spmpsci}
\bibliography{xfem}
\end{document}